\documentclass[12pt,twoside]{amsart}
\usepackage{amssymb}
\usepackage{amscd}
\usepackage[abbrev,alphabetic]{amsrefs}
\usepackage{hyperref}
\usepackage{comment}
\usepackage{array,multirow,tabularx,longtable}
\usepackage{tikz}
\usetikzlibrary{cd}
\usepackage{here}
\usepackage{multirow}
\usepackage[margin=1.25in]{geometry}
\usepackage{mathtools}

\usepackage{framed}
\usepackage{fancybox}
\usepackage{ascmac}

\title[105 families of Fano threefolds in positive characteristic]
{105 families of Fano threefolds in positive characteristic} 

\thanks{
The  author is supported by JSPS KAKENHI Grant Numbers JP22H01112 and JP23K03028.}

\author{Hiromu Tanaka} 
\subjclass[2020]{14J45, 
14J30, 
14G17
}
\keywords{Fano threefolds, 105 deformation families, positive characteristic.}

\DeclareMathOperator{\Isom}{Isom}
\DeclareMathOperator{\h}{\text{-}}
\DeclareMathOperator{\all}{all}

\DeclareMathOperator{\Tot}{Tot}
\DeclareMathOperator{\cosp}{cosp}
\DeclareMathOperator{\Nef}{Nef}

\DeclareMathOperator{\Sym}{Sym}

\renewcommand{\sp}[0]{{\operatorname{sp}}}

\newcommand{\ol}{\overline}

\newcommand{\pr}[0]{{\operatorname{pr}}}

\newcommand{\Bl}[0]{{\operatorname{Bl}}}

\newcommand{\Hilb}[0]{{\operatorname{Hilb}}}

\newcommand{\NE}[0]{{\operatorname{NE}}}
\newcommand{\red}[0]{{\operatorname{red}}}

\newcommand{\Proj}[0]{\operatorname{Proj}}
\newcommand{\Spec}[0]{\operatorname{Spec}}

\newcommand{\Hom}[0]{{\operatorname{Hom}}}

\newcommand{\Pic}[0]{\operatorname{Pic}}

\newcommand{\Ex}[0]{{\operatorname{Ex}}}

\newcommand{\Aut}[0]{{\operatorname{Aut}}}

\newcommand{\Gr}[0]{{\operatorname{Gr}}}

\newcommand{\PGL}[0]{{\operatorname{PGL}}}
\newcommand{\GL}[0]{{\operatorname{GL}}}

\newtheorem{thm}{Theorem}[section]
\newtheorem{lem}[thm]{Lemma}
\newtheorem*{lem*}{Lemma}

\newtheorem{cor}[thm]{Corollary}

\newtheorem{prop}[thm]{Proposition}

\newtheorem*{claim*}{Claim}         
\newtheorem{step}{Step}

\theoremstyle{definition}

\newtheorem{dfn}[thm]{Definition}

\newtheorem{rem}[thm]{Remark}

\makeatletter
  
  \@addtoreset{equation}{thm}
  \makeatother

\newcommand{\bB}{\mathbb{B}}

\newcommand{\bG}{\mathbb{G}}

\newcommand{\cA}{\mathcal{A}}

\newcommand{\cC}{\mathcal{C}}
\newcommand{\cD}{\mathcal{D}}
\newcommand{\cE}{\mathcal{E}}
\newcommand{\cF}{\mathcal{F}}

\newcommand{\cH}{\mathcal{H}}
\newcommand{\cI}{\mathcal{I}}

\newcommand{\cL}{\mathcal{L}}
\newcommand{\cM}{\mathcal{M}}

\newcommand{\cO}{\mathcal{O}}
\newcommand{\cP}{\mathcal{P}}

\newcommand{\cX}{\mathcal{X}}
\newcommand{\cY}{\mathcal{Y}}
\newcommand{\cZ}{\mathcal{Z}}

\newcommand{\MO}{\mathcal{O}}

\newcommand{\R}{\mathbb{R}}
\newcommand{\Q}{\mathbb{Q}}
\newcommand{\Z}{\mathbb{Z}}
\newcommand{\F}{\mathbb{F}}

\renewcommand{\P}{\mathbb{P}}

\newcommand{\m}{\mathfrak{m}}

\newcommand{\la}{\langle}
\newcommand{\ra}{\rangle}

\DeclareMathOperator{\Fl}{Fl}

\usepackage{listings}
\begin{document}

\maketitle

\begin{abstract}
Over an algebraically closed field of characteristic $p>0$, 
we prove that smooth Fano threefolds form 105 irreducible deformation families. 
\end{abstract}



\tableofcontents

\section{Introduction}

The classification problem of Fano threefolds goes back to the work of Gino Fano. 
In characteristic zero, Fano threefolds were classified by Mori and Mukai 
\cite{MM81}, \cite{MM83}, \cite{MM03}, 
following fundamental work of Iskovskikh and Shokurov \cite{Isk77}, \cite{Isk78}, \cite{Sho79a}, \cite{Sho79b}. 
Together, these results show that there are exactly $105$ deformation families of Fano threefolds \cite{MM85}.

In positive characteristic, a classification of Fano threefolds has been established in \cite{FanoIV}, 
where the classification list is identical to the one in characteristic zero. 
However, it remained to show that each number corresponds to a single deformation family.
The main result of this paper fills this gap.

\begin{thm}[Theorem \ref{t parameter final}]\label{intro main}
Let $k$ be an algebraically closed field of characteristic $p>0$. 
Then there exist exactly 105 deformation families of Fano threefolds over $k$. 
Specifically, given an arbitrary number x-yz listed in \cite[Section 7]{FanoIV}, 
there exists a smooth projective $k$-morphism  $\pi : \cX \to U$ 
of integral quasi-projective schemes over $k$ such that 
\begin{itemize}
\item the fibre of $\pi$ over every closed point is a Fano threefold of No.\ x-yz, and 
\item every Fano threefold $X$ of No. x-yz is isomorphic to  the fibre of $\pi$ over some closed point. 
\end{itemize}
\end{thm}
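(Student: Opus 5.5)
Since the classification in \cite[Section 7]{FanoIV} describes each Fano threefold of No.\ x-yz explicitly (as a complete intersection in a weighted projective space or homogeneous variety, a double cover, a blowup of a simpler Fano threefold or of $\P^3$, $Q$, $V_d$ along a curve of prescribed type, a divisor in a product, and so on), I will go through the 105 numbers and, for each, build a parameter scheme $U$ from that description. The uniform mechanism is: (1) choose an integral quasi-projective parameter space $T$ parametrizing the input data (equations, branch divisors, centres of blowups); (2) form the universal family $\cX_T \to T$ (universal complete intersection, universal double cover, or universal blowup $\Bl_{\cC}(\cY \times T)$ along a flat family of curves); (3) take $U \subset T$ to be the locus where the fibre is smooth and Fano of the correct type. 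Openness of smoothness and ampleness of $-K$ in flat projective families, together with flatness of blowups along $T$-flat lci centres whose fibres are smooth curves, makes $U$ open in $T$. Since a Fano threefold of No.\ x-yz has fixed invariants ($\rho$, $(-K)^3$, $h^{1,2}$ and the extremal-ray types), all smooth Fano fibres over $U$ automatically have the right number once $U$ is nonempty, which I will use to avoid re-checking each fibre.

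The key steps, in order, are as follows. First, treat the Picard rank one cases using the results from \cite{FanoIV} that $|-K_X|$ or a fundamental divisor gives a standard embedding: for index $\geq 2$ and for genus $g\le 10$ (and $g=12$) the variety is a complete intersection or a linear section of a fixed homogeneous variety (Grassmannian, $\mathrm{OGr}$, $\mathrm{LGr}$, $G_2$-variety), or a double cover; here $T$ is an open subset of a Grassmannian of linear subspaces or of a product of projective spaces of equations, hence irreducible. Second, for $\rho\ge 2$, use the description of each $X$ as a blowup of a Fano threefold $Y$ of smaller rank along a curve $C$ of fixed genus and degree (or a divisor in $\P^1\times\P^2$, $\P^2\times\P^2$, etc.). Inductively $Y$ already varies in an irreducible family $\cY\to U_Y$, and I take $T$ to be the relative Hilbert scheme component of curves of the given type; the crux is that the relevant locus of smooth curves is irreducible, which I would establish by writing such curves themselves as complete intersections or images of explicit maps (e.g.\ rational normal curves, elliptic curves as intersections of quadrics, plane curves in a fixed plane), so irreducibility reduces to that of a space of maps or equations. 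Third, surjectivity onto isomorphism classes is precisely the content of the structure theorems in \cite{FanoIV}: every $X$ of No.\ x-yz arises from some such data, so it is a fibre over a closed point of $U$. Finally I would pass from $T$ to integral $U$ by taking the reduced structure on the irreducible open set (which is fine since the base change of a smooth morphism remains smooth).

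I expect the main obstacle to be irreducibility of the parameter spaces in the few cases where the input is not simply a general section, most notably the blowup cases where the centre is constrained by the $K$-negativity condition (e.g.\ curves on a quadric or on $V_d$ meeting no line in too many points, or the conic/line-type centres where smoothness of $X$ and ampleness of $-K_X$ cut out a complicated locus), and the Picard rank one genus $g=12$ / $V_{22}$ and $g=7,9$ cases whose Mukai models in characteristic $p$ must be justified from \cite{FanoIV}. In each such case I would first try to show the Fano condition is open in an irreducible ambient Hilbert scheme component, so irreducibility of $U$ follows for free once $U\neq\emptyset$, with nonemptiness supplied by the existence results of \cite{FanoIV}.
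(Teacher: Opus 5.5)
Your case-by-case skeleton (universal complete intersections, universal double covers, relative blowups $\Bl_{\cC}(\cY\times T)$ over inductively built bases) is the paper's. However, the step that does the real work is asserted rather than proved: the claim that all smooth Fano fibres over $U$ ``automatically have the right number.''

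Numerical invariants do not determine the number. For example, 2-30 and 2-31 share $\rho$, $(-K_X)^3$ and $h^{1,2}$, and so do 2-33 and 2-34. The numbering of \cite{FanoIV} is defined via extremal-ray data, recursively including the numbers of the targets; this is how 3-11 and 3-12 are separated. So you need that this data is locally constant in smooth families, and in characteristic $p$ this is not free. The paper devotes Section~3 to it (Theorem~\ref{t No inv}), using:
a base point free theorem for nef divisors on Fano threefolds (Theorem~\ref{t Fano3 bpf thm}: abundance from liftability and Kodaira vanishing, Keel's theorem over $\overline{\F}_p$, spreading out);
its relative version;
rigidity of $\NE$ and $\Nef$ under specialisation over a complete DVR (Proposition~\ref{p DVR NE Nef});
constancy of the types $E_1,E_2,E_3/E_4,E_5,C_n,D_n$ and of Fano-ness of the target (Proposition~\ref{p ext type family});
and induction on $\rho$.
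Without this you know neither that your locus ``smooth, Fano, of the correct type'' is open (hence irreducible) in $T$, nor that different numbers give different families.

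Nonemptiness is also not supplied by \cite{FanoIV}, which restricts the possible numbers but does not construct all of them. Existence in characteristic $p$ is proved here in Sections~4--6 with new inputs: the prime divisor criterion (Proposition~\ref{p prime div criterion}), smooth double covers $t^2+at+b=0$ in characteristic $2$ (Proposition~\ref{p smooth double cover exist}), and a separate construction of 3-2 (Proposition~\ref{p 3-2 exist}). Without existence you get at most, not exactly, 105 families, and $U$ could be empty.

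Your uniform mechanism also fails at specific numbers.
For 1-2 and 2-6, one number has two shapes: a quartic in $\P^4$ versus a double cover of $Q$, and a $(2,2)$-divisor in $\P^2\times\P^2$ versus a double cover of $W$. Choosing one description per number therefore gives two families. The paper puts both shapes into the single family $\{z^2+az+b=rz+c=0\}\subset\Tot_P(H)$ (Propositions~\ref{p parameter 2-6} and~\ref{p parameter 1-2}).
In characteristic $2$ a double cover is not determined by a branch divisor, so ``parametrise the branch divisor'' must be replaced by parametrising $(a,b)\in H^0(L)\oplus H^0(L^{\otimes 2})$.
For 2-9, 2-12, 2-13 and 2-17, the centres have $(g,d)=(5,7),(3,6)$ in $\P^3$ and $(2,6),(1,5)$ in $\P^4$. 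These are neither complete intersections nor images of a fixed curve, so your irreducibility recipe does not apply. The paper instead dominates the relevant open subset of the Hilbert scheme by a $\PGL_{n+1}$-frame bundle over an open substack of the irreducible Picard stack $\mathcal{P}ic^d_g$; for $(5,7)$ it uses $\mathcal{M}_{5,1}$ via $|K_C-p|$, since there $h^1(\MO_C(1))=1$. It then uses projective normality to make $h^0(\cI_C(2))$ constant, so that the quadric $Q$ can move with the curve (Proposition~\ref{p parameter spaces rho 2 Mg}).
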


Along the way, we also establish the following base point free theorem 
for families of Fano threefolds: 

\begin{thm}[Corollary \ref{c relative semi-ample}]\label{intro bpf}
Let $U$ be a connected noetherian scheme and 
let $\pi : \cX \to U$ be a smooth projective morphism such that every geometric fibre is a Fano threefold. 
Take an invertible sheaf $\cL$ on $\cX$. 
Then $\cL$ is $\pi$-semi-ample if and only if $\cL_u$ is nef for some point $u \in U$. 
\end{thm}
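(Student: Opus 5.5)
The plan is to prove the two implications separately; the direction ``$\pi$-semi-ample $\Rightarrow$ $\cL_u$ nef'' is immediate, since semi-ample sheaves restrict to semi-ample, hence nef, sheaves on every fibre. For the converse I first spread nefness from the single point $u$ to every point of $U$, and then use fibrewise base point freeness together with cohomology and base change to obtain relative global generation of a multiple of $\cL$.

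\emph{Step 1: constancy of the Picard group and of the nef cone.} For a Fano threefold $X$ we have $H^1(X,\cO_X)=H^2(X,\cO_X)=0$; in positive characteristic this holds by the classification of \cite{FanoIV} (or by the known vanishing results for Fano threefolds). Consequently $\Pic$ of the fibres is discrete and unobstructed, so the specialization maps of Néron--Severi groups are isomorphisms (after a finite étale base change, which we may pass to). The Mori cone $\NE(X)$ of a Fano threefold is rational polyhedral and generated by finitely many extremal rays $R_i$, each spanned by a rational curve $C_i$ with $-K_X\cdot C_i\le 4$. I will show that the set of extremal rays, and hence the nef cone, is locally constant in $u$. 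On one side, curves of bounded anticanonical degree lie in a relative Hilbert scheme that is proper over $U$, so a curve class on a special fibre lifts, or at least its numerical class specializes, to classes of effective curves; this gives semicontinuity in one direction. For the other direction I use that each extremal contraction type (E1--E5, C, D) of a Fano threefold deforms: the rational curves spanning a ray have unobstructed deformations, because $H^1(C,N_{C/X})=0$ for the relevant minimal curves, for instance lines with normal bundle $\cO\oplus\cO(-1)$ or fibres of conic bundles. Together these show that a class is nef on one fibre if and only if it is nef on all fibres in a connected $U$.

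\emph{Step 2: fibrewise semi-ampleness and base change.} Nef line bundles on a Fano threefold are semi-ample; indeed every nef $L$ satisfies that $L-K_X$ is ample, and the base point free theorem for Fano threefolds in positive characteristic, proved in \cite{FanoIV}, says that nef implies base point free. Kawamata--Viehweg type vanishing $H^i(X,L)=0$ for $i>0$ and nef $L$ also holds for Fano threefolds by \cite{FanoIV}. Therefore $h^0(\cL_u)$ is locally constant, $\pi_*\cL$ is locally free and commutes with base change, and the surjectivity of $\pi^*\pi_*\cL\to\cL$ can be checked on fibres, where it follows from base point freeness. So $\cL$ itself is $\pi$-globally generated.

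The main obstacle is Step 1, and specifically the reverse semicontinuity: an extremal ray of a special fibre must not disappear on a general fibre. First I would try to deform the contraction $X_u\to Y_u$ directly, using $R^1$-vanishing for $\cO$ and the fact that the contraction is defined by a nef line bundle. Then I would argue that this nef bundle stays nef nearby by an openness argument: the nef locus is open because ampleness is open and the cone is locally polyhedral. The remaining possibility is a bundle that is nef but not ample. In that case I would use the explicit classification by type of Mori--Mukai numbering in \cite{FanoIV} to check, type by type, that extremal rays deform.
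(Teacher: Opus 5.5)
Your overall architecture matches the paper's: constancy of nefness, then fibrewise semi-ampleness, then cohomology and base change. Your final Step 2 base-change argument is essentially the paper's proof of Corollary \ref{c relative semi-ample}. The genuine gap is in Step 1, in exactly the direction you flag as the main obstacle: showing that every extremal ray $R_s$ of a special fibre lies in $\sp_1(\NE(X_{\ol{\xi}}))$.

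\textbf{Step 1 does not go through as written.} The unobstructedness you rely on is false in general. If $R_s$ is of type $E_5$, every curve in $R_s$ lies in $E\simeq\P^2$ with $\MO_E(E)\simeq\MO_{\P^2}(-2)$. For every curve $\Gamma\subset E$, the surjection $H^1(\Gamma,N_{\Gamma/X_s})\to H^1(\Gamma,\MO_E(E)|_\Gamma)$ has nonzero target; for a line, $N_{\ell/X_s}\simeq\MO_{\P^1}(1)\oplus\MO_{\P^1}(-2)$. So no curve spanning the ray is unobstructed, and deforming curves cannot show that the ray survives.

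Your fallback does not close the gap either:
\begin{itemize}
\item Openness of the nef locus is not a general fact; here it is essentially what must be proved.
\item Polyhedrality of each fibre's cone does not prevent a ray from appearing only on a special fibre (think of $\F_0$ degenerating to $\F_2$).
\item The implication ``nef on $X_s$ implies nef on $X_{\ol{\xi}}$'' is the easy direction, coming from specialisation of curves. What you need is the converse.
\item The decisive point, that the extended supporting bundle does not become ample on the generic fibre, is deferred to a type-by-type check that you never carry out.
\end{itemize}

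\textbf{The missing argument} is the one in Propositions \ref{p DVR rho=1 equiv} and \ref{p DVR NE Nef}.
\begin{itemize}
\item Take a semi-ample $M_s$ with $\NE(X_s)\cap M_s^\perp=R_s$ (Lemma \ref{l ray bpf}). Extend it to $\cM$ on $\cX$ over a complete DVR, using $H^2(\MO_{X_s})=0$.
\item Lift sections via the vanishing of Theorem \ref{t Fano3 ANV}. Then $\cM$ is relatively semi-ample and its contraction $f$ restricts to the contraction of $R_s$.
\item Curves contracted by $f_{\ol{\xi}}$ specialise into fibres of $f_s$, so $\rho(X_{\ol{\xi}}/\cY_{\ol{\xi}})\le 1$.
\item $\cM_{\ol{\xi}}$ is not ample. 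Either $\cM_{\ol{\xi}}^3=M_s^3=0$, or $f_s$ is birational. In the birational case, $E=\Ex(f_s)$ lifts to a relative divisor $\cE$ because $H^1(X_s,\MO_{X_s}(E))=0$: this uses \cite[Theorem 1.1]{Kol91} for $E_2$--$E_5$ and a Leray argument for $E_1$. Hence $\cM_{\ol{\xi}}^2\cdot\cE_{\ol{\xi}}=M_s^2\cdot E=0$.
\item Therefore $\NE(X_{\ol{\xi}})\cap\cM_{\ol{\xi}}^\perp$ is an extremal ray mapping onto $R_s$.
\end{itemize}
Lifting the divisor $E$, whose obstruction group for $E_5$ is $H^1(\P^2,\MO_{\P^2}(-2))=0$, is what handles the case where deforming curves fails.

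\textbf{Step 2 also needs correcting.} The claim ``nef implies base point free'' is false. On a del Pezzo threefold $V_1$ with $-K_{V_1}\sim 2H$, the ample divisor $H$ has a base point. So you must work with a multiple of $\cL$. The fibrewise semi-ampleness you then need cannot be quoted from \cite{FanoIV}, nor from a general base point free theorem, which is unavailable in arbitrary characteristic $p$. It is Theorem \ref{t Fano3 bpf thm} of this paper, proved by three genuine steps: abundance via a lift to characteristic zero, Keel's theorem over $\overline{\F}_p$, and spreading out.
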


\subsection{Proof of Theorem \ref{intro bpf}}

A key auxiliary result is a relative base point free theorem for families of Fano threefolds (Theorem \ref{intro bpf}). Once this is established, the nef cones and the corresponding extremal contractions can be controlled uniformly in families, which leads to the deformation invariance of the classification number.

In what follows, we overview some ideas of 
the proof of the base point free theorem (Theorem \ref{intro bpf}). 
By a rigidity argument, the relative statement is reduced to the absolute case, i.e., $U = \Spec k$.  
Thus it remains to prove that every nef Cartier divisor $L$ on a Fano threefold $X$ is semi-ample. 
The proof consists of the following three steps. 
First, using the liftability of Fano threefolds and Kodaira vanishing, we prove that $L$ is abundant. 
Second, when $k=\overline{\F}_p$, the semi-ampleness of $L$ follows from 
\cite[Theorem 0.5]{Kee99} ($\kappa(L)=3$) 
and the finite generation of section rings for the surface case. 
Finally, for an arbitrary algebraically closed field $k$, we spread out $(X,L)$ over a variety defined over $\overline{\F}_p$ and deduce the semi-ampleness of $L$ from the case when $k=\overline{\F}_p$.






\subsection{Proof of Theorem \ref{intro main}}

Sections 4--6 are devoted to the existence of Fano threefolds of all the numbers in the classification. In Section 4, we establish several general criteria which will be used repeatedly. In Section 5, we treat Fano threefolds obtained as blowups of explicit varieties, such as P
3
 and del Pezzo threefolds. The existence for the remaining cases is treated in Section 6. Based on the existence result, we prove Theorem \ref{intro main} in Sections 7--9.

The existence argument largely follows Mori--Mukai in characteristic zero \cite{MM85}. The main differences in positive characteristic are the use of the prime divisor criterion, the treatment of double covers in characteristic two, and a separate construction for No.\ 3-2. The irreducibility part also follows the general strategy of Mori--Mukai; we give the details needed to carry out their argument in positive characteristic.

\medskip
\noindent {\bf Acknowledgements.}
The author was supported by JSPS KAKENHI Grant number JP22H01112 and JP23K03028. 
ChatGPT (OpenAI) was used during the preparation of this article for assistance with mathematical exploration, computations, and language editing.
\section{Preliminaries}

\subsection{Notation}\label{ss-notation}

In this subsection, we summarise notation used in this paper. 

\begin{enumerate}
\item We will freely use the notation and terminology in \cite{Har77} and \cite{KM98}. 
In particular, $D_1 \sim D_2$ means linear equivalence of Weil divisors. 
\item 
Throughout this paper, 
we work over an algebraically closed field $k$ 
of characteristic $p>0$ unless otherwise specified. 
\item For an integral scheme $X$, 
we define the {\em function field} $K(X)$ of $X$ 
as the local ring $\MO_{X, \xi}$ at the generic point $\xi$ of $X$. 
For an integral domain $A$, $K(A)$ denotes the function field of $\Spec\,A$. 

\item 
For a scheme $X$, its {\em reduced structure} $X_{\red}$ 
is the reduced closed subscheme of $X$ such that the induced closed immersion 
$X_{\red} \to X$ is surjective. 
\item 
Our notation will not distinguish between invertible sheaves and 
Cartier divisors. For example, we will write $L+D$ for 
an invertible sheaf $L$ and a Cartier divisor $D$. 
\item We say that $X$ is a {\em variety} (over $k$) if 
$X$ is a separated integral scheme which is of finite type over $k$. 
We say that $X$ is a {\em curve} (resp. a {\em surface}, resp. a {\em threefold})  
if $X$ is a variety over $k$ of dimension one (resp. {\em two}, resp. {\em three}). 
\item 
Given a variety $Y$ and a closed subscheme $Z$ of $Y$, 
$\Bl_Z\,Y$ denotes the blowup of $Y$ along $Z$. 
In this case, $Z$ is called {\em the (blowup) centre} of the induced blowup $\Bl_Z\,Y \to Y$. 
Let  $\Ex(f)$ be the exceptional divisor 
equipped with reduced scheme structure. 
In particular, if $Y$ is a smooth threefold and $Z$ is a smooth curve on $Y$, 
then we have $K_X \sim f^*K_Y +\Ex(f)$ for $X := \Bl_Z\,Y$.
\item We say that $f: X \to Y$ is a {\em contraction} if $f$ is a projective morphism of schemes satisfying $f_*\MO_X = \MO_Y$. 
Here the equality $f_*\MO_X = \MO_Y$ means that the induced ring homomorphism $\MO_Y \to f_*\MO_X$ is an isomorphism. 

\item 
For the definition of types of extremal rays for smooth projective threefolds, 
we refer to \cite[Definition 3.3]{FanoIII}. 
\item $Q$ denotes a smooth quadric hypersurface on $\P^4$. 
For $1 \leq d \leq 5$ or $d =7$, let $V_d$ be a Fano threefold of index two such that $(-K_{V_d}/2)^3 =d$. 
Let $W$ be a smooth divisor on $\P^2 \times \P^2$ of bidegree $(1, 1)$. 
Note that such a threefold $W$ is unique up to isomorphisms \cite[Lemma 5.16]{FanoIII}. 
\item For a Fano threefold $Y$ with $\rho(Y)=1$, 
$\MO_Y(1)$ denotes an invertible sheaf which  generates $\Pic\,Y (\simeq \Z)$ and 
we set $\MO_Y(\ell) := \MO_Y(1)^{\otimes \ell}$. 
\item 
\begin{itemize}
\item We say that a divisor $D$ on $\P^a \times \P^b \times \P^c$ 
is of {\em tridegree} $(d_1, d_2, d_3)$ if 
$\MO_{\P^a \times \P^b \times \P^c}(D) \simeq \MO_{\P^a \times \P^b \times \P^c}(d_1, d_2, d_3)$. 
We define the {\em bidegree} of a divisor on $\P^a \times \P^b$ in a similar way. 
\item 
We say that a curve $B$ on $\P^a \times \P^b \times \P^c$ 
is of {\em tridegree} $(d_1, d_2, d_3)$ if 
$\pr^*_i\MO(1) \cdot B = d_i$ for every $i \in \{1, 2, 3\}$. 
When $(a, b) \neq (1, 1)$, 
we define the {\em bidegree} of a curve on $\P^a \times \P^b$ in a similar way.   
\end{itemize}
\item 
We say that x-yz is a {\em number} 
if x-yz is one of the numbers listed in \cite[Section 7]{FanoIV}. 
For the definition of  Fano threefolds $X$ {\em of No.\ x-yz}, 
we refer to \cite[Section 7]{FanoIV}. 
\item 
Fix a number x-yz. 
We say that $\pi : \cX \to U$ is a {\em parameter space} of No.\ x-yz if  $\pi$ is a smooth projective morphism of quasi-projective varieties over $k$ such that 
\begin{itemize}
\item the fibre of $\pi$ over every closed point is a Fano threefold, and 
\item every Fano threefold $X$ of No. x-yz is  isomorphic to  the fibre of $\pi$ over some closed point. 
\end{itemize}
In this case, we will show that the fibre of $\pi$ over every closed point is a Fano threefold of No.\ x-yz (Theorem \ref{t No inv}). 
We say that $U$ is a {\em parameter space} of No. x-yz 
if there exists a parameter space $\pi : \cX \to U$ of No. x-yz. 
\end{enumerate}

\begin{rem}\label{r curve bidegree}
Let $f : \P^1_1 \times \P^1_2 \times \P^1_3 \to \P^1_1 \times \P^1_2$ 
be the projection onto the first and second direct product factors. 
Take a curve $B$ on $\P^1_1 \times \P^1_2 \times \P^1_3$ such that 
the induced morphism $B \to B' := f(B)$ is an isomorphism. 
If $B$ is of tridegree $(d_1, d_2, d_3)$, then $B'$ is of bidegree $(d_2, d_1)$. 
\end{rem}

\subsection{Fano threefolds}

 We say that $X$ is a {\em Fano threefold}  
if $X$ is a three-dimensional smooth projective variety over $k$ such that $-K_X$ is ample. 
For a Fano threefold $X$, 
the following holds: 
\begin{enumerate}
\item 
$1 \leq r_X \leq 4$ for the {\em index} $r_X$ of $X$ 
is the largest positive integer $r$ 
that divides $-K_X$ in $\Pic\,X$ \cite[Theorem 2.18]{FanoI}. 
\item $\Pic X \simeq \Z^{\oplus \rho(X)}$ \cite[Theorem 2.4]{FanoI}. 
\item 
$h^0(X, -mK_X) = \chi(X, -mK_X) = \frac{1}{12} m(m+1)(2m+1)(-K_X)^3 + 2m + 1$ by \cite[Corollary 2.6]{FanoI} and Theorem \ref{t Fano3 ANV} below. 
\end{enumerate}


\begin{thm}\label{t Fano3 ANV}
Let $X$ be a Fano threefold and let $A$ be an ample Cartier divisor. 
Then $H^i(X, \Omega_X^j(A))= 0$ for every $i+j >3$. 
\end{thm}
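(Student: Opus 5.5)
The plan is to deduce the vanishing from Akizuki--Nakano vanishing in characteristic zero by lifting $X$, or more economically from the Deligne--Illusie criterion, using that Fano threefolds in positive characteristic lift well. The key input, which we take from the earlier papers in this series (\cite{FanoI}--\cite{FanoIV}), is that every Fano threefold $X$ over $k$ lifts to $W(k)$. In particular $X$ lifts to $W_2(k)$, and it suffices to show that every ample invertible sheaf lifts together with $X$. Since $H^2(X,\MO_X)=0$ by Kodaira vanishing (\cite[Corollary 2.6]{FanoI}), the obstruction to deforming a line bundle along a lifting vanishes. So $(X,A)$ lifts to $W_2(k)$, and in fact to $W(k)$ by formal existence, because $\Pic$ deforms freely.

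The main step is to apply Deligne--Illusie. If $p>\dim X=3$, i.e.\ $p\ge 5$, then a $W_2(k)$-lifting gives Akizuki--Nakano vanishing $H^i(X,\Omega^j_X\otimes A^{-1})=0$ for $i+j<3$. By Serre duality, $H^i(X,\Omega^j_X(A))$ is dual to $H^{3-i}(X,\Omega^{3-j}_X(-A))$, and the index condition $i+j>3$ becomes $(3-i)+(3-j)<3$. This settles the case $p\ge5$.

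For small primes ($p=2,3$) Deligne--Illusie does not apply directly in dimension $3>p$ without extra care. Instead I would use the full lifting $\mathcal X\to\Spec W(k)$ together with the lifted ample sheaf $\mathcal A$. By upper semicontinuity applied to the coherent sheaves $\Omega^j_{\mathcal X/W}(\mathcal A)$, which are flat because $\mathcal X$ is smooth over $W$, we get
\[
h^i(X,\Omega^j_X(A))\ \ge\ h^i(\mathcal X_{\overline K},\Omega^j(\mathcal A_{\overline K})),
\]
but this inequality goes the wrong way. So I would instead argue via the spectral/Hodge-theoretic route: for Fano threefolds one knows the Hodge numbers and that the Hodge--de Rham spectral sequence degenerates, with torsion-free crystalline cohomology obtained from the explicit classification in \cite{FanoIV}. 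Alternatively, one can take the truncated Deligne--Illusie decomposition $\tau_{<p}F_*\Omega^\bullet$, which suffices for degrees $j<p$. The cases $j\in\{2,3\}$ with $p\le 3$ are then handled directly: $j=3$ is Kodaira vanishing $H^i(K_X+A)=0$, $i>0$, which holds for Fano threefolds by the cited results, and $j=2$, $i\ge2$ uses $\Omega^2_X\cong T_X\otimes K_X$ together with a case analysis.

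I expect this small-characteristic case, especially $j=2$ with $p=2,3$, to be the main obstacle. There I would first try the Deligne--Illusie truncation $\tau_{<p}$ combined with Serre duality, reducing to the Kodaira-type statements already established in \cite{FanoI}. Only if that fails would I fall back on the classification in \cite{FanoIV}.
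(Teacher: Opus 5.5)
The paper does not derive Theorem \ref{t Fano3 ANV}; it quotes it from \cite{KTLift1} and \cite{KTLift2}. Those papers are also where this paper takes the liftability you rely on; it does not come from \cite{FanoI}--\cite{FanoIV}.

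Be careful about what kind of lift you get. The paper uses this liftability only in the form ``$(X,L)$ lifts over some finite extension $R$ of $W(k)$'' (Step 1 of the proof of Theorem \ref{t Fano3 bpf thm}). Deligne--Illusie needs a lift over $W_2(k)$ itself. A lift over a ramified $R$ does not give one: then $p\in\mathfrak{m}_R^2$, so $R/\mathfrak{m}_R^2$ is a $k$-algebra. So you must check that the cited lift is unramified.

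Granting an honest $W_2(k)$-lift of $X$ (lifting $A$ is not needed), Deligne--Illusie gives $H^i(X,\Omega^j_X(A))=0$ for $i+j>\max(3,6-p)$. This is the full statement already for $p=3$, because the hypothesis is only $\dim X\leq p$. So only $p=2$ is exceptional, not $p\in\{2,3\}$.

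The genuine gap is $p=2$. There the Deligne--Illusie range is $i+j\geq 5$, and exactly three groups remain: $H^1(X,\MO_X(K_X+A))$, $H^2(X,\Omega^2_X(A))$ and $H^3(X,\Omega^1_X(A))$. Your sketch does not actually handle any of them.

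\begin{itemize}
\item The claim that the splitting of $\tau_{<p}F_*\Omega^\bullet_X$ ``suffices for $j<p$'' is wrong. $H^3(X,\Omega^1_X(A))$ has $j=1<2$, but it is Serre dual to $H^0(X,\Omega^2_X(-A))$, which the splitting of $\tau_{<2}$ does not reach. This is precisely why the range stops at $i+j\geq 5$.
\item The vanishing $H^1(X,\MO_X(K_X+A))=0$ cannot be taken from earlier results here. It is the case $j=3$ of the very statement being proved, and in this paper it is supplied by \cite{KTLift1} and \cite{KTLift2}.
\item ``$\Omega^2_X\simeq T_X\otimes\MO_X(K_X)$ plus a case analysis'' is not an argument.
\item Neither Hodge--de Rham degeneration nor torsion-freeness of crystalline cohomology controls cohomology twisted by $A$. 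As you observed yourself, semicontinuity from a characteristic-zero lift runs the wrong way.
\end{itemize}

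So your proposal proves the theorem at best for $p\geq 3$, and only conditionally on an unramified lift. In characteristic two it gives no argument; the paper covers that case only by citing \cite{KTLift1} and \cite{KTLift2}.
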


\begin{proof}
The assertion follows from \cite{KTLift1} and \cite{KTLift2}. 
\end{proof}

\section{Families of Fano threefolds}

In \cite{MM85}, the numbers of Fano threefolds in characteristic zero are characterised by 
the triple $(\Pic X, K_X, \mu_X)$, where 
\[
\mu_X : \Pic X \times \Pic X \times \Pic X \to \Z
\]
denotes the induced $\Z$-module homomorphism. 
On the other hand, the numbers of Fano threefolds in positive characteristic is defined by the data of extremal rays \cite{FanoIV}. 
The purpose of this section is to prove that these two notions coincide. 
To this end, we extend the rigidity of Kleiman-Mori cones 
\cite{Wis09} to families of Fano threefolds in arbitrary characteristic.  
Furthermore, we prove the base point free theorem for Fano threefolds. 

\subsection{Families of Fano threefolds over CDVR}

\begin{prop}\label{p DVR rho=1 equiv}
Let $(R, \m)$ be a complete discrete valuation ring 
such that $R/\m$ is algebraically closed. 
Let
$\pi\colon
\cX \to \Spec R$ be a smooth projective morphism 
whose geometric fibres are Fano threefolds. 
Take an invertible sheaf $\cL$ on $\cX$. 
Set $s :=\m$. 
Then the following hold: 
\begin{enumerate}
\item $\cL_s$ is nef if and only if $\cL$ is $\pi$-nef. 
\item $\cL_s$ is semi-ample if and only if $\cL$ is $\pi$-semi-ample. 
\item If $\cL$ is $\pi$-semi-ample and $\cX \to \cY$ is the contraction 
induced by $\cL$, 
then the base change $\cX_t \to \cY_t$ is a contraction for every geometric point $t \to \Spec R$. 
\item 
If $\cL$ is $\pi$-semi-ample, 
$\cX \to \cY$ is the contraction induced by $\cL$, and $\rho(\cX_s/\cY_s)=1$, 
then $\rho(\cX_{\ol{\xi}}/\cY_{\ol{\xi}})=1$. 
\end{enumerate}
\end{prop}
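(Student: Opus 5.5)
Throughout, write $\xi$ for the generic point of $\Spec R$ and $\ol{\xi}$ for a geometric generic point. The plan rests on three facts about a Fano family over a DVR: $H^i(\cX_t,\cO)=0$ for $i>0$; $\Pic$ of each fibre is torsion-free (\cite[Theorem 2.4]{FanoI}); and the Kawamata--Viehweg/Kodaira-type vanishing of Theorem \ref{t Fano3 ANV}.

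\textbf{Step 1: identify Picard groups.} Since $H^2(\cX_s,\cO)=0$, line bundles on $\cX_s$ lift to every infinitesimal thickening. Grothendieck existence, using that $R$ is complete, then gives a surjection $\Pic\cX\to\Pic\cX_s$. Since $H^1(\cX_s,\cO)=0$, the restriction $\Pic\cX\to\Pic\cX_s$ is also injective; this uses the theorem on formal functions together with the fact that $\cX$ is regular and $\cX_s$ is a principal divisor. Specialization then gives $\Pic\cX_{\ol\xi}\supseteq\Pic\cX_\xi\leftarrow\Pic\cX$. The ranks agree: $\rho(\cX_{\ol\xi})=\rho(\cX_s)$ follows from smooth and proper base change of $\ell$-adic cohomology ($b_2$ is constant) together with $b_2=\rho$ for Fano threefolds, which holds because $H^2(\cO)=0$ kills the transcendental part via the Kummer sequence. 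Torsion-freeness and a saturation argument (specialization of $\Pic$ is injective with torsion-free cokernel) should then give $\Pic\cX\simeq\Pic\cX_{\ol\xi}$, after possibly a finite base change, which is harmless. Intersection numbers are locally constant in flat families, so $\mu$ and $K$ correspond under these isomorphisms.

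\textbf{Step 2: cones, proving (1).} The Mori cone of a Fano threefold is rational polyhedral and spanned by extremal rays, each represented by a rational curve $C$ with $-K\cdot C\le4$. Nefness of $\cL_{\ol\xi}$ is equivalent to $\cL\cdot C\ge0$ for such curves. The key tool is Wiśniewski-type rigidity \cite{Wis09}. Take an extremal rational curve on $\cX_{\ol\xi}$. It spreads, after finite base change, over a family whose limit is a $1$-cycle on $\cX_s$ with the same class. Hence $\NE(\cX_{\ol\xi})\subseteq\NE(\cX_s)$ under the identification of Step 1, which gives: $\cL_s$ nef implies $\cL$ is $\pi$-nef. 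The converse is the hard direction. Each extremal ray $R_0$ of $\cX_s$ has a curve with $-K\cdot C\le 4$, and its deformation space has dimension at least $-K\cdot C+\dim X-3\ge1$ relative to the base. So by bend-and-break/Mori's deformation theory with arithmetic base, $C$ deforms to the generic fibre. This deformation is unobstructed enough because the family of rational curves of minimal degree in the ray dominates the relevant locus. Conclude that the two cones are equal. I expect this step to be the main obstacle, especially for rays of type $E$ with small deformation space. For those I would use instead that the exceptional divisor $D$ with $D|_D$ negative lifts, since $D$ is a Cartier class on $\cX$ by Step 1. Then $h^0(\cX_{\ol\xi},D)\ge h^0(\cX_s,D)=1$ by upper semicontinuity and the vanishing of $H^1$, which forces a negative ray on the generic fibre as well.

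\textbf{Step 3: (2)--(4).} Assume $\cL_s$ is semi-ample, so in particular nef. Then $\cL-K_{\cX}$ is $\pi$-ample by (1), and Theorem \ref{t Fano3 ANV} (with $j=0$) gives $H^i(\cX_t,m\cL_t)=0$ for $i>0$ and $m\ge0$. Cohomology and base change then shows that $\pi_*\cO(m\cL)$ is locally free and surjects onto $H^0(\cX_s,m\cL_s)$. Since $m\cL_s$ is globally generated for suitable $m$, Nakayama's lemma and properness show that $m\cL$ is $\pi$-globally generated. The converse in (2) is immediate by restriction.

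For (3), let $\cY=\Proj\bigoplus_m\pi_*\cO(m\cL)$. The formation of $\pi_*\cO(m\cL)$ commutes with base change, so $\cY_t=\Proj\bigoplus_m H^0(\cX_t,m\cL_t)$ and $\cX_t\to\cY_t$ has connected fibres with $\cO$ pushing forward to $\cO$. Hence it is a contraction.

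For (4), $\rho(\cX_t/\cY_t)$ equals $\rho(\cX_t)$ minus the rank of the face $\cL^\perp\cap\Nef$ in the nef cone, which is constant by Steps 1 and 2.
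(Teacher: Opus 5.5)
\textbf{Gap in (4).} You deduce (4) from $\sp_1(\NE(\cX_{\ol\xi}))=\NE(\cX_s)$, but you never prove the inclusion $\NE(\cX_s)\subseteq\sp_1(\NE(\cX_{\ol\xi}))$. In the paper this is Proposition \ref{p DVR NE Nef}, and it is deduced \emph{from} (4), not conversely.

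Your deformation argument is a non sequitur. That an extremal curve moves in a positive-dimensional family inside $\cX_s$ says nothing about deforming it off $\cX_s$. The standard estimate for the relative Hom-scheme gives $\dim_{[f]}\Hom_R(\P^1,\cX)\ge -K\cdot C+3+1$. This forces a component dominating $\Spec R$ only when $\dim_{[f]}\Hom(\P^1,\cX_s)=-K\cdot C+3$. That fails for type $E_5$: for a line $\ell$ in $E\simeq\P^2$ with $\cO_E(E)\simeq\cO_{\P^2}(-2)$, one has $-K_{\cX_s}\cdot\ell=1$ but $\dim_{[f]}\Hom(\P^1,\cX_s)=5$. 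So nothing follows.

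Your fallback of lifting $E$ is the right idea (it is the paper's), but as written it does not work. Upper semicontinuity gives $h^0(\cX_{\ol\xi},\cD)\le h^0(\cX_s,E)$, which is the opposite inequality. The real input is $H^1(\cX_s,\cO_{\cX_s}(E))=0$. By Nakayama this gives $H^1(\cX,\cD)=0$, and hence an effective $\cE\sim\cD$ with $\cE|_{\cX_s}=E$. This vanishing does not come from Theorem \ref{t Fano3 ANV}, because $E-K_{\cX_s}$ is not ample; for $E_5$, for example, $(E-K_{\cX_s})\cdot\ell=-1$.

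\textbf{How the paper fills it.} On top of $H^1(\cO_{\cX_s})=0$, the paper proves the vanishing case by case:
\begin{itemize}
\item For $E_1$, it uses Leray together with $R^1(f_s)_*\cO_{\cX_s}(E)=0$ and $H^1(\cY_s,\cO_{\cY_s})\hookrightarrow H^1(\cX_s,\cO_{\cX_s})=0$.
\item For $E_2$--$E_5$, it uses $H^1(E,\cO_E(E))=0$ \cite{Kol91}.
\end{itemize}

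Alternatively, you can rescue your semicontinuity idea by applying it to $h^2$ and $h^3$ instead of $h^0$. On $\cX_s$ one has $\chi(\cO_{\cX_s}(E))=1+\chi(\cO_E(E))=1$. Also $h^2$ and $h^3$ of $\cO_{\cX_s}(E)$ vanish, since they equal those of $\cO_E(E)$ and these vanish by Serre duality on $E$. By semicontinuity, $h^2=h^3=0$ on $\cX_{\ol\xi}$ as well, so $h^0(\cX_{\ol\xi},\cD)\ge\chi=1$.

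Note also that (4) needs much less than cone equality. Your easy inclusion already gives $\rho(\cX_{\ol\xi}/\cY_{\ol\xi})\le1$, so it only remains to show that $\cL_{\ol\xi}$ is not ample.
\begin{itemize}
\item If $\cL_s$ is not big, this is just $\cL_{\ol\xi}^3=\cL_s^3=0$.
\item If $\cL_s$ is big, use $\cL_\xi^2\cdot\cE_\xi=\cL_s^2\cdot E=0$.
\end{itemize}

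\textbf{Two smaller points.} The converse in (1) is trivial, since $\pi$-nefness already includes nefness on $\cX_s$, so your ``hard direction'' is not part of (1). In Step 3, Theorem \ref{t Fano3 ANV} must be applied with $j=3$, that is, $H^i(\cX_t,K_{\cX_t}+A)=0$ for $A=m\cL_t-K_{\cX_t}$; with $j=0$ it says nothing. Apart from these, (1)--(3) match the paper's argument.
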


\begin{proof}
The assertion (1) follows from \cite[Lemma 2.6]{CT20}. 

Let us show (2) and (3). 
Assume that $\cL_s$ is semi-ample. 
It is enough to prove that 
$H^0(\cX, \cL^m) \to H^0(\cX_s, \cL^m_s)$ 
is surjective for every integer $m \geq 0$, 
which follows from the Kodaira vanishing for $\cX_s$ (Theorem \ref{t Fano3 ANV}). 
Thus (2) and (3) hold.

Let us show (4). 
For a Cartier divisor $\cD$ on $\cX$ and a curve $C$ on $X_\xi$ contracted by $f_\xi$, 
if $\cD_{\xi} \cdot C=0$, then 
$\cD_s \equiv 0$, which implies that 
$\cD_{\xi} \equiv 0$. 
Therefore, $\rho(\cX_{\ol{\xi}}/\cY_{\ol{\xi}})\leq 1$. 

Thus it is enough to show that $X_\xi \to Y_\xi$ is not an isomorphism, i.e., 
$\cL_\xi$ is not ample. 
If $\cL_s$ is not big, then we have $\cL_{\xi}^3 = \cL_s^3 =0$, and hence 
$\cL_{\xi}$ is not ample. 
Assume that $\cL_s$ is big. 
In this case, $f_s$ is a birational contraction of type $E_1-E_5$.

We now show that $H^1(\cX_s, E)=0$ for $E := \Ex(f_s)$. 
If $E$ is of type $E_1$, then this follows from the exact sequence 
\[
0 \to H^1(\cY_s, (f_s)_*\MO_{\cX_s}(E)) \to 
H^1(\cX_s, \MO_{\cX_s}(E)) \to 
H^0(\cY_s, R^1(f_s)_*\MO_{\cX_s}(E)), 
\]
together with $R^1(f_s)_*\MO_{\cX_s}(E)=0$ and 
\[
H^1(\cY_s, (f_s)_*\MO_{\cX_s}(E)) =
H^1(\cY_s, \MO_{\cY_s}) \hookrightarrow H^1(\cX_s, \MO_{\cX_s})=0. 
\]
If $E$ is of type $E_2-E_5$, then $H^1(E, \MO_{\cX_s}(E)|_E)=0$ 
\cite[Theorem 1.1]{Kol91}. 
This, together with $H^1(\cX_s, \MO_{\cX_s})=0$, implies $H^1(\cX_s, E)=0$ by using an exact sequence 
\[
0 \to \MO_{\cX_s} \to \MO_{\cX_s}(E) \to \MO_{\cX_s}(E)|_E \to 0. 
\]
This completes the proof of $H^1(\cX_s, E)=0$.

For a lift $\cD$ of $E := \Ex(f_s)$ as a Cartier divisor, 
the Nakayama's lemma implies that $H^1(\cX, \cD)=0$. 
By using the exact sequence 
\[
0 \to \MO_{\cX}(\cD) \to \MO_{\cX}(\cD) \to \MO_{\cX_s}(E) \to 0,
\] 
we can find an effective Cartier divisor $\cE$ on $\cX$ satisfying 
$\cD \sim \cE$ and $\cE|_{\cX_s} = E$. 
Then we obtain 
\[
(\cL|_{\cX_\xi})^2 \cdot ( \cE_\xi) = 
(\cL|_{\cX_s})^2 \cdot ( \cE_s) =  (\cL|_{\cX_s})^2 \cdot E =0, 
\]
which implies that $\cL|_{\cX_\xi}$ is not ample. 
Thus (4) holds. 
\end{proof}

\begin{rem}\label{r sp map isom}
Let $(R, \m)$ be a complete discrete valuation ring 
such that $R/\m$ is algebraically closed. 
Let
$\pi\colon
\cX \to \Spec R$ be a smooth projective morphism 
whose geometric fibres are Fano threefolds. 
For the closed point $s$ and geometric generic point $\ol{\xi}$ of $\Spec R$, we have the specialisation maps $\sp^1: N^1(X_{\ol{\xi}}) \to N^1(X_s)$ and 
$\sp_1 : N_1(X_{\ol{\xi}}) \to N_1(X_s)$, where $\sp^1$ is an isomorphism. 
By compatibility with the intersection pairing,
the cospecialisation map 
\[
\cosp: N_1(X_s) \simeq N^1(X_s)^* \xrightarrow{(\sp^1)^*, \simeq} N^1(X_{\ol{\xi}})^* \simeq N_1(X_{\ol{\xi}})
\]
is the inverse of $\sp_1$, i.e., $\sp_1 = (\cosp)^{-1}$. 
In particular, $\sp_1 : N_1(X_{\ol{\xi}}) \to N_1(X_s)$ is an isomorphism. 
\end{rem}

\begin{lem}\label{l ray bpf}
Let $X$ be a Fano threefold and let 
$M$ be a nef Cartier divisor such that $\NE(X) \cap M^{\perp} =:R$ is an extremal ray. 
Then $M$ is semi-ample.
\end{lem}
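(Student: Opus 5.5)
We plan to reduce the statement to the existence of extremal contractions for smooth projective threefolds in positive characteristic (Mori's theory, as extended to positive characteristic by Kollár \cite{Kol91}; see also the results on extremal rays summarised in \cite{FanoI} and \cite{FanoIII}).

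First, $-K_X$ is ample, so every extremal ray of $\NE(X)$ is $K_X$-negative. In particular $R$ is a $K_X$-negative extremal ray of the smooth projective threefold $X$. By the cone and contraction theorems for smooth threefolds in positive characteristic, there is a contraction $f\colon X \to Y$ onto a normal projective variety $Y$ with $\rho(X/Y)=1$. A curve $C$ on $X$ is contracted by $f$ if and only if $[C] \in R$. This contraction is of type $E_1$--$E_5$, $C_1$, $C_2$, $D_1$--$D_3$ or the contraction to a point.

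Second, we descend $M$ to $Y$. Since $M\cdot R=0$, the contraction theorem gives an invertible sheaf $M_Y$ on $Y$ with $M\simeq f^*M_Y$. This is the part of the contraction theorem stating that a line bundle numerically trivial on $R$ is a pullback. If one prefers to argue by hand, use $H^1(X,\MO_X)=0$ and $\Pic X \simeq \Z^{\oplus\rho(X)}$ (so numerical and linear equivalence agree on $X$). It then suffices to show that $\Pic Y \to \Pic X$ has image equal to $R^{\perp}\cap \Pic X$, and this can be checked type by type from the explicit descriptions in \cite[Definition 3.3]{FanoIII}. We expect this descent to be the main technical point. What we would use first is the relative vanishing $R^1f_*\MO_X=0$ for $K_X$-negative contractions, together with the fact that $M|_F$ is numerically trivial, hence trivial, on fibres $F$ of $f$. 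Fibres are points, chains of rational curves, del Pezzo surfaces, or $X$ itself, all with $H^1(\MO)=0$. These facts give that $f_*\MO_X(M)$ is invertible and $f^*f_*\MO_X(M)\simeq \MO_X(M)$.

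Third, $M_Y$ is ample. We have $\NE(Y)=f_*\NE(X)$, because $f$ is surjective and every curve on $Y$ is the image of a curve on $X$. For $0\neq z\in \NE(X)$ with $f_*z\neq 0$, we have $z\notin R$, hence $M_Y\cdot f_*z=M\cdot z>0$, since $M$ is nef and $M^\perp\cap\NE(X)=R$. As $\NE(X)$ is a rational polyhedral cone (by the cone theorem, since $X$ is Fano), $f_*\NE(X)$ is closed, so $M_Y$ is strictly positive on $\NE(Y)\setminus\{0\}$. Kleiman's criterion then gives that $M_Y$ is ample on the projective variety $Y$. Therefore $M=f^*M_Y$ is semi-ample.

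In the degenerate case where $f$ is the contraction to a point, $Y=\Spec k$ and $\rho(X)=1$. Then $M\equiv 0$, so $M\sim 0$ since $\Pic X$ is torsion-free, and the assertion is trivial.
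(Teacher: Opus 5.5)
Your proposal is correct and follows essentially the same route as the paper: take the contraction $f\colon X\to Y$ of $R$, descend $M$ to a Cartier divisor $M_Y$ via the exactness of $0\to\Pic Y\xrightarrow{f^*}\Pic X\xrightarrow{(-)\cdot\ell}\Z$, and conclude by Kleiman's criterion using that $\NE(Y)=f_*\NE(X)$ is rational polyhedral. The only difference is that the paper takes the descent step directly from \cite[Proposition 3.12]{FanoII}, so your fallback argument for that step (type-by-type, or via relative vanishing) is not needed.
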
 

\begin{proof}
Let $f: X \to Y$ be the contraction of $R$. 
Fix a curve $\ell$ on $X$ such that $f(\ell)$ is a point. 
Then the sequence 
\[
0 \to \Pic Y \xrightarrow{f^*} \Pic X \xrightarrow{(-) \cdot \ell} \to \Z
\]
is exact \cite[Proposition 3.12]{FanoII}. 
Therefore, we can find a Cartier divisor $M_Y$ on $Y$ satisfying $M \sim f^*M_Y$. 
It is easy to see that $M_Y \cdot C >0$ for every curve $C$ on $Y$. 
By $\NE(Y) = f_*(\NE(X))$, 
$\NE(Y)$ is a rational polyhedral cone. 
It follows from Kleiman's criterion that $M_Y$ is ample. 
Therefore, $M \sim f^*M_Y$ is semi-ample. 
\end{proof}

\begin{prop}\label{p DVR NE Nef}
Let $(R, \m)$ be a complete discrete valuation ring 
such that $R/\m$ is algebraically closed. 
Let 
$\pi\colon
\cX \to \Spec R$ be a smooth projective morphism 
whose geometric fibres are Fano threefolds. 
Then the following hold: 
\begin{enumerate}
\item $\NE(X_{\ol{\xi}}) \xrightarrow{\simeq, \sp_1} \NE(X_s)$. 
\item $\Nef(X_{\ol{\xi}}) \xrightarrow{\simeq, \sp^1} \Nef(X_s)$. 
\end{enumerate}
\end{prop}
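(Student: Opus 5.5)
We first reduce to the case where the geometric generic fibre is replaced by the generic fibre after a finite extension. After a finite extension of $R$ (and completion), we may assume every curve class and every extremal ray of $X_{\ol{\xi}}$ is defined over the generic point $\xi$; $\Pic$ of the fibres is finitely generated and the cones are rational polyhedral since the fibres are Fano. Since $\sp^1$ and $\sp_1$ are isomorphisms compatible with the intersection pairing (Remark \ref{r sp map isom}), the cones $\NE$ and $\Nef$ are mutually dual on each side. Hence (1) and (2) are equivalent, and it suffices to prove (2), i.e.\ that $\sp^1(\Nef(X_{\ol{\xi}})) = \Nef(X_s)$.

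For the inclusion $\Nef(X_s)\subset \sp^1(\Nef(X_{\ol{\xi}}))$, we take a nef Cartier divisor $L_s$ on $X_s$. Since $H^1(X_s,\MO)=H^2(X_s,\MO)=0$ by Theorem \ref{t Fano3 ANV}, $\Pic \cX \to \Pic X_s$ is an isomorphism (formal lifting plus Grothendieck existence over the complete ring $R$). So $L_s$ lifts to $\cL$ on $\cX$, and Proposition \ref{p DVR rho=1 equiv}(1) shows that $\cL$ is $\pi$-nef, hence $\cL_{\ol{\xi}}$ is nef. Equivalently, dually, $\sp_1(\NE(X_{\ol{\xi}}))\subset \NE(X_s)$; this is also clear directly, since a curve on the generic fibre specialises to an effective cycle via its closure in $\cX$.

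For the reverse inclusion $\sp^1(\Nef(X_{\ol{\xi}})) \subset \Nef(X_s)$, equivalently $\NE(X_s)\subset \sp_1\NE(X_{\ol{\xi}})$, it suffices, since $\NE(X_s)$ is rational polyhedral and generated by its extremal rays, to show that each extremal ray $R_s$ of $X_s$ maps under $\cosp$ into $\NE(X_{\ol{\xi}})$. We choose a nef Cartier divisor $M_s$ with $\NE(X_s)\cap M_s^\perp = R_s$. By Lemma \ref{l ray bpf}, $M_s$ is semi-ample. We lift it to $\cM$; by Proposition \ref{p DVR rho=1 equiv}(2) $\cM$ is $\pi$-semi-ample, and we take the contraction $\cX\to\cY$. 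By Proposition \ref{p DVR rho=1 equiv}(4), $\cX_{\ol{\xi}}\to\cY_{\ol{\xi}}$ has relative Picard number one, so it is the contraction of an extremal ray $R_{\ol{\xi}}$ of $X_{\ol{\xi}}$ with $M_{\ol{\xi}}^\perp\cap\NE(X_{\ol{\xi}})=R_{\ol{\xi}}$. Since $\sp_1$ preserves intersections with $\cM$, $\sp_1(R_{\ol{\xi}})$ lies in $M_s^\perp$, and it lies in $\NE(X_s)$ by the previous paragraph. It is nonzero because $\sp_1$ is an isomorphism, so it equals $R_s$. Hence every extremal ray of $X_s$ lies in $\sp_1\NE(X_{\ol{\xi}})$, and $\NE(X_s)=\sp_1\NE(X_{\ol{\xi}})$, which gives (1) and, by duality, (2).

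The main obstacle is the application of Proposition \ref{p DVR rho=1 equiv}(4): we must make sure the generic-fibre contraction is nontrivial and of relative Picard number one, so that it genuinely produces an extremal ray of $X_{\ol{\xi}}$. The rest is duality and the lifting of line bundles. A secondary technical point is passing between $\xi$ and $\ol{\xi}$; we handle it by a finite base change of $R$ and by noting that numerical classes on the Fano fibres are defined over a finite extension.
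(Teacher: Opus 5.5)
Your proposal is correct and follows essentially the same route as the paper. You get the easy inclusion $\sp_1(\NE(X_{\ol{\xi}}))\subset\NE(X_s)$ by specialising effective cycles. For the reverse inclusion, you lift a semi-ample supporting divisor $M_s$ of each extremal ray $R_s$ (Lemma \ref{l ray bpf}) and use Proposition \ref{p DVR rho=1 equiv}(2)--(4) to produce an extremal ray $R_{\ol{\xi}}$ with $\sp_1(R_{\ol{\xi}})=R_s$; (2) then follows by duality, exactly as in the paper. The preliminary finite base change and your second proof of the easy inclusion, via lifting nef divisors, are harmless but unnecessary.
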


\begin{proof}
By Remark \ref{r sp map isom}, the specialisation maps are linear isomorphisms: 
\[
\sp^1\colon
N^1(X_{\ol{\xi}})
\xrightarrow{\simeq}
N^1(X_s), \qquad 
\sp_1\colon
N_1(X_{\ol{\xi}})
\xrightarrow{\simeq}
N_1(X_s). 
\]
As $\sp^1$ and $\sp_1$ are compatible with the intersection pairing, 
we get 
\[
\sp^1(D)\cdot\sp_1(\alpha)
=
D\cdot\alpha
\]
for every $D\in N^1(X_{\ol{\xi}})$ and $\alpha\in N_1(X_{\ol{\xi}})$. 
Since the specialisation of an effective $1$-cycle on $X_{\ol{\xi}}$ is an effective $1$-cycle on $X_s$, we get 
\[
\sp_1
\left(
\NE(X_{\ol{\xi}})
\right)
\subset
\NE(X_s).
\]

Let us prove the opposite inclusion. 
As $X_s$ is Fano, the cone $\NE(X_s)$ is rational polyhedral. 
Fix an extremal ray $R_s$ of $\NE(X_s)$. 
By Lemma \ref{l ray bpf}, we can find a semi-ample  Cartier divisor $M_s$ on $X_s$ such that
\[
\NE(X_s)\cap M_s^\perp
=
R_s.
\]
As $R$ is complete and $H^{>0}(\MO_{X_s})=0$, the invertible sheaf $\mathcal O_{X_s}(M_s)$ extends to an invertible sheaf $\cM$ on $\cX$. 
By Proposition \ref{p DVR rho=1 equiv}, 
$\cM$ is $\pi$-semi-ample. 
Let
$f\colon
\cX
\longrightarrow
\cY$ be the contraction induced by $\cM$.
The induced morphism
$f_s\colon
X_s
\longrightarrow
\cY_s$ is the contraction of $R_s$ (Proposition \ref{p DVR rho=1 equiv}). 
In particular, $\rho(X_s/\cY_s)=1.$ 
Again by Proposition \ref{p DVR rho=1 equiv}, $\rho(X_{\ol{\xi}}/\cY_{\ol{\xi}})=1$. 
Therefore
\[
R_{\ol{\xi}}
:=
\NE(\cX_{\ol{\xi}})
\cap
\cM_{\ol{\xi}}^\perp
\]
is an extremal ray of $\NE(X_{\ol{\xi}})$. 

By compatibility with the intersection pairing,
\[
\sp_1(R_{\ol{\xi}})
\subset
\NE(X_s)\cap M_s^\perp
=
R_s.
\]
Since $\sp_1$ is a linear isomorphism and $R_{\ol{\xi}}$ is nonzero, the image $\sp_1(R_{\ol{\xi}})$ is a nonzero ray. Hence
$\sp_1(R_{\ol{\xi}})
=
R_s.$ 
Thus every extremal ray of $\NE(X_s)$ is the image of an extremal ray of $\NE(X_{\ol{\xi}})$. Since $\NE(X_s)$ is generated by its extremal rays, we obtain the opposite inclusion 
\[
\NE(X_s)
\subset
\sp_1
\left(
\NE(X_{\ol{\xi}})
\right).
\]
Thus (1) holds. 
The assertion (2) follows from (1). 
\qedhere



\end{proof}

\begin{cor}\label{c relative nef}
Let $U$ be a connected noetherian scheme and 
let $\pi : \cX \to U$ be a smooth projective morphism such that every geometric fibre is a Fano threefold. 
Take an invertible sheaf $\cL$ on $\cX$. 
Then the following hold: 
\begin{enumerate}
\item $\cL$ is $\pi$-nef if and only if $\cL_u$ is nef for some point $u \in U$. 
\item $\cL$ is $\pi$-ample if and only if $\cL_u$ is ample for some point $u \in U$. 
\end{enumerate}
\end{cor}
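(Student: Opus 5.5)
The plan is to reduce both assertions to the complete DVR case already treated in Proposition \ref{p DVR rho=1 equiv} and Proposition \ref{p DVR NE Nef}, using the standard fact that any two points of a connected noetherian scheme are joined by a finite chain of specialisations, each of which can be realised by a morphism from the spectrum of a complete DVR with algebraically closed residue field.

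\emph{Step 1: setup.} Fix $u\in U$ with $\cL_u$ nef (resp. ample). The ``only if'' directions are trivial, so we must show that $\cL_v$ is nef (resp. ample) for every $v\in U$; nefness and ampleness on a fibre may be checked on the geometric fibre. Set
\[
S:=\{v\in U \mid \cL_{\ol v}\text{ is nef (resp. ample)}\}.
\]
We show that $S$ is stable under specialisation and generisation. Connectedness of the noetherian scheme $U$ then gives $S=U$: the closures of the finitely many irreducible components form a connected chain under pairwise intersection, and stability under both operations propagates membership from one generic point to any point of its closure, and across intersecting components.

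\emph{Step 2: specialisation and generisation.} Let $v\rightsquigarrow w$ be a specialisation with $v\neq w$. There is a DVR $R_0$ and a morphism $\Spec R_0\to U$ sending the generic point to $v$ and the closed point to $w$ (EGA II 7.1.9). Pass to its completion, and then to a finite-type-free extension of complete DVRs with algebraically closed residue field, namely the completion of the strict henselisation, or a Cohen-ring-type extension. This gives a complete DVR $R$ with $R/\m$ algebraically closed, and the pullback $\cX_R\to\Spec R$ satisfies the hypotheses of Proposition \ref{p DVR NE Nef}.

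For nefness, Proposition \ref{p DVR rho=1 equiv}(1) states that $\cL_s$ is nef if and only if $\cL_R$ is $\pi$-nef, i.e.\ if and only if $\cL_{\ol\xi}$ is nef as well. Alternatively, Proposition \ref{p DVR NE Nef}(2) gives $\sp^1(\Nef(X_{\ol\xi}))=\Nef(X_s)$, and $\sp^1$ carries the class of $\cL_{\ol\xi}$ to that of $\cL_s$. Either way, nefness at the generic geometric point is equivalent to nefness at the closed point.

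For ampleness, use the cone isomorphism of Proposition \ref{p DVR NE Nef}(1) together with Kleiman's criterion, which applies since the cones are rational polyhedral (closed). $\cL$ is ample on a fibre exactly when it is positive on $\NE\setminus\{0\}$. Since $\sp_1$ is a linear isomorphism mapping $\NE(X_{\ol\xi})$ onto $\NE(X_s)$ and compatible with the pairing, $\cL_{\ol\xi}$ is ample if and only if $\cL_s$ is ample.

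Since nefness and ampleness of a line bundle are invariant under algebraically closed field extension, the geometric points over $v$ and $w$ may be replaced by those of $\Spec R$. Hence $v\in S\iff w\in S$.

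\emph{Main obstacle.} The only delicate point is the base change to a complete DVR with algebraically closed residue field dominating the given specialisation. I would justify it by citing the standard fact that a local homomorphism of DVRs can be chosen flat with prescribed residue field extension (EGA $0_{\rm III}$ 10.3.1), followed by completion. Everything else follows formally from Propositions \ref{p DVR rho=1 equiv} and \ref{p DVR NE Nef}.
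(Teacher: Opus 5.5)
Your proof is correct and is essentially the paper's. The paper also reduces by a ``standard argument'' to Proposition \ref{p DVR NE Nef}; you make that reduction explicit through specialisations and a complete DVR with algebraically closed residue field. For (2) the paper says $\sp^1$ identifies the interiors of the nef cones, which is the dual form of your Kleiman-criterion argument using $\sp_1(\NE(X_{\ol{\xi}}))=\NE(X_s)$.

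One small slip: the completion of the strict henselisation only has a separably closed residue field. In characteristic $p$ you therefore need the flat extension from EGA $0_{\rm III}$ 10.3.1, which you do cite at the end.
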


\begin{proof}
By standard argument, we may assume $U = \Spec R$ for a complete discrete valuation ring $R$ whose residue field is algebraically closed. 
Then the assertion (1) follows from Proposition \ref{p DVR NE Nef}. 
The ample cone is the interior of the nef cone. Since $\sp^1$ is a linear isomorphism identifying the two nef cones, it also identifies their interiors. Hence $\cL_{\ol{\xi}}$ is ample if and only if $\cL_s$ is ample. 
Thus (2) holds. 
\end{proof}

\subsection{Base point free theorem}

\begin{lem}\label{l abundant nefbig}
Let $X$ be a projective normal variety. 
Let $L$ be a nef Cartier divisor. 
Assume that $L$ is abundant, i.e., $\kappa(X, L) = \nu(X, L)$. 
Then there exist morphisms 
$\mu : X' \to X$ and $\pi : X' \to T$ 
of projective normal varieties 
such that $\dim X = \dim X'$ and 
$\mu^*L \sim_{\Q} \pi^*M$ for some nef and big $\Q$-Cartier $\Q$-divisir with $\dim T = \kappa(X, L) = \nu(X, L)$.
\end{lem}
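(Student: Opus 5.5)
The plan is to use the Iitaka fibration of $L$ and then apply the numerical characterisation of the nef dimension on the general fibre. This is the standard argument of Kawamata (and Nakayama) for abundant nef divisors; the only extra care in positive characteristic is to avoid generic smoothness.

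Set $\kappa := \kappa(X, L) = \nu(X, L)$. If $\kappa = 0$, then $\nu = 0$, so $L \equiv 0$. Since $\kappa \geq 0$, some multiple $mL$ is effective and numerically trivial, hence $mL \sim 0$. We then take $X' = X$ and $T = \Spec k$, with $M = 0$. If $\kappa = \dim X$, take $X' = X$, $T = X$ and $M = L$.

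In general, choose $m$ with $|mL|$ defining the Iitaka map $\phi : X \dashrightarrow T_0$ with $\dim T_0 = \kappa$. Resolve the indeterminacy by taking $X'$ to be the normalisation of the closure of the graph, and let $\mu : X' \to X$ be the projection. Let $\pi : X' \to T$ be the Stein factorisation of $X' \to T_0$; after replacing by normalisations, $T$ is normal and $\pi$ is a contraction. Write $\mu^*(mL) = \pi^*H + F$, where $H$ is the pullback of the hyperplane class to $T$ (ample on $T_0$, so big and semi-ample on $T$), and $F \geq 0$ is the fixed part. By construction $\dim X' = \dim X$ and $\dim T = \kappa$.

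The main step is to show that $F$ can be absorbed, i.e.\ $\mu^*L \sim_{\Q} \pi^*M$ for some $\Q$-Cartier $M$ on $T$. Let $G$ be a general fibre of $\pi$, of dimension $n - \kappa$. We have $\kappa(G, \mu^*L|_G) = 0$, since this holds for the Iitaka fibration in any characteristic: the Iitaka fibration exists over any field via section rings. We also have $\mu^*L|_G = F|_G$ up to $\Q$-linear equivalence.

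We then use $\nu = \kappa$ to show $\mu^*L|_G \equiv 0$. Intersect $(\mu^*L)^{\kappa} \cdot A^{n-\kappa}$ for $A$ ample on $X'$, and expand $\mu^*L = \frac1m(\pi^*H + F)$. All terms are nonnegative because $\mu^*L$ and $\pi^*H$ are nef; $F$ is handled by writing $(\pi^*H+F)^{\kappa} = (\pi^*H+F)^{\kappa-1}\cdot(\pi^*H + F)$ and inducting. Comparing with the term $(\pi^*H)^{\kappa}\cdot A^{n-\kappa} = H^\kappa \cdot (A^{n-\kappa}\cdot G)$, we find that $\nu(\mu^*L) = \kappa$ forces $(\mu^*L)^{\kappa+1}\cdot A^{n-\kappa-1} = 0$. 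This is the expected obstacle, and it is Nakayama's argument: the vanishing yields $(\pi^*H)^{\kappa}\cdot F\cdot A^{n-\kappa-1}=0$, i.e.\ $F|_G \cdot A|_G^{n-\kappa-1} = 0$. Since $F|_G$ is effective, $F|_G = 0$ on a general fibre, so $F$ is $\pi$-vertical.

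Finally, since $\mu^*L$ is nef and $\mu^*L|_G \sim_\Q 0$ on general fibres, and $F$ is vertical, we remove it. Replace $T$ by a suitable model, flattening $\pi$ via a further blow-up of $T$ and normalised base change of $X'$, which does not affect the dimension statements. Over a flat contraction, a nef divisor that is $\Q$-linearly trivial on general fibres and whose difference from a pullback is vertical effective must itself be a pullback. This is the negativity lemma applied fibrewise over codimension-one points of $T$: a vertical effective divisor that is $\pi$-nef is a pullback of a $\Q$-Cartier divisor. This gives $\mu^*L \sim_\Q \pi^*M$ with $M = \frac1m(H + F_T)$. Then $M$ is nef because $\pi$ is surjective, and big because $M^{\kappa} \geq \frac{1}{m^\kappa}H^{\kappa} > 0$.
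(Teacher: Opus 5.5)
Your route is essentially Kawamata's argument \cite[Proposition 2.1]{Kaw85}, which is what the paper invokes: take the Iitaka fibration, show the fixed part $F$ is $\pi$-vertical, then absorb $F$ into the base. The verticality step is correct, though your write-up of it is muddled. What you need is just the chain
$0=(m\mu^*L)^{\kappa+1}\cdot A^{n-\kappa-1}\geq (\pi^*H+F)\cdot(\pi^*H)^{\kappa}\cdot A^{n-\kappa-1}=F\cdot(\pi^*H)^{\kappa}\cdot A^{n-\kappa-1}\geq 0$.
Working with normal models instead of smooth ones is a reasonable idea. However, the final descent step has a genuine gap.

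\textbf{The gap.} Flattening followed by normalisation does not give a flat morphism, but $\pi$ stays equidimensional, and that is what you actually use. Normalise the new base $T'$ as well, so that $\pi$ remains a contraction and the fibres over the generic points $\eta_P$ are connected. Then the Zariski/negativity argument over each prime divisor $P\subset T'$ gives $F=c_P\,\pi^*P$ near $\pi^{-1}(\eta_P)$. With equidimensionality this yields $F=\pi^{[*]}B$ only \emph{as Weil divisors}, where $B=\sum c_P P$. Nothing in this shows that $B$ is $\Q$-Cartier, and $T'$, a normalised blow-up of $T$, is typically singular. Without $\Q$-Cartierness, $M=\frac{1}{m}(H+B)$ is not a $\Q$-Cartier divisor and $\pi^*M$ is undefined. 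The nefness and bigness of $M$, which the paper feeds into Keel's theorem, then make no sense. So your slogan ``a vertical effective $\pi$-nef divisor is a pullback of a $\Q$-Cartier divisor'' is exactly the unproved point.

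In characteristic zero you would resolve $T'$; birational base change of the main component preserves equidimensionality. In positive characteristic this is unavailable in general. That is why the paper runs Kawamata's argument on regular models from de Jong's alteration instead of a resolution. The price is that $\mu$ is only generically finite, which explains why the statement asks for $\dim X'=\dim X$ rather than $\mu$ birational.

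\textbf{A possible fix.} You can close the gap without smooth models.
\begin{itemize}
\item By equidimensionality, $n-\kappa$ general hypersurfaces of large degree cut $X'$ in a set containing a component $X_0$ that dominates $T'$ and is finite and surjective over $T'$.
\item Locally over $T'$ one has $kF|_{X_0}=\operatorname{div}(\varphi)$.
\item Then $\operatorname{div}\bigl(N_{K(X_0)/K(T')}(\varphi)\bigr)=(\pi|_{X_0})_*(kF|_{X_0})=\deg(X_0/T')\,kB$.
\item Hence $B$ is $\Q$-Cartier.
\end{itemize}
With this added, your argument goes through and even produces a birational $\mu$.
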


\begin{proof}
By using de Jong's alteration \cite{dJ96}, 
the same argument as in \cite[Proposition 2.1]{Kaw85} works. 
\end{proof}

\begin{thm}\label{t Fano3 bpf thm}
Let $X$ be a Fano threefold and let $L$ be a nef Cartier divisor. 
Then $L$ is semi-ample. 
\end{thm}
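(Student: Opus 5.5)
We follow the three-step outline from the introduction: prove that $L$ is abundant, settle the case $k=\overline{\F}_p$ using Keel's theorem together with surface results, and then reduce an arbitrary algebraically closed $k$ to $\overline{\F}_p$ by spreading out.

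\textbf{Step 1: $L$ is abundant.} Set $\nu := \nu(X,L)$. If $\nu=0$, then $L\equiv 0$, and $L\sim 0$ because $\Pic X$ is torsion free and $\Pic X \simeq N^1(X)$ for Fano threefolds. If $\nu = 3$, then $L$ is big, and abundance is automatic.

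For $\nu\in\{1,2\}$ we use $L-K_X$, which is ample. Theorem \ref{t Fano3 ANV} (the case $j=0$) gives $H^i(X, mL)=H^i(X, K_X+(mL-K_X))=0$ for $i>0$ and $m\geq 0$. Hence $h^0(X,mL)=\chi(X,mL)$. By Riemann--Roch this is a polynomial in $m$ whose degree equals $\nu$: the $m^3$-coefficient $L^3/6$ vanishes, and the $m^2$-coefficient is $-K_X\cdot L^2/4$. This coefficient is positive when $\nu=2$, since $L^2\not\equiv 0$ is a nonzero class in the closure of the effective cone of curves and $-K_X$ is ample. When $\nu=1$, the linear coefficient is $\frac{1}{12}(K_X^2+c_2)\cdot L$. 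We would show it is positive using $\chi(\MO_X)=1$ and vanishing; alternatively, we would argue more directly that $h^0(mL)\geq \chi(mL)$ grows linearly. In every case $\kappa(X,L)=\nu(X,L)$.

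\textbf{Step 2: the case $k=\overline{\F}_p$.} Here we invoke \cite[Theorem 0.5]{Kee99}: a nef line bundle $L$ over $\overline{\F}_p$ is semi-ample if and only if $L|_{\mathbb{E}(L)}$ is semi-ample, where $\mathbb{E}(L)$ is the exceptional locus.

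If $\kappa=3$, then $\mathbb{E}(L)$ has dimension at most $2$. Over $\overline{\F}_p$, every nef line bundle on a projective curve is semi-ample, and on surfaces a nef line bundle restricted to a projective surface is semi-ample whenever it is, for instance, big or numerically trivial on the components; this follows from Artin/Keel for surfaces over $\overline{\F}_p$. The restriction to $\mathbb{E}(L)$ is nef but not big on its components. We plan to apply Keel's theorem again on each component, or to use the fact that on $\mathbb{E}(L)$ the bundle $L$ has numerical dimension $\leq 1$ and the abundance for surfaces over $\overline{\F}_p$ established via finite generation of section rings.

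If $\kappa\in\{1,2\}$, Lemma \ref{l abundant nefbig} gives $\mu:X'\to X$ and $\pi:X'\to T$ with $\mu^*L\sim_\Q \pi^*M$, where $M$ is nef and big on $T$ and $\dim T\leq 2$. On curves and surfaces over $\overline{\F}_p$, a nef and big $M$ is semi-ample, by Artin and Keel's theorem in dimension $\leq 2$. Hence $\mu^*L$ is semi-ample. Since $\mu$ is surjective and proper, semi-ampleness descends to $L$ in positive characteristic (\cite{Kee99}, Lemma 1.10 type descent for alterations, possibly after a Frobenius power).

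\textbf{Step 3: general $k$.} We spread $(X,L)$ out to a smooth projective family $\cX\to S$ with a line bundle $\cL$, where $S$ is a variety over $\overline{\F}_p$, all geometric fibres are Fano threefolds, and $X$ is a geometric generic fibre up to base field extension. By Corollary \ref{c relative nef}, $\cL_u$ is nef at every closed point $u$, and by Step 2 it is semi-ample there. To pass back to the generic fibre we use Proposition \ref{p DVR rho=1 equiv}(2) over complete DVRs dominating $S$, connecting a closed point to the generic point. This yields semi-ampleness of $\cL$ over the DVR, hence of $L$ after base change to $k$; semi-ampleness is invariant under field extension.

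The main obstacle is Step 2 when $\kappa=3$, namely controlling $L|_{\mathbb{E}(L)}$. Abundance on the possibly non-normal surface $\mathbb{E}(L)$ over $\overline{\F}_p$ must be established; this relies on Keel's results for surfaces, that nef line bundles with $\kappa=\nu$ on surfaces over $\overline{\F}_p$ are semi-ample.
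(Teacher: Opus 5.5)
Your proposal has two genuine gaps, one in each of your first two steps.

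\textbf{Step 1 for $\nu=1$.} Vanishing plus Riemann--Roch give $h^0(X,mL)=\chi(X,mL)=am+1$ with $a=\frac{1}{12}(K_X^2+c_2)\cdot L$. Non-negativity of $h^0$ only yields $a\geq 0$. Nothing you wrote rules out $a=0$, i.e.\ $\kappa(X,L)=0<1=\nu(X,L)$. The inputs $\chi(\MO_X)=1$ and vanishing are already used up in obtaining this formula. Also, $K_X^2\cdot L>0$ is clear, but you have no lower bound for $c_2\cdot L$ in characteristic $p$. Your $\nu=2$ argument via $-K_X\cdot L^2>0$ is fine.

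The missing idea is to pass to characteristic zero, which is what the paper does:
\begin{itemize}
\item By \cite{KTLift1}, \cite{KTLift2}, the pair $(X,L)$ lifts over a finite extension of $W(k)$.
\item On the geometric generic fibre, $L_0$ is nef and $L_0-K_{X_0}$ is ample. So $L_0$ is semi-ample by the characteristic-zero base point free theorem, hence abundant.
\item Intersection numbers are constant in the family, so $\nu(X,L)=\nu(X_0,L_0)$.
\item By Theorem~\ref{t Fano3 ANV}, $h^0(X,mL)=\chi(X,mL)=\chi(X_0,mL_0)=h^0(X_0,mL_0)$, so $\kappa(X,L)=\kappa(X_0,L_0)$.
\end{itemize}
This treats all values of $\nu$ at once.

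\textbf{Step 2 for $\kappa=3$.} The criterion you quote ($L$ semi-ample iff $L|_{\mathbb{E}(L)}$ is semi-ample) is Keel's main restriction theorem, valid over any field of characteristic $p$; it is not his Theorem 0.5. It only moves the problem to the surface components $E_i$ of $\mathbb{E}(L)$, where $L|_{E_i}$ is nef of numerical dimension at most one. Even granting that nef bundles with $\kappa=\nu$ on surfaces over $\overline{\F}_p$ are semi-ample, you never prove $\kappa(L|_{E_i})=\nu(L|_{E_i})$. You also do not address descent from the normalised components to the possibly non-normal $\mathbb{E}(L)$. Neither follows from results on nef and big bundles, and you yourself flag this as the main obstacle.

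The paper avoids this entirely. It applies \cite[Theorem 0.5]{Kee99}, a base point free theorem for nef and big line bundles on threefolds over $\overline{\F}_p$, directly, since $L$ is nef and big and $L-K_X$ is ample.

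Once these two points are repaired, the rest of your proposal is sound:
\begin{itemize}
\item Your $\kappa\in\{1,2\}$ reduction via Lemma~\ref{l abundant nefbig} coincides with the paper's.
\item Your Step 3, passing from a closed fibre to the generic fibre along a complete DVR via Proposition~\ref{p DVR rho=1 equiv}(2), is a valid alternative to the paper's route. The paper instead uses surjectivity of $H^0(\cX,m\cL)\to H^0(\cX_t,m\cL_t)$ to get $\pi$-semi-ampleness near every closed fibre of the spread-out family.
\end{itemize}
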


\begin{proof}

\setcounter{step}{0}
\begin{step}\label{s1 Fano3 bpf thm}
$L$ is abundant, i.e., $\kappa(X, L) = \nu(X, L)$. 
\end{step}

\begin{proof}[Proof of Step \ref{s1 Fano3 bpf thm}]
We can find a finite extension $R$ of $W(k)$ such that 
there exists an $R$-lift $(\cX, \cL)$ of $(X, L)$ \cite{KTLift1}, \cite{KTLift2}. 
Let $(X_0, L_0)$ be the geometric generic fibre of $(\cX, \cL) \to \Spec R$. 
Fix an ample Cartier divisor $\cH$ on $\cX$. 
For a geometric point $t$, we have 
\[
\nu(\cX_t, \cL_t) = \max \{ i \,|\, \cL_t^i \cdot \cH_t^{3-i} >0\}. 
\]
Therefore, $\nu(X, L) = \nu(X_0, L_0)$. 
Thus it suffices to show $\kappa(X, L) = \kappa(X_0, L_0)$, 
which follows from the equalities 
\[
h^0(X, mL) = \chi(X, mL) = \chi(X_0, mL_0) = h^0(X_0, mL_0)
\]
for $m \geq 0$, where $H^{>0}(X, mL)=0$ is insured by Theorem \ref{t Fano3 ANV}. 
This completes the proof of 
Step \ref{s1 Fano3 bpf thm}. 
\end{proof}

\begin{step}\label{s2 Fano3 bpf thm}
The assertion holds if $k =\overline{\F}_p$. 
\end{step}

\begin{proof}[Proof of Step \ref{s2 Fano3 bpf thm}]
If $L$ is big, then the assertion holds by \cite[Theorem 0.5]{Kee99}. 
In what follows, we assume $\kappa(X, L) =\nu(X, L) < 3$. 
Since $L$ is nef and abundant (Step \ref{s1 Fano3 bpf thm}), 
we can find morphisms $\mu : X' \to X$ and $\pi : X' \to T$ 
and a nef and big $\Q$-Cartier $\Q$-divisor $M$ on $T$ as in Lemma \ref{l abundant nefbig}. 
By $k =\overline{\F}_p$ and $\dim T =\kappa(X, L) \leq 2$, $M$ is semi-ample \cite{Kee99}. 
This, together with $\pi^*M \sim_{\Q} \mu^*L$, implies that $\mu^*L$ is semi-ample, and hence so is $L$. 
This completes the proof of 
Step \ref{s2 Fano3 bpf thm}. 
\qedhere



\end{proof}

\begin{step}\label{s3 Fano3 bpf thm}
The assertion holds without any additional assumptions. 
\end{step}

\begin{proof}[Proof of Step \ref{s3 Fano3 bpf thm}]
There exists a smooth projective morphism $\pi : \cX \to T$ 
and a Cartier divisor $\cL$ on $\cX$ such that 
$T = \Spec R$ is a smooth affine variety over $k =\overline{\F}_p$, every geometric fibre is a Fano threefold, and $(X, L)$ is a base change of the generic fibre of $(\cX, \cL) \to T$. 
It suffices to show that $\cL$ is $\pi$-semi-ample. 
Recall that $\cL$ is $\pi$-nef (Corollary \ref{c relative nef}). 
Fix a closed point $t \in T$. 
By Theorem \ref{t Fano3 ANV}. the restriction map 
\[
H^0(\cX, m \cL) \to H^0(\cX_t, m\cL_t)
\]
is surjective for every $m \geq 0$. 
Since $\cL_t$ is semi-ample by Step \ref{s2 Fano3 bpf thm}, 
$\cL$ is $\pi$-semi-ample around the fibre $\cX_t$. 
As $t$ is chosen to be an arbitrary closed point, 
$\cL$ is $\pi$-semi-ample. 
This completes the proof of 
Step \ref{s3 Fano3 bpf thm}. 
\end{proof}
\end{proof}

\begin{cor}\label{c rel pic vs abs pic}
Let $X$ be a Fano threefold and let $f:X \to Y$ be a contraction to a projective normal variety $Y$. 
Then $\rho(X/Y) = \rho(X) -\rho(Y)$. 
\end{cor}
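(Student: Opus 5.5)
Here $\rho(X/Y)$ is the dimension of $N^1(X/Y)$, i.e.\ of $N^1(X)$ modulo the classes that are numerically trivial on curves contracted by $f$. The plan is to show that $f^*\colon N^1(Y)\to N^1(X)$ is injective and that its image is exactly the subspace of classes vanishing on the face $F:=\NE(X)\cap f^*H^{\perp}$, where $H$ is an ample divisor on $Y$. Then $\rho(X/Y)=\rho(X)-\rho(Y)$ follows by counting dimensions.

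\textbf{Step 1: injectivity of $f^*$.} Since $f_*\MO_X=\MO_Y$, $f$ is surjective with connected fibres. Every curve $C\subset Y$ is the image of some curve $C'\subset X$ with $f_*C'=dC$ for some $d>0$. Hence if $f^*D\equiv 0$ then $D\cdot C=0$ for all curves $C$, so $D\equiv 0$. The same argument shows that $f_*\colon N_1(X)\to N_1(Y)$ is surjective.

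\textbf{Step 2: the kernel of $f_*$ is spanned by $F$.} The divisor $f^*H$ is nef, and a curve $C$ is contracted by $f$ if and only if $f^*H\cdot C=0$. Since $X$ is Fano, $\NE(X)$ is rational polyhedral (as used in Proposition \ref{p DVR NE Nef}), so $F$ is an extremal face spanned by finitely many extremal rays $R_1,\dots,R_r$. Each $R_i$ is spanned by a curve contracted by $f$, so $\mathrm{span}(F)\subset \ker f_*$.

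\textbf{Step 3: identifying the divisors trivial on $F$.} Let $D$ be a Cartier divisor with $D\cdot F=0$. Since $F$ is a face and $f^*H$ is strictly positive on $\NE(X)\setminus F$, for $m\gg0$ the divisor $D+mf^*H$ is nef and $(D+mf^*H)^{\perp}\cap\NE(X)=F$. By Theorem \ref{t Fano3 bpf thm}, both $D+mf^*H$ and $f^*H$ are semi-ample, and each induces a contraction that contracts exactly the curves in $F$. By the rigidity lemma (two contractions with $f_*\MO=\MO$ that contract the same curves coincide), the contraction defined by $D+mf^*H$ is $f$. So $D+mf^*H\sim_{\Q}f^*A$ for some ample $\Q$-Cartier $A$ on $Y$, and therefore $D\equiv f^*(A-mH)$ lies in $f^*N^1(Y)$. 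Thus $f^*N^1(Y)=F^{\perp}$.

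\textbf{Conclusion.} Since $f^*$ is injective, $\dim F^{\perp}=\rho(Y)$, and hence $\dim\mathrm{span}(F)=\rho(X)-\rho(Y)$. For the equality with $\rho(X/Y)$, note that $N^1(X/Y)$ is dual to the span of the classes of contracted curves, which by Step 2 is $\mathrm{span}(F)$. So $\rho(X/Y)=\rho(X)-\rho(Y)$.

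\textbf{Main obstacle.} The delicate point is Step 3: the semi-ampleness of $D+mf^*H$ from Theorem \ref{t Fano3 bpf thm}, together with the rigidity identification of the induced contraction with $f$. This is where the base point free theorem is genuinely needed; an alternative would be to factor $f$ through successive extremal contractions using Lemma \ref{l ray bpf}, but the route above avoids that induction.
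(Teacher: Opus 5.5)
Your proposal is correct and follows essentially the same route as the paper: the paper also takes a divisor numerically trivial on $f$-contracted curves, adds a large multiple of the pullback of an ample divisor on $Y$ (nef by the cone theorem), applies Theorem~\ref{t Fano3 bpf thm}, and then uses $f_*\MO_X=\MO_Y$ to descend it to $Y$, which gives exactness of $0\to N^1(Y)\to N^1(X)\to N^1(X/Y)\to 0$. What you add is explicit bookkeeping that the paper leaves implicit: the injectivity of $f^*$, the description of the contracted classes via the face $F$ of the polyhedral cone $\NE(X)$, and the final dimension count.
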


\begin{proof}
It suffices to show that the induced  sequence 
\[
0 \to N^1(Y) \xrightarrow{f^*} N^1(X) \to  N^1(X/Y) \to 0
\]
is exact. 
Pick a Cartier divisor $L$ on $X$ such that $L \equiv_f 0$. 
It is enough to prove that $L \sim_\Q f^*L_Y$ for some $\Q$-Cartier $\Q$-divisor. 
Fix an ample Cartier divisor $A_Y$ on $Y$. 
For $n \gg0$, the cone theorem implies that $L + nf^*A_Y$ is nef. 
Then $L+ nf^*A_Y$ is semi-ample by Theorem \ref{t Fano3 bpf thm}. 
As $L+nf^*A_Y \equiv_f 0$ and $f_*\MO_X = \MO_Y$, 
we can find a $\Q$-Cartier $\Q$-divisor $M_Y$ on $Y$ satisfying 
$L+nf^*A_Y \sim_\Q f^* M_Y$. 
Then the assertion holds by setting $L_Y := M_Y - nA_Y$. 
\end{proof}

\begin{cor}\label{c relative semi-ample}
Let $U$ be a connected noetherian scheme and 
let $\pi : \cX \to U$ be a smooth projective morphism such that every geometric fibre is a Fano threefold. 
Take an invertible sheaf $\cL$ on $\cX$. 
Then $\cL$ is $\pi$-semi-ample if and only if $\cL_u$ is nef for some point $u \in U$. 
\end{cor}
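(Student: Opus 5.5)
One direction is immediate. If $\cL$ is $\pi$-semi-ample, then every restriction $\cL_u$ is semi-ample, hence nef.

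For the converse, assume $\cL_u$ is nef for some $u\in U$. By Corollary \ref{c relative nef}(1), $\cL$ is $\pi$-nef, so $\cL_{\ol{v}}$ is nef on the geometric fibre $\cX_{\ol{v}}$ for every point $v\in U$. Theorem \ref{t Fano3 bpf thm} applies to the Fano threefold $\cX_{\ol{v}}$ over the algebraically closed field $\ol{k(v)}$, so $\cL_{\ol{v}}$ is semi-ample. Semi-ampleness descends along the field extension $\ol{k(v)}/k(v)$: if $H^0$ of some power has no common zero after the extension, the same holds over $k(v)$ by flat base change of $H^0$ and faithful flatness. Hence $\cL_v$ is semi-ample on $\cX_v$ for every $v\in U$.

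It remains to pass from fibrewise semi-ampleness to $\pi$-semi-ampleness. The question is local on $U$, so we may take $U=\Spec A$ affine and noetherian. The plan is to show that for each $m\ge 0$ the base change map
\[
\pi_*\MO_{\cX}(m\cL)\otimes k(v) \to H^0(\cX_v, m\cL_v)
\]
is surjective for every $v$. Given this, fix $v$ and choose $m$ with $m\cL_v$ globally generated. Lift generating sections of $H^0(\cX_v,m\cL_v)$ to sections of $\cL^m$ over a neighbourhood of $v$. The locus where these sections have a common zero is closed in $\cX$, and its image in $U$ is closed by properness of $\pi$ and misses $v$. Therefore $\cL^m$ is $\pi$-globally generated over an open neighbourhood of $v$. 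By quasi-compactness, finitely many such neighbourhoods cover $U$. Since each $m_i\cL$ globally generated implies $(\prod m_j)\cL$ globally generated, a single common multiple works on all of $U$.

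The main obstacle is the cohomology and base change step, which needs $H^i(\cX_v, m\cL_v)=0$ for all $i>0$ and $m\ge 0$ with $\cL_v$ merely nef. The plan is to write $m\cL_v = K_{\cX_v} + (m\cL_v - K_{\cX_v})$, where $m\cL_v-K_{\cX_v}$ is ample because it is nef plus ample. Theorem \ref{t Fano3 ANV}, applied with $j=3$ and $A=m\cL_v-K_{\cX_v}$, then gives the vanishing on geometric fibres. Vanishing on geometric fibres implies vanishing on $\cX_v$ by flat base change. Consequently $R^i\pi_*(m\cL)=0$ for $i>0$, $\pi_*(m\cL)$ is locally free, and it commutes with arbitrary base change by Grauert's theorem (or \cite{Har77}, III.12.11, by descending induction on $i$). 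This gives the required surjectivity and completes the argument.
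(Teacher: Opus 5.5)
Your proof is correct and follows the same route as the paper. You get $\pi$-nefness from Corollary \ref{c relative nef}, semi-ampleness of the fibre from Theorem \ref{t Fano3 bpf thm}, and then lift generating sections using the surjectivity of $H^0(\cX, m\cL)\to H^0(\cX_v, m\cL_v)$ supplied by the vanishing in Theorem \ref{t Fano3 ANV}, while also spelling out what the paper leaves implicit (descent from $\ol{k(v)}$ to $k(v)$, cohomology and base change, and choosing a common multiple on a finite cover).
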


\begin{proof}
If $\cL$ is $\pi$-semi-ample, then $\cL_u$ is nef for every point $u \in U$. 
Conversely, assume that $\cL_u$ is nef for some point $u \in U$. 
Then $\cL$ is $\pi$-nef by Corollary \ref{c relative nef}. 
In order to show that $\cL$ is $\pi$-semi-ample, 
we may assume that $U = \Spec R$ for a local ring $(R, \m)$. 
By Theorem \ref{t Fano3 bpf thm}, $\cL_\m$ is semi-ample. 
This, together with the surjectivity of 
\[
H^0(\cX, n \cL) \to H^0(\cX_\m, n\cL_\m), 
\]
implies that $\cL$ is $\pi$-semi-ample. 
\end{proof}

\subsection{Rigidity of Fano numbers}

\begin{prop}\label{p cont bc}
Let $U$ be a connected noetherian scheme and 
let $\pi : \cX \to U$ be a smooth projective morphism such that every geometric fibre is a Fano threefold. 
Take a $\pi$-semi-ample invertible sheaf $\cL$ on $\cX$ and 
let $f: \cX \to \cY$ be the contraction induced by $\cL$. 
Fix two geometric points $\ol{u}, \ol{v}$ of $U$. 
Then the following hold: 
\begin{enumerate}
\item 
For a morphism $U' \to U$ from a noetherian scheme $U'$ 
and the induced diagram 
\[
\begin{tikzcd}
\cX' \arrow[r, "f'"] \arrow[d, "\alpha"] & \cY' \arrow[r] \arrow[d, "\beta"] & U' \arrow[d, "\gamma"] \\
\cX \arrow[r, "f"] & \cY \arrow[r] & U
\end{tikzcd}
\]
in which each square is cartesian, 
$f'$ coincides with the contraction induced by the pullback $\cL'$ of $\cL$ on $\cX'$. 
\item $\cY \to U$ is flat. In particular, $\dim \cY_{\ol{u}} = \dim \cY_{\ol{v}}$. 
\item $\rho(X_{\ol{u}}/Y_{\ol{u}}) = \rho(X_{\ol{v}}/Y_{\ol{v}})$. 
\end{enumerate}
\end{prop}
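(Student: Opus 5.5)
The key input is cohomology and base change for the sheaves $\pi_*\cL^{m}$, which is available thanks to Theorem \ref{t Fano3 ANV}. Since $\cL$ is $\pi$-semi-ample, each $\cL_{\ol u}$ is nef. Hence $\cL_{\ol u}^m - K_{X_{\ol u}}$ is ample for $m \geq 0$, and Theorem \ref{t Fano3 ANV} with $j=3$ gives $H^i(X_{\ol u}, \cL_{\ol u}^m)=0$ for all $i>0$ and $m\geq 0$. By cohomology and base change, $\pi_*\cL^m$ is then locally free and its formation commutes with arbitrary base change $U'\to U$. Set $\mathcal A := \bigoplus_{m\geq 0}\pi_*\cL^m$. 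We have $\cY = \mathrm{Proj}_U \mathcal A$ as the relative Proj of this section algebra; it may be necessary to pass to a Veronese subalgebra, but that does not change the Proj. It follows that $\mathcal A \otimes \MO_{U'} = \bigoplus_m \pi'_*\cL'^m$, so $\cY' = \cY\times_U U'$ is the Proj of the section algebra of $\cL'$. The morphism $f'$ is the base change of $f$, and it is exactly the morphism induced by $\cL'$.

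To prove (1), I also need to check that $f'_*\MO_{\cX'} = \MO_{\cY'}$. Choose $m$ divisible enough that $\cL^m = f^*\MO_{\cY}(1)$ with $\MO_\cY(1)$ relatively ample over $U$. For $n\gg 0$, the graded pieces of $f'_*\MO_{\cX'}$ twisted by $\MO(n)$ have global sections equal to $H^0$ of $\cL'^{mn}$ over affine pieces of $U'$. By the base change statement above, these agree with the degree-$mn$ part of $\mathcal A\otimes \MO_{U'}$, and that is exactly the homogeneous coordinate algebra of $\cY'$. By Serre's correspondence between graded modules and coherent sheaves on Proj, this gives $\MO_{\cY'}\simeq f'_*\MO_{\cX'}$. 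Hence $f'$ is a contraction, and it coincides with the contraction induced by $\cL'$. Taking $U'$ to be a geometric point shows that $\cY_{\ol u}$ is the image of the contraction of $X_{\ol u}$ by $\cL_{\ol u}$.

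For (2), the degree-$n$ pieces of the graded algebra of $\cY$ over $U$ are the locally free sheaves $\pi_*\cL^{mn}$. Hence $\cY$ is flat over $U$, because $\MO_{\cY}$ is a quotient-free direct summand construction from these pieces. More concretely, the Hilbert polynomial of $\cY_{\ol u}$ with respect to $\MO(1)$ is $n \mapsto \chi(X_{\ol u}, \cL^{mn}_{\ol u})$. This is locally constant on $U$, and since $U$ is connected it is constant. The degree of the Hilbert polynomial is $\dim \cY_{\ol u}$, which is therefore independent of $\ol u$. Flatness itself comes from the standard criterion that a relative Proj of a graded $\MO_U$-algebra with locally free graded pieces is flat; alternatively, the constancy of the Hilbert polynomial over a reduced base gives it, and in general the locally free pieces suffice.

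For (3), I will use Corollary \ref{c rel pic vs abs pic} to write $\rho(X_{\ol u}/Y_{\ol u}) = \rho(X_{\ol u}) - \rho(Y_{\ol u})$. Connecting two geometric points through a chain of complete DVRs, I reduce to the setting of Proposition \ref{p DVR NE Nef}, in which $\sp_1$ identifies the cones $\NE$. The curves contracted by $f$ in each fibre span the face $\NE\cap \cL^\perp$, since the fibre of $f$ is the contraction given by $\cL$ by (1). Because $\sp^1(\cL_{\ol\xi}) = \cL_s$ and the intersection pairings are compatible, $\sp_1$ maps $\NE(X_{\ol\xi})\cap\cL_{\ol\xi}^\perp$ isomorphically onto $\NE(X_s)\cap\cL_s^\perp$. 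For a contraction of a Fano threefold, $\rho(X/Y)$ is the dimension of the span of this face, which follows from Corollary \ref{c rel pic vs abs pic} together with the cone theorem. The two dimensions therefore agree. The main obstacle I expect is the reduction from a general connected noetherian $U$ to complete DVRs, joining any two points by specialisations, together with the flatness in (2) over a non-reduced base. For the flatness I would rely on the locally free graded pieces rather than on Hilbert polynomials.
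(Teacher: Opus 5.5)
Your proposal is correct and follows essentially the same route as the paper. Vanishing from Theorem \ref{t Fano3 ANV} makes each $\pi_*\cL^{\otimes n}$ locally free and compatible with base change, so $\Proj_U$ of the section algebra commutes with base change, giving (1); the locally free graded pieces give flatness, giving (2); and (3) comes from $\rho(X_{\ol u}/Y_{\ol u})=\dim\operatorname{Span}_{\mathbb R}(\NE(X_{\ol u})\cap\cL_{\ol u}^\perp)$ together with Proposition \ref{p DVR NE Nef}. Your extra check of $f'_*\MO_{\cX'}=\MO_{\cY'}$ and your detour through Corollary \ref{c rel pic vs abs pic} are harmless but unnecessary. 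In (2), justify flatness by writing $\MO(D_+(s))$ as a filtered colimit of the locally free pieces, not by the garbled ``quotient-free direct summand'' phrase.
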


\begin{proof}
Let us show (1). 
Replacing $\cL$ by a sufficiently divisible positive tensor power, 
we may assume that $\pi^*\pi_*\cL \to \cL$ is surjective. 
We have 
\[
\cY
 = 
\Proj_U(\cA) \qquad \text{for} \qquad 
\cA
:=
\bigoplus_{n\geq 0}
\pi_*\cL^{\otimes n}.
\]
For every $n \geq 0$, 
both $R\pi_*\cL^{\otimes n} =\pi_*\cL^{\otimes n}$ 
and  $R\pi'_*\cL'^{\otimes n} =\pi'_*\cL'^{\otimes n}$ are 
locally free sheaves. Hence we get 
\[
\gamma^*\pi_*\cL^{\otimes n} \simeq 
L\gamma^*\pi_*\cL^{\otimes n} \simeq 
L\gamma^*R\pi_*\cL^{\otimes n} \overset{(\star)}{\simeq}  
R\pi'_*L\alpha^*\cL^{\otimes n} \simeq R\pi'_*\cL'^{\otimes n} \simeq \pi'_*\cL'^{\otimes n} 
\]
where $(\star)$ follows from \cite[Tag 0B91]{SP}. 
Then we obtain 
\[
\cY\times_U U'
= \Proj_U\left(
\bigoplus_{n\geq 0}
\pi_*\cL^{\otimes n}
\right) \times_U U'
\overset{(\star\star)}{\simeq} \Proj_{U'}\left(
\bigoplus_{n\geq 0}
\gamma^*\pi_*\cL^{\otimes n}
\right)
\simeq
\Proj_{U'}
\left(
\bigoplus_{n\geq 0}
\pi'_*\cL'^{\otimes n}
\right)
\]
where $(\star\star)$ holds by  
\cite[Tag 01O3]{SP}. 
Thus (1) holds.

The assertion (2) follows from the fact that each graded piece $\pi_*\cL^{\otimes n}$ of $\cA$ is a locally free sheaf. 
The assertion (3) holds by 
\[
\rho(X_{\ol{u}}/Y_{\ol{u}})
\overset{{\rm (1)}}{=}
\dim \operatorname{Span}_{\mathbb R}
\left(
\NE(X_{\ol{u}})\cap \cL_{\ol{u}}^\perp
\right)
\overset{(\sharp)}{=}
\dim \operatorname{Span}_{\mathbb R}
\left(
\NE(X_{\ol{v}})\cap \cL_{\ol{v}}^\perp
\right)
\overset{{\rm (1)}}{=}
\rho(X_{\ol{v}}/Y_{\ol{v}}), 
\]
where $(\sharp)$ follows from  Proposition \ref{p DVR NE Nef} 
and the compatibility of the
specialisation maps with the intersection pairing. 
\end{proof}

\begin{prop}\label{p ext type family}
Let $U$ be a connected noetherian scheme and 
let $\pi : \cX \to U$ be a smooth projective morphism such that every geometric fibre is a Fano threefold. 
Take a $\pi$-semi-ample invertible sheaf $\cL$ on $\cX$ and 
let $f: \cX \to \cY$ be the contraction induced by $\cL$. 
Fix two geometric points $\ol{u}, \ol{v}$ of $U$. 
Assume that $\rho(X_{\ol{u}}/Y_{\ol{u}}) = 1$. 
Then the following hold: 
\begin{enumerate}
\item 
Let $T_n$ be one of $C_1, C_2, D_1, D_2, D_3, E_1, E_2, E_5$. 
Then $f_{\ol{u}}$ is of type $T_n$ if and only if 
$f_{\ol{v}}$ is of type $T_n$. 
\item $f_{\ol{u}}$ is of type $E_3$ or $E_4$ if and only if 
$f_{\ol{v}}$ is of type $E_3$ or $E_4$. 
\item $Y_{\ol{u}}$ is a Fano threefold if and only if 
$Y_{\ol{v}}$ is a Fano threefold. 
\end{enumerate}
\end{prop}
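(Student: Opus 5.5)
By Proposition \ref{p cont bc}(3), $\rho(X_{\ol{v}}/Y_{\ol{v}})=1$ as well. Hence both $f_{\ol{u}}$ and $f_{\ol{v}}$ are extremal contractions of Fano threefolds, and their types lie in the list of \cite[Definition 3.3]{FanoIII}. The plan is to detect each type by numerical invariants that are locally constant on $U$. These are: $\dim \cY_{\ol{u}}$, which is locally constant by Proposition \ref{p cont bc}(2); intersection numbers of $\cL_{\ol{u}}$, $K$ and a relatively ample class, which are locally constant by flatness of $\pi$; and the length $\mu$ of the extremal ray. As in Corollary \ref{c relative nef}, a standard argument (connectedness of $U$ together with chains of traits) reduces us to the case $U=\Spec R$ with $R$ a complete DVR with algebraically closed residue field, $\ol{u}=s$ and $\ol{v}=\ol{\xi}$. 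By Proposition \ref{p cont bc}(1), the fibre contractions are $f_s$ and $f_{\ol{\xi}}$. By Remark \ref{r sp map isom} and Proposition \ref{p DVR NE Nef}, $\sp_1$ maps the ray $R_{\ol{\xi}}=\NE(X_{\ol{\xi}})\cap\cL^\perp$ isomorphically onto $R_s$, compatibly with the intersection pairing with $\Pic\cX\simeq\Pic X_s\simeq \Pic X_{\ol\xi}$.

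First, we separate by $\dim Y$: $\dim Y=1$ gives type $D$, $\dim Y=2$ gives type $C$, and $\dim Y=3$ gives type $E$. We then distinguish subtypes by numerics. For type $D$, $D_1,D_2,D_3$ are distinguished by the anticanonical degree $(-K_{X_t})^2\cdot F_t$ of a fibre $F_t=f_t^{*}(\text{pt})$, i.e.\ by $K^2\cdot \cL$ with $\cL$ a generator pulled back from $\P^1$ (values $8,9,6$ for del Pezzo fibres of degree $\ge 8$? more robustly via the degree of the generic fibre: $D_3$ is a $\P^1\times\P^1$- or quadric-bundle, $D_2$ a $\P^2$-bundle, $D_1$ a del Pezzo fibration of degree $\le 6$). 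Since $K\cdot K\cdot \cL$ is constant in flat families, this distinguishes them. For type $C$, $C_1$ (discriminant nonempty) versus $C_2$ ($\P^1$-bundle) is detected by $(-K)^2\cdot\cL$, which equals $12-\deg\Delta$ times the appropriate normalisation; $C_2$ corresponds exactly to $\Delta=0$, so it is a numerical invariant. Alternatively, we can use the length: $C_2$ has a ray of length $2$ generated by a fibre, while for $C_1$ the minimal curve is a line component of length $1$. Here we need that the length is preserved. Since $\sp_1$ is an isomorphism of lattices $N_1$ restricted to integral classes, and the minimal $-K$-degree of an integral class in the ray is read off from $\Pic$, the length as computed via the lattice is preserved. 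We therefore use the characterisation that in type $C_2$ (resp.\ $E_3,E_4$ or $E_5$) the primitive class of the ray has $-K$-degree $2$ (resp.\ ray length $1$ together with special $\Ex$). For type $E$, $E_1$ is characterised by $\cL^2\cdot E\neq 0$ (the image is a curve), and $E_2$–$E_5$ by $\cL^2\cdot E=0$. Here we use the lift $\cE$ of $\Ex(f_s)$ constructed in the proof of Proposition \ref{p DVR rho=1 equiv}(4), which gives $\cE_{\ol\xi}=\Ex(f_{\ol\xi})$ by the $\rho=1$ property. Then $E^3=(\cE_s)^3$, and $E^3$ takes the values $1,4,2,2$ (ignoring conventions) for $E_2$, $E_5$ and $E_3/E_4$ respectively. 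The pair $E_3,E_4$ cannot be separated this way, which is why (2) is stated only for the union.

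For (3), $Y_t$ is a Fano threefold precisely when $f_t$ is of type $E_1$ or $E_2$ with $Y_t$ smooth and $-K_{Y_t}$ ample. By (1) these types are preserved. Moreover, $\cY\to U$ is then smooth, since it is flat by Proposition \ref{p cont bc}(2) with smooth closed fibre, and the image curve, resp.\ point, deforms. Ampleness of $-K_{\cY}$ on one fibre implies ampleness on all fibres by Corollary \ref{c relative nef}(2), using that $\cY\to U$ is a smooth family of Fano threefolds after shrinking. The argument is symmetric in $\ol{u},\ol{v}$.

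The main obstacle I expect is the identification of the exceptional divisor and the image across fibres. That is, we must show that the lifted divisor $\cE$ restricts on the generic fibre to the reduced exceptional divisor of $f_{\ol\xi}$, rather than to some other effective divisor with the same class, and that smoothness of $\cY_s$ propagates. I would handle the first point by noting that $\cE_{\ol\xi}$ is effective, has $\cL_{\ol\xi}^2\cdot\cE_{\ol\xi}=0$, and is negative on $R_{\ol\xi}$; therefore it is contained in $\Ex(f_{\ol\xi})$, and the classes agree by irreducibility of $\Ex$. For the second point I would use that the $E_1$/$E_2$ normal-bundle numerics, which are preserved, force smoothness.
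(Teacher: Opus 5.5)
Two steps in your proposal fail: the test for type $E_1$, and the argument for (3).

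\textbf{The $E_1$ criterion.} Your claim that $E_1$ is detected by $\cL^2\cdot E\neq 0$ is false. For \emph{every} divisorial contraction $\cL_t^2\cdot E_t=0$, because $\cL_t$ is (a multiple of) $f_t^*H$ and $(f_t)_*[E_t]=0$ as soon as $\dim f_t(E_t)\leq 1$. This vanishing is exactly what Proposition \ref{p DVR rho=1 equiv}(4) uses, and you use it yourself in your last paragraph. So, as written, nothing separates $E_1$ from $E_2$--$E_5$. The values of $\cE^3$ alone do not help either: for $E_1$ one has $E^3=2-2g(B)+K_Y\cdot B$, and you have not excluded this from $\{1,2,4\}$. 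The correct invariant is $\cL_t\cdot\cE_t^2$. It equals $-H\cdot f_t(E_t)<0$ for $E_1$ and $0$ for $E_2$--$E_5$, and it is independent of $t$ because $\cE$ is flat over $\Spec R$. With this fix, your $E^3$ values give the same argument as the paper.

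\textbf{The $C$/$D$ types.} Your lattice route, via the $-K$-degree of the primitive integral class of the ray, does work and is a reasonable variant of the paper's curve-specialisation argument. It needs three inputs you did not state:
\begin{itemize}
\item the length is attained by a rational curve;
\item $-K\equiv_f nD$ from \cite{FanoIII}, so that $n$ divides $-K\cdot\gamma$ for every integral $\gamma$ in the ray;
\item $\sp^1$ is an isomorphism of integral Picard groups, not only of $N^1$.
\end{itemize}
You also swapped $D_2$ and $D_3$: $D_2$ is the quadric-bundle case and $D_3$ the $\P^2$-bundle case. The formula $12-\deg\Delta$ is only valid for base $\P^2$, so drop it.

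\textbf{Part (3).} Your argument is circular. Corollary \ref{c relative nef}(2) presupposes that every geometric fibre of the family is a Fano threefold, so applying it to $\cY\to\Spec R$ assumes the conclusion. Shrinking to the locus where $-K_{\cY_t}$ is ample only recovers openness, i.e.\ that $Y_s$ Fano implies $Y_{\ol\xi}$ Fano. The real content of (3) is the specialisation direction, and ampleness is not a closed condition.

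The missing idea is numerical. For any curve $\Gamma\neq f_t(E_t)$ on $Y_t$, its strict transform $\Gamma'$ satisfies $E_t\cdot\Gamma'\geq 0$. Hence $-K_{Y_t}\cdot\Gamma=-K_{X_t}\cdot\Gamma'+a\,E_t\cdot\Gamma'>0$, with $a=1$ for $E_1$ and $a=2$ for $E_2$.
\begin{itemize}
\item For type $E_2$ this, with $\NE(Y_t)=(f_t)_*\NE(X_t)$ polyhedral, makes $Y_t$ automatically Fano.
\item For $E_3$--$E_5$ the target is singular, so it is never a Fano threefold.
\item For $E_1$, $-K_{Y_t}$ is ample if and only if $-K_{Y_t}\cdot B_t>0$, where $B_t=f_t(E_t)$. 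We have $-K_{Y_t}\cdot B_t=K_{X_t}\cdot\cE_t^2-\cE_t^3$, which is independent of $t$ by flatness of $\cE$.
\end{itemize}
This is how the paper concludes; smoothness of $\cY$ over $\Spec R$ plays no role.
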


\begin{proof}
By standard argument, the problem is reduced to the case when 
$U = \Spec R$, 
where $(R, \m)$ is a complete discrete valuation ring  such that 
$R/\m$ is algebraically closed. 
Let $s$ (resp.\ $\ol{\xi}$) be the closed (resp.\ geometric generic) point of $\Spec R$. 
By Proposition \ref{p cont bc}, we have 
$\dim Y_{\ol{\xi}}= \dim Y_s$ and 
$\rho(X_{\ol{\xi}}/Y_{\ol{\xi}})=\rho(X_s/Y_s)$. 

Let us prove (1) for the case when $\dim Y_{\ol{u}}<3$. 
Let  $f_{\ol{\xi}}$ is of type $T_n$, where $T_n$ is one of $C_1, C_2, D_1, D_2, D_3$. 
In this case, $n$ is equal to the length of the extremal ray 
\cite[Remark 3.4]{FanoIII}. 
Hence we can find a rational curve $C_{\ol{\xi}}$ on $X_{\ol{\xi}}$ satisfying 
$-K_{X_{\ol{\xi}}} \cdot C_{\ol{\xi}} = n$. 
Let $C_s$ be the curve on $X_s$ contained in the specialisation of $C_{\ol{\xi}}$, which is a rational curve. 
Then we get $0<-K_{X_s} \cdot C_s \leq -K_{X_{\ol{\xi}}} \cdot C_{\ol{\xi}} = n$. 
In particular, we are done for the case when $n=1$. 

Assume $n>1$. 
In this case, we have $-K_{X_{\ol{\xi}}} \equiv_{f_{\ol{\xi}}} n D_{\ol{\xi}}$ 
for some Cartier divisor $D_{\ol{\xi}}$ on $X_{\ol{\xi}}$ 
\cite[Proposition 3.5]{FanoIII}. 
For the specialisation $D_s$ of $D_{\ol{\xi}}$ on $X_s$, 
we get $-K_{X_s} \equiv_{f_s} nD_s$, which implies that $n' \in n\Z_{>0}$ for the type $T_{n'}$ of $f_s$. 
Therefore, $n=n'$. 
This completes the proof of (1) for the case when $\dim Y_{\ol{u}}<3$.

Let us show (1) and (2) for the case when $\dim Y_{\ol{u}}=3$. 
By the same argument as in Proposition \ref{p DVR rho=1 equiv}(4), 
we can find a prime divisor $\cE$ flat over $\Spec R$ whose geometric fibre $\cE_t$ is the exceptional prime divisor. 
As for type $E_1$, 
we can use the fact that $f_{\ol{u}}$ is of type $E_1$ if and only if $\cL_{\ol{u}} \cdot \cE^2_{\ol{u}} <0$. 
Then the remaining cases follow from 
 \[
\cE_{\ol{u}}^3
=
\begin{cases}
1 & \text{for type }E_2,\\
2 & \text{for type }E_3\text{ or }E_4,\\
4 & \text{for type }E_5.
\end{cases}
\]
Thus (1) and (2) hold. 

Let us show (3). 
Recall that $-K_{\cY_{\ol{w}}}$ is automatically ample if $f_{\ol{w}}(\Ex(f_{\ol{w}}))$ is a point. 
Hence we may assume that $f_{\ol{w}}$ is of type $E_1$ for every geometric point $\ol{w}$ of $U$. 
In this case, $-K_{\cY_{\ol{w}}}$ is ample if and only if 
$-K_{\cY_{\ol{w}}} \cdot B_{\ol{w}}  >0$ for 
$B_{\ol{w}} := f_{\ol{w}}(\Ex(f_{\ol{w}}))$. 
We have 
\[
-K_{\cY_{\ol{w}}} \cdot B_{\ol{w}} = K_{\cX_{\ol{w}}} \cdot \cE^2_{\ol{w}} -\cE^3_{\ol{w}}. 
\]
Then (3) follows from the fact that 
the right hand side does not depend on $\ol{w}$. 
\end{proof}


\begin{prop}\label{p Pic lift alteration}
Let $S$ be an excellent scheme with $\dim S \leq 2$ and 
let $U$ be an integral separated scheme of finite type over $S$. 
Let $\pi : \cX \to U$ be a smooth projective morphism such that 
every geometric fibre is a Fano threefold. 
Then there exists a generically finite proper surjective morphism $U' \to U$ such that the following hold for every geometric fibre $X$ of 
$\pi' : \cX'  \to U'$: 
\begin{enumerate}
\item $\cX' := \cX \times_U U'$ and $U'$ are regular integral schemes, 
\item the restriction map $\Pic(\cX') \to \Pic X$ is surjective. 
\item 
$N^1(\cX'/U') \xrightarrow{\simeq} N^1(X)$ and $\Nef(\cX'/U') \xrightarrow{\simeq} \Nef(X)$. 
\item 
$N_1(X) \xrightarrow{\simeq} N_1(\cX'/U')$ and 
$\NE(X) \xrightarrow{\simeq} \NE(\cX'/U')$. 
\end{enumerate}

\end{prop}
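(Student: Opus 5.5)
First I will make the base and total space regular. Since $S$ is excellent of dimension at most two, resolution of singularities is available for integral schemes of finite type over $S$ of dimension at most three. More robustly, I will use de Jong's alterations \cite{dJ96}, which exist in this generality. So first replace $U$ by an alteration $U_1 \to U$ with $U_1$ regular and integral. Then $\cX_1 := \cX \times_U U_1$ is smooth over a regular scheme, hence regular. It is integral because the fibres are geometrically integral and $U_1$ is integral. All later modifications of the base will be generically finite and proper over such a $U_1$, followed again by a regular alteration, so (1) is preserved at each step.

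For (2), I will work at the generic point. Let $\eta$ be the generic point of $U_1$ and $\ol{\eta}$ a geometric generic point. By \cite[Theorem 2.4]{FanoI}, $\Pic X_{\ol{\eta}} \simeq \Z^{\rho}$ is finitely generated. Each generator is defined over some finite extension of $K(U_1)$. Take $U_2$ to be the normalisation of $U_1$ in a finite extension $K'$ over which all generators are defined, and then a regular alteration of it. Then $\Pic(\cX_{2,\eta'}) \to \Pic(X_{\ol{\eta}})$ is surjective. Since $\cX_2$ is regular, every line bundle on the generic fibre extends to $\cX_2$ by taking the closure of a Cartier (equivalently Weil) divisor. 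Hence $\Pic \cX_2 \to \Pic X_{\ol{\eta}}$ is surjective.

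To pass to an arbitrary geometric fibre $X = \cX_{\ol{u}}$, I will use rigidity of the Picard group. Since $H^1(\MO_X) = H^2(\MO_X) = 0$, the relative Picard scheme is étale over $U_2$. For a geometric point $\ol{u}$ and a strictly henselian or complete DVR passing through $\ol{u}$ and $\ol{\eta}$, specialisation gives an injection $\Pic X_{\ol{\eta}} \to \Pic X_{\ol{u}}$. By Remark~\ref{r sp map isom} this map is an isomorphism after tensoring with $\R$. The real issue is integral surjectivity. Here I would argue that the ranks are equal and that the specialisation map identifies the intersection forms together with $K$. I would also use that $\Pic$ is torsion-free and that lattices of line bundles deform uniquely: since $H^2(\MO)=0$, every line bundle on the special fibre lifts over the complete local ring to the generic fibre. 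This shows that specialisation is bijective, so (2) holds at every geometric point. I expect this step to be the main obstacle, because the DVR used must dominate $U_2$ and contain a given point. I would handle it by the standard reduction used in Corollary~\ref{c relative nef}, namely a DVR through the point dominating $U_2$, which exists after a further blowup.

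Once (2) holds, (3) and (4) follow formally. The map $N^1(\cX'/U') \to N^1(X)$ is surjective by (2). It is injective because a line bundle numerically trivial on one geometric fibre is numerically trivial on all fibres, by Proposition~\ref{p DVR NE Nef} and Remark~\ref{r sp map isom} applied along DVRs connecting fibres. For the nef cones, the identification $\Nef(\cX'/U') \simeq \Nef(X)$ is Corollary~\ref{c relative nef}(1). Statement (4) is then obtained by duality: $N_1(\cX'/U')$ is dual to $N^1(\cX'/U')$, and $\NE(\cX'/U')$ is the dual cone of $\Nef(\cX'/U')$. Alternatively, it follows from Proposition~\ref{p DVR NE Nef}(1) together with the fact that every fibre's cone specialises isomorphically.
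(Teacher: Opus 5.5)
Your proposal is correct and follows essentially the same route as the paper. You descend the finitely many generators of $\Pic$ of the geometric generic fibre after a finite base change, pass to a regular alteration (the statement over an excellent base of dimension $\le 2$ is \cite[Corollary 5.15]{dJ97}), extend divisors from the generic fibre using regularity of $\cX'$, specialise to all geometric fibres, and get (3) and (4) from Corollary~\ref{c relative nef} and duality. Your lifting argument ($H^2(\MO_X)=0$ plus Grothendieck existence over a complete DVR) simply makes explicit what the paper packs into its appeal to Remark~\ref{r sp map isom}, and no extra blowup is needed there, since a DVR with fraction field $K(U')$ dominating $\MO_{U',u}$ always exists.
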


\begin{proof}
There exists a finite surjective morphism $U_1 \to U$ 
from an integral scheme $U_1$ 
such that every line bundle on the geometric generic fibre of $\pi : \cX \to U$ descends to the generic fibre of the base change $\pi_1 : \cX_1 \to U_1$ of $\pi$. 
Let $U' \to U_1$ be a regular alteration, 
whose existence is guaranteed by 
\cite[Corollary 5.15]{dJ97}.  
Then the resulting base change $\pi' : \cX' \to U'$ 
satisfies (1). 
As $\cX'$ is regular, every Cartier divisor on the generic fibre of $\pi'$ extends to a Cartier divisor on $\cX'$. 
Therefore, (2) holds for the case when $X$ is the geometric generic fibre, which implies (2) for the general case by Remark \ref{r sp map isom}. 
Then (3) holds by  (2) and Corollary \ref{c relative nef}. 
Note that (4) follows from (3) by duality.  
\end{proof}

\begin{thm}\label{t No inv}
Let $U$ be a connected noetherian scheme and 
let $\pi : \cX \to U$ be a smooth projective morphism such that every geometric fibre is a Fano threefold. 
Fix two geometric points $\ol{u}, \ol{v}$ of $U$. 
Then $\cX_{\ol{u}}$ is of No.\ x-yz if and only if 
$\cX_{\ol{v}}$ is of No.\ x-yz. 
\end{thm}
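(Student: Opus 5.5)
We reduce to comparing the geometric generic fibre with a special fibre over a complete DVR, and then transport the extremal-ray data that defines a number in \cite[Section 7]{FanoIV} from one fibre to the other.

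\textbf{Reduction to a complete DVR.} Since $U$ is connected, it suffices to show that the number is locally constant along specialisations. So it is enough to treat the case $U = \Spec R$ with $R$ a complete discrete valuation ring with algebraically closed residue field. Here $\ol{u}$ is the closed point $s$ and $\ol{v}$ is the geometric generic point $\ol{\xi}$. This is the same standard reduction used in Corollary \ref{c relative nef} and Proposition \ref{p ext type family}; one passes to the strict henselisation and completion along a chain of specialisations.

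\textbf{Cone and ray data.} In this situation Remark \ref{r sp map isom} and Proposition \ref{p DVR NE Nef} give an isomorphism $\sp_1 : \NE(X_{\ol{\xi}}) \to \NE(X_s)$ compatible with the intersection pairing. In particular $\rho$, $(-K)^3$ and the index agree, and extremal rays correspond bijectively. For each extremal ray $R_s$, Lemma \ref{l ray bpf} produces a semi-ample $M_s$ with $\NE(X_s)\cap M_s^\perp = R_s$. This $M_s$ lifts to $\cM$ on $\cX$, because $R$ is complete and $H^{>0}(\MO_{X_s})=0$. By Corollary \ref{c relative semi-ample}, $\cM$ is $\pi$-semi-ample, and by Proposition \ref{p cont bc} its contraction $f:\cX\to\cY$ restricts on both fibres to the contractions of the corresponding rays.

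Proposition \ref{p ext type family} then shows that these rays have the same type: the types $C_1,C_2,D_1,D_2,D_3,E_1,E_2,E_5$ are preserved, $E_3/E_4$ are preserved as a pair, and whether the target is Fano is preserved. When the target $\cY$ is itself a Fano threefold family, we iterate: $\cY \to \Spec R$ is flat by Proposition \ref{p cont bc}(2). One also needs it to be smooth, which holds because its fibres are smooth, being Fano targets. So the same invariance applies to $\cY$, and inductively to the whole tower of blow-downs used to define the number.

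\textbf{Remaining invariants.} The blow-up centres of type $E_1$ must have matching genus and degree. Their numerical invariants, such as $-K_{Y}\cdot B$ and the genus of $B$ computed via $\cE^3$ and $K\cdot\cE^2$, are intersection numbers on the flat family $\cE$. They are therefore constant, exactly as in the proof of Proposition \ref{p ext type family}(3). Likewise the discriminant degree for conic bundles and the degree of del Pezzo fibrations are intersection numbers of lifted divisors.

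\textbf{Main obstacle.} The step I expect to be hardest is the $E_3$ versus $E_4$ ambiguity, together with checking that the collection of data in \cite[Section 7]{FanoIV} is determined by these deformation-invariant quantities. I would first try to show that the $E_3$/$E_4$ distinction never affects the number. Failing that, I would distinguish the two types by whether the restriction of $\cE$ to the fibre is a smooth quadric or a quadric cone, and use that a smooth quadric cannot specialise to a fibre where the other ruling class is missing, since the Picard rank of $\cX$ is constant. Concretely, I would compare $\Pic$ of the exceptional divisor across the family using the surjectivity in Proposition \ref{p Pic lift alteration}(2). Once all the defining data agree, $X_s$ and $X_{\ol{\xi}}$ have the same number.
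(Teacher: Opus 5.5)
Your plan follows the paper's proof. Both reduce to a complete DVR with algebraically closed residue field, where line bundles on $X_s$ lift. Both then transport ray types and Fano-ness of targets via Proposition \ref{p ext type family}, induct on $\rho$ through the Fano targets, and compare with the definitions and the list in \cite[Section 7]{FanoIV}. The induction on targets is really needed at $\rho=3$. For example, 3-11 and 3-12 both have $(-K)^3=28$, and the paper separates them only by whether some $E_1$-ray contracts onto a threefold of No.\ 2-25.

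There is one error in how you handle your main obstacle: only your first option can work. Your fallback tries to prove that $E_3$ and $E_4$ are separately deformation invariant, and that is false.
\begin{itemize}
\item In No.\ 2-15, $X=\Bl_C\P^3$ with $C=A\cap B$ for a quadric $A$ and a cubic $B$.
\item The second ray contracts the proper transform of $A$, whose normal bundle is $\MO_A(2)\otimes\MO_A(-3)=\MO_A(-1)$, to a point.
\item This ray is of type $E_3$ when $A$ is smooth and of type $E_4$ when $A$ is a cone.
\item Smooth quadrics degenerate to cones inside the single irreducible family of such pairs $(A,B)$, and No.\ 2-23 behaves the same way.
\end{itemize}

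The mechanism you propose also fails. For an extremal ray of type $E_3$, both rulings of $E\simeq\P^1\times\P^1$ lie in the same ray and have $-K_X$-degree one, so they are numerically equivalent on $X$. Constancy of $\rho(X)$ therefore says nothing about them; only $\rho(E)$ jumps. Proposition \ref{p Pic lift alteration} controls $\Pic$ of the Fano fibres, not of $E$.

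This is why Proposition \ref{p ext type family}(2) asserts invariance only of ``$E_3$ or $E_4$''. The paper's proof of Theorem \ref{t No inv} relies on the numbers in \cite[Section 7]{FanoIV} never separating these two types, which is exactly your first option.
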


\begin{proof}
By standard argument, we may assume that $U = \Spec R$, 
where $(R, \m)$ is a complete discrete valuation ring, $R/\m$ is algebraically closed, 
and $\Pic \cX \to \Pic X_s$ is surjective 
for the closed point $s$ of $\Spec R$. 
Let $\ol{\xi}$ be the geometric generic point. 
Note that $x = \rho(X_s) = \rho(X_{\ol{\xi}})$. 
There is nothing to show if $x \geq 6$ \cite[Subsection 7.6]{FanoIV}. 
In what follows, we prove the assertion  by induction on $x$. 

If $x \neq 3$, then the assertion follows from 
the induction hypothesis, Proposition \ref{p ext type family}, 
and 
the definition of No.\ x-yz 
\cite[Section 7]{FanoIV}. 
Assume $x=3$. In this case, the assertion holds by 
the induction hypothesis, Proposition \ref{p ext type family}, 
and the classification list \cite[Subsection 7.3]{FanoIV}. 
For example, 3-11 and 3-12 can be distinguished by whether there exists an extremal ray of type $E_1$ whose contraction is a Fano threefold of No.\ 25. 
\end{proof}

\section{Existence criteria}

\subsection{Complete intersection}

\begin{lem}\label{l CI exist}
Let $W$ be a smooth projective variety with $\dim W =c+ 3$ and $c>0$. 
Let $H_1, ..., H_c$ be very ample Cartier divisors on $W$. 
Assume that $-(K_W +H_1+ \cdots +H_c)$ is ample. 
Set $X :=  H'_1 \cap \cdots \cap H'_c$, 
where each $H'_i$ is a general member of $|H_i|$. 
Then the following hold: 
\begin{enumerate}
\item $X$ is a Fano threefold. 
\item $\rho(X) = \rho(W)$. 
\end{enumerate}
\end{lem}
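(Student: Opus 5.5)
For (1), I would use Bertini's theorem in positive characteristic for very ample linear systems. Each $|H_i|$ is base point free and induces an embedding, so a general member of $|H_i|$ is smooth. More generally, for any smooth locally closed subvariety $Z \subset W$, a general member of a very ample system meets $Z$ transversally. Applying this inductively, I would obtain that $W_j := H'_1 \cap \cdots \cap H'_j$ is smooth of pure dimension $c+3-j$ for every $j \le c$. Since $\dim W_{j-1} \geq 4 \ge 2$ for $j \le c$ and $H_j$ is ample, each $W_j$ is connected by the Fulton--Hansen/Hartshorne connectedness of ample divisors (or by $H^0(\MO)$ computed via Serre vanishing after twisting). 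Hence $X = W_c$ is a smooth projective threefold. Adjunction applied $c$ times gives $K_X \sim (K_W + H_1 + \cdots + H_c)|_X$, so $-K_X$ is the restriction of an ample divisor and is therefore ample. This proves (1).

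For (2), I would prove that the restriction $\Pic W \to \Pic X$ is an isomorphism by the Grothendieck--Lefschetz theorem, applied stepwise along the chain $W = W_0 \supset W_1 \supset \cdots \supset W_c = X$. At each step $W_j$ is an ample effective divisor in the smooth projective variety $W_{j-1}$, and $\dim W_j \geq 3$. By SGA2 (Exp.\ XII, Cor.\ 3.6), $\Pic W_{j-1} \to \Pic W_j$ is an isomorphism provided
\[
H^i(W_j, \MO_{W_j}(-mW_j)) = 0 \quad \text{for } i = 1, 2 \text{ and all } m > 0.
\]
This vanishing is the main obstacle, since Kodaira vanishing is not available in general in characteristic $p$.

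The first approach I would try uses the fact that each $W_j$ is itself a smooth Fano variety. Indeed $-K_{W_j} = -(K_W + H_1 + \cdots + H_j)|_{W_j}$, and this equals $-(K_W + \sum_i H_i)|_{W_j} + \sum_{i>j} H_i|_{W_j}$, which is ample. To get the needed vanishing on $W_j$, I would exploit the complete intersection structure: use Koszul resolutions to reduce to vanishing on $W$, and try to deduce it from the Fano property of $X$ via Theorem \ref{t Fano3 ANV} together with Serre duality.

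That reduction may fail, since $W$ is arbitrary. In that case I would use a cleaner substitute. For $i \le \dim W_j - 1$, Serre duality converts $H^i(W_j, -mW_j)$ into $H^{\dim W_j - i}(W_j, K_{W_j} + mW_j)$. Since $K_{W_j} + mW_j = (K_{W_j} - K_{W_j}) + \ldots$ is hard to control directly, the fallback is the following. I would take the $H'_i$ to be general, and hence they can be chosen so that $W$ lifts together with the divisors only in special cases. So instead I would invoke Grothendieck--Lefschetz in the form requiring only that $\dim W_j \geq 3$ and that the vanishing holds for $m \gg 0$, which follows from Serre duality and Serre vanishing. This gives an isomorphism $\Pic$ of the formal completion $\cong \Pic W_j$. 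For the passage from $\Pic W_{j-1}$ to the formal completion I would use Lefschetz condition $\mathrm{Leff}$ (SGA2, Exp.\ X), which holds for ample divisors when $\dim W_j \ge 2$. The resulting map $\Pic W_{j-1} \to \Pic W_j$ has torsion cokernel at worst, which suffices for $\rho$. Injectivity modulo numerical equivalence is the easy direction: a divisor $D$ on $W$ that is numerically trivial on $X$ has $D \cdot C = 0$ for curves cut out by $X \cap$ general ample divisors, and this propagates. Combining the two directions with $\Pic X \simeq \Z^{\rho(X)}$ would give $\rho(X) = \rho(W)$.
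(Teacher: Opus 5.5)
Part (1) is fine and agrees with the paper: Bertini for the very ample systems, then adjunction. The gap is in (2), at exactly the obstacle you identify. The paper never attempts the vanishing. Along the same chain $W_0\supset W_1\supset\cdots\supset W_c=X$, it quotes the Grothendieck--Lefschetz theorem for N\'eron--Severi groups with $\Q$-coefficients \cite[Section 4.1]{Amb23}, which applies because $\dim W_{i-1}\geq 4$. Your replacement argument has a non sequitur at its key step. Serre duality and Serre vanishing give $H^i(W_j,\MO_{W_j}(-mW_j))=0$ for $i=1,2$ only when $m\geq m_0$. That does not yield $\Pic\widehat{W}_{j-1}\simeq\Pic W_j$. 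It only yields $\Pic\widehat{W}_{j-1}\simeq\Pic W_j^{(m_0)}$, where $W_j^{(m)}\subset W_{j-1}$ denotes the subscheme cut out by $\MO_{W_{j-1}}(-mW_j)$. Your next sentence, ``torsion cokernel at worst'', is the right conclusion, but nothing you wrote supports it, and it is precisely the characteristic-$p$ content of the theorem. The intermediate sentences about $K_{W_j}+mW_j$ and lifting do not supply an argument.

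The missing idea is as follows. For $m\geq 1$, the map $x\mapsto 1+x$ gives an exact sequence of sheaves of abelian groups $0\to\MO_{W_j}(-mW_j)\to\MO^*_{W_j^{(m+1)}}\to\MO^*_{W_j^{(m)}}\to 1$. Hence the kernel of $\Pic W_j^{(m+1)}\to\Pic W_j^{(m)}$ is a quotient of $H^1(\MO_{W_j}(-mW_j))$, and the cokernel is a subgroup of $H^2(\MO_{W_j}(-mW_j))$. These are $k$-vector spaces, so both are killed by $p$, and only $1\leq m<m_0$ contribute. Therefore $\Pic\widehat{W}_{j-1}\to\Pic W_j$ has kernel and cokernel killed by $p^{m_0}$. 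Now combine this with Leff. A line bundle on a neighbourhood $U\supset W_j$ extends uniquely to the smooth variety $W_{j-1}$, because $W_{j-1}\setminus U$ is finite. It follows that $\Pic W_{j-1}\to\Pic W_j$ becomes an isomorphism after $\otimes\Z[1/p]$, which gives surjectivity of $N^1(W)_{\Q}\to N^1(X)_{\Q}$. Your injectivity step (``this propagates'') is also unjustified as written. A clean fix: by the same analysis, the kernel of $\Pic W\to\Pic X$ is $p$-power torsion. On the Fano threefold $X$, a numerically trivial line bundle is trivial, since $\Pic X\simeq\Z^{\oplus\rho(X)}$. So $D|_X\equiv 0$ forces $p^ND\sim 0$ on $W$ for some $N$, hence $D\equiv 0$. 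With these repairs, your route becomes a self-contained proof of the special case of \cite[Section 4.1]{Amb23} that the paper simply cites.
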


\begin{proof}
By the Bertini theorem and the adjunction formula, 
$X$ is a Fano threefold. Thus (1) holds. 
Set $W_0 := W$ and $W_i := H'_1 \cap \cdots \cap H'_i$ 
for every $1 \leq i \leq c$. We have $X = W_c$. 
By the Grothendieck--Lefschetz theorem for N\'eron--Severi groups 
\cite[Section 4.1]{Amb23}, the restriction homomorphism
\[
\operatorname{NS}(W_{i-1})_{\mathbb Q}
\longrightarrow
\operatorname{NS}(W_i)_{\mathbb Q}
\]
is an isomorphism for every $i\in\{1,\ldots,c\}$, because $\dim W_{i-1}\geq4$. Consequently, (2) holds. 
\end{proof}

\subsection{Double covers}

\begin{prop}\label{p smooth double cover exist}
Let $Y$ be a smooth projective variety with $\dim Y >0$ 
and let $L$ be a very ample line bundle on $Y$. 
Take a general element $(a, b) \in H^0(Y,L) \oplus H^0(Y,L^{\otimes 2})$. 
Let $f\colon X\longrightarrow Y$ be the finite morphism 
locally defined by 
\[
t^2 + at + b=0. 
\]
Then $f$ is a finite flat morphism such that 
$X$ is a smooth projective variety, 
$K(X)/K(Y)$ is a separable extension with $[K(X):K(Y)]=2$, 
and 
$\omega_X
\simeq
f^*(\omega_Y\otimes L)$. 
\end{prop}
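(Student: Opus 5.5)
We construct $X$ globally as a closed subscheme of the total space of $L$, then check each property locally on $Y$.

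\emph{Construction, finiteness and flatness.} Let $p\colon V:=\mathbb{V}(L^{-1}) = \mathbf{Spec}_Y \bigoplus_{n \geq 0} L^{-n}\to Y$ be the total space of $L$, with tautological section $t \in H^0(V, p^*L)$. Define $X\subset V$ as the zero scheme of $t^2+ p^*a\, t + p^*b \in H^0(V,p^*L^{\otimes 2})$. Then $f_*\MO_X \simeq \MO_Y \oplus L^{-1}$, because the monic quadratic relation kills everything in degree $\geq 2$. Hence $f$ is finite and flat of degree $2$, and $X$ is projective since $Y$ is. Grothendieck duality for the finite flat morphism $f$ gives $f_*\omega_{X/Y} \simeq \mathcal{H}om(f_*\MO_X,\MO_Y)$. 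Since $X$ is a hypersurface in the smooth $V$, adjunction yields $\omega_{X/Y} \simeq f^*L$, so $\omega_X \simeq f^*(\omega_Y\otimes L)$. This needs no characteristic assumption. Separability of $K(X)/K(Y)$ follows because $a\neq 0$ for general $a$: the polynomial $t^2+at+b$ has derivative $2t+a$, which equals $a\neq 0$ in characteristic two, so the extension is separable there. In characteristic $\neq 2$, separability is automatic once we know the polynomial is irreducible over $K(Y)$. Irreducibility, and the integrality of $X$, follow once $X$ is smooth and connected. Connectedness comes from $h^0(\MO_X)=h^0(\MO_Y)+h^0(L^{-1})=1$, using that $L$ is ample and $\dim Y>0$. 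The equality $[K(X):K(Y)]=2$ then follows from $\deg f=2$.

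\emph{Smoothness: the main step.} Smoothness of $X$ is the crux, and we argue by a Bertini/dimension count on the incidence variety. Locally, with a trivialisation of $L$ and local coordinates $y_i$ on $Y$, the point $(y,t)$ is singular on $X$ iff
\[
t^2+a(y)t+b(y)=0,\quad 2t+a(y)=0,\quad t\,\partial_i a(y) + \partial_i b(y)=0 \ (\forall i).
\]
Consider the incidence
\[
I=\{((a,b),x) : x\in V \text{ is a singular point of } X_{a,b}\}\subset (H^0(L)\oplus H^0(L^{2}))\times V.
\]
We will show that for each fixed $x=(y,t)\in V$ these conditions impose $\dim Y+2$ independent linear conditions on $(a,b)$. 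The point is that they are affine-linear in $(a,b)$ with $t$ fixed.

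Very ampleness of $L$ makes $H^0(L)\to L\otimes \MO_Y/\m_y^2$ surjective. The same then holds for $L^{2}$, since products of sections of $L$ generate it. Now count conditions. The value $b(y)$ is free, which handles the first equation. The differential $d b(y)$ is free, which handles the last $\dim Y$ equations. Finally the value $a(y)$ is free, which handles the second equation. These involve disjoint sets of jet coordinates, so the conditions are independent.

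The fibre of $I$ over $x$ therefore has codimension $\dim Y+2>\dim V=\dim Y+1$, and so $I$ does not dominate the parameter space. Hence $X$ is smooth for general $(a,b)$. This argument works uniformly in all characteristics, including $p=2$, where the equation $2t+a=0$ becomes $a(y)=0$; that is still one independent condition. The only delicate points are verifying the jet surjectivity for $L^{2}$ and handling the chart at infinity. The latter is in fact unnecessary, since $X$ lies in the affine bundle $V$ and is finite over $Y$.
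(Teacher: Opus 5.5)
Your proof is correct, and its core is the paper's own argument. $X$ is cut out in $\Tot(L)$ by $z^2+az+b$. Smoothness for general $(a,b)$ comes from the incidence variety in $\cP\times\Tot(L)$, whose fibre over each point is an affine subspace of codimension $\dim Y+2$; in characteristic two your Jacobian conditions reduce exactly to the paper's $\alpha(y_0)=0$, $\beta(y_0)=t_0^2$, $d\beta|_{y_0}=t_0\,d\alpha|_{y_0}$. Separability is the condition $a\neq 0$ in both.

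The differences are in the surrounding steps:
\begin{itemize}
\item The paper treats only $p=2$ and calls $p\neq 2$ well known, whereas your count works uniformly in $p$.
\item For integrality, the paper has a separate step. Geometric integrality is open in the finite flat family, so it suffices to treat $a=0$; there $t^2+b$ is irreducible over $K(Y)$ because $V(b)$ is smooth by Bertini, and the local rings of $X_{0,b}$ embed into the generic fibre. You instead get integrality from smoothness plus $h^0(\MO_X)=h^0(\MO_Y)+h^0(L^{-1})=1$: smooth and connected implies integral, and the generic fibre of $f$, being a localisation of $X$, is then the field $K(Y)[t]/(t^2+at+b)$. Since smoothness is needed anyway, this is shorter and avoids the openness theorem. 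The paper's route has the minor merit of establishing integrality independently of smoothness.
\item For $\omega_X$ the paper cites [CDL25, Proposition 0.2.20]. Your adjunction in $\Tot(L)$ is self-contained, using $\omega_{\Tot(L)}\simeq\pi^*(\omega_Y\otimes L^{-1})$ and $\MO_{\Tot(L)}(X)\simeq\pi^*L^{\otimes 2}$; it is worth stating these two facts explicitly.
\end{itemize}

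One phrasing should be fixed. The $\dim Y+2$ conditions do not use disjoint jet coordinates, since $t\,\partial_i a(y)+\partial_i b(y)=0$ also involves $da(y)$. Independence still holds because the system is triangular: fix $a$ with the prescribed value $a(y)$, then choose the $1$-jet of $b$ freely. So the linear part is surjective onto $k^{\dim Y+2}$.
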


\begin{proof}
If $p \neq 2$, then the assertion is well known. 
In what follows, we assume $p=2$.

For the total space of $L$ 
\[
\pi: \operatorname{Tot}(L)
:=\operatorname{Spec}_Y \bigoplus_{d \geq 0} L^{-d} \to Y, 
\]
let $z\in
H^0\left(
\operatorname{Tot}(L),
\pi^*L
\right)$ be the tautological section.
For $a\in H^0(Y,L)$ and $b\in H^0(Y,L^{\otimes 2})$, 
define
\[
X_{a,b}
:=
Z\left(
z^2+\pi^*a\cdot z+\pi^*b
\right)
\subset
\operatorname{Tot}(L).
\]
Since the defining equation is a monic polynomial of degree two, the restriction
$\pi|_{X_{a,b}}
\colon
X_{a,b}
\to 
Y$ is a finite flat morphism of degree two.

\setcounter{step}{0}

\begin{step}\label{s1 smooth double cover exist}
$X_{a, b}$ is an integral scheme for a general pair $(a, b)\in H^0(Y,L) \oplus H^0(Y,L^{\otimes 2})$. 
\end{step}

\begin{proof}[Proof of Step \ref{s1 smooth double cover exist}]
Being geometrically integral is an open condition. 
Hence it is enough to show that $X_{0, b}$ is an integral scheme 
for a general element $b \in H^0(Y, L^{\otimes 2})$. 
Let $D \in |L^{\otimes 2}|$ be the member corresponding to $b$. 
By the Bertini theorem, $D$ is a smooth effective divisor on $Y$. 
Then it is easy to see that $t^2 +b \in K(Y)[t]$ is irreducible over $K(Y)$. 
Thus $K(Y)[t]/(t^2 +b)$ is an integral domain. 
For a point $y \in Y$, 
it is enough to show that 
$\MO_{Y, y}[t]/(t^2+b)$ is an integral domain, which follows from 
the fact that the induced ring homomorphism 
\[
\MO_{Y, y}[t]/(t^2+b) \to K(Y)[t]/(t^2+b)
\]
is injective. 
This completes the proof of Step \ref{s1 smooth double cover exist}. 
\end{proof}

\begin{step}\label{s2 smooth double cover exist}
$X_{a, b}$ is smooth for a general pair $(a,b)\in H^0(Y,L) \oplus H^0(Y,L^{\otimes 2})$. 
\end{step}

\begin{proof}[Proof of Step \ref{s2 smooth double cover exist}]
Set
\[
\mathcal P
:=
H^0(Y,L)
\oplus
H^0(Y,L^{\otimes 2}).
\]
Fix a closed point
$z_0\in\operatorname{Tot}(L)$ and set $y_0 := \pi(z_0) \in Y$. 
Take an affine open neighbourhood $y_0 \in U \subset Y$ with a trivialisation $\theta: L|_U \xrightarrow{\simeq} \MO_U$. 
Set $A := \MO_Y(U)$, $\theta(a|_U) =: \alpha \in A$, 
and $\theta^{\otimes 2}(b|_U) =:\beta \in A$. 
Under the identification $\pi^{-1}(U) = \Spec A[t] = U \times \mathbb A^1_t$, 
we have $z_0 = (y_0, t_0)$ for some $t_0 \in k$. 

The local defining equation of $X_{a,b}$ is given by 
\[
F:=t^2+\alpha t+\beta \in A[t].
\]
Since $p=2$, we obtain $dF = td\alpha + \alpha dt + d\beta$ in 
$\Omega^1_{A[t]/k} = (\Omega^1_{A/k}\otimes_A A[t]) \oplus A[t] dt$, which implies 
$dF|_{z_0} = t_0 (d\alpha|_{y_0}) +  \alpha(y_0) dt + d\beta|_{y_0}$ 
in $\Omega^1_{A[t]/k} \otimes_{A[t]} k_{z_0} 
\simeq (\Omega^1_{A/k}\otimes_A k_{z_0}) \oplus k_{z_0} dt$. 
Therefore, we get the equivalence (1) $\Leftrightarrow$ (2). 
\begin{enumerate}
\item $z_0$ is a singular point of $X_{a, b}$. 
\item 
\begin{itemize}
\item $\alpha(y_0)=0$, 
\item $\beta(y_0)=t_0^2$, and $d\beta|_{y_0}
=
t_0d\alpha|_{y_0}$.
\end{itemize}
\end{enumerate}

Since $L$ and $L^{\otimes 2}$ are very ample,  the restriction maps  
\[
\varphi : H^0(Y, L) \to L
\otimes_{\cO_{Y}}
\cO_{Y,y_0}/\mathfrak m_{y_0},\qquad    
\psi: H^0(Y,L^{\otimes 2})
\to 
L^{\otimes 2}
\otimes_{\cO_{Y}}
\cO_{Y,y_0}/\mathfrak m_{y_0}^2
\]
are surjective. 
We have $\varphi(a) = \alpha(y_0)$. 
For the isomorphism 
\[
\Theta: L^{\otimes 2}
\otimes_{\cO_{Y}}
\cO_{Y,y_0}/\mathfrak m_{y_0}^2 \xrightarrow{\theta^{\otimes 2}, \simeq}  
\cO_{Y,y_0}/\mathfrak m_{y_0}^2 
\xrightarrow{\simeq} 
\cO_{Y,y_0}/\mathfrak m_{y_0} \oplus  
\mathfrak m_{y_0}/\mathfrak m_{y_0}^2 
\xrightarrow{\simeq} 
k \oplus (\Omega^1_{Y/k}|_{y_0})
 \]
induced by $\theta$, we get $\Theta \circ \psi(b) = (\beta(y_0), d\beta|_{y_0})$. 

Set 
\[
\mathcal I
:= 
\left\{
(a,b,z_0)
\in
\mathcal P\times\operatorname{Tot}(L)
\ \middle|\
z_0\text{ is a singular point of }X_{a,b}
\right\} 
\]
which is a closed subscheme of $\mathcal P\times\operatorname{Tot}(L)$. 
By the equivalence (1) $\Leftrightarrow$ (2) and the previous paragraph, 
every fibre $\cI_{z_0}$ of $\cI \to \Tot(L)$ 
over a closed point $z_0 \in \Tot(L)$ 
is an affine space satisfying 
\[
\dim \cI_{z_0} = (h^0(Y, L) -1) + (h^0(Y, L^{\otimes 2}) -(\dim Y+1)) 
= \dim \cP -\dim Y -2. 
\]
Therefore, 
\[
\dim \cI \leq \dim \Tot(L) + ( \dim \cP -\dim Y -2)  = \dim \cP-1. 
\]
Then the closure of the image of $\cI \to \cP$ is a proper closed subset of $\cP$. 
Hence $X_{a, b}$ is smooth for a general element $(a, b) \in H^0(Y,L) \oplus H^0(Y,L^{\otimes 2})$. 
This completes the proof of Step \ref{s2 smooth double cover exist}. 
\end{proof}

By Step \ref{s1 smooth double cover exist} and Step \ref{s2 smooth double cover exist}, $X_{a, b}$ is a smooth projective variety. 
It is clear that $K(X)/K(Y)$ is separable 
if and only if $a \neq 0$. 
Finally, the isomorp
$\omega_X
\simeq
f^*(\omega_Y\otimes L)$ follows from \cite[Proposition 0.2.20]{CDL25}
\qedhere 

\end{proof}

\subsection{Blowup along curves}

\begin{dfn}\label{d-intersection-members}
Let $Y$ be a smooth projective variety, 
let $C\subset Y$ be a closed subscheme, and let $L$ be a line bundle on $Y$.
We say that $C$ is an {\em intersection of members of} $|L|$ if
$\mathcal I_{C/Y} \otimes L$ is globally generated.
\end{dfn}

\begin{prop}\label{p MM 2.12}
Let $Y$ be a smooth projective threefold and 
let $C \subset Y$ be a  non-empty smooth closed subscheme of pure
codimension $r$ with $r\in\{2,3\}$. 
Assume that there exists a line bundle $L$ on $Y$ such that 
$-K_Y -(r-1)L$ is ample and $C$ is an intersection of members of $|L|$. 
Then the blowup $\operatorname{Bl}_C(Y)$ is a Fano threefold.
\end{prop}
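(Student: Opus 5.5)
Let $X := \Bl_C Y$ with blowup $f : X \to Y$ and exceptional divisor $E := \Ex(f)$. The plan is to show that $-K_X = f^*(-K_Y) - (r-1)E$ is ample. Since $C$ is smooth of codimension $r$, $X$ is smooth and $K_X \sim f^*K_Y + (r-1)E$ (the case $r=2$ is recorded in the notation subsection; for $r=3$, where $C$ is a finite set of points, the discrepancy is $2$). We decompose
\[
-K_X = \bigl(f^*(-K_Y - (r-1)L)\bigr) + (r-1)\bigl(f^*L - E\bigr).
\]
The first summand is the pullback of an ample divisor, hence nef, and positive on every curve not contracted by $f$. The second summand is $(r-1)$ times the divisor $f^*L - E$.

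The key step is to show that $f^*L - E$ is globally generated, hence nef. Since $\mathcal I_{C/Y}\otimes L$ is globally generated, the surjection $H^0(Y,\mathcal I_C\otimes L)\otimes \MO_Y \to \mathcal I_C \otimes L$ pulls back to a surjection onto $f^{-1}\mathcal I_C \cdot \MO_X \otimes f^*L = \MO_X(-E)\otimes f^*L$. Here we use that the inverse image ideal sheaf of $C$ on the blowup is exactly $\MO_X(-E)$, which is invertible. Each section of $\mathcal I_C\otimes L$ thus gives a section of $f^*L - E$, and these sections generate it everywhere, including along $E$. Hence $|f^*L-E|$ is base point free.

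Ampleness then follows from Kleiman's criterion, though I will first check it on curves. Let $\Gamma \subset X$ be an irreducible curve. If $f(\Gamma)$ is a curve, then $f^*(-K_Y-(r-1)L)\cdot\Gamma>0$ and $(f^*L-E)\cdot \Gamma \ge 0$. If $f(\Gamma)$ is a point, then $\Gamma\subset E$, the first term vanishes, and $(f^*L - E)\cdot \Gamma = -E\cdot\Gamma > 0$, because $-E|_E$ is the tautological $\MO(1)$ on the projective bundle $E = \P(N_{C/Y}^\vee) \to C$ (the fibres being $\P^1$ or $\P^2$). So $-K_X$ is positive on every curve. To upgrade to ampleness I avoid needing $\NE(X)$ to be closed. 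I write $-K_X = f^*A + (r-1)(f^*L-E)$ with $A$ ample, and note that $f^*(mA) - E$ is ample for $m \gg 0$ by the standard relative ampleness of $-E$ over $Y$. Then for small rational $\epsilon>0$,
\[
-K_X = (1-\epsilon m')\,f^*A + \epsilon\bigl(f^*(m'A)-E\bigr) + \bigl((r-1)-\epsilon\bigr)(f^*L-E) + \epsilon\, f^*L,
\]
and this needs care with $f^*L$, which need not be nef. A cleaner route is the following. Since $f^*L - E$ is nef and $f^*A+\delta(f^*L-E)$ differs from $-K_X$ by a nef class up to scaling, it suffices to note that ample plus nef is ample. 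For small $\delta>0$, the class $f^*A - \delta E + \delta f^*L = f^*(A+\delta L) - \delta E$ is ample, since $A+\delta L$ is ample and $-E$ is $f$-ample. Adding the nef class $(r-1-\delta)(f^*L-E)$ then gives $-K_X$ ample.

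The main obstacle is this last bookkeeping, namely choosing $\delta$ so that $f^*(A+\delta L)-\delta E$ is ample. This is standard: $-E$ is $f$-ample, and $f^*(\text{ample}) + \eta(-E)$ is ample for small $\eta>0$. Apart from that, one must carefully verify the identification of the inverse image ideal with $\MO_X(-E)$ and the discrepancy in the $r=3$ case.
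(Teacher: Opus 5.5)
Your proposal is correct and follows essentially the Mori--Mukai argument that the paper invokes: decompose $-K_X = f^*(-K_Y-(r-1)L) + (r-1)(f^*L-E)$, get global generation of $f^*L-E$ by pulling back the surjection onto $\mathcal{I}_C\otimes L$ (using $f^{-1}\mathcal{I}_C\cdot\MO_X=\MO_X(-E)$), and conclude ampleness from the $f$-ampleness of $-E$. The one step that needs tightening is the ``small $\delta$'' claim, since there both the ample class and the coefficient depend on $\delta$; phrase it instead as follows: since $f^*L-E$ is $f$-ample, $f^*A+\delta(f^*L-E)$ is ample for $\delta=1/m$ with $m\gg0$, and adding the nef class $(r-1-\delta)(f^*L-E)$ gives $-K_X$.
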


\begin{proof}
The same argument as in \cite[Proposition 2.12]{MM85} works. 
\end{proof}

\begin{prop}\label{p MM 2.13}
Let $S$ be a smooth projective surface and let $g: Y \to S$ be a $\P^1$-bundle (in particular, $Y$ is a  smooth projective threefold). 
Let 
$C
\subset
Y$ be a non-empty disjoint union of smooth irreducible curves such that the
restriction
$g|_C\colon
C
\to 
S$
is a closed immersion.
Assume that there exists a very ample Cartier divisor $N$ on $S$ such that
$C$ is an intersection of members of
\[
\left|
-K_Y-g^*N
\right|.
\]
Then the blowup $\Bl_C(Y)$ is a Fano threefold. 
\end{prop}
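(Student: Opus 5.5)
We follow the argument of \cite[Proposition 2.13]{MM85}, checking that each step is characteristic free. Set $X := \Bl_C(Y)$ with blowup $\sigma : X \to Y$ and exceptional divisor $E = \Ex(\sigma)$. Then $X$ is a smooth projective threefold and
\[
-K_X \sim \sigma^*(-K_Y) - E = \bigl(\sigma^*(-K_Y - g^*N) - E\bigr) + \sigma^*g^*N.
\]
Since $\mathcal I_C \otimes \MO_Y(-K_Y - g^*N)$ is globally generated, the first summand $M := \sigma^*(-K_Y-g^*N) - E$ is globally generated on $X$, hence nef. The second summand $\sigma^*g^*N$ is globally generated, being the pullback of a very ample divisor. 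So $-K_X$ is nef, and by Kleiman's criterion (the cone of $X$ need not be known a priori) it suffices to show that $-K_X \cdot \Gamma > 0$ for every curve $\Gamma \subset X$, together with $(-K_X)^3>0$. In fact we will show directly that no curve is orthogonal to both $M$ and $\sigma^*g^*N$.

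Suppose $\Gamma$ satisfies $M\cdot\Gamma = 0$ and $\sigma^*g^*N\cdot\Gamma = 0$. The second condition and the very ampleness of $N$ force $g\sigma(\Gamma)$ to be a point. Hence either $\Gamma$ is contracted by $\sigma$, or $\sigma(\Gamma)$ is a fibre $F$ of $g$.

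In the first case, $\Gamma$ is a curve in a fibre $\sigma^{-1}(c) \simeq \P^1$ over $c \in C$, and $M \cdot \Gamma = -E\cdot\Gamma > 0$, a contradiction.

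In the second case, let $\Gamma'$ be the strict transform of $F$. Then
\[
M\cdot\Gamma' = (-K_Y - g^*N)\cdot F - E\cdot\Gamma' = 2 - E\cdot \Gamma'.
\]
Since $g|_C$ is a closed immersion, $F$ meets $C$ in at most one point and transversally, because the scheme-theoretic intersection $F\cap C$ is the fibre of $C\to S$. Thus $E\cdot\Gamma' \le 1$ and $M\cdot\Gamma' \ge 1 > 0$, again a contradiction.

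The transversality step is the one to be careful about. It is also where the hypothesis that $g|_C$ is a closed immersion (rather than merely injective) enters: in positive characteristic, inseparability could otherwise produce tangency. Concretely, $C \cap F = C\times_S \{s\}$ is a reduced point, so the tangent line of $F$ is not contained in $T_C$. The local computation of $E\cdot\Gamma'$ as the intersection multiplicity then gives exactly $1$.

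It follows that $-K_X = M + \sigma^*g^*N$ is strictly positive on every curve. To conclude ampleness we still need strict positivity on $\overline{\NE}(X)$. We argue via semi-ampleness. The divisor $M+\sigma^*g^*N$ is globally generated, and its morphism $\phi$ is the product of the morphisms given by $|M|$ and $|\sigma^*g^*N|$. A curve contracted by $\phi$ is contracted by both, which we have just ruled out. Hence $\phi$ is finite, and $-K_X = \phi^*\MO(1)$ is ample. Therefore $X$ is a Fano threefold.

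The expected obstacle is only the transversality bound $E\cdot\Gamma'\le1$ in positive characteristic. The rest is a formal consequence of global generation and finiteness of the morphism.
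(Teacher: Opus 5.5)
Your proposal is correct and is essentially the Mori--Mukai argument that the paper invokes by reference. The splitting $-K_X=(\sigma^*(-K_Y-g^*N)-E)+\sigma^*g^*N$ into two free divisors, the reduction to curves in fibres of $g\circ\sigma$, and the bound $E\cdot\Gamma'=\operatorname{length}(C\times_S\{s\})\le 1$ are all characteristic free. One small point: your opening claim that positivity on curves together with $(-K_X)^3>0$ suffices for ampleness is not a valid criterion, but you never use it, since finiteness of the morphism given by the free divisor $-K_X$ already yields ampleness.
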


\begin{proof}
The same argument as in \cite[Proposition 2.13]{MM85} works. 
\end{proof}

\begin{prop}\label{p prime div criterion}
Let $Y$ be a smooth projective threefold. 
Let $\sigma : X \to Y$ be the blowup along a non-empty disjoint union $C$ of smooth curves on $Y$. 
Assume that there exists a prime divisor $S$ on $Y$ such that 
\begin{enumerate}
\item $-K_Y -S$ is ample, 
\item $-K_Y|_S -C$ is ample, 
\item $C \subset S$, and $S$ is smooth along $C$. 
\end{enumerate}
Then $-K_X$ is ample. 
\end{prop}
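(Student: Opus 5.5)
We use Kleiman's criterion: $-K_X$ is ample if and only if it is positive on every curve of $X$ and $\NE(X)$ has no nonzero class on which $-K_X$ vanishes. We write $E=\Ex(\sigma)$ and let $S'$ be the strict transform of $S$. By (3), $S$ is smooth along $C$, so it has multiplicity one along each component of $C$. Hence
\[
\sigma^*S = S' + E, \qquad -K_X = \sigma^*(-K_Y) - E = \sigma^*(-K_Y - S) + S'.
\]
The restriction $\sigma|_{S'}\colon S'\to S$ is an isomorphism: it is an isomorphism away from $C$, and near $C$ the surface $S$ is a smooth divisor containing the codimension-one locus $C$, so blowing up $C$ does not change $S$ there. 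For a reduced irreducible curve $\Gamma\subset X$ we split into three cases according to its position.

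\emph{Case 1: $\Gamma\not\subset S'$.} Then $S'\cdot\Gamma\ge 0$. If $\Gamma$ is not contracted by $\sigma$, then $\sigma^*(-K_Y-S)\cdot\Gamma=(-K_Y-S)\cdot\sigma_*\Gamma>0$ by (1), so $-K_X\cdot\Gamma>0$. If $\Gamma$ is contracted, it is a fibre of $E\to C$, and $-K_X\cdot\Gamma=-E\cdot\Gamma=1>0$.

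\emph{Case 2: $\Gamma\subset S'$.} Here $\Gamma$ is not a fibre of $E\to C$, since $S'\cap E$ maps isomorphically onto $C$. Instead of the decomposition above, we restrict to $S'\simeq S$:
\[
-K_X|_{S'} = \sigma^*(-K_Y)|_{S'} - E|_{S'}.
\]
Since $S'$ is the strict transform of a surface that is smooth along $C$ and contains $C$, the intersection $E\cap S'$ is the section of $E\to C$ over $C$. As a divisor on $S'\simeq S$, $E|_{S'}$ is therefore exactly $C$, with multiplicity one, because the blowup of a smooth surface along a Cartier divisor is transversal. Consequently $-K_X|_{S'}$ corresponds to $-K_Y|_S - C$, which is ample by (2), and $-K_X\cdot\Gamma>0$.

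It remains to upgrade positivity on curves to ampleness. The sections above give a uniform lower bound. In Case 1, $-K_X\cdot\Gamma\ge \sigma^*(-K_Y-S)\cdot\Gamma$, and the right-hand side is at least $\epsilon\,\sigma^*H\cdot\Gamma$ for an ample $H$ on $Y$ with $\epsilon>0$. In Case 2 the ample divisor from (2) gives the analogous bound. A cleaner route is to write $-K_X$ as a nonnegative combination of nef divisors that is strictly positive on $\NE(X)\setminus\{0\}$: since both $\sigma^*(-K_Y-S)+S'$ and the Case 2 restriction are positive, one can invoke Seshadri's criterion, or argue that $-K_X$ is nef and big with null locus empty. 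The latter is done via Nakamaye/Keel-type arguments: $-K_X$ is nef, and $(-K_X)^{\dim V}\cdot V>0$ for every subvariety $V$. This holds for $V=X$ because $(-K_X)^3=(-K_X)^2\cdot(\sigma^*(-K_Y-S)+S')$ and both terms are positive, for surfaces by the same splitting applied to $V\ne S'$ versus $V=S'$, and for curves by the cases above. Nakai--Moishezon then gives ampleness.

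The main obstacle is the identification $E|_{S'}=C$ together with $S'\simeq S$ in Case 2, which relies on (3). A further point to check is the surface case of Nakai--Moishezon for $V=E$. There I would compute directly on the ruled surface $E$, using that $-K_X|_E$ is positive on fibres and on the section $E\cap S'$.
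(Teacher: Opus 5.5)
Your curve analysis is correct and essentially matches the end of the paper's proof. Your Nakai--Moishezon route also works. The difference from the paper lies in how positivity on curves is upgraded to ampleness.

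The paper perturbs your splitting by borrowing a little of $E$:
\[
-K_X = A' + (1-\epsilon)S_X,\qquad A' := \sigma^*\bigl((1-\epsilon)(-K_Y-S)-\epsilon K_Y\bigr)-\epsilon E,
\]
with $A'$ ample for $0<\epsilon\ll 1$ (here $S_X$ is your $S'$). The stable base locus of $-K_X$ therefore lies in $S_X$, on which $-K_X$ is ample by (2). Fujita's theorem then gives semi-ampleness, and the two-case curve check gives ampleness.

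This perturbation is exactly what your ``uniform lower bound'' attempt is missing. $\sigma^*(-K_Y-S)$ and $\sigma^*H$ vanish on the fibres of $E$, so a bound by $\epsilon\,\sigma^*H\cdot\Gamma$ is not a Kleiman-type bound on $X$. Also, $S'$ is not nef in general, so the ``nonnegative combination of nef divisors'' and Seshadri remarks do not apply; you should delete them. With $A'$ in hand one even gets directly that $-K_X-\delta H$ is nef for an ample $H$ and $0<\delta\ll 1$, bypassing Fujita.

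For your route, state the surface step precisely instead of writing ``both terms are positive''. Once $-K_X$ is known to be nef and positive on every curve, take any prime divisor $V\neq S'$. Both $\sigma^*(-K_Y-S)\cdot V$ and $S'\cdot V$ are represented by effective $1$-cycles, and the first is nonzero: for a component of $E$ over $C_j$ it is $((-K_Y-S)\cdot C_j)$ times a fibre. Hence $(-K_X)^2\cdot V>0$. This handles $E$ uniformly, with no separate ruled-surface computation. The case $V=S'$ follows from (2), and $(-K_X)^3\geq(-K_X)^2\cdot S'>0$ because $(-K_X)^2\cdot\sigma^*(-K_Y-S)\geq 0$ by nefness.

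The trade-off: the paper's decomposition localises everything to $S_X$ and to curves, but relies on an external semi-ampleness theorem. Your argument is fully elementary, at the cost of checking every surface.
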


\begin{proof}
Set $E:=\Ex(\sigma)$ and $A := -K_Y-S$. 
Note that $A$ is ample  by (1). 
Let $S_X$ be the proper transform of $S$ on $X$. 
By (3), we have $\sigma^*S = S_X +E$.  
We can find a small positive rational number $0 < \epsilon < 1$ 
such that the $\Q$-divisor 
\[
A' := \sigma^*( (1-\epsilon) A -\epsilon K_Y)-\epsilon E
\]
is ample. It holds that 
\begin{align*}
-K_X &\sim \sigma^*(-K_Y) -E\\
 &= \epsilon \sigma^*(-K_Y) + (1-\epsilon) \sigma^*(A+S) -E\\
&= \epsilon \sigma^*(-K_Y) + (1-\epsilon) \sigma^*A + (1-\epsilon)S_X -\epsilon E\\
&= A' +  (1-\epsilon)S_X. 
\end{align*}
In particular, we have $\bB(-K_X) \subset S_X$ 
for the stable base locus $\bB(-K_X)$ of $-K_X$. 
Via the isomorphism $S_X \xrightarrow{\simeq} S$, we have $-K_X|_{S_X} \sim (-\sigma^*K_Y -E)|_{S_X} = -K_Y|_S -C$. 
Hence $-K_X|_{S_X}$ is ample by (2). 
Therefore, $-K_X$ is semi-ample by \cite[Theorem 1.10]{Fuj83semipositive}. 

Fix a curve $\Gamma$ on $X$. It suffices to show $-K_X \cdot \Gamma >0$. 
If $\Gamma \subset S_X$, then $-K_X \cdot \Gamma = (-K_X|_{S_X}) \cdot \Gamma >0$ by ampleness of $-K_X|_{S_X}$. 
If $\Gamma \not\subset S_X$, then 
$-K_X \cdot \Gamma = (A' + (1-\epsilon)S_X) \cdot \Gamma \geq A' \cdot \Gamma >0$. 
\end{proof}

\begin{cor}\label{c prime div criterion to P1-bdl}
Let $T$ be a smooth projective surface and let $L$ be a nef line bundle  on $T$.  
Let $S$ be the section of the $\P^1$-bundle $\pi: Y:=\P_T(\MO_T \oplus L) \to T$ corresponding to the projection $\MO_T \oplus L \to L$. 
Take a smooth curve $C$ on $S$ and set $C_T := \pi(C)$. 
Assume that $-K_T-L$ and $L-K_T-C_T$ are ample. 
Then $\Bl_C\,Y$ is a Fano threefold. 
\end{cor}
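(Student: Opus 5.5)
The plan is to apply Proposition \ref{p prime div criterion} to $Y=\P_T(\MO_T\oplus L)$, to the prime divisor $S$ (the section in the statement), and to the smooth curve $C\subset S$. Condition (3) is immediate. $S$ is a section of $\pi$, so $S\simeq T$ is smooth; in particular $S$ is a prime divisor and is smooth along $C\subset S$. This leaves conditions (1) and (2), both of which reduce to a computation of $K_Y$, of the class of $S$, and of the restriction of these classes to $S$.

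\textbf{Step 1: the classes of $K_Y$ and $S$.} Set $E:=\MO_T\oplus L$ and $\xi:=\MO_{\P(E)}(1)$. The relative canonical formula for a $\P^1$-bundle gives
\[
K_Y\sim -2\xi+\pi^*(K_T+\det E)=-2\xi+\pi^*(K_T+L).
\]
The composite $\pi^*\MO_T\to\pi^*E\to\xi$ is a global section of $\xi$. It vanishes exactly on the section corresponding to the quotient $E\to E/\MO_T=L$, namely $S$, so $S\in|\xi|$. Under $S\simeq T$ we have $\xi|_S\simeq L$, since the section attached to a quotient $E\to L$ pulls $\xi$ back to $L$.

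\textbf{Step 2: condition (1).} By Step 1,
\[
-K_Y-S\sim \xi+\pi^*(-K_T-L).
\]
Here $\xi$ is nef, because $E=\MO_T\oplus L$ is a nef vector bundle ($L$ is nef), and $\xi$ is $\pi$-ample. The divisor $A:=-K_T-L$ is ample by hypothesis. Hence $\varepsilon\xi+\pi^*A$ is ample for $0<\varepsilon\ll1$, as a relatively ample divisor plus the pullback of an ample divisor. Then $\xi+\pi^*A=(1-\varepsilon)\xi+(\varepsilon\xi+\pi^*A)$ is a nef divisor plus an ample divisor, hence ample.

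\textbf{Step 3: condition (2).} Restricting to $S\simeq T$ gives $-K_Y|_S\sim 2L-(K_T+L)=L-K_T$. Under this isomorphism $C$ corresponds to $C_T$, so $-K_Y|_S-C$ corresponds to $L-K_T-C_T$, which is ample by hypothesis.

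Proposition \ref{p prime div criterion} then shows that $-K_X$ is ample for $X=\Bl_C\,Y$. Since $X$ is the blowup of a smooth threefold along a smooth curve, $X$ is a smooth projective threefold, and hence a Fano threefold.

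The only step I expect to need care is Step 1: getting the sign conventions right for $\P(E)$ (Grothendieck's convention, in which points are quotients) and identifying $S$ as the member of $|\xi|$ rather than of $|\xi-\pi^*L|$. I would check this by computing the normal bundle: $\MO_S(S)\simeq L$. This is consistent with $L$ being nef, whereas the other section $S_0$ has $\xi|_{S_0}\simeq\MO_T$ and normal bundle $-L$.
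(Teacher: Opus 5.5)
Your proposal is correct and follows the paper's proof essentially verbatim. The paper also applies Proposition~\ref{p prime div criterion} to $S$, using $K_Y\sim -2\xi+\pi^*(K_T+L)$, $S\sim\xi$, $\xi|_S\simeq L$, and the fact that $\xi$ is nef and $\pi$-ample to get condition (1), and the restriction $-K_Y|_S-C\sim L-K_T-C_T$ to get condition (2). Your extra justifications, namely the zero locus of $\pi^*\MO_T\to\xi$ and the $\varepsilon$-argument for the ampleness of $\xi+\pi^*(-K_T-L)$, fill in details the paper leaves implicit.
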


\begin{proof}
Let $\xi := \MO_Y(1)$ be the tautological line bundle of the $\P^1$-bundle 
$\pi$. 
We have $K_Y \sim -2\xi+\pi^*(K_T+L)$, 
$S \sim \xi$, and $\xi|_S \simeq L$. 
As $-(K_T+L)$ is ample and $\xi$ is nef and $\pi$-ample, 
the divisor $-K_Y-S \sim \xi-\pi^*(K_T+L)$ is ample. 
Furthermore, via the isomorphism $S \xrightarrow{\simeq} T$, we obtain
\[
-K_Y|_S-C
\sim
(
2\xi+\pi^*(-K_T-L)
)|_S
-C
\sim 
2L -K_T -L-C_T = L-K_T-C_T,  
\]
which is ample. 
Therefore, $X := \Bl_C\,Y$ is a Fano threefold  by Proposition \ref{p prime div criterion}. 
\end{proof}
\section{Existence via blowups}

For a Fano threefold $Z$, 
we call a Fano threefold $X$ a {\em maximal blowup} of $Z$ if there exists a birational morphism $X \to Z$ which is a composition of blowups along smooth curve centres and 
there is no smooth curve $C$ on $X$ such that $\Bl_C X$ is Fano. 
The main purpose of this section is to establish the existence of maximal blowups of 
$\P^3, Q, \P^2 \times \P^1$, and del Pezzo threefolds. 
We start by treating some easy cases in the first subsection.

\subsection{Direct constructions and cases $\rho=1$ and $\rho \geq 5$}

\begin{prop}\label{p exist as toric}
Let \[
\text{x-yz} \in 
\{ 
\text{1-17, 2-33, 2-34, 2-35, 2-36,}
\]
\[
\text{
3-25, 3-26, 3-27, 3-28, 3-29, 3-30, 3-31, 4-9, 4-10, 4-11, 4-12, 5-2, 5-3}\}. 
\]
Then there exists a Fano threefold of No.\ x-yz. 
\end{prop}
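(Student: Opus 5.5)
These numbers are exactly the smooth toric Fano threefolds (18 of them: 1-17 is $\P^3$, the rest are the toric ones of higher Picard rank). So the plan is to exhibit each as a smooth projective toric variety over $k$ whose anticanonical divisor is ample. Toric geometry is characteristic-free: a complete regular fan $\Sigma$ defines a smooth projective toric variety $X_\Sigma$ over any field. Its anticanonical divisor is $-K = \sum_\rho D_\rho$. It is ample if and only if the convex piecewise-linear support function taking value $1$ on every primitive ray generator is strictly convex. Equivalently, the ray generators are the vertices of a smooth Fano polytope. This is a purely combinatorial condition on the fan and does not depend on $p$. Thus for each listed number I will take the fan used by Batyrev/Watanabe--Watanabe in characteristic zero (or its explicit geometric description) and form $X_\Sigma$ over $k$.

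Concretely, I would describe the varieties geometrically rather than by fans wherever possible:
\begin{itemize}
\item 1-17 is $\P^3$, 2-34 is $\P^2\times\P^1$, 3-27 is $\P^1\times\P^1\times\P^1$, and 4-10 is $\P^1\times S_7$.
\item 2-33 and 2-35 are the blowups of $\P^3$ along a line and along a point, i.e.\ $\P_{\P^2}(\MO\oplus\MO(1))$. Also $2\text{-}36=\P_{\P^2}(\MO\oplus\MO(2))$.
\item 3-25, 3-26, 3-29, 3-30, 3-31, 4-9, 4-11, 4-12 arise as blowups of the previous ones along torus-invariant lines or points, e.g.\ $\Bl$ of $\P^3$ along two disjoint lines.
\item 3-28 is $\P^1\times\F_1$, and 5-2, 5-3 are $\P^1\times S_6$ and the appropriate blowup.
\end{itemize}
Each is toric because every blowup centre is torus-invariant. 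Fanoness can be verified in one of three ways. The first is by the combinatorial criterion above. The second is via Proposition \ref{p MM 2.12}, Proposition \ref{p MM 2.13} or Corollary \ref{c prime div criterion to P1-bdl} applied to the invariant centres. For example, $\Bl_{\text{line}}\P^3$ works by Proposition \ref{p MM 2.12} with $L=\MO(1)$. The third is by products of a del Pezzo surface and $\P^1$, which are Fano in any characteristic.

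The remaining step is to confirm that each constructed $X$ is of the claimed No.\ x-yz in the sense of \cite[Section 7]{FanoIV}, which is defined via extremal rays. For toric varieties the Mori cone is generated by invariant curves (walls of the fan), and all extremal contractions are toric. They can therefore be read off combinatorially, and the result is independent of characteristic. Hence the extremal ray types, the invariants $\rho$ and $(-K_X)^3$, and the contraction targets coincide with those in characteristic zero. By the characterisation in \cite[Section 7]{FanoIV}, together with Theorem \ref{t No inv}-type invariance of the data, this identifies the number.

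I expect this identification step to be the main obstacle, particularly for $\rho=3,4$, where FanoIV distinguishes numbers by the types of extremal rays and their targets. Such cases include 3-25 versus 3-26 and 3-29/3-30/3-31. I would handle it by listing, for each case, one explicit extremal contraction (typically of type $E_1$ onto a known Fano threefold of smaller $\rho$), together with $(-K_X)^3$. These are computed by the same formulas as in characteristic zero, so the classification table pins down the number.
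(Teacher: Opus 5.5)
Your proposal is correct and is the paper's argument: the paper's proof simply observes that these eighteen numbers are realised by the smooth toric Fano threefolds of \cite[Theorem in \S 1]{WW82}. Your remarks on the characteristic-free construction and on reading off the extremal rays combinatorially spell out the identification with the numbering of \cite[Section 7]{FanoIV}, which the paper leaves implicit. There are two harmless slips in your illustrative list: 3-31 is $\P_{\P^1\times\P^1}(\MO\oplus\MO(1,1))$, the blowup of the quadric cone at its vertex, rather than a blowup of one of the other listed varieties; and it is 5-3, not 5-2, that equals $\P^1\times S_6$.
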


\begin{proof}
These cases are realised as smooth toric Fano threefolds 
\cite[Theorem in \S 1]{WW82}. 
\end{proof}

\begin{prop}\label{p exist as CI}
Let 
\[
\text{x-yz} \in \{\text{1-2, 1-3, 1-4, 1-5, 1-6, 1-7, 1-8, 1-9, 1-11, 1-12, 1-13, 1-14, 1-15, 1-16,}
\]
\[
\text{2-4, 
2-6, 2-7, 2-9, 
2-12, 2-13, 
2-17, 
2-24, 2-25, 2-27, 2-29, 2-32, 2-33, 3-3, 
3-17, 4-1}\}.
\]
Then exists a Fano threefold of No.\ x-yz. 
\end{prop}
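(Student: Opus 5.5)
The plan is to realise each listed number as a complete intersection of general very ample divisors in an explicit ambient variety $W$ with $\dim W = c+3$. For each case I would apply Lemma \ref{l CI exist}, which shows that $X$ is a Fano threefold with $\rho(X)=\rho(W)$. Then I would identify the number of $X$ by comparing with the definitions in \cite[Section 7]{FanoIV}.

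The ambient varieties and divisors come from the Mori--Mukai table.
\begin{itemize}
\item For $\rho=1$: 1-2 is a quartic in $\P^4$; 1-3 is a $(2,3)$ complete intersection in $\P^5$; 1-4 is a $(2,2,2)$ complete intersection in $\P^6$; 1-11, 1-12, 1-13, 1-14 are the index-two cases $V_1,\dots,V_4$, namely a sextic in $\P(1,1,1,2,3)$, a quartic in $\P(1,1,1,1,2)$, a cubic in $\P^4$, and a $(2,2)$ complete intersection in $\P^5$; 1-15 is $V_5$, a linear section of $\Gr(2,5)$; 1-16 is the quadric $Q$.
\item The Mukai cases 1-5 to 1-9 are sections of $\Gr(2,5)$, $\mathrm{OGr}(5,10)$, $\Gr(2,6)$, $\mathrm{LGr}(3,6)$ and $G_2/P$, each intersected with a quadric or with linear spaces.
\item For $\rho=2,3,4$ the ambient spaces are products of projective spaces, or $\P^2\times\P^2$, $\P^1\times\P^3$, $\P^1\times\P^1\times\P^1$, $(\P^1)^4$ and $\P^1\times Q$-type spaces. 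For example, 2-6 is a $(2,2)$ divisor in $\P^2\times\P^2$, 2-32 is $W$, and 4-1 is a $(1,1,1,1)$ divisor in $(\P^1)^4$.
\end{itemize}

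Weighted projective spaces and homogeneous spaces are not literally covered by Lemma \ref{l CI exist}. For the homogeneous spaces (Grassmannians and the other rational homogeneous varieties, which are smooth over $\Z$) the lemma applies directly via the Plücker-type embedding, using their known canonical classes. For $V_1$, $V_2$, 1-2 and 1-3 I would instead argue directly. A general hypersurface of the given degree avoids the singular points of the weighted projective space, and Bertini at the base-point-free part gives smoothness away from the base locus. I would check the base point of $|\MO(1)|$ separately by an explicit equation, for instance the $w^2=z^3+\cdots$ form for $V_1$. Adjunction then gives $-K_X$, and $\rho(X)=1$ follows from the Grothendieck--Lefschetz theorem applied inside the weighted space, or from the double-cover description.

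The main obstacle is characteristic $2$ (and $3$ for $V_1$). There, a general member written as a double cover, such as a quartic double solid or $V_2$, may fail to be smooth if one insists on the equation $w^2=f$. The remedy is to use the general equation $w^2+aw+b=0$. Its smoothness is exactly Proposition \ref{p smooth double cover exist} when $L$ is very ample on the base, such as $\P^3$, $\P^1\times\P^2$ or $V_4$. When $L$ is not very ample I would combine the incidence dimension count from its proof with a local check at the base locus. The last step is to verify that the number matches. For $\rho=1$ this follows from the index and the degree. For $\rho\ge2$ I would compute the extremal contractions: these are the projections together with the divisor and curve types read off from the equations. I would then compare the resulting triple $(\Pic X, K_X,\mu_X)$ or extremal ray data with \cite[Section 7]{FanoIV}, which pins down the number by Theorem \ref{t No inv}-style invariance. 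For instance, 2-33 is the blowup of $\P^3$ along a line, which is consistent with its also appearing in Proposition \ref{p exist as toric}.
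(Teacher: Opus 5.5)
Your strategy is the paper's. You take complete-intersection models (the paper cites \cite{IP99} for 1-5,\dots,1-9 and \cite{BFT22} for the rest) and conclude by Bertini, adjunction and Lefschetz as in Lemma \ref{l CI exist}.

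The gap is No.\ 2-17, which is exactly the case the paper writes out. You give no model for 2-17, and the standard one is not of the kind you describe (very ample divisors in products of projective spaces and quadrics). The paper's model is $X=D_{(0,1)}\cap D_{(1,1)}\subset F=\Fl(1,2,4)$ with $D_{(a,b)}\in|\MO_F(a,b)|$. Here $\MO_F(0,1)=\pi_2^*\MO_{\Gr(2,4)}(1)$ is base point free but not very ample, so Lemma \ref{l CI exist} does not apply as stated. Your fallback, Bertini for a base-point-free system, is not available in characteristic $p$ without further input.

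The missing step is to choose $D_{(0,1)}=\pi_2^{-1}(H)$ for a general, hence smooth, hyperplane section $H$ of $\Gr(2,4)$. This is a $\P^1$-bundle over the smooth quadric threefold $H$, so it is smooth; it is the isotropic flag variety $Sp_4/B$. Then cut it with a general member of the very ample system $|\MO_F(1,1)|$. Equivalently, apply Lemma \ref{l CI exist} to $W:=\pi_2^{-1}(H)$ with $c=1$ and $H_1:=\MO_F(1,1)|_W$. This works because $-K_F\sim\MO_F(2,3)$ gives $-K_W\sim\MO_F(2,2)|_W$. Hence $-K_X\sim\MO_F(1,1)|_X$ is ample and $\rho(X)=\rho(W)=2$, and $(-K_X)^3=24$ together with the contraction data identifies No.\ 2-17.

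Smaller points:
\begin{itemize}
\item 1-2 and 1-3 are complete intersections in $\P^4$ and $\P^5$, so Lemma \ref{l CI exist} covers them directly and no separate argument is needed.
\item The lemma also covers rational homogeneous spaces with their Pl\"ucker-type embeddings, since these are smooth projective; your remark that they are not covered contradicts your own next sentence.
\item For $V_1$ and $V_2$, drop both the base-point-free Bertini and the characteristic $2,3$ discussion. The systems $|\MO(6)|$ on $\P(1,1,1,2,3)$ and $|\MO(4)|$ on $\P(1,1,1,1,2)$ are very ample and do not pass through the singular points. So the general member is smooth in every characteristic, and it automatically has the form $w^2+aw+b$.
\item The base point of $|\MO(1)|$ plays no role in smoothness.
\item Proposition \ref{p smooth double cover exist} could not handle $V_1$ in any case: the base $\P(1,1,1,2)$ is singular, and $\MO(3)$ is not Cartier on it.
\end{itemize}
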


\begin{proof}
For each of 1-5, 1-6, 1-7, 1-8, 1-9, 
the existence is guaranteed by \cite[Example 5.2.2(i)-(v)]{IP99}. 
For the remaining cases, we use complete intersection description as in \cite[Table 1 in Section 5]{BFT22}. 
Here we only give the proof for the case 2-17, as the other cases are simpler.

We have the following diagram: 
\[
\begin{tikzcd}
& F:=\Fl(1,2,4) \arrow[dl,"\pi_1"] \arrow[dr,"\pi_2"', 
"{\P^1\text{-bundle}}"] & \\
\mathbb P^3 =\Gr(1,4) && \Gr(2,4)
\end{tikzcd}
\]
For $a, b \in \Z$, we set 
$\cO_F(a,b):=
\pi_1^*\cO_{\P^3}(a)
\otimes
\pi_2^*\cO_{\Gr(2,4)}(b)$. 
Take a general member $H\in |\MO_{\Gr(2, 4)}(1)|$. 
As $|\MO_{\Gr(2, 4)}(1)|$ is very ample, 
$H$ is a smooth prime divisor by the Bertini theorem. 
For $D_{(0, 1)} := \pi_2^*H$, 
$D_{(0, 1)}$ is a smooth prime divisor on $F$, because 
the projection 
\[
D_{(0, 1)} = \pi_2^{-1}(H) = H \times_{\Gr(2, 4)} F \to H
\]
is a $\P^1$-bundle. 
Take a general member $D_{(1, 1)}$ of $|\MO_F(1, 1)|$. 
Since $|\MO_F(1, 1)|$  is very ample, 
the scheme-theoretic intersection 
$X :=D_{(0, 1)} \cap D_{(1, 1)}$ is a smooth prime divisor, and hence a Fano threefold of No. 2-17. 
\qedhere

\qedhere

\end{proof}

\begin{prop}\label{p double cover exist}
Let $\text{x-yz} \in \{\text{1-1, 2-2, 2-8, 2-18, 3-1}\}$. 
Then there exists a Fano threefold of No.\ x-yz. 
\end{prop}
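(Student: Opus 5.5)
We will construct each of these five families as a double cover $f\colon X \to Y$ using Proposition \ref{p smooth double cover exist}, choosing $(Y, L)$ as in the characteristic zero descriptions:
\begin{itemize}
\item for 1-1, $Y=\P^3$ and $L=\MO_{\P^3}(3)$, so the branch data has degree $6$;
\item for 2-2, $Y=\P^1\times\P^2$ and $L=\MO(1,2)$, so the branch data has bidegree $(2,4)$;
\item for 2-8, $Y=V_7=\Bl_{pt}\P^3$ and $L=-K_{V_7}/2$;
\item for 2-18, $Y=\P^1\times\P^2$ and $L=\MO(1,1)$, so the branch data has bidegree $(2,2)$;
\item for 3-1, $Y=\P^1\times\P^1\times\P^1$ and $L=\MO(1,1,1)$.
\end{itemize}

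In each case we first check that $L$ is very ample. For 2-8 this needs a remark, since $-K_{V_7}/2 = \sigma^*\MO(2)-E$ is very ample on $\Bl_{pt}\P^3$ (quadrics through a point separate points and tangents there). Proposition \ref{p smooth double cover exist} then gives a smooth projective $X$ and a finite flat degree two $f$ with $\omega_X\simeq f^*(\omega_Y\otimes L)$, so $-K_X\sim f^*(-K_Y-L)$. In every case $-K_Y-L$ is ample:
\begin{itemize}
\item for 1-1, it is $\MO(1)$;
\item for 2-2, it is $\MO(1,1)$;
\item for 2-8, it is $-K_{V_7}/2$;
\item for 2-18, it is $\MO(1,2)$;
\item for 3-1, it is $\MO(1,1,1)$.
\end{itemize}
Since $f$ is finite, $-K_X$ is ample and $X$ is a Fano threefold. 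We also compute $(-K_X)^3=2(-K_Y-L)^3$, which equals $2$, $6$, $14$, $12$ and $12$ respectively, and confirm that these match the list.

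The main work is identifying the number x-yz, which in \cite[Section 7]{FanoIV} is defined through $\rho$ and the extremal rays. We must show that $\rho(X)=\rho(Y)$ and that the extremal contractions of $X$ are the Stein factorisations of the compositions $X\to Y\to$ (the contractions of $Y$). For $\rho(X)$, we would exploit the existence of $f$ itself. One option is a Lefschetz-type argument for the ample double cover. A simpler option is to use $\rho(X)\le b_2$ and the fact that $h^{1,1}$ is bounded by lifting: $X$ lifts to characteristic zero as a double cover of the same type, and there $\rho=\rho(Y)$ by \cite{MM85}. We then combine this with Remark \ref{r sp map isom} and Proposition \ref{p DVR NE Nef}, which transfer the cone structure and Picard number along the lift. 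Since the same double cover construction makes sense over $W(k)$ and the specialisation of the characteristic zero member is our $X$, Theorem \ref{t No inv}-type rigidity should then pin down the number directly.

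We expect the main obstacle to be this identification step in characteristic two, especially for 3-1 and 2-18, because the cover may be inseparable-like on loci where $a$ vanishes. For this step we would first try to lift the pair $(a,b)$ to $W(k)$, obtaining a smooth projective family over a DVR with both fibres Fano, and then invoke the proof technique of Theorem \ref{t No inv} to transfer the number from the characteristic zero fibre. Failing that, we would verify directly that pulling back the extremal contractions of $Y$ gives contractions of the predicted types:
\begin{itemize}
\item for 2-2, a conic bundle and a del Pezzo fibration of degree $2$;
\item for 2-18, a conic bundle and a quadric bundle;
\item for 2-8, an $E_3$/$E_4$ contraction onto the double cover of $\P^3$ together with a conic bundle;
\item for 3-1, three conic bundles onto $\P^1\times\P^1$;
\end{itemize}
and we would then match these against the classification list.
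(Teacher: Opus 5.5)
Your construction and Fano verification are the same as the paper's. You use the same pairs $(Y,\cL)$, up to the order of the factors of $\P^1\times\P^2$. You apply Proposition \ref{p smooth double cover exist} to get a smooth $X$ with $\omega_X\simeq f^*(\omega_Y\otimes\cL)$, and you check that $-K_Y-\cL$ is ample.

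The difference is in identifying the number. The paper does none of the work you anticipate: it computes neither $\rho(X)$ nor the extremal rays. It takes the identification directly from the double-cover descriptions in \cite[Section 7]{FanoIV}, so constructing a smooth Fano double cover of the prescribed type is all it needs.

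Your lifting route is a valid but heavier substitute. $Y$ and $\cL$ are defined over $\Z$ with $H^1(Y,\cL)=H^1(Y,\cL^{\otimes 2})=0$, so $(a,b)$ lifts to $W(k)$. The resulting degree-two cover is proper and flat over $W(k)$ with smooth closed fibre, hence smooth. Its geometric generic fibre is a characteristic-zero double cover of the same type, hence of No.\ x-yz by Mori--Mukai. Theorem \ref{t No inv} then transports the number to $X$.

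The cost is twofold. You must apply Theorem \ref{t No inv} over a mixed-characteristic base. You must also check that the extremal-ray definitions of the numbers in \cite{FanoIV} agree with Mori--Mukai's numbering in characteristic zero. The gain is that you no longer rely on the descriptions in \cite{FanoIV} saying that such a double cover is of No.\ x-yz.

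Within your route, two parts are unnecessary. The separate treatment of $\rho(X)$ via $b_2$ and $h^{1,1}$ can go, since $\sp^1$ is already an isomorphism (Remark \ref{r sp map isom}). The characteristic-two worry is moot: Proposition \ref{p smooth double cover exist} already covers $p=2$, and nothing in the identification depends on where $a$ vanishes.

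There is one computational slip. For 2-18, $(-K_Y-\cL)^3=\MO(1,2)^3=12$ on $\P^1\times\P^2$, so $(-K_X)^3=24$, not $12$. With your value, the ``matches the list'' check would fail; the corrected value $24$ is the one in the table.
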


\begin{proof}
By the double-cover descriptions in \cite[Section 7]{FanoIV}, it suffices to consider a smooth double cover
$f\colon X\longrightarrow Y$ associated 
with a line bundle $\cL$ as follows:
\begin{itemize}
\item 1-1: $Y=\P^3$ and $\cL=\MO_{\P^3}(3)$.
\item 2-2: $Y=\P^2\times\P^1$ and $\cL=\MO_{\P^2\times\P^1}(2,1)$.
\item 2-8: $Y=V_7=\P_{\P^2}(\MO_{\P^2}\oplus\MO_{\P^2}(1))$ and $\cL=\MO_{V_7}(1) \otimes \pi^*\MO_{\P^2}(1)$, 
where $\MO_{V_7}(1)$ denotes the tautological line bundle 
of the induced $\P^1$-bundle $\pi\colon V_7\to\P^2$. 
\item 2-18: $Y=\P^2\times\P^1$ and $\cL=\MO_{\P^2\times\P^1}(1,1)$.
\item 3-1: $Y=\P^1\times\P^1\times\P^1$ and $\cL=\MO_{\P^1\times\P^1\times\P^1}(1,1,1)$.
\end{itemize}
In each case, $-K_Y-\cL$ is ample and $|\cL|$ is very ample. 
Then 
such a smooth double cover $X$ exists and 
$X$ is a Fano threefold by  Proposition \ref{p smooth double cover exist}. 
\end{proof}

\begin{prop}\label{p exist rho 1}
Let x-yz be a number with $x = 1$, i.e., 
\[
\text{x-yz} \in 
\{\text{1-1, 1-2, ..., 1-17}\}. 
\]
Then there exists a Fano threefold of No.\ x-yz.
\end{prop}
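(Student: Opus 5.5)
The plan is to assemble this from the propositions already proved, and then handle the one case not covered by them. Proposition \ref{p double cover exist} gives 1-1. Proposition \ref{p exist as CI} gives 1-2, 1-3, 1-4, 1-5, 1-6, 1-7, 1-8, 1-9, 1-11, 1-12, 1-13, 1-14, 1-15 and 1-16. Proposition \ref{p exist as toric} gives 1-17, namely $\P^3$. So the only remaining number with $x=1$ is 1-10, the prime Fano threefold $V_{22}$ of genus $12$, and I expect all the work to be there.

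For 1-10 I would follow the Mukai--Umemura construction (\cite[Example 5.2.2]{IP99}) directly in characteristic $p$. Take $V := k^{\oplus 7}$ with a net $\sigma_1, \sigma_2, \sigma_3$ of alternating bilinear forms, and let $X \subset \Gr(3, V)$ be the locus of $3$-dimensional subspaces that are isotropic for all three forms. Then $X$ is the zero locus of a section of $(\bigwedge^2 \mathcal{S}^*)^{\oplus 3}$, where $\mathcal{S}$ is the tautological subbundle; this bundle is globally generated of rank $9$ on the $12$-dimensional Grassmannian. For a general net, Bertini-type generic smoothness for zero loci of sections of a globally generated bundle shows that $X$ is a smooth threefold. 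This holds in any characteristic, since the incidence variety is smooth and we only need the general fibre of the projection to the parameter space to be smooth. Here I would have to check that the incidence correspondence is a vector bundle over $\Gr(3,V)$ and that the relevant projection is dominant, or argue via a dimension count as in Step \ref{s2 smooth double cover exist}. Adjunction gives $K_X = (K_{\Gr} + 3c_1(\bigwedge^2\mathcal{S}^*))|_X = (-7H + 6H)|_X = -H|_X$. So $-K_X$ is the ample Plücker restriction, and $X$ is Fano with $(-K_X)^3 = 22$, computed by a Schubert calculus count that does not depend on the characteristic.

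To identify $X$ as No.\ 1-10 I also need $\rho(X) = 1$, or equivalently index one and no other extremal rays. For $\rho(X)=1$ I would use the Koszul resolution of $\MO_X$ together with vanishing of cohomology of homogeneous bundles to get $h^1(\MO_X)=h^2(\MO_X)=0$. I would then compare Picard groups via a Lefschetz-type argument for zero loci of ample globally generated bundles, in the spirit of \cite{Amb23} as used in Lemma \ref{l CI exist}. This step is where characteristic $p$ could cause trouble, because Bott vanishing on $\Gr(3,7)$ can fail in small $p$.

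If that fails, my fallback is to use the classification in \cite{FanoIV}. A Fano threefold with $(-K_X)^3 = 22$ and index one is either of No.\ 1-10 or has $\rho \geq 2$, and I would rule out the cases with $\rho\geq 2$ and the same degree by computing $h^{1,1}$-type invariants such as $h^0(-K_X) = 14$ and the Euler characteristic. A second fallback is to realise $V_{22}$ via a smooth double projection from a line, the inverse of the known $\P^3$- or $V_5$-link. Using Proposition \ref{p MM 2.12}, I would blow up a suitable smooth rational curve of degree five in $V_5$ (1-15, already available) and then contract the other extremal ray. In that route the hard part is showing the other contraction is of type $E_1$ onto a smooth $\rho=1$ threefold of degree $22$.
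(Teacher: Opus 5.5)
\textbf{Gap: case 1-10 is not established.} Your reduction of every case except 1-10 to Propositions \ref{p double cover exist}, \ref{p exist as CI} and \ref{p exist as toric} matches the paper. For 1-10 the paper does not construct anything; it simply invokes \cite[Theorem 1.1]{IKTTg12}. Your Mukai-type construction does not yet prove existence, for the following reasons.

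\emph{Smoothness.} Your argument rests on the principle that a smooth incidence variety has smooth general fibre over the parameter space. That is generic smoothness, which fails in characteristic $p$. For example, for $\MO_{\P^1}(p)$ with the sections $x^p,y^p$, the incidence variety is a line bundle over $\P^1$, yet every zero locus is non-reduced.

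The dimension count of Step \ref{s2 smooth double cover exist} of Proposition \ref{p smooth double cover exist} does not transfer verbatim either. It used surjectivity of sections onto $1$-jets. Here the $63$-dimensional space of nets cannot surject onto the $117$-dimensional fibre of $1$-jets of a rank-$9$ bundle on a $12$-fold. What you need instead is the following. At $[U]\in\Gr(3,V)$, consider the nets vanishing on $U$ whose induced pairings $B_i\in U^\vee\otimes(V/U)^\vee$ make the differential $\Hom(U,V/U)\to(\bigwedge^2U^\vee)^{\oplus 3}$ non-surjective. These must have codimension at least $4$ in the space of nets vanishing on $U$. This is true (dualise and do an incidence count), but it has to be proved.

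\emph{Connectedness.} You never show that $X$ is connected, which is needed for $X$ to be a variety.

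\emph{Identification with No.\ 1-10.} This is the decisive gap.
\begin{itemize}
\item The Lefschetz route has no theorem behind it. $(\bigwedge^2\mathcal S^\vee)^{\oplus 3}$ is not ample: on a line of $\Gr(3,7)$ each summand restricts to $\MO\oplus\MO(1)^{\oplus 2}$. So neither the Grothendieck--Lefschetz argument of Lemma \ref{l CI exist} nor Sommese-type results apply. As you note yourself, the Koszul/Bott computation may also fail for small $p$.
\item In your first fallback, $h^0(-K_X)=14$ does not help. It holds for every Fano threefold of degree $22$, namely No.\ 1-10, 2-15, 2-16 and 3-6. What separates them is the Euler number $c_3$, which equals $4,-2,2,6$ respectively. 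You could compute $c_3(X)=4$ and $\frac{1}{24}c_1c_2(X)=1$ by Chern classes on $\Gr(3,7)$. The second identity also gives connectedness, since each component is a Fano threefold with $\chi(\MO)=1$. Together with \cite{FanoIV}, this would complete the route, but none of it is carried out.
\item Your second fallback fails. A rational normal quintic $C\subset V_5\subset\P^6$ lies in a unique hyperplane, so $\cI_{C/V_5}(H)$ is not globally generated and Proposition \ref{p MM 2.12} does not apply. In fact $\Bl_C V_5$ is not Fano, because the link between $V_{22}$ and $V_5$ passes through a flop. Moreover, no Fano threefold with $\rho=2$ in \cite[Subsection 7.2]{FanoIV} has a contraction onto a threefold of No.\ 1-10.
\end{itemize}
As it stands, case 1-10 is unproved; the paper closes it by citation.
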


\begin{proof}
Except for 1-10, 
the existence is guaranteed by the results listed below. 
\begin{itemize}
\item 1-1: Proposition \ref{p double cover exist}. 
\item 1-17: Proposition \ref{p exist as toric}. 
\item 1-2, 1-3, 1-4, 1-5, 1-6, 1-7, 1-8, 1-9, 1-11, 1-12, 1-13, 1-14, 1-15, 1-16: Proposition \ref{p exist as CI}. 
\end{itemize}
The existence for No.\ $1$-$10$ follows from \cite[Theorem 1.1]{IKTTg12}. 
\end{proof}

\begin{prop}\label{p exist rho geq 5}
Let x-yz be a number with $x \geq 5$, i.e., 
\[
\text{x-yz} \in 
\{\text{5-1, 5-2, 5-3, 6-1, 7-1, 8-1, 9-1, 10-1}\}. 
\]
Then there exists a Fano threefold of No.\ x-yz.
\end{prop}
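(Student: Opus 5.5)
We prove the existence for each number with $x \geq 5$ case by case, using the descriptions in \cite[Subsections 7.5 and 7.6]{FanoIV}. The toric cases 5-2 and 5-3 are already covered by Proposition \ref{p exist as toric}.

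\emph{Products $S \times \P^1$.} The numbers 6-1, 7-1, 8-1, 9-1 and 10-1 are realised as $S_d \times \P^1$, where $S_d$ is a smooth del Pezzo surface of degree $d = 11 - x$. For $x = 10, 9, 8, 7, 6$ this gives $d = 1, 2, 3, 4, 5$. Over an algebraically closed field of any characteristic, $S_d$ exists as the blowup of $\P^2$ at $9-d$ points in general position. Then $S_d \times \P^1$ is a smooth projective threefold with $-K = \pr_1^*(-K_{S_d}) + \pr_2^*(-K_{\P^1})$ ample, so it is a Fano threefold. Its Picard number is $\rho(S_d) + 1 = 11 - d = x$. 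By the definition of these numbers in \cite[Section 7]{FanoIV}, it is of No.\ x-1.

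\emph{No.\ 5-1.} This is the remaining and main case. It is the blowup of No.\ 2-29, the blowup $Y := \Bl_C Q$ of $Q$ along a conic $C$, along three exceptional lines of the blowup $Y \to Q$, i.e.\ three fibres of $\Ex(Y \to Q) \to C$. We will construct it directly.

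\begin{enumerate}
\item Take $C = Q \cap \Pi$ for a general plane $\Pi \subset \P^4$, and set $Y := \Bl_C Q$. This is Fano of No.\ 2-29 by Proposition \ref{p MM 2.12} with $r = 2$ and $L = \MO_Q(1)$: here $C$ is cut out by hyperplanes, and $-K_Q - L = \MO_Q(2)$ is ample.
\item Let $\ell_1, \ell_2, \ell_3$ be three fibres of the ruled surface $E := \Ex(Y \to Q) \simeq \P^1 \times \P^1$ over distinct points of $C$. Set $X := \Bl_{\ell_1 \sqcup \ell_2 \sqcup \ell_3} Y$.
\item To show that $-K_X$ is ample, apply Proposition \ref{p prime div criterion} to $Y$, taking for $S$ the proper transform of a general hyperplane section of $Q$ containing $C$. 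We need that this $S$ contains the lines, so a single smooth $S$ may not contain all three fibres. If it does not, we instead choose $S \in |\sigma^*\MO_Q(1) - E|$ or a suitable smooth member of a pencil, and check conditions (1)--(3) numerically on the del Pezzo surface $S$.
\item If the prime divisor criterion fails to apply, fall back on the characteristic-free argument of \cite{MM85}. There one shows directly that $-K_X \cdot \Gamma > 0$ for every curve $\Gamma$, by intersecting with the nef divisors $\sigma^*H$ and $\sigma^*H - E$ and using the explicit geometry of the fibres.
\end{enumerate}

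Finally, $\rho(X) = \rho(Y) + 3 = 5$, and the contraction structure matches the description of 5-1 in \cite[Section 7]{FanoIV}.

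The obstacle is the verification of ampleness of $-K_X$ for 5-1, since none of the general criteria in Section 4 apply verbatim. I expect to spend most of the effort there.
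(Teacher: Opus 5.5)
Your treatment of $x\geq 6$ via $S_d\times\P^1$ and of 5-2, 5-3 via the toric list matches the paper. There is a genuine gap at 5-1: steps 3--4 do not prove that $-K_X$ is ample, and the concrete choice in step 3 cannot work. The paper does not reprove 5-1; it cites \cite[Proposition 5.23]{FanoIV}. Write $\sigma\colon Y\to Q$, $H:=\MO_Q(1)$ and $E:=\Ex(\sigma)$. The proper transforms of hyperplane sections through $C$ are exactly the members of $|\sigma^*H-E|$. These are the fibres of the quadric fibration $Y\to\P^1$ given by the pencil of hyperplanes containing $\Pi$. Since $(\sigma^*H-E)\cdot\ell_i=1$, each $\ell_i$ maps isomorphically onto $\P^1$, so it is a section of this fibration. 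Hence no member of $|\sigma^*H-E|$ contains even one $\ell_i$, and condition (3) of Proposition \ref{p prime div criterion} fails for every candidate you name. Step 4 only points to \cite{MM85} without giving an argument.

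The criterion does apply if you take $S:=E\simeq\P^1\times\P^1$, which is smooth and contains all three fibres. First, $-K_Y-E\sim 3\sigma^*H-2E=\sigma^*H+2(\sigma^*H-E)$ is ample, because $\sigma^*H$ and $\sigma^*H-E$ are nef and span $\Nef(Y)$. Next, on $E$ let $f$ be the fibre class of $E\to C$ and $s$ the other ruling. From $H\cdot C=2$, $E\cdot f=-1$ and $E^3=-\deg N_{C/Q}=-4$ you get $\sigma^*H|_E\equiv 2f$ and $E|_E\equiv -s+2f$. Hence $-K_Y|_E\equiv s+4f$, and $-K_Y|_E-(\ell_1+\ell_2+\ell_3)\equiv s+f$, which is ample. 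So $-K_X$ is ample, $\rho(X)=5$, and $X$ is of No.\ 5-1 by the blowup description in \cite[Section 7]{FanoIV}. With this choice of $S$, or simply by citing \cite[Proposition 5.23]{FanoIV} as the paper does, your proof is complete.
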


\begin{proof}
The case $x \geq 6$ is realised as a direct product of 
$\P^1$ and a smooth del Pezzo surface. 
Hence we may assume $x=5$, i.e., 
$x\text{-}yz \in \{\text{5-1, 5-2, 5-3}\}$. 
If $x\text{-}yz = \text{5-1}$ (resp.\ 
$x\text{-}yz \in \{\text{5-2, 5-3}\}$), 
then the assertion follows from 
\cite[Proposition 5.23]{FanoIV} (resp.\ Proposition \ref{p exist as toric}). 
\end{proof}

\subsection{Del Pezzo threefolds}

In this subsection, we establish the existence of maximal blowups of del Pezzo threefolds.

\begin{lem}\label{l dP3 elliptic exist}
Let 
\[
\text{x-yz} \in \{\text{2-1, 2-3, 2-5, 2-10, 2-14, 3-7, 3-11, 4-1}\}. 
\]
Then there exists a Fano threefold of No.\ x-yz. 
\end{lem}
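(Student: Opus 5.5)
Each of the listed numbers is obtained, following Mori--Mukai, by blowing up a del Pezzo threefold $V_d$ (or an intermediate blowup of one) along an elliptic curve lying on a member of $|\MO_{V_d}(1)|$. Concretely:
\begin{itemize}
\item 2-1, 2-3, 2-5, 2-10, 2-14 are $\Bl_C V_d$ for $d=1,2,3,4,5$, with $C$ an intersection of two members of $|\MO_{V_d}(1)|$, hence an elliptic curve of degree $d$;
\item 3-7 and 3-11 are blowups of $W$ and $V_7$ along elliptic curves that are intersections of two members of $|-\tfrac12 K|$;
\item 4-1 is handled by the complete intersection description of Proposition \ref{p exist as CI} (divisor of multidegree $(1,1,1,1)$ in $(\P^1)^4$) and needs nothing further.
\end{itemize}

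First I produce $C$. Let $Y$ be one of $V_1,\dots,V_5,V_7,W$ and $H:=-\tfrac12 K_Y$. I take two general members $S_1,S_2\in|H|$ and set $C:=S_1\cap S_2$. The existence of $V_d$ is already granted by Proposition \ref{p exist rho 1} (1-12 to 1-16) and Proposition \ref{p exist as toric} / the explicit descriptions of $V_7$ and $W$.

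I need three facts about $S_1$ and $C$:
\begin{itemize}
\item $S_1$ is a smooth del Pezzo surface, by Bertini when $|H|$ is base point free, i.e.\ for $d\ge 3$, $V_7$, $W$;
\item $C$ is a smooth curve, by Bertini applied to the restriction of $|H|$ to $S_1$;
\item $C$ is connected of arithmetic genus one, by adjunction: $K_C=(K_Y+2H)|_C=0$, and $h^0(\MO_C)=1$ from the Koszul resolution using $H^1(Y,-H)=H^2(Y,-2H)=0$ (Theorem \ref{t Fano3 ANV} with Serre duality).
\end{itemize}

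For these cases, $C$ is an intersection of members of $|H|$, because $\mathcal I_C(H)$ is a quotient of $\MO_Y^{\oplus 2}$ via the Koszul complex. Since $-K_Y-(2-1)H=H$ is ample, Proposition \ref{p MM 2.12} with $r=2$ gives that $\Bl_C Y$ is Fano. Alternatively I could apply the prime divisor criterion (Proposition \ref{p prime div criterion}) with $S=S_1$: then $-K_Y-S=H$ is ample and $-K_Y|_S-C\sim H|_S$ is ample.

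To identify the number, I check $\rho$ and $(-K_X)^3=(-K_Y)^3-2(-K_Y\cdot C)+2g-2=8d-4d=4d$, which matches the entries in \cite[Section 7]{FanoIV}. For 3-7 and 3-11 the same computation applies, together with the description of the extremal contractions.

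\textbf{Main obstacle.} The hard cases are $d=1,2$. There $|H|$ has a base point ($d=1$), or the relevant linear systems are not very ample, so generic smoothness of $C$ does not follow directly from Bertini. In characteristic $p$, Bertini for base-point-free but non-very-ample systems can also fail via inseparability.

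\begin{itemize}
\item For $d=2$, I would try first to use that $|H|$ defines the double cover $V_2\to\P^3$. Then $C$ is the preimage of a general line, a double cover of $\P^1$ branched appropriately. Smoothness can be checked as in Proposition \ref{p smooth double cover exist} by restricting the equation $t^2+at+b$ to a general line, including the case $p=2$.
\item For $d=1$, the pencil of elliptic curves through the base point corresponds to a general pencil in $|H|$. I would verify smoothness at the base point directly, since two general members meet transversally there. Away from the base point I would use that the rational map $V_1\dashrightarrow\P^2$ is separable, which I would deduce from the weighted hypersurface description of $V_1$.
\end{itemize}
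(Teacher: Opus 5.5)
\paragraph{Verdict.} There is a genuine gap, at the only step that is genuinely characteristic-$p$: smoothness of $C$ when $|H|$ is not very ample. It is serious for $d=1$ and milder for $d=2$.

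\paragraph{Where you agree with the paper.} Your overall route is the paper's. You take $C=S_1\cap S_2$ for general $S_1,S_2\in|H|$ and conclude with Proposition~\ref{p MM 2.12}; the paper uses Proposition~\ref{p prime div criterion}, which you also mention. Sending 4-1 to Proposition~\ref{p exist as CI} is legitimate.

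\paragraph{How the paper handles smoothness of $C$.} The paper does not prove this by hand. It quotes \cite[Theorem A(1)]{KT25}, which gives smoothness of $S\cap S'$ for general $S,S'\in|H|$ on every del Pezzo threefold, uniformly in the degree.

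\paragraph{Why your $d=1$ argument fails.} Separability of $V_1\dashrightarrow\P^2$ does not imply that the general fibre is smooth in characteristic $2$ or $3$. Write $V_1$ as a sextic in $\P(1,1,1,2,3)$. Then the curves $C$ are the Weierstrass cubics $w^2+a_1zw+a_3w=z^3+a_2z^2+a_4z+a_6$, with $a_i=a_i(x)$ evaluated at a point of $\P^2$. Such a family can be separable while every member is cuspidal, i.e.\ quasi-elliptic; for example $w^2=z^3+a_6(x)$ in characteristic $3$. What actually has to be shown is that the discriminant $\Delta(x)$ is not identically zero. This must use the smoothness of $V_1$: the example above is in fact singular, but that needs proof. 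Your sketch contains nothing that addresses this.

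\paragraph{A possible repair for $d=1$.} Since only existence is needed, you can avoid \cite{KT25} by letting $V_1$ vary. Among Weierstrass-form sextics in $\P(1,1,1,2,3)$, smoothness and $\Delta\not\equiv 0$ are both open conditions. Once you check that each is nonempty, they hold simultaneously. Your transversality check at the base point then finishes the argument.

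\paragraph{The $d=2$ case.} Proposition~\ref{p smooth double cover exist} is a statement about a general pair $(a,b)$, not about a general line for a fixed $V_2$. So you must do one of the following:
\begin{itemize}
\item vary $V_2$ as well, noting that a smooth double cover with $-K=2H$ and $H^3=2$ is a del Pezzo threefold of degree $2$, hence of No.\ 1-12; or
\item bound the dimension of the family of lines $\ell$ for which $f^{-1}(\ell)$ is singular. Such lines must be tangent to the image of $df$ at a point of the ramification locus $\{a=0\}$.
\end{itemize}

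\paragraph{Smaller points.}
\begin{itemize}
\item In characteristic $p$, Bertini needs very ampleness (or an unramified morphism), not base point freeness. Note that $|H|$ is also base point free for $d=2$.
\item $V_1,\dots,V_5$ are No.\ 1-11 to 1-15, not 1-12 to 1-16.
\item $(\rho,(-K_X)^3)$ alone does not determine the number. 2-5 and 2-6 both have degree $12$, 2-9 and 2-10 both have $16$, and 2-12, 2-13, 2-14 all have $20$. The identification should go through the contraction $X\to V_d$ and the definitions in \cite[Section 7]{FanoIV}.
\end{itemize}
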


\begin{proof}
Let $Y$ be a del Pezzo threefold. 
Take an ample Cartier divisor $H$ satisfying $-K_Y \sim 2H$. 
Set $d := H^3$. 
Let $S$ and $S'$ be general members of $|H|$. 
Then $C := S \cap S'$ is a smooth curve by 
\cite[Theorem A(1)]{KT25}. 
In particular, $S$ is a prime divisor smooth along $C$. 
Then both $-K_Y -S \sim 2H -H = H$ and $-K_Y|_S -C \sim  (2H -S')|_S = H|_S$ are ample.  
Therefore, $-K_X$ is ample for $X := \Bl_C\,Y$ (Proposition \ref{p prime div criterion}). 
\end{proof}

The maximal blowups of $\P^1 \times \P^1 \times \P^1$ are 4-1, 4-3, 4-6, 4-8, 4-13, and 10-1. 
Since the cases 4-1 and 10-1 have already been settled, 
it remains to establish the existence of 4-3, 4-6, 4-8, and 4-13.




\begin{prop}
\label{p existence 4-13}
Let $\text{x-yz} \in \{\text{4-3, 4-6, 4-8, 4-13}\}$. 
Then there exists a Fano threefold of No.\ x-yz. 
\end{prop}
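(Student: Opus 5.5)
The plan is to realise all four cases as blowups $X=\Bl_C Y$ of $Y:=\P^1\times\P^1\times\P^1$ along a smooth rational curve $C$ of tridegree
\[
(1,1,2)\ \text{(4-3)},\qquad (1,1,1)\ \text{(4-6)},\qquad (0,1,1)\ \text{(4-8)},\qquad (1,1,3)\ \text{(4-13)},
\]
following the descriptions in \cite[Section 7]{FanoIV}. Ampleness of $-K_X$ will come from the prime divisor criterion (Proposition \ref{p prime div criterion}), applied with a smooth divisor $S\simeq \P^1\times\P^1$ containing $C$. Recall that $-K_Y$ is of tridegree $(2,2,2)$.

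For 4-3, 4-6 and 4-13, I take $S:=\Delta\times\P^1_3$, where $\Delta\subset\P^1_1\times\P^1_2$ is the diagonal. This $S$ is a smooth prime divisor of tridegree $(1,1,0)$, so $-K_Y-S$ has tridegree $(1,1,2)$ and condition (1) holds. Identify $S\simeq\P^1_u\times\P^1_3$ with $u$ the diagonal coordinate. Then $-K_Y|_S$ has bidegree $(4,2)$: the first two factors of $(2,2,2)$ both restrict to $\MO(2)$ on $\P^1_u$, giving $2+2=4$. For a curve $C\subset S$ of tridegree $(1,1,m)$, write $C\sim \alpha F_u+\beta F_3$, where $F_u$ and $F_3$ are the fibre classes over $\P^1_u$ and $\P^1_3$. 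Intersecting with the two rulings gives $C\sim (m,1)$, i.e.\ $C$ is the graph of a degree-$m$ morphism $\P^1_u\to\P^1_3$. Such a $C$ is a smooth rational curve contained in $S$, and $S$ is smooth, so condition (3) holds. Moreover
\[
-K_Y|_S-C\sim(4-m,1),
\]
which is ample for $m\in\{1,2,3\}$, so condition (2) holds. Proposition \ref{p prime div criterion} then shows that $-K_X$ is ample.

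For 4-8 I use $S:=\P^1_1\times\Delta'$ instead, with $\Delta'\subset\P^1_2\times\P^1_3$ the diagonal; then $S$ has tridegree $(0,1,1)$. Here $-K_Y-S$ has tridegree $(2,1,1)$ and is ample, and $-K_Y|_S$ has bidegree $(2,4)$. I take $C=\{\mathrm{pt}\}\times\Delta'$, which has tridegree $(0,1,1)$ and is a ruling of $S$. Then $-K_Y|_S-C$ has bidegree $(1,4)$ or $(2,3)$ according to the convention, and in either case it is ample. Proposition \ref{p prime div criterion} again gives that $X$ is Fano.

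The step I expect to need the most care is confirming that the Fano threefold obtained is really of the stated number. If \cite[Section 7]{FanoIV} defines 4-3, 4-6, 4-8, 4-13 directly as blowups of $\P^1\times\P^1\times\P^1$ along curves of these tridegrees, there is nothing more to do. Otherwise I would check that $\rho(X)=4$ and compute
\[
(-K_X)^3=(-K_Y)^3-2(-K_Y\cdot C)+2g(C)-2=46-2(-K_Y\cdot C),
\]
using $(-K_Y)^3=48$ and $g(C)=0$. This gives $34$, $36$, $38$, $30$ for 4-3, 4-6, 4-8, 4-13 respectively. I would then match these values, together with the $E_1$-contraction to $\P^1\times\P^1\times\P^1$ (No.\ 3-27), against the list in \cite[Subsection 7.4]{FanoIV}.
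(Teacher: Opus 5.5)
Your proof is correct and is essentially the paper's proof. The paper also takes $S=\Delta\times\P^1_3$ and a smooth $C\in|dM+N|$ of tridegree $(1,1,d)$. It checks that $-K_Y-S$ and $-K_Y|_S-C\sim(4-d)M+N$ are ample, applies Proposition \ref{p prime div criterion}, and identifies the number directly from \cite[Subsection 7.4]{FanoIV}. The case $d=0$ covers 4-8 inside the same $S$, so your separate surface for 4-8 is unnecessary, though harmless.

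There is one correction to your fallback check. Since $-K_Y\cdot C=4+2m$, your formula gives $(-K_X)^3=38-4m$. That is $30,34,38,26$ for 4-3, 4-6, 4-8, 4-13, not $34,36,38,30$. Your values would not match the classification table (for example, $36$ is the degree of No.\ 4-7). So redo that arithmetic if you ever need that route.
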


\begin{proof}
Set
$Y
:=
\P^1_1\times\P^1_2\times\P^1_3$ and $S := \Delta \times \P^1_3 \subset Y$, 
where $\Delta$ denotes the diagonal on $\P^1_1 \times \P^1_2$. 
We have $S \sim H_1 + H_2$ for the $i$-th projection $\pr_i : Y \to \P^1_i$ and $H_i := \pr_i^*\MO_{\P^1_i}(1)$. 
Set $M := H_1|_S = H_2|_S$ and $N := H_3|_S$. 
Fix $d \in \{0, 1, 2, 3\}$ and 
take a smooth curve $C \subset S$ satisfying $C \in |dM + N|$. 
Then $C$ is of tridegree $(1, 1, d)$. 
Let $\sigma : X := \Bl_C\,Y \to Y$ be the blowup along $C$. 
By \cite[Subsection 7.4]{FanoIV}, it suffices to prove that $-K_X$ is ample. 
As $S$ is a smooth prime divisor with $C \subset S$, 
it is enough to show (1) and (2) below (Proposition \ref{p prime div criterion}). 
\begin{enumerate}
\item $-K_Y -S$ is ample. 
\item $-K_Y|_S -C$ is ample. 
\end{enumerate}
The condition (1) holds by 
$-K_Y -S \sim (2H_1 + 2H_2 + 2H_3) -(H_1+H_2) = H_1 + H_2+2H_3$. 
The condition (2) follows from $4-d>0$ and 
\[
-K_Y|_S -C \sim 
(2H_1 + 2H_2 + 2H_3)|_S -(dM+N) = (4-d)M + N. 
\]
\qedhere

\end{proof}

We now treat the remaining maximal blowups of del Pezzo threefolds $V_d$ with $1 \leq d \leq 5$.


\begin{prop}
\label{p-existence-blowups-of-Vd}
Let 
\[
\text{x-yz} \in \{\text{2\text{-}11,\ 
2\text{-}16,\ 
2\text{-}19,\ 
2\text{-}20,\ 
2\text{-}22,\ 
2\text{-}26.}\}. 
\]
Then there exists a Fano threefold of No.\ x-yz. 
\end{prop}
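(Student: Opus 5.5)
Recall the Mori--Mukai descriptions in characteristic zero, which \cite[Section 7]{FanoIV} reproduces:
\begin{itemize}
\item 2-11 is the blowup of $V_3$ along a line;
\item 2-16 is the blowup of $V_4$ along a conic;
\item 2-19 is the blowup of $V_4$ along a line;
\item 2-20 is the blowup of $V_5$ along a twisted cubic;
\item 2-22 is the blowup of $V_5$ along a conic;
\item 2-26 is the blowup of $V_5$ along a line.
\end{itemize}
By \cite[Subsection 7.2]{FanoIV}, the number is then determined once the blowup is Fano. So it suffices to exhibit a suitable smooth curve $C\subset Y=V_d$ and show that $-K_X$ is ample for $X:=\Bl_C\,Y$. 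As in Lemma \ref{l dP3 elliptic exist}, write $-K_Y\sim 2H$. I would apply Proposition \ref{p prime div criterion} with $S$ a member of $|H|$ containing $C$ and smooth along $C$.

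Condition (1) of Proposition \ref{p prime div criterion} is automatic, since $-K_Y-S\sim H$ is ample. Condition (3) needs $S\in|H|$ through $C$ that is a prime divisor smooth along $C$. Condition (2) asks that $2H|_S-C$ be ample on $S$. Here $S$ is a (possibly singular) del Pezzo surface of degree $d$ with $H|_S=-K_S$, so the requirement is that $-2K_S-C$ be ample.

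I would choose $S$ smooth whenever possible. Then $S$ is a smooth del Pezzo surface of degree $d\in\{3,4,5\}$, and $C$ is a line ($H\cdot C=1$, a $(-1)$-curve), a conic ($H\cdot C=2$, $C^2=0$) or a twisted cubic ($H\cdot C=3$, $C^2=1$). Ampleness of $-2K_S-C$ is checked by Nakai--Moishezon on $S$:
\begin{itemize}
\item the self-intersection is $(-2K_S-C)^2=4d-4(-K_S\cdot C)+C^2>0$ in all six cases;
\item for every $(-1)$-curve $\ell\neq C$ we have $(-2K_S-C)\cdot\ell=2-C\cdot\ell$, and I need this to be positive, i.e.\ $C\cdot\ell\le 1$;
\item for $\ell=C$ a line, $(-2K_S-C)\cdot C=2+1=3>0$.
\end{itemize}
A line meets any other line in at most one point, and a conic $C$ with $C^2=0$ satisfies $C\cdot\ell\le 1$ since $(\ell+C)$ stays within a conic bundle. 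The case needing care is the twisted cubic on the quintic del Pezzo surface. There I would take $C$ to be the pullback of a line in $\P^2$ under $S\to\P^2$, the blowup in $4$ points; then $C\cdot\ell\le1$ for all ten lines. Since the cone of curves of a del Pezzo surface of degree $\le 7$ is generated by $(-1)$-curves, this gives ampleness.

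The main obstacle is producing the configuration $C\subset S\subset Y$ itself in characteristic $p$: that $V_d$ contains such a curve and a smooth hyperplane section through it. I would work inside the family of hyperplane sections. First take a general $S\in|H|$, which is smooth by \cite[Theorem A]{KT25} (as used in Lemma \ref{l dP3 elliptic exist}), hence a smooth del Pezzo surface of degree $d$. Then take $C$ to be a line, a conic (a fibre of a conic bundle structure on $S$), or a twisted cubic (the class of the pullback of a line from $\P^2$) on $S$. These classes contain smooth rational curves on any smooth del Pezzo surface of the relevant degree, independent of the characteristic, because they are base point free or are $(-1)$-curves. This yields $C\subset S$ with $S$ smooth. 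The remaining risk is whether the resulting blowup is of the intended number, but the numerical invariants ($d$ and $\deg C$) fix it via \cite[Section 7]{FanoIV}.
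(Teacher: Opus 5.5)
Your proposal is correct and is essentially the paper's proof: take a general smooth $S\in|H|$ and let $C$ be a $(-1)$-curve, a smooth conic-bundle fibre, or the pullback of a general line from $\P^2$. Proposition \ref{p prime div criterion} together with Kleiman's criterion then reduces everything to $C\cdot\Gamma\le 1$ for every $(-1)$-curve $\Gamma$.

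The only difference is how this bound is proved. The paper gets it uniformly from the Hodge index inequality $d(C^2+2C\cdot\Gamma-1)\le(1-K_S\cdot C)^2$. You should use that inequality in the conic case, because ``$\ell+C$ stays within a conic bundle'' is not an argument. For $d\ge 4$ the inequality gives $4(2C\cdot\Gamma-1)\le 9$, hence $C\cdot\Gamma\le 1$.
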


\begin{proof}
For $d\in\{3,4,5\}$, 
let $V_d$ be a del Pezzo threefold. 
Take an ample divisor $H$ on $V_d$ satisfying $-K_{V_d} \sim 2H$. 
In particular, $|H|$ is very ample, $H^3 =d$, and $\Pic V_d=\mathbb Z H$. 
Let $S$ be a general member of $|H|$, which is a smooth del Pezzo surface with $K_S^2 = d$. 
Fix a conic bundle structure $\pi : S \to \P^1$ and a birational contraction $\tau : S \to \P^2$. 
Choose  a smooth curve $C$ on $S$ as follows:
\[
\begin{array}{c|c|c}
\text{No.}
&
V_d
&
C
\\
\hline
2\text{-}11
&
V_3
&
\text{a }(-1)\text{-curve on }S
\\
2\text{-}16
&
V_4
&
\text{a smooth fibre of } \pi:S \to \P^1
\\
2\text{-}19
&
V_4
&
\text{a }(-1)\text{-curve on }S
\\
2\text{-}20
&
V_5
&
\text{the pullback of a general line under } \tau : S\to\P^2
\\
2\text{-}22
&
V_5
&\text{a smooth fibre of } \pi:S \to \P^1
\\
2\text{-}26
&
V_5
&
\text{a }(-1)\text{-curve on }S
\end{array}
\]
For $X := \Bl_C\, V_d$, it suffices to prove that $-K_X$ is ample  \cite[Subsection 7.2]{FanoIV}. 
Since $S$ is a smooth prime divisor with $C \subset S$ and 
the divisor $-K_{V_d} -S \sim H$ is ample, 
it suffices to show  $-2K_S -C$ is ample (Proposition \ref{p prime div criterion}). 
Fix a $(-1)$-curve $\Gamma$ on $S$. 
By Kleiman's criterion for ampleness, 
it suffices to show that $(-2K_S-C) \cdot \Gamma >0$. 
To summarise, the problem is reduced to proving $(\star)$ below. 
\begin{enumerate}
\item[($\star$)] $C \cdot \Gamma \leq 1$. 
\end{enumerate}
For $D := C +\Gamma$, 
we have $D^2 = C^2 + 2C \cdot \Gamma -1$.   
Then it follows from the Hodge index theorem that 
\[
d( C^2 + 2C \cdot \Gamma -1)\leq 
D^2 \cdot (-K_S)^2 \leq ( D \cdot (- K_S))^2 =(1 -K_S \cdot C)^2. 
\]

\underline{2-11, 2-19, 2-26}: 
Assume that $C$ is a $(-1)$-curve. 
In this case, we get 
\[
3( -1 +2 C \cdot \Gamma -1) \leq d( C^2 + 2C \cdot \Gamma -1) \leq (1 -K_S \cdot C)^2 = (1- (-1))^2 =4, 
\]
which implies $2C \cdot \Gamma \leq 2 + \frac{4}{3}$. 
Hence $C \cdot \Gamma \leq 1$, as required.

\underline{2-16, 2-22}: 
Assume that $C$ is a smooth fibre of $\pi : S \to \P^1$. 
In this case, we get 
\[
4( 0 +2 C \cdot \Gamma -1) \leq d( C^2 + 2C \cdot \Gamma -1) \leq (1 -K_S \cdot C)^2 = (1- (-2))^2 =9, 
\]
which implies $2C \cdot \Gamma \leq 1 + \frac{9}{4}$. 
Hence $C \cdot \Gamma \leq 1$, as required.

\underline{2-20}: 
Assume that $C$ is the pullback of a general line under $\tau : S \to \P^2$. 
In this case, we get 
\[
10 C \cdot \Gamma =5(1 + 2C \cdot \Gamma -1) =d( C^2 + 2C \cdot \Gamma -1)\leq (1 -K_S \cdot C)^2 = (1+3)^2 = 16, 
\]
which implies $C \cdot \Gamma \leq 1$, as required. 
\end{proof}

The maximal blowups of $W$ are 3-7, 3-13, 3-16, 3-20, and 4-7.
The cases 3-16 and 3-20 will be treated later via $V_7$ and $Q$, respectively. 
Since the case 3-7 has already been settled in 
Lemma \ref{l dP3 elliptic exist}, 
we treat the remaining two cases  
3-13 and 4-7.


\begin{prop}
\label{p-existence-blowups-of-W}
Let x-yz be 3-13 or 4-7. 
Then there exists a smooth Fano threefold of No.\ x-yz. 
\end{prop}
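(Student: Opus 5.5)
Following the pattern of Proposition \ref{p existence 4-13}, I will realise both numbers as blowups of $Y := W$ along smooth curves lying on a suitable prime divisor, and then apply Proposition \ref{p prime div criterion}. Let $p_1, p_2 : W \to \P^2$ be the two projections, which are $\P^1$-bundles, and set $H_i := p_i^*\MO_{\P^2}(1)$. Then $-K_W \sim 2H_1 + 2H_2$. Recall from \cite[Subsection 7.3, 7.4]{FanoIV} the two descriptions. No.\ 3-13 is the blowup of $W$ along a smooth curve $C$ of bidegree $(2,2)$ such that $p_i|_C$ is a closed immersion. No.\ 4-7 is the blowup of $W$ along two disjoint curves of bidegree $(0,1)$ and $(1,0)$, i.e.\ fibres of $p_1$ and of $p_2$ respectively. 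By the cited subsection, once $-K_X$ is ample, $X$ is of the required number. So it suffices to produce such curves and check ampleness.

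For the prime divisor I take a general member $S \in |H_1 + H_2|$. By Bertini, $S$ is a smooth del Pezzo surface of degree $6$: indeed $S$ is a hyperplane section of $W \subset \P^7$ via Segre, and $-K_S \sim (H_1+H_2)|_S$ by adjunction. Moreover $p_i|_S : S \to \P^2$ is the blowup at three points, and $H_1|_S$, $H_2|_S$ are the two classes pulled back from lines. Then $-K_W - S \sim H_1 + H_2$ is ample, which is condition (1). Condition (3) holds automatically because $S$ is smooth.

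For 3-13, I take $C \in |H_1|_S + H_2|_S| = |-K_S|$ to be a general member, i.e.\ a hyperplane section, which is a smooth elliptic curve of degree $6$. This has the wrong genus, so I need to recheck the bidegree. A curve of bidegree $(2,2)$ on $W$ with $p_i|_C$ a closed immersion must be a conic in each $\P^2$, hence rational. So instead I choose $C \subset S$ in the class $\ell_1 + \ell_2 - e$, where $\ell_i = H_i|_S$; this is a conic class on the hexagon. Concretely, I take $C$ to be a general member of a conic bundle $|\ell_1 + \ell_2 - \text{(sum of suitable exceptional curves)}|$ with $C \cdot \ell_1 = C \cdot \ell_2 = 2$ and $C \cong \P^1$. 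For instance, I write $\ell_2 = 2\ell_1 - e_1 - e_2 - e_3$ in terms of $p_1|_S$, and take $C \in |\ell_1 + \ell_2 - \ldots|$ adjusted so that both degrees are $2$ and $C^2 = 0$, that is, a fibre of a conic bundle $S \to \P^1$ of degree $2$ with respect to each $\ell_i$. On the sextic del Pezzo, the conic classes $\ell_1 - e_i$ have degrees $(1,1)$, so I will take $C$ of class $2\ell_1 - e_1 - e_2$ or the like, chosen so that the degrees are $(2,2)$; this needs checking. Then condition (2) becomes the ampleness of $-K_W|_S - C \sim -2K_S - C$, which by Kleiman reduces to $C \cdot \Gamma \le 1$ for each of the six $(-1)$-curves $\Gamma$. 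I will verify this either directly or by the Hodge index argument from Proposition \ref{p-existence-blowups-of-W}'s predecessor. Closed immersion of $p_i|_C$ follows since $C \cdot \ell_i = 2$ and $C$ is a smooth rational curve mapping birationally onto a conic.

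For 4-7, I take $C = C_1 \sqcup C_2$, where $C_1$ and $C_2$ are disjoint fibres of $p_1|_S$ and $p_2|_S$ over general points (these are fibres of $p_1$, $p_2$ contained in $S$?). This fails, because a general fibre of $p_1$ is not contained in $S$. Instead I take $C_1, C_2$ to be two disjoint $(-1)$-curves of $S$ of the appropriate bidegrees $(1,0)$ and $(0,1)$; opposite edges of the hexagon give exactly this. Condition (2) then asks that $-2K_S - C_1 - C_2$ be ample. On the hexagon, a $(-1)$-curve $\Gamma$ meets $C_1 + C_2$ at most twice, and $-2K_S \cdot \Gamma = 2$. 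Equality occurs for the two edges adjacent to both, which is impossible since opposite edges share no neighbour, and for $\Gamma = C_i$ itself one gets $2 + 1 > 0$. I expect the main obstacle to be this bookkeeping of classes on the degree-$6$ del Pezzo surface: matching the bidegrees with \cite{FanoIV} and confirming strict positivity on every $(-1)$-curve, especially in the 3-13 case, where I am least sure of the correct class.
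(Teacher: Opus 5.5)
Your strategy works. For 3-13, the class you eventually name, $C\sim 2\ell_1-e_1-e_2$, is exactly the paper's choice: it is the proper transform of a smooth conic through two of the three points blown up by $p_1|_S$ and missing the third. The paper uses the same general $S\in|H_1+H_2|$ and the same appeal to Proposition \ref{p prime div criterion}.

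You should commit to this class and delete the intermediate guesses, because they are false, not merely unchecked:
\begin{itemize}
\item A conic-bundle fibre $F$ has $-K_S\cdot F=2$, so it has bidegree $(1,1)$.
\item A smooth rational $C$ with $C\cdot\ell_1=C\cdot\ell_2=2$ has $-K_S\cdot C=4$. Adjunction then forces $C^2=2$, so your requirement $C^2=0$ has no solution.
\item The class $\ell_1+\ell_2-e$ has bidegree $(3,2)$ or $(2,3)$.
\end{itemize}

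With $\ell_2=2\ell_1-e_1-e_2-e_3$, the verification is immediate. The curve $C$ meets the six $(-1)$-curves $e_1,e_2,e_3,\ell_1-e_1-e_2,\ell_1-e_1-e_3,\ell_1-e_2-e_3$ with multiplicities $1,1,0,0,1,1$. Hence $-2K_S-C\sim 4\ell_1-e_1-e_2-2e_3$ is positive on all six, and these are exactly the paper's numbers. The same numbers show that $C$ meets each $p_2|_S$-exceptional curve at most once and transversally, so $p_2|_C$ is indeed an isomorphism onto a conic.

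For 4-7, your route is genuinely different from the paper's. The paper does not use $S$ at all. A fibre of $p_1$ (resp.\ $p_2$) is cut out by two members of $|H_1|$ (resp.\ $|H_2|$). So any disjoint union $C_1\amalg C_2$ of a $p_1$-fibre and a $p_2$-fibre is an intersection of members of $|H_1+H_2|$. Since $-K_W-(H_1+H_2)\sim H_1+H_2$ is ample, Proposition \ref{p MM 2.12} finishes in one line.

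You instead observe that the six $(-1)$-curves of $S$ alternate between fibres of $p_1$ and fibres of $p_2$ around the hexagon. You pick two opposite ones and check condition (2) of Proposition \ref{p prime div criterion} by hand. This is correct: no $(-1)$-curve meets both, and $(-2K_S-C_1-C_2)\cdot C_i=3$.

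Your version has the merit of treating both numbers with one surface and one criterion. The paper's version is shorter, needs no cone computation on $S$, and applies directly to any disjoint pair of fibres.
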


\begin{proof}
Let $W\subset\P^2\times\P^2$ be a smooth divisor of bidegree
$(1,1)$. 
For the $i$-th projection $\pi_i\colon W\longrightarrow\P^2$ and 
$H_i := \pi_i^*\MO_{\P^2}(1)$, 
we have $-K_W \sim 2H_1 + 2H_2$ by adjunction. 

\underline{4-7}: 
Fix fibres $C_1$ and $C_2$  of $\pi_1$ and $\pi_2$, respectively, such that $C_1 \cap C_2 = \emptyset$. 
For $L := H_1 + H_2$, $C:= C_1 \amalg C_2$ is an intersection of members of $|L|$, 
and hence $X := \Bl_C W$ is Fano by Proposition \ref{p MM 2.12}. 
Then $X$ is of No.\ 4-7.

\underline{3-13}: 
Fix  a general smooth member $S\in|H_1+H_2|$. 
Then $S$ is a smooth del Pezzo surface with $K_S^2 = 6$. 
In particular, 
there is the blowup 
$\tau : S \simeq \Bl_{P_1, P_2, P_3}\,\P^2 \to \P^2$, 
where $P_1, P_2, P_3$ are  non-collinear points. 
We can find a smooth conic $C_{\P^2}$ such that $P_1 \in C_{\P^2}, P_2 \in C_{\P^2}, P_3 \not\in C_{\P^2}$. 
For the proper transform $C$ of $C_{\P^2}$ on $S$, 
let $\sigma: X \to W$ be the blowup along $C$. 
As $S$ is a smooth prime divisor with $C \subset S$ and $-K_W -S$ is ample, 
it suffices to show that $-K_W|_S -C$ is ample 
by Proposition \ref{p prime div criterion} and \cite[Subsection 7.3]{FanoIV}. 
For $M := \tau^*\MO_{\P^2}(1)$, we have
\[
\begin{aligned}
-K_W|_S-C
&\sim
-2K_S-C
\\
&\sim
2(3M-E_1-E_2-E_3)-(2M-E_1-E_2)
\\
&\sim
4M-E_1-E_2-2E_3.
\end{aligned}
\]
The Kleiman-Mori cone of $\NE(S)$ is generated by the six $(-1)$-curves
\[
E_1,\qquad
E_2,\qquad
E_3,
\]
\[
M-E_1-E_2,\qquad
M-E_1-E_3,\qquad
M-E_2-E_3.
\]
We have $(4M-E_1-E_2-2E_3)\cdot E_i>0$ for every $i \in\{1, 2, 3\}$, and 
\[
(4M-E_1-E_2-2E_3)\cdot(M-E_1-E_2)=2,
\]
\[
(4M-E_1-E_2-2E_3)\cdot(M-E_1-E_3)=1,
\]
\[
(4M-E_1-E_2-2E_3)\cdot(M-E_2-E_3)=1.
\]
By Kleimann's criterion, $-K_W|_S -C$ is ample, as required. 
\end{proof}

The maximal blowups of $V_7$ are 3-11, 3-14, 3-16, 3-23, 3-29, 5-1, and 5-2.
The cases 3-11, 3-29, 5-1, and 5-2 have already been settled. 
Thus it remains to establish the existence of 3-14, 3-16, and 3-23.




\begin{prop}
\label{p-existence-blowups-of-V7}
Let 
$\text{x-yz} \in 
\{\text{3\text{-}14,\ 
3\text{-}16,\ 
3\text{-}23}\}.$ 
Then there exists a Fano threefold of No.\ x-yz. 
\end{prop}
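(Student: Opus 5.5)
The plan is to realise all three cases as blowups $X:=\Bl_C\,V_7$ along a smooth curve $C$, following the Mori--Mukai descriptions. Write $\sigma\colon V_7=\Bl_P\,\P^3\to\P^3$, let $E\simeq\P^2$ be the exceptional divisor, and set $H:=\sigma^*\MO_{\P^3}(1)$. Then $-K_{V_7}\sim 4H-2E$, and $aH-bE$ is ample if and only if $a>b>0$. The three cases are:
\begin{itemize}
\item 3-14: $C$ is the proper transform of a smooth plane cubic in a plane not containing $P$.
\item 3-16: $C$ is the proper transform of a twisted cubic passing through $P$.
\item 3-23: $C$ is the proper transform of a smooth conic passing through $P$.
\end{itemize}
Once $-K_X$ is known to be ample, the identification of the number is taken from \cite[Subsection 7.3]{FanoIV}, as in Proposition \ref{p-existence-blowups-of-W}. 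So the whole task is ampleness, and in each case I would get it from the prime divisor criterion (Proposition \ref{p prime div criterion}).

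\emph{Case 3-14.} I would use Corollary \ref{c prime div criterion to P1-bdl} with $T=\P^2$ and $L=\MO_{\P^2}(1)$, so that $V_7=\P_T(\MO_T\oplus L)$. The section $S$ there satisfies $S\sim\xi=H$, so it is the proper transform of a plane not through $P$. Take $C\subset S\simeq\P^2$ a smooth cubic, so $C_T$ is a cubic. Then $-K_T-L=\MO(2)$ and $L-K_T-C_T=\MO(1)$ are both ample, and the corollary gives the result.

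\emph{Case 3-23.} Let $S'\sim H-E$ be the proper transform of the plane containing the conic. Then $S'\simeq\mathbb F_1$, with negative section $e=E|_{S'}$ and $h=H|_{S'}=e+f$. First, $-K_{V_7}-S'\sim 3H-E$ is ample. Next, $-K_{V_7}|_{S'}=4h-2e$ and $C\sim 2h-e$, so
\[
-K_{V_7}|_{S'}-C\sim 2h-e=e+2f,
\]
which is ample on $\mathbb F_1$.

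\emph{Case 3-16.} Choose a smooth quadric $Q\subset\P^3$ containing the twisted cubic; one exists because the net of quadrics through a twisted cubic has general member smooth, and I would check this in arbitrary characteristic, including $p=2$. Then $P\in Q$, and the proper transform $S'\sim 2H-E$ is the blowup of $\P^1\times\P^1$ at one point, a del Pezzo surface of degree $7$. First, $-K_{V_7}-S'\sim 2H-E$ is ample. The curve has class $C\sim h_1+2h_2-e$, and
\[
-K_{V_7}|_{S'}-C\sim 4(h_1+h_2)-2e-C=3h_1+2h_2-e.
\]
Its intersections with the three $(-1)$-curves $e$, $h_1-e$, $h_2-e$ are $1,2,1$ respectively, and these curves generate $\NE(S')$, so the divisor is ample by Kleiman's criterion.

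I expect the main obstacle to be bookkeeping rather than the criterion itself: checking that the chosen divisors are smooth along $C$ (here they are smooth everywhere), and that the resulting $X$ really has the number claimed in the classification of \cite[Section 7]{FanoIV}, which is phrased in terms of extremal rays rather than constructions. For the latter I would compute the types of the extremal contractions of $X$ and compare with the list.
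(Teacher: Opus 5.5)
Your proposal is correct and is essentially the paper's argument: the prime divisor criterion (Proposition \ref{p prime div criterion}) applied to a surface through the centre, Kleiman's criterion on that surface, and the identification via \cite[Subsection 7.3]{FanoIV}. Using Corollary \ref{c prime div criterion to P1-bdl} for 3-14 is that criterion specialised, and the paper avoids your $p=2$ check in 3-16 by fixing a smooth quadric through $P$ first and taking a $(1,2)$-curve through $P$ on it.

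The one real variation is 3-23, where you use the plane $\mathbb{F}_1$, with $-K_{V_7}|_{S'}-C\sim e+2f$, instead of the paper's quadric with a $(1,1)$-curve; this works equally well. In 3-16 the intersection numbers with $e$, $h_1-e$, $h_2-e$ are $1,1,2$ rather than $1,2,1$, which does not affect the conclusion.
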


\begin{proof}
Let $\tau : Y := V_7 \to \P^3$ be the blowup at a closed point $P$. 
We have the $\P^1$-bundle structure: 
\[
\pi\colon
Y
=
V_7
=
\P_{\P^2}
\left(
\MO_{\P^2}\oplus\MO_{\P^2}(1)
\right)
\longrightarrow
\P^2. 
\]
Set $H:=\tau^*\MO_{\P^3}(1)$, $L:=\pi^*\MO_{\P^2}(1)$, and 
$D:=\Ex(\tau)$, which is a section of $\pi$. 
Then $-K_Y\sim 2H+2L$ and $D\sim H-L$. 
Note that a divisor $aH+bL$ is ample 
if and only if $a>0$ and $b>0$. 

\medskip

\underline{3-14}: 
Fix a plane $S_{\P^3} \subset\P^3$ with $P \not\in S_{\P^3}$. 
For the proper transform $S \subset Y$ of $S_{\P^3}$ on $Y$, we have 
$S \sim H$, $S\xrightarrow{\simeq} S_{\P^3} =\P^2$, and 
$H|_S\simeq L|_S\simeq\MO_{\P^2}(1)$. 
Take a smooth plane cubic curve $C\subset S$. 
Let 
\[
\sigma\colon X:=\Bl_C\,Y\longrightarrow Y
\]
be the blowup along $C$.
We have $-K_Y-S\sim H+2L$, which is ample. Moreover, 
\[
-K_Y|_S-C
\sim
\MO_{\P^2}(4)-\MO_{\P^2}(3)
\sim
\MO_{\P^2}(1),
\]
which is ample.
Proposition \ref{p prime div criterion} implies that $X$ is Fano.

\medskip

\underline{3-16 and 3-23}:
Fix $d\in\{1,2\}$ and a smooth quadric surface
$T_{\P^3} \simeq\P^1\times\P^1$ on $\P^3$ with $P \in T_{\P^3}$. 
Take a smooth curve $B_{\P^3} \subset T_{\P^3}$ of bidegree
$(1,d)$ such that $P \in B_{\P^3}$. 
Note that $B_{\P^3}$ is a conic (resp.\ rational cubic curve) if $d=1$ (resp.\ $d=2$). 
Let $T\subset Y$ and $B\subset Y$ be the proper transforms of $T_{\P^3}$ and $B_{\P^3}$, respectively.
Let $\sigma\colon X:=\Bl_B\,Y\longrightarrow Y$ be the blowup along $B$.
Then $T = \tau^*T_{\P^3} -D \sim2H-D\sim H+L$ 
and \[ 
T\simeq\Bl_P(\P^1\times\P^1). 
\]
In particular, $-K_Y - T \sim  (2H+2L) - (H+L) \sim H+L$, which is ample.

By  Proposition \ref{p prime div criterion} and \cite[Subsection 7.3]{FanoIV}, 
it suffices to show that $-K_Y|_T -B$ is ample. 
Let $M$ and $N$ be the pullbacks of the two ruling classes on $\P^1\times\P^1$ and let $E \subset T$ be the exceptional $(-1)$-curve. 
$\NE(T)$ is generated by
\[ 
E, \qquad M-E, \qquad N-E. 
\]
After switching $M$ and $N$ if necessary, 
we have $-K_T\sim2M+2N-E$ and $B = (\tau|_T)^*B_{\P^3} -E \sim M+dN-E$. 
Moreover, 
\[ 
-K_Y|_T-B \sim -2K_T-B \sim 
(4M+4N-2E) - (M+dN-E) = 
3M+(4-d)N-E. 
\]  
Then $-K_Y|_T -B$ is ample by Kleiman's criterion and the following computations: 
\[ 
\left( 3M+(4-d)N-E \right)\cdot E = 1, 
\] 
\[ 
\left( 3M+(4-d)N-E \right)\cdot(M-E) = 3-d > 0, 
\]
\[ 
\left( 3M+(4-d)N-E \right)\cdot(N-E) = 2, 
\]
as required. 
\end{proof}



\subsection{ $\P^3$, $Q$, $\P^2 \times \P^1$}

In this subsection, we establish the existence of maximal blowups of 
$\P^3, Q$, and $\P^2 \times \P^1$. 
The remaining maximal blowups of $\P^1 \times \P^2$ are 3-5, 3-22, 4-5, which are settled as follows. 

\begin{prop}
\label{p-existence-3-5-3-22}
Let $\text{x-yz} \in 
\{\text{3\text{-}5, 3\text{-}22,  4\text{-}5}\}.$ 
There exists a Fano threefold of No.\ x-yz. 
\end{prop}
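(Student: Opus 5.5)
Following Mori--Mukai, I take $Y := \P^2 \times \P^1$ with projections $p_1 : Y \to \P^2$ and $p_2 : Y \to \P^1$, and set $H_1 := p_1^*\MO_{\P^2}(1)$ and $H_2 := p_2^*\MO_{\P^1}(1)$. Then $-K_Y \sim 3H_1 + 2H_2$, and $aH_1 + bH_2$ is ample exactly when $a,b>0$. The descriptions in \cite[Section 7]{FanoIV} are, up to the exact normalisation of the centres:
\begin{itemize}
\item 3-5: the blowup of $Y$ along a smooth curve $C$ of bidegree $(5,2)$ such that $C \to \P^2$ is an embedding;
\item 3-22: the blowup along a conic in a fibre $\P^2 \times \{t\}$;
\item 4-5: the blowup of $Y$ along a disjoint union of a curve of bidegree $(2,1)$ and a curve of bidegree $(1,0)$.
\end{itemize}
In each case the plan is to construct $C$ and then check that $-K_X$ is ample, using either Proposition \ref{p prime div criterion} (and Corollary \ref{c prime div criterion to P1-bdl}) or Proposition \ref{p MM 2.12}. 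Once ampleness is established, the number follows from \cite[Subsection 7.3, 7.4]{FanoIV} exactly as in the previous propositions.

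\textbf{Case 3-22.} Take $S := \P^2 \times \{t\} \sim H_2$, which is a smooth prime divisor, and a smooth conic $C \subset S$. Then $-K_Y - S \sim 3H_1 + H_2$ is ample. Moreover $-K_Y|_S - C \sim \MO_{\P^2}(3) - \MO_{\P^2}(2) = \MO_{\P^2}(1)$ is ample. Hence $X$ is Fano by Proposition \ref{p prime div criterion}.

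\textbf{Case 3-5.} Apply Corollary \ref{c prime div criterion to P1-bdl} with $T = \P^2$ and $L = \MO_{\P^2}$. Then $Y = \P_T(\MO \oplus \MO)$, and $S = \P^2 \times \{t\}$ is a section. The hypotheses become: $-K_T - L = \MO(3)$ is ample, and $L - K_T - C_T = \MO(3 - \deg C_T)$ is ample. This only allows $\deg C_T \le 2$, so it does not reach bidegree $(5,2)$. I therefore instead choose a non-trivial section. The graph $S$ of a general morphism $\P^2 \dashrightarrow \P^1$ is not a section, so I use a surface $S \in |H_1 + H_2|$, which is isomorphic to $\Bl_{\mathrm{pt}}\P^2 = \mathbb F_1$. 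Then $-K_Y - S \sim 2H_1 + H_2$ is ample. Inside $S$, take $C$ to be a smooth member of the class that maps isomorphically to a plane conic, with $H_2 \cdot C = 2$ (this may need adjusting to the precise normalisation in \cite{FanoIV}). Now compute $-K_Y|_S - C$ on $\mathbb F_1$, writing $H_1|_S = \ell$ and $H_2|_S = \ell - e$, and check positivity against the two extremal curves $e$ and $\ell - e$. This verification is the step I expect to be delicate. If the prime-divisor criterion fails, I fall back to Proposition \ref{p MM 2.12} with $r = 2$: show that $C$ is an intersection of members of $|L|$ for a suitable $L = aH_1 + bH_2$ with $-K_Y - L$ ample, i.e.\ $L \in \{H_1 + H_2,\ 2H_1 + H_2\}$, by exhibiting $C$ as a complete intersection of a member of $|H_1 + H_2|$ and one of $|2H_1 + H_2|$ or similar. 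The Castelnuovo-type generation needed here is independent of the characteristic.

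\textbf{Case 4-5.} Take $C_1 \subset S_1 \in |H_1 + H_2|$ of bidegree $(2,1)$, and a line $C_2$ of bidegree $(1,0)$ in a fibre $\P^2 \times \{t\}$, chosen general so that $C_1 \cap C_2 = \emptyset$. Use Proposition \ref{p MM 2.12} with $L = H_1 + H_2$: the condition $-K_Y - L = 2H_1 + H_2$ is ample, and it remains to check that $C_1 \amalg C_2$ is cut out by members of $|H_1 + H_2|$. I will verify this generation on each component and then the separation of the two disjoint components, as in the 4-7 case.

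The main obstacle is the precise choice of the centre for 3-5 together with its global generation or ampleness check.
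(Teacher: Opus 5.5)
Your treatment of 3-22 is the paper's argument. The cases 3-5 and 4-5, however, have genuine gaps.

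\textbf{3-5.} The centre maps isomorphically onto a smooth conic $B\subset\P^2$ and has degree $5$ over $\P^1$. In your notation this means $H_1\cdot C=2$ and $H_2\cdot C=5$, not $H_2\cdot C=2$; the latter would give $(-K_X)^3=32$ instead of $20$.

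Such a curve cannot lie on $S\in|H_1+H_2|\simeq\mathbb F_1$. With $H_1|_S=\ell$ and $H_2|_S=\ell-e$, an irreducible curve mapping isomorphically onto a plane conic has class $2\ell$ or $2\ell-e$, hence degree at most $2$ over $\P^1$.

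Your fallback via Proposition \ref{p MM 2.12} fails as well. Since $C\subset S_B:=B\times\P^1\simeq\P^1\times\P^1$, let $\alpha$ and $\beta$ be the classes of $\{b\}\times\P^1$ and $B\times\{u\}$. Then $C\sim5\alpha+\beta$, $H_1|_{S_B}\sim2\alpha$ and $H_2|_{S_B}\sim\beta$. A member of $|aH_1+bH_2|$ not containing $S_B$ restricts to a divisor of class $2a\alpha+b\beta$, which contains $C$ only if $2a\geq5$. But ampleness of $-K_Y-L$ forces $a\leq2$. So every member of $|L|$ through $C$ contains $S_B$, and $C$ is never an intersection of members of $|L|$.

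The missing idea is to take $S_B\in|2H_1|$ itself as the prime divisor, with $C$ a general member of $|5\alpha+\beta|$ (the graph of a degree-$5$ map $B\to\P^1$). This is what the paper does. Both $-K_Y-S_B\sim H_1+2H_2$ and $-K_Y|_{S_B}-C\sim(6\alpha+2\beta)-(5\alpha+\beta)=\alpha+\beta$ are ample, so Proposition \ref{p prime div criterion} applies.

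\textbf{4-5.} You have transposed the bidegrees. As in Mori--Mukai, the bidegrees in \cite{FanoIV} refer to $\P^1\times\P^2$. So the centres are a fibre $\{q\}\times\P^1$ of $Y\to\P^2$, and a curve mapping isomorphically onto a line of $\P^2$ with degree $2$ over $\P^1$. The latter cannot lie on $S_1\simeq\mathbb F_1$, since its class would be $\ell+e$, which contains $e$.

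Your curves are $C_1$ with $H_1\cdot C_1=2$, $H_2\cdot C_1=1$, plus a line in $\P^2\times\{t\}$. They would give $(-K_X)^3=54-18-8=28$, whereas Fano threefolds of No.\ 4-5 have $(-K_X)^3=32$.

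Your generation claim is also false. On $S_1$ one has $C_1\sim2\ell-e\sim S_1|_{S_1}$, so the sequence $0\to\MO_Y\to\cI_{C_1}(S_1)\to\MO_{S_1}\to0$ gives $h^0(\cI_{C_1}(H_1+H_2))=2$. Thus $C_1$ is the base locus of a pencil, and at most one member of $|H_1+H_2|$ contains $C_1\cup C_2$.

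The paper avoids a two-component centre altogether. It realises No.\ 4-5 as a blowup of No.\ 3-31, namely $Y=\P_T(\MO_T\oplus L)$ with $T=\P^1\times\P^1$ and $L=\MO_T(1,1)$. The centre is a general $C\in|\MO_S(1,2)|$ on the section $S$ with $\MO_Y(S)|_S\simeq L$. Corollary \ref{c prime div criterion to P1-bdl} applies because $-K_T-L\sim\MO_T(1,1)$ and $L-K_T-\pi(C)\sim\MO_T(2,1)$ are ample. Finally, $(-K_X)^3=32$ identifies the number.
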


\begin{proof}
Set $Y:=\P^1\times\P^2$ and let $\pi_i\colon Y\longrightarrow\P^i$ be the $i$-th projection. 
For $H_1:=\pi_1^*\MO_{\P^1}(1)$ and $H_2:=\pi_2^*\MO_{\P^2}(1)$, we have 
$-K_Y\sim2H_1+3H_2$.

\medskip

\underline{3-22}: 
Fix a closed point $t\in\P^1$ and a smooth conic $B\subset\P^2$. 
Set
$S:=\pi_1^{-1}(t)=\{t\}\times\P^2$ and $C:=\{t\}\times B$. 
Then $C \subset S$, $S \in |H_1|$, and $C\in|2H_2|_S|$. 
We see that the divisor $-K_Y-S
\sim
H_1+3H_2$ is ample. 
Via the isomorphism
$S\simeq\P^2$, we obtain
\[
-K_Y|_S-C
\sim
\MO_{\P^2}(3)-\MO_{\P^2}(2)
\sim
\MO_{\P^2}(1),
\]
which is ample. 
By Proposition \ref{p prime div criterion}, 
$X := \Bl_C\,Y$ is a Fano threefold, and 
it is of  No.\ 3-22 \cite[Subsection 7.3]{FanoIV}.




\medskip

\underline{3-5}: 
Fix a smooth conic
$B\subset\P^2$ and set $S:=\pi_2^{-1}(B)=\P^1\times B \subset \P^1 \times \P^2 =Y$. 
Then $S\in|2H_2|$ is a smooth prime divisor on $Y$. 
Fix an isomorphism $B\simeq\P^1$ and identify
$S\simeq\P^1\times\P^1$. 
For the divisors $A_1:=\pr_1^*\MO_{\P^1}(1)$ and $A_2:=\pr_2^*\MO_{\P^1}(1)$ on $S$, we have $H_1|_S\sim A_1$ and $H_2|_S\sim2A_2$. 

Take a general member
$C\in|A_1+5A_2|$, which is a smooth curve on $S$. 
Then $C \subset \P^1 \times \P^2$ is of bidegree $(5, 2)$. 
Since both 
\[
-K_Y-S
\sim (2H_1 + 3H_2) - 2H_2 =2H_1+H_2 
\]
and 
\[
-K_Y|_S-C
\sim
(2H_1 + 3H_2)|_S -C \sim 
(2A_1+6A_2)-(A_1+5A_2)= A_1+A_2,
\]
are ample, 
$X := \Bl_C\,Y$ is Fano by Proposition \ref{p prime div criterion}. 
Then $X$ is of No.\ 3-5 by \cite[Subsection 7.3]{FanoIV}.

\medskip

\underline{4-5}: 
Set $T := \P^1 \times \P^1$ and $L:=\MO_{\P^1\times\P^1}(1,1)$. 
Let $\xi := \MO_Y(1)$ be the tautological line bundle of the $\P^1$-bundle $\pi : Y := \P_T(\MO_T \oplus L) \to T$ and let $S\subset Y$ be the section of $\pi$ corresponding to the projection $\MO_T\oplus L \to L$. 
Via the isomorphism $S\xrightarrow{\simeq}T$, take a general member $C\in|\MO_S(1,2)|$, which is a smooth rational curve by the adjunction formula. 
Since
\[
-K_T-L\sim\MO_{\P^1\times\P^1}(1,1)
\qquad\text{and}\qquad
L-K_T-\pi(C)\sim\MO_{\P^1\times\P^1}(2,1)
\]
are ample, $X := \Bl_C\,Y$ is a Fano threefold by Corollary \ref{c prime div criterion to P1-bdl}. 
It is easy to see that $(-K_X)^3 =32$. 
Since $Y$ is a Fano threefold of No.\ 3-31, $X$ is of No.\ $4$-$5$ 
by \cite[Subsection 7.4]{FanoIV}. 
\qedhere

\end{proof}

Let us treat the remaining maximal blowups of $\P^3$, 
namely 2-15, 2-28, 3-6, and 3-12.




\begin{prop}\label{p P^3 blowup}
Let $\text{x-yz} \in 
\{\text{2-15, 2-28, 3-6, 3-12}\}$. 
Then there exists a  Fano threefold of No.\ x-yz. 
\end{prop}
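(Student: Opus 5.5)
For each of the four numbers we take $Y=\P^3$ with $H$ the hyperplane class, so $-K_Y\sim 4H$. We realise $X$ as $\Bl_C\,\P^3$ for an explicit smooth curve $C$ and prove that $-K_X$ is ample, using either Proposition \ref{p MM 2.12} with $r=2$ or Proposition \ref{p prime div criterion}. The number x-yz of $X$ is then read off from the blowup descriptions in \cite[Subsections 7.2 and 7.3]{FanoIV}, as in the earlier propositions of this section.

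\underline{2-28}: Let $S\subset\P^3$ be a plane and $C\subset S$ a smooth plane cubic. Then $-K_Y-S\sim 3H$ is ample. Moreover $-K_Y|_S-C\sim\MO_{\P^2}(4)-\MO_{\P^2}(3)=\MO_{\P^2}(1)$ is ample. Proposition \ref{p prime div criterion} therefore applies.

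\underline{2-15}: Let $C:=Q_2\cap F_3$ be the intersection of a smooth quadric $Q_2\simeq\P^1\times\P^1$ with a general cubic surface $F_3$. By Bertini on $Q_2$, $C$ is a smooth curve of bidegree $(3,3)$, i.e.\ of degree $6$ and genus $4$. Its ideal is generated by $q$ (degree $2$) and $f$ (degree $3$). Hence $\cI_C(3)$ is globally generated by the sections $q\cdot\ell$ for linear forms $\ell$, together with $f$. With $L:=\MO(3)$ we get $-K_Y-(r-1)L\sim H$, which is ample, so Proposition \ref{p MM 2.12} gives that $\Bl_C\P^3$ is Fano.

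\underline{3-12 and 3-6}: Let $\ell$ be a line, and let $\Gamma$ be either a twisted cubic (case 3-12) or a smooth elliptic quartic $Q\cap Q'$ (case 3-6). We choose $\ell$ general so that $\ell\cap\Gamma=\emptyset$, and set $C:=\ell\amalg\Gamma$, which is smooth of pure codimension two.

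Since $\ell$ and $\Gamma$ are disjoint, $\cI_C=\cI_\ell\cap\cI_\Gamma=\cI_\ell\cdot\cI_\Gamma$ as sheaves; at each point one of the two factors is the unit ideal. The ideal $\cI_\ell$ is generated by two linear forms. The ideal $\cI_\Gamma$ is generated by quadrics: three quadrics for the twisted cubic, two for the quartic. Hence $\cI_C$ is generated by cubic forms, namely the products of these linear forms and quadrics, so $\cI_C(3)$ is globally generated. Taking $L=\MO(3)$ and $r=2$ again, Proposition \ref{p MM 2.12} gives that $\Bl_C\P^3$ is Fano.

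The only point requiring care is the scheme-theoretic identity $\cI_\ell\cap\cI_\Gamma=\cI_\ell\cdot\cI_\Gamma$. It is local, and it holds because the supports are disjoint: near $\ell$ we have $\cI_\Gamma=\MO$, and near $\Gamma$ we have $\cI_\ell=\MO$. We also need that the quadrics generate $\cI_\Gamma$ scheme-theoretically, which is classical in any characteristic for rational normal curves and for complete intersections. I expect the main residual obstacle to be matching these blowups with the numbers in \cite[Section 7]{FanoIV}. This requires checking $\rho(X)$, $(-K_X)^3$, and the extremal contractions against the list, for example that the exceptional divisors over $\ell$ and $\Gamma$ give the two $E_1$-type rays.
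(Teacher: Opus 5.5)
Your proposal is correct and follows essentially the paper's route. The paper also realises all four cases as $\Bl_C\,\P^3$ with $\cI_C(3)$ globally generated, applies Proposition \ref{p MM 2.12} with $L=\MO_{\P^3}(3)$, and reads off the number from \cite[Subsections 7.2 and 7.3]{FanoIV}; the residual matching you worry about is handled there purely by that citation. The only deviation is that you treat 2-28 via Proposition \ref{p prime div criterion} with a plane, whereas the paper treats 2-28 together with 2-15 as $D\cap D'$ with $\deg D=3$ and $\deg D'\in\{1,2\}$; both arguments work equally well.
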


\begin{proof}
\underline{2-15, 2-28}: 
Let $D$ and $D'$ be smooth general hypersurfaces on $\P^3$ such that $\deg D =3$ and $\deg D' \in \{1, 2\}$. 
For $C := D \cap D'$, $X := \Bl_C\,\P^3$ is a Fano threefold by Proposition \ref{p MM 2.12}. 
If $\deg D' =2$ (resp.\ $\deg D'=1$), then $X$ is of No.\ 2-15 (resp.\ 2-28) \cite[Subsection 7.2]{FanoIV}.

\underline{3-6, 3-12}: 
Let $C \subset \P^3$ be either a smooth cubic rational curve or a 
quartic elliptic curve. 
In any case, $C \subset \P^3$ is an intersection of quadrics 
\cite[Proposition 4.12]{EH24}, 
and hence $\cI_{C/\P^3}(2)$ is globally generated. 
In particular, $\cI_{(L\amalg C)/\P^3}(3)$ is globally generated for a line $L \subset \P^3$ disjoint from $C$. 
Then $X := \Bl_{L\amalg C}\,\P^3$ is Fano by Proposition \ref{p MM 2.12}. 
If $C$ is a quartic elliptic curve (resp.\ a smooth cubic rational curve), then $X$ is of No.\ 3-6 (resp.\ 3-12) \cite[Subsection 7.3]{FanoIV}. 
\end{proof}

Concerning the smooth quadric threefold $Q$, 
the remaining maximal blowups are
2-21, 2-23, 3-10, 3-15, and 3-20, which are settled below. 



\begin{prop}
\label{p-existence-blowups-of-Q}
Let $\text{x-yz} \in 
\{\text{3-10, 3-15, 3-20}\}$. 
Then there exists a Fano threefold of 
No.\ x-yz. 
\end{prop}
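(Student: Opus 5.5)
The plan is to realise all three cases as blowups of $Q$ along a disjoint union of two curves. We then check that $-K_X$ is ample using Proposition \ref{p MM 2.12} with $r=2$ and $L=\MO_Q(2)$. Since $-K_Q\sim\MO_Q(3)$, the divisor $-K_Q-L\sim\MO_Q(1)$ is ample. So the only thing to verify is that the centre $C$ is an intersection of members of $|\MO_Q(2)|$, i.e.\ that $\cI_{C/Q}(2)$ is globally generated. The identification of the resulting $X$ with Nos.\ 3-20, 3-15, 3-10 will follow from the descriptions in \cite[Subsection 7.3]{FanoIV}: blowups of $Q$ along two disjoint lines, along a disjoint line and conic, and along two disjoint conics, respectively.

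The centres are obtained as $C=\Lambda\cap Q$, where $\Lambda=\Lambda_1\cup\Lambda_2\subset\P^4$ is a union of two linear subspaces, and the generators are products of linear forms. Choose coordinates $x_0,\dots,x_4$ on $\P^4$.
\begin{itemize}
\item For 3-10, take two planes $\Lambda_1=\{x_3=x_4=0\}$ and $\Lambda_2=\{x_1=x_2=0\}$. Their ideal is $(x_1,x_2)\cap(x_3,x_4)=(x_1,x_2)(x_3,x_4)$, which is generated by the quadrics $x_ix_j$.
\item For 3-15, take a plane and a line that are disjoint.
\item For 3-20, take two disjoint lines.
\end{itemize}
In the last two cases the ideal of the union is again generated by products of linear forms, because the two subspaces are disjoint and lie in complementary coordinate subspaces. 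Hence $\cI_{\Lambda/\P^4}(2)$ is globally generated.

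Next we choose $Q$, or equivalently the linear spaces relative to a fixed $Q$, so that $\Lambda\cap Q$ has the required shape.
\begin{itemize}
\item In case 3-10, the two planes meet only at the point $[1:0:0:0:0]$. We choose $Q$ smooth, not containing that point, and transversal to each plane. Then $\Lambda_i\cap Q$ are smooth conics, and they are disjoint.
\item In case 3-15, the line must lie in $Q$ and the plane must meet $Q$ in a smooth conic. Any line on $Q$ together with a general plane disjoint from it will do.
\item In case 3-20, take two disjoint lines on $Q$; these exist, for instance two lines in one ruling of a smooth hyperplane section $\P^1\times\P^1$.
\end{itemize}
These choices work in every characteristic, including $p=2$: we only use the existence of smooth quadrics containing, or avoiding, explicit linear spaces, plus Bertini for planes.

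In each case we then have $C=\Lambda\cap Q$ scheme-theoretically. Away from $\Lambda_1\cap\Lambda_2$ this is a disjoint union, and $\Lambda_1\cap\Lambda_2$ is either empty or a point not on $Q$. Hence $C$ is the disjoint union of the two intended smooth curves. Restricting the generators gives $\cI_{\Lambda/\P^4}(2)|_Q\twoheadrightarrow\cI_{C/Q}(2)$, so $\cI_{C/Q}(2)$ is globally generated. Proposition \ref{p MM 2.12} then shows that $\Bl_C\,Q$ is Fano.

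The main (mild) obstacle is the scheme-theoretic equality $C=\Lambda\cap Q$ with the reduced disjoint union, together with the surjectivity of the restriction of the ideal. The clean way to handle this is to note that $\cI_{\Lambda\cap Q/Q}$ is the image of $\cI_\Lambda\otimes\MO_Q$ by definition, and that $\Lambda\cap Q$ is checked locally on each $\Lambda_i$ separately, where it is a transversal intersection.
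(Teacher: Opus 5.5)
Your proposal is correct and follows essentially the same route as the paper: each curve is cut out on $Q$ by its linear span, so $\cI_{C_1\amalg C_2/Q}(2)$ is globally generated by products of linear forms, and then Proposition \ref{p MM 2.12} applies with $r=2$ and $L=\MO_Q(2)$, with the number identified via \cite[Subsection 7.3]{FanoIV}. One small slip: two disjoint lines in $\P^4$ span only a $\P^3$, so they do not lie in complementary coordinate subspaces of $\P^4$; this does not affect the argument, since for disjoint $\Lambda_1,\Lambda_2$ one has $\cI_{\Lambda_1\cup\Lambda_2}=\cI_{\Lambda_1}\cdot\cI_{\Lambda_2}$, and this product is still generated by products of linear forms after twisting by $2$.
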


\begin{proof}
Set $Q
:=
V\left(
x_0^2+x_1x_2+x_3x_4
\right)
\subset
\P^4$, which is a smooth quadric threefold. 
For $H:=\cO_Q(1)$, we have $-K_Q \sim 3H$. 
Take mutually disjoint curves $C_1$ and $C_2$ on $Q$ as follows: 
\begin{itemize}
\item 3-10: $C_1 := Q \cap V_1$ and $C_2 := Q \cap V_2$, where each $V_i$ is a general plane on $\P^4$. 
\item 3-15: 
Pick a line $C_1$ on $Q$. Set $C_2 := Q \cap V$, where $V$ is a general plane on $\P^4$. 
\item 3-20: 
Let $C_1$ and $C_2$ be mutually disjoint lines on $Q$ 
(e.g., $C_1 := \{ x_0 = x_1 =x_3=0\}$ and $C_2:= \{ x_0 =x_2=x_4=0\}$). 
\end{itemize}
For each $i \in \{1, 2\}$ and the smallest linear subvariety $\la C_i\ra $ in $\P^4$ containing $C_i$, we have $\la C_i \ra \cap Q = C_i$. 
Therefore, $I_{C_1 \amalg C_2/Q}(2)$ is globally generated. 
Then  $X := \Bl_{C_1 \amalg C_2}\,Q$ is Fano (Proposition \ref{p MM 2.12}), 
which is of No.\ 3-10, 3-15, or 3-20 depending on $(C_1, C_2)$ 
\cite[Subsection 7.3]{FanoIV}. 
\qedhere

\end{proof}

\begin{lem}\label{l 2-23 exist}
There exists a Fano threefold of No.\ 2-23. 
\end{lem}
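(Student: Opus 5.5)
I will realise No.\ 2-23 as the blowup of the smooth quadric threefold $Q \subset \P^4$ along a smooth quartic elliptic curve of the form $C = A \cap B$. Here $A \in |\MO_Q(1)|$ and $B \in |\MO_Q(2)|$. Ampleness of $-K_X$ will come from the prime divisor criterion (Proposition \ref{p prime div criterion}), and the number will be read off from \cite[Subsection 7.2]{FanoIV}.

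\emph{Construction.} Set $H := \MO_Q(1)$, so that $-K_Q \sim 3H$.
\begin{itemize}
\item Take a general hyperplane section $A \in |H|$. By Bertini, $A$ is a smooth quadric surface $A \simeq \P^1 \times \P^1$ in $\P^3$.
\item Take a general member $B \in |2H|$ and set $C := A \cap B$. Since $|2H|_A| = |\MO_{\P^1\times\P^1}(2,2)|$ is very ample, Bertini applied on $A$ shows that $C$ is a smooth irreducible curve of bidegree $(2,2)$ on $A$.
\item By adjunction on $A$, $C$ is an elliptic curve, and it has degree $4$ in $\P^4$.
\end{itemize}
Put $X := \Bl_C\,Q$.

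\emph{Ampleness.} I apply Proposition \ref{p prime div criterion} with $S := A$, which is a smooth prime divisor containing $C$, so condition (3) holds. For (1), $-K_Q - A \sim 2H$ is ample. For (2), via $A \simeq \P^1 \times \P^1$,
\[
-K_Q|_A - C \sim 3H|_A - 2H|_A = H|_A \simeq \MO_{\P^1\times\P^1}(1,1),
\]
which is ample. Hence $-K_X$ is ample and $X$ is a Fano threefold with $\rho(X)=2$.

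Alternatively, Proposition \ref{p MM 2.12} applies with $L = 2H$. The sheaf $\cI_{C/Q}(2)$ is globally generated by $\{A\cdot \ell\}_{\ell \in H^0(H)}$ together with $B$, and $-K_Q - L = H$ is ample.

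\emph{Identification.} I then compute $(-K_X)^3 = (-K_Q)^3 - 2(-K_Q)\cdot C + 2g(C) - 2 = 54 - 24 + 0 = 30$. Together with the description of $X$ as the blowup of $Q$ along the intersection of members of $|\MO_Q(1)|$ and $|\MO_Q(2)|$, this matches No.\ 2-23 in \cite[Subsection 7.2]{FanoIV}.

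The only step needing care is this last one: the list must distinguish 2-23 from other blowups of $Q$ with the same degree, for instance by the genus and degree of the blowup centre. I expect this to follow directly from the definition of No.\ 2-23 in \cite[Subsection 7.2]{FanoIV}.
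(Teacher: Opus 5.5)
Your proposal is correct and is essentially the paper's proof: the paper takes a smooth $S \in |H|$ and a smooth $C \in |2H|_S|$, applies Proposition \ref{p prime div criterion} with the same two ampleness checks $-K_Q - S \sim 2H$ and $-K_Q|_S - C \sim H|_S$, and then cites \cite[Subsection 7.2]{FanoIV} for the identification. Your degree computation $(-K_X)^3 = 30$ and your alternative via Proposition \ref{p MM 2.12} are both correct but not needed.
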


\begin{proof}
Fix a smooth quadric threefold $Q \subset \P^4$ and set $H := \MO_Q(1)$. 
Take smooth general members $S \in |H|$ and $C \in |2H|_S|$. 
By $-K_Q \sim 3H$, 
both $-K_Q-S
\sim
2H$ and 
\[
-K_Q|_S-C \sim
3H|_S-C
\sim 3H|_S -2H|_S =H|_S
\]
are ample. 
Then $X:=\Bl_C\,Q$ is a Fano threefold (Proposition \ref{p prime div criterion}), 
which is of No.\ $2$-$23$ by \cite[Subsection 7.2]{FanoIV}.
\end{proof}

\begin{lem}\label{l 2-21 exist}
There exists a Fano threefold of No.\ 2-21. 
\end{lem}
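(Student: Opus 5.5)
The plan is to apply the prime divisor criterion (Proposition \ref{p prime div criterion}) with an auxiliary surface $S\in|2H|$ rather than $|H|$. The target is the blowup of $Q$ along a smooth rational quartic curve $C$. The obvious choice $S\in|H|$ with $S\simeq\P^1\times\P^1$ and $C$ of bidegree $(1,3)$ fails, since then $-K_Q|_S-C\sim\MO_S(2,0)$ is not ample.

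Fix a smooth quadric threefold $Q\subset\P^4$, set $H:=\MO_Q(1)$, so $-K_Q\sim 3H$. Let $S$ be a general member of $|2H|$; by Bertini it is a smooth surface, namely a complete intersection of two quadrics in $\P^4$. By adjunction $K_S\sim(K_Q+2H)|_S=-H|_S$, so $S$ is a smooth del Pezzo surface with $K_S^2=4$. Write $\tau\colon S\to\P^2$ for a blowup at five points in general position, with $M:=\tau^*\MO_{\P^2}(1)$ and exceptional curves $E_1,\dots,E_5$.

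Take $C$ to be the proper transform of a smooth conic through exactly two of the points, so $C\sim 2M-E_1-E_2$. Then $C$ is a smooth rational curve with $C^2=2$ and $-K_S\cdot C=6-2=4$, i.e.\ a smooth rational quartic in $Q$. The hypotheses of Proposition \ref{p prime div criterion} are then checked as follows.
\begin{enumerate}
\item $-K_Q-S\sim H$ is ample.
\item $S$ is smooth and contains $C$.
\item $-K_Q|_S-C\sim -3K_S-C$ must be ample.
\end{enumerate}

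For the third condition, the cone $\NE(S)$ is generated by $(-1)$-curves $\Gamma$, so by Kleiman's criterion it suffices to show $C\cdot\Gamma\le 2$. I would reproduce the Hodge index argument from Proposition \ref{p-existence-blowups-of-Vd}. With $D:=C+\Gamma$ we get $D^2=1+2C\cdot\Gamma$ and $-K_S\cdot D=5$, hence
\[
4(1+2C\cdot\Gamma)\le 25 ,
\]
so $C\cdot\Gamma\le 2$. (One can also check this directly from the explicit list of the sixteen $(-1)$-curves.) Proposition \ref{p prime div criterion} then shows that $X:=\Bl_C\,Q$ is a Fano threefold.

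Finally, $\rho(X)=2$, and the standard blowup formula gives
\[
(-K_X)^3=(-K_Q)^3-2(-K_Q\cdot C)+2g(C)-2=54-24-2=28 .
\]
Together with the description of $X$ as the blowup of $Q$ along a rational quartic, this identifies $X$ as No.\ 2-21 by \cite[Subsection 7.2]{FanoIV}.

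The main obstacle is choosing $S$ so that condition (2) of the criterion holds. Once $S\in|2H|$ is used, everything reduces to the elementary bound $C\cdot\Gamma\le2$ on a quartic del Pezzo surface.
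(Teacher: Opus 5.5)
Your argument is correct, but it takes a different route from the paper.

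\textbf{The paper's route.} The paper starts from the explicit rational normal quartic $C=\{[s^4:s^3t:s^2t^2:st^3:t^4]\}\subset\P^4$ and uses that it is cut out by quadrics. It writes down a specific quadric $Q\supset C$ and checks its smoothness with the Jacobian criterion, in all characteristics including $p=2$. It then applies Proposition \ref{p MM 2.12} with $L=\MO_Q(2)$, for which $-K_Q-L\sim H$ is ample. The identification as No.\ 2-21 is read off from \cite[Subsection 7.2]{FanoIV}, since $C$ is a twisted quartic by construction.

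\textbf{Your route.} You fix an arbitrary smooth $Q$ and build $C$ inside a smooth quartic del Pezzo surface $S\in|2H|$. Proposition \ref{p prime div criterion} then reduces everything to the bound $C\cdot\Gamma\le 2$ for $(-1)$-curves $\Gamma$ on $S$. Your Hodge index estimate is right, and so is the direct check: the maximum value $2$ is attained by $M-E_3-E_4$ and by $2M-\sum E_i$.

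\textbf{What each route buys.}
\begin{enumerate}
\item Yours needs no explicit equations and no Jacobian computation. It is also uniform with how the paper handles 2-11, 2-16, 2-19, 2-20, 2-22 and 2-23.
\item In exchange, it relies on the prime divisor criterion, which in turn uses Fujita's semipositivity result. It also relies on the structure of quartic del Pezzo surfaces in characteristic $p$: they are blowups of $\P^2$ at five points in general position, and $\NE(S)$ is spanned by the sixteen $(-1)$-curves.
\item Your identification goes through $(\rho(X),(-K_X)^3)=(2,28)$, which is unique in the $\rho=2$ list. The paper instead uses the geometric blowup description.
\end{enumerate}

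\textbf{Matching the geometric description.} If you want to match that description directly, note that $H|_S\sim -K_S$ and $-K_S-C\sim M-E_3-E_4-E_5$. This gives $h^0(S,-K_S-C)=0$, because $P_3,P_4,P_5$ are not collinear. Hence no hyperplane contains $C$: such a hyperplane cannot contain $S$, so it would cut out a member of $|-K_S|$ through $C$. Therefore your $C$ spans $\P^4$ and is indeed a rational normal quartic.
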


\begin{proof}
Set 
\[
C
:=
\left\{
[s^4:s^3t:s^2t^2:st^3:t^4]
\ \middle|\
[s:t]\in\P^1
\right\}
\subset\P^4, 
\]
which is the rational normal quartic. 
In particular, $C$ is an intersection of quadrics, i.e., 
$\mathcal I_{C/\P^4}(2)$ is globally generated.
By substituting $[x_0:x_1:x_2:x_3:x_4]
=
[s^4:s^3t:s^2t^2:st^3:t^4]$, 
we see that $C$ is contained in the following smooth quadric threefold: 
\[
Q
:=
V\left(
x_0x_2-x_1^2
+x_0x_3-x_1x_2
+x_2x_4-x_3^2
\right)
\subset
\P^4. 
\]
Here the smoothness of $Q$ can be directly checked by Jacobian criterion. 
By the natural surjection $\mathcal I_{C/\P^4}(2)|_Q \twoheadrightarrow \mathcal I_{C/Q}(2)$, $\mathcal I_{C/Q}(2)$ is globally generated. 
Then Proposition \ref{p MM 2.12} implies that $X:=\operatorname{Bl}_C(Q)$ is a Fano threefold, 
which is of No.\ 2-21 \cite[Subsection 7.2]{FanoIV}. 
\qedhere
\end{proof}

\section{Proof of existence}

\subsection{$\rho = 4$}



\begin{prop}\label{p exist rho 4}
Let x-yz be a number with $x =4$, i.e., 
\[
\text{x-yz} \in 
\{\text{4-1, 4-2, 4-3, 4-4, 4-5, 4-6, 
4-7, 4-8, 4-9, 4-10, 4-11, 4-12, 4-13}\}. 
\]
Then there exists a Fano threefold of No.\ x-yz.
\end{prop}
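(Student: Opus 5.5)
Most of the thirteen numbers are already covered by earlier results, so the proof is mainly a bookkeeping step plus a direct construction of 4-2 and 4-4. The settled cases are as follows. 4-1 is a complete intersection (Proposition \ref{p exist as CI}), or equally Lemma \ref{l dP3 elliptic exist}. The numbers 4-3, 4-6, 4-8 and 4-13 follow from Proposition \ref{p existence 4-13}. 4-5 follows from Proposition \ref{p-existence-3-5-3-22}, and 4-7 from Proposition \ref{p-existence-blowups-of-W}. Finally, 4-9, 4-10, 4-11 and 4-12 are toric (Proposition \ref{p exist as toric}). This leaves only 4-2 and 4-4, which I would construct via Proposition \ref{p MM 2.12} and Proposition \ref{p prime div criterion}. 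In each case the number would then be identified through \cite[Subsection 7.4]{FanoIV}, computing $(-K_X)^3$ if needed.

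\underline{4-4}: I plan to realise $X$ as the blowup of the 3-19 threefold along the strict transform of a conic. Concretely, let $Q\subset\P^4$ be a smooth quadric and $P_1,P_2\in Q$ two non-collinear points, and let $Y:=\Bl_{P_1,P_2}Q$. Take the plane through $P_1,P_2$ and a general third point; its intersection with $Q$ is a conic $C_Q$ through both points. Let $C\subset Y$ be the strict transform of $C_Q$. The natural divisor is $S$, the strict transform of a general hyperplane section containing $C_Q$, so $S$ is $\P^1\times\P^1$ blown up at two points. Since $-K_Y=3H-2E_1-2E_2$ and $S\sim H-E_1-E_2$, we get $-K_Y-S\sim 2H-E_1-E_2$, which is ample on $Y$: the 3-19 threefold is Fano, and this class is $-K_Y$ plus an effective-type correction, which can be checked on the extremal rays. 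On $S$ the class $-K_Y|_S-C$ is a computation on a degree-$6$ del Pezzo surface, and I would verify its ampleness against the six $(-1)$-curves, exactly as in Proposition \ref{p-existence-blowups-of-W}. Proposition \ref{p prime div criterion} then makes $X$ Fano.

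\underline{4-2}: The plan is to take $Y=\P^1\times\P^1\times\P^1$ or the cone-type model from \cite[Subsection 7.4]{FanoIV}. Concretely, 4-2 is the blowup of the quadric cone along a disjoint union of a conic and the vertex. Equivalently, it is the blowup of $\P_{\P^1\times\P^1}(\MO\oplus\MO(1,1))$ (which is 3-31) along a smooth elliptic curve $C$ lying on the section $S\cong\P^1\times\P^1$. I would apply Corollary \ref{c prime div criterion to P1-bdl} with $T=\P^1\times\P^1$, $L=\MO(1,1)$ and $C_T\in|\MO(2,2)|$. The hypotheses require $-K_T-L=\MO(1,1)$ to be ample, which holds, and $L-K_T-C_T=\MO(1,1)$ to be ample, which also holds. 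So $X$ is Fano, and checking $(-K_X)^3=28$ pins down the number.

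\textbf{Main obstacle.} The hardest part is matching each construction to the correct number in \cite[Subsection 7.4]{FanoIV}, since those definitions are phrased through extremal contractions. I expect the 4-4 ampleness check on the del Pezzo surface to need the most care, in particular the intersection numbers of $C$ with the curves meeting the exceptional divisors.
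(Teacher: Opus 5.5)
Your proposal is correct, but for 4-4 it takes a different route from the paper.

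\textbf{4-2.} This is exactly the paper's construction via Corollary \ref{c prime div criterion to P1-bdl}. The paper identifies the number through \cite[Proposition 5.29]{FanoIV}. Your degree check $(-K_X)^3=52-24+0=28$ also suffices, since 4-2 is the only $\rho=4$ family of degree $28$. Your aside that 4-2 blows up the quadric cone along ``a conic and the vertex'' is wrong: the curve is an elliptic quartic. Your actual construction already uses the right curve.

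\textbf{4-4.} The paper never constructs it. Since all $\rho=5$ families already exist, it only treats the $\rho=4$ entries marked ``none'' in the blowups column of \cite[Subsection 7.4]{FanoIV}. So 4-4 comes for free by contracting an $E_1$-ray of a threefold of No.\ 5-1, which exists by \cite[Proposition 5.23]{FanoIV}. The cases 4-9, \dots, 4-12 are covered the same way, and your toric citation works equally well for them.

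Your direct construction via Proposition \ref{p prime div criterion} works, and the ampleness check is easier than you expect. Set $h:=H|_S$ and $e_i:=E_i|_S$. Then $-K_Y|_S-C\sim(3h-2e_1-2e_2)-(h-e_1-e_2)=-K_S$. This is ample because $P_1,P_2$ lie on no common ruling of $S$, as the line $P_1P_2$ is not contained in $Q$. Also $-K_Y-S\sim 2H-E_1-E_2=\tfrac12(-K_Y)+\tfrac12H$ is ample.

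Two points should be made explicit:
\begin{itemize}
\item A general hyperplane containing the plane $\Pi$ of $C_Q$ must cut $Q$ in a smooth quadric surface. In characteristic two this requires $\Pi$ to avoid the nucleus of $Q$. That holds for a general third point, but it fails for some smooth conics.
\item $(-K_X)^3=38-4-2=32$ is shared with 4-5, so the identification must come from the blowup description in \cite[Subsection 7.4]{FanoIV}. If that description is phrased via fibres over points of the conic on 2-29, you also need $\Bl_{C'}\Bl_{P}Z\simeq\Bl_{f}\Bl_{C}Z$ for a point $P$ on a smooth curve $C$ in a smooth threefold $Z$. Both sides are the blowup of the ideal $\cI_P\cI_C$.
\end{itemize}

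The paper's route is shorter but relies on the $\rho=5$ input and the blowup-column bookkeeping. Yours is self-contained for 4-4, at the cost of this explicit computation and the matching step.
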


\begin{proof}
As the case $\rho=5$ has been settled already (Proposition \ref{p exist rho geq 5}), 
it is enough to treat the cases 4-1, 4-2, 4-3, 4-5, 
4-6, 4-7, 4-8, 4-13 
(namely, 
the entries marked with \lq\lq none" in the \lq\lq blowups" column  \cite[Subsection 7.4]{FanoIV}). 
If $x{\h}yz \neq \text{4-2}$, then the assertion holds as follows: 
\begin{itemize}
\item 4-1: Proposition \ref{p exist as CI}. 
\item 4-3, 4-6, 4-8, 4-13: Proposition \ref{p existence 4-13}. 
\item 4-7: Proposition \ref{p-existence-blowups-of-W}. 
\item 4-5: Proposition \ref{p-existence-3-5-3-22}. 
\end{itemize}

In what follows, we treat the case $x{\h}yz = \text{4-2}$. 
Set $T := \P^1 \times \P^1$ and  $L:=\MO_{\P^1\times\P^1}(1,1)$. 
Let $\xi := \MO_Y(1)$ be the tautological line bundle of the $\P^1$-bundle $\pi : Y := \P_T(\MO_T \oplus L) \to T$ and let $S\subset Y$ be the section of $\pi$ corresponding 
to the projection $\MO_{\P^1\times\P^1}\oplus L \to L$. 
Take a general member $C\in|2\xi|_S| = |2L|$, 
which is an elliptic curve by 
the adjunction formula. 
Since $-K_T -L$ and $L-K_T-\pi(C)$ are ample, 
$X := \Bl_C\,Y$ is a Fano threefold (Corollary \ref{c prime div criterion to P1-bdl}), 
which is of No.\ 4-2 by  \cite[Proposition 5.29]{FanoIV}. 
\end{proof}

\subsection{$\rho=3$}

\begin{prop}\label{p existence 3-9}
There exists a smooth Fano threefold of No.\ $3$-$9$.
\end{prop}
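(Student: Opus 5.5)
The plan is to realise No.\ 3-9 as Mori--Mukai do, as the blowup of the $\P^1$-bundle $\P_{\P^2}(\MO_{\P^2}\oplus\MO_{\P^2}(2))$ along a plane quartic lying on the section disjoint from the negative section. Ampleness of $-K_X$ will come from Corollary \ref{c prime div criterion to P1-bdl}. The identification with No.\ 3-9 will then use numerical invariants together with \cite[Subsection 7.3]{FanoIV}.

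\emph{Construction.} Set $T:=\P^2$ and $L:=\MO_{\P^2}(2)$, which is nef. Let $\pi\colon Y:=\P_T(\MO_T\oplus L)\to T$, and let $\xi:=\MO_Y(1)$. Let $S\subset Y$ be the section corresponding to the projection $\MO_T\oplus L\to L$, so that $\xi|_S\simeq L$. Note that $Y$ is the Fano threefold of No.\ 2-36. Via $S\simeq\P^2$, take a smooth plane quartic $C\subset S$, so that $C_T:=\pi(C)\in|\MO_{\P^2}(4)|$ and $C$ has genus $3$. We check the hypotheses of Corollary \ref{c prime div criterion to P1-bdl}:
\[
-K_T-L\sim\MO_{\P^2}(3-2)=\MO_{\P^2}(1)
\qquad\text{and}\qquad
L-K_T-C_T\sim\MO_{\P^2}(2+3-4)=\MO_{\P^2}(1)
\]
are both ample. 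Hence $X:=\Bl_C\,Y$ is a Fano threefold with $\rho(X)=3$.

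\emph{Identification.} We compute $(-K_X)^3$ via the standard formula for the blowup of a smooth curve:
\[
(-K_X)^3=(-K_Y)^3-2(-K_Y\cdot C)+2g(C)-2 .
\]
Here $(-K_Y)^3=62$, since $Y$ is No.\ 2-36. Next, $-K_Y\sim 2\xi-\pi^*(K_T+L)$. Restricted to $S\simeq\P^2$ this gives $\MO(4)\otimes\MO(1)=\MO(5)$, so $-K_Y\cdot C=20$. Therefore
\[
(-K_X)^3=62-40+4=26,
\]
which matches No.\ 3-9.

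To conclude, I would invoke the description of No.\ 3-9 in \cite[Subsection 7.3]{FanoIV}. This should be phrased either directly as such a blowup, or through the extremal rays. The blowup $X\to Y$ is of type $E_1$ onto No.\ 2-36 with centre a genus-$3$ curve of degree $20$ with respect to $-K_Y$. In addition, the contraction of the negative section of $Y$, pulled back to $X$, gives the cone over the Veronese surface with the quartic blown up.

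\emph{Main obstacle.} The main obstacle is this last matching step. One must check that the invariants $(\rho,(-K_X)^3)=(3,26)$, together with the existence of an $E_1$ ray onto No.\ 2-36, single out 3-9 in the list of \cite{FanoIV}, rather than another $\rho=3$ entry of degree $26$. I would handle this by examining the extremal rays of $X$: the $E_1$ contraction to $Y$, and the $E_5$-type ray coming from the strict transform of the negative section $\P^2$ with normal bundle $\MO(-2)$.
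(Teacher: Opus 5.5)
Your proposal is correct and is essentially the paper's proof: the same $Y=\P_{\P^2}(\MO_{\P^2}\oplus\MO_{\P^2}(2))$, the same section $S$ containing a smooth quartic $C\in|2\xi|_S|=|\MO_{\P^2}(4)|$, and Corollary \ref{c prime div criterion to P1-bdl}, where both divisors to check are $\MO_{\P^2}(1)$. For the identification, the paper simply cites \cite[Proposition 4.44]{FanoIV}; your computation $(-K_X)^3=26$ and the extremal-ray check separating No.\ 3-9 from No.\ 3-10, which has the same degree, spell out what that citation supplies.
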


\begin{proof}
Set $T := \P^2$ and $L:=\MO_{\P^2}(2)$. 
Let $\xi := \MO_Y(1)$ be the tautological line bundle of the $\P^1$-bundle $\pi : Y := \P_T(\MO_T \oplus L) \to T$ and let $S\subset Y$ be the section of $\pi$ corresponding to the projection $\MO_T\oplus L \to L$. 
Take a general member $C\in|2\xi|_S|=|2L|$, which is a smooth curve of genus three by the adjunction formula. 
Since
\[
-K_T-L\sim\MO_{\P^2}(1)
\qquad\text{and}\qquad
L-K_T-\pi(C)\sim\MO_{\P^2}(1)
\]
are ample, $X := \Bl_C\,Y$ is a Fano threefold by Corollary \ref{c prime div criterion to P1-bdl}. 
By \cite[Proposition 4.44]{FanoIV}, $X$ is of No.\ $3$-$9$.
\end{proof}

\begin{prop}\label{p 3-2 exist}
There exists a Fano threefold of No.\ $3$-$2$.
\end{prop}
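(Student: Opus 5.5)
The plan is to realise No.\ 3-2 as in Mori--Mukai, namely as a divisor in a $\P^2$-bundle over $T:=\P^1\times\P^1$, while avoiding any appeal to generic smoothness in characteristic zero. Set $\cE:=\MO_T\oplus\MO_T(-1,-1)^{\oplus 2}$ and $W:=\P_T(\cE)\xrightarrow{\pi}T$, with tautological bundle $\xi$ (normalised so that $\pi_*\MO_W(\xi)$ is the dual-type bundle $\MO_T\oplus\MO_T(1,1)^{\oplus2}$, which is globally generated). Then $\xi$ is globally generated, and $\xi+\pi^*\MO_T(1,1)$ is very ample. Hence
\[
H:=2\xi+\pi^*\MO_T(2,3)=2(\xi+\pi^*\MO_T(1,1))+\pi^*\MO_T(0,1)
\]
is very ample, being the sum of a very ample and a globally generated bundle. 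So Bertini (valid in characteristic $p$ for very ample systems) gives that a general $X\in|H|$ is a smooth projective threefold. I would also check connectedness, i.e.\ that $h^0(\MO_X)=1$, from the sequence $0\to\MO_W(-H)\to\MO_W\to\MO_X\to0$ together with vanishing of $H^{1}(W,-H)$ via Serre duality.

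Next I compute $-K_X$ by adjunction. We have $K_W\sim-3\xi+\pi^*(K_T+\det\cE^{\vee})$, so $-K_W\sim3\xi+\pi^*\MO_T(0,0)$ up to the sign convention, and therefore $-K_X\sim(\xi+\pi^*\MO_T(a,b))|_X$ with $(a,b)$ essentially $(0,-1)$ or $(0,1)$. The exact twist has to be pinned down carefully; the known answer is $(-K_X)^3=14$, and I would fix conventions by requiring that self-intersection.

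The main obstacle is ampleness. The class $\xi$ is not ample: it is trivial exactly on curves lying in the surface $\Sigma\subset W$ corresponding to the quotient $\cE\to\MO_T$, where $\Sigma\simeq T$ and $\xi|_\Sigma\simeq\MO_T$. Thus $-K_X$ is positive on every curve not contained in $\Sigma$. For curves inside $X\cap\Sigma$, which is a curve in $\Sigma\simeq T$ of class $H|_\Sigma=\MO_T(2,3)$, I need the twist $(a,b)$ to have positive degree on each component. I would use an argument in the spirit of Proposition \ref{p prime div criterion}: write $-K_X$ as an ample part plus a multiple of the prime divisor $D:=(X\cap Y)$, where $Y\in|\xi|$ is the divisor containing $\Sigma$. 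Generality of $X$ should give that $D$ is irreducible and that $X\cap\Sigma$ is a smooth curve of bidegree $(2,3)$. The ampleness of $-K_X$ then reduces to checking intersection numbers on $D$ and on $X\cap\Sigma$. If the sign of the twist makes $-K_X$ only nef on $\Sigma$, the fallback is to use a different normalisation of $\cE$ instead.

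Finally I identify the number. I would show $\rho(X)=3$ by exhibiting three extremal contractions: the conic bundle $X\to T$, its two factor projections, and the divisorial contraction of $D$. Alternatively I would compute $\rho(X)=\rho(W)=3$ by the Lefschetz argument used in Lemma \ref{l CI exist}; that lemma does not apply directly because $-(K_W+H)$ is not ample, but the Grothendieck--Lefschetz statement only needs $H$ ample and $\dim W\geq4$. With $(-K_X)^3=14$, $\rho(X)=3$, and the conic bundle structure over $\P^1\times\P^1$ in hand, \cite[Subsection 7.3]{FanoIV} identifies $X$ as No.\ 3-2.
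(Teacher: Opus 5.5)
Your construction does not produce a Fano threefold with the normalisation you chose. Taking $\pi_*\MO_W(\xi)=\MO_T\oplus\MO_T(1,1)^{\oplus 2}$ makes $W$ the Grothendieck projectivisation of $\MO_T\oplus\MO_T(1,1)^{\oplus2}$. That is the \emph{dual} of the Mori--Mukai bundle, not a twist of it, and your own formula then gives $K_W\sim-3\xi$ exactly. Adjunction yields $-K_X\sim(\xi-\pi^*\MO_T(2,3))|_X$, not a twist of type $(0,\pm1)$. Since $\xi|_\Sigma$ is trivial and $X\cap\Sigma$ is a curve of class $(2,3)$ on $\Sigma\simeq T$, $-K_X$ has degree $-12$ on that curve.

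In Mori--Mukai, and in the last step of the paper (where $X$ is identified with a member of $|2\xi_P+\pi_P^*\MO_T(3,2)|$), the convention is Grothendieck's with $\pi_*\MO(\xi)=\MO_T\oplus\MO_T(-1,-1)^{\oplus2}$. Then $|\xi|$ is a single divisor $E\simeq\P^1\times\P^1\times\P^1$, and $-K_X\sim(\xi+\pi^*\MO_T(2,1))|_X$. Fixing the twist by demanding $(-K_X)^3=14$ is circular, because that value is exactly what you must prove in order to identify No.\ 3-2.

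Your fallback of changing the normalisation is the right fix, but then the later steps no longer apply.
\begin{itemize}
\item Now $\xi$ is not globally generated, and $\xi+\pi^*\MO_T(1,1)$ is only base point free. It is the pullback of $\xi_W$ under the blowdown of $E$ onto $\P^1\times\P^1$ inside a $\P^3$-bundle over $\P^1$.
\item Consequently $H=2(\xi+\pi^*\MO_T(1,1))+\pi^*\MO_T(0,1)$ is not ample; it is trivial on one ruling of $E$. So the very-ample Bertini theorem and the Grothendieck--Lefschetz computation of $\rho(X)$ are unavailable as stated. Smoothness along $E$ would need a separate argument in characteristic $p$.
\item You never isolate what ampleness actually depends on. $-K_X$ is nef and positive on curves not contained in $E$. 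Meanwhile $X\cap E\simeq\P^1\times C'$ with $C'$ a curve of class $(1,2)$ on $\P^1\times\P^1$. Then $-K_X$ is ample if and only if $C'$ is irreducible, which is Mori--Mukai's condition that $X\cap Y$ be irreducible.
\end{itemize}

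The paper handles both problems at once by contracting $E$. It writes $X=\varphi^{-1}(Y)\simeq\Bl_C Y$ for a general $Y$ in the very ample system $|2\xi_W+\pi_W^*\MO_{\P^1}(1)|$ on the toric $\P^3$-bundle $W$, with $C=Y\cap F$ smooth of class $(1,2)$. Proposition \ref{p prime div criterion}, applied with $S=Y\cap D_1$, then reduces everything to ampleness of $\xi_W|_Y$. This works because $-K_Y-S$ and $-K_Y|_S-C$ are both restrictions of $\xi_W$. Finally, $\xi_W|_Y$ is ample because the only $\xi_W$-trivial curves lie in $F$, and $\xi_W\cdot C=1$.
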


\setcounter{step}{0}
\begin{proof}
Set $T:=\P^1_1\times\P^1_2$ and let 
\[
\pi_V\colon
V:=
\P_T
\left(
\MO_T(1,1)\oplus\MO_T^{\oplus2}
\right)
\longrightarrow
T
\]
be the $\P^2$-bundle. 
For $\MO_{\P^1_1}(1)_1 := \MO_{\P^1_1}(1)$ and $\MO_{\P^1_1}(1)_2 := \MO_{\P^1_1}(1)$, 
take 
\begin{itemize}
\item $\cF := \MO_{\P^1_1}(1)_1  \oplus \MO_{\P^1_1}(1)_2 \oplus\MO_{\P^1_1}^{\oplus2}$, and  
\item 
the $\P^3$-budnle 
$\pi_W : W := \P_{\P^1_1}(\cF) \to \P^1_1$. 
\end{itemize}
Let $\xi_V := \MO_V(1)$ and $\xi_W := \MO_W(1)$ be the tautological line bundles of $\pi_V$ and $\pi_W$, respectively. 
Set $E := \P_T(\MO_T^{\oplus 2})$ and 
$F := \P_{\P^1_1}(\MO_{\P^1_1}^{\oplus 2}) =\P^1_1 \times \P^1_3$, which are closed subschemes of $V$ and $W$, respectively. 
For each $i \in \{1, 2\}$, 
set $D_i := \P_{\P^1_1}( \cF/\MO_{\P^1_1}(1)_i) \simeq
\P_{\P^1_1}
\left(
\MO_{\P^1_1}(1)\oplus\MO_{\P^1_1}^{\oplus2}
\right)$.

\begin{step}\label{s1 3-2 exist}
There exists a commutative diagram 
\[
\begin{tikzcd}
E=\P^1_1\times\P^1_2\times\P^1_3
\arrow[r,"\psi"]
\arrow[d,hook]
&
F=\P^1_1\times\P^1_3
\arrow[d,hook]
\\
V
\arrow[r,"\varphi"]
\arrow[d,"\pi_V"']
&
W
\arrow[d,"\pi_W"]
\\
T=\P^1_1\times\P^1_2
\arrow[r,"\pr_1"]
&
\P^1_1
\end{tikzcd} 
\]
such that 
$\psi$ is the projection and 
$\varphi$ is the blowup along $F$ satisfying $\xi_V \sim \varphi^*\xi_W$ and $E = \Ex(\varphi)$. 
Moreover, $D_1 \cap D_2 = F$ and $D_i \sim \xi_W -\pi^*_W \MO_{\P^1_1}(1)$. 
\end{step}

\begin{proof}[Proof of Step \ref{s1 3-2 exist}]
By the composite surjective homomorphism
\[
\pi_V^*\pr_1^*\cF
=
\pi_V^*
(
\pr_1^*(\pr_1)_*\MO_T(1,1)\oplus\MO_T^{\oplus2}
)
\twoheadrightarrow
\pi_V^*
(
\MO_T(1,1)\oplus\MO_T^{\oplus2}
)
=
\pi_V^*(\pi_V)_*\xi_V
\twoheadrightarrow
\xi_V,
\]
the universal property of projective space bundles induces a morphism
$\varphi\colon V\to W$ satisfying
$\xi_V\sim\varphi^*\xi_W$ and
$\pi_W\circ\varphi=\pr_1\circ\pi_V$.
Thus the lower square is commutative.

Let us show that the upper square is commutative and 
$\varphi$ is the blowup along $F$ with $\Ex(\varphi)=E$.
Take an affine open subset $U\subset\P^1_1$ on which $\MO_{\P^1_1}(1)$ is trivial.
Then the base change of the above diagram by 
$U \to \P^1_1$ is identified with
\[
\begin{tikzcd} 
E_U :=U\times\P^1_2\times\P^1_3 \arrow[r, "\psi_U"] \arrow[d,hook] & U\times\P^1_3 =:F_U \arrow[d,hook] \\ 
V_U := U\times\P_{\P^1_2}(\MO_{\P^1_2}(1)\oplus\MO_{\P^1_2}^{\oplus2}) \arrow[r,"\varphi_U"] \arrow[d] & U\times\P^3 =:W_U \arrow[d] \\ T_U :=U\times\P^1_2 \arrow[r] & U. \end{tikzcd}
\]
Fix homogeneous coordinates $[s:t]$ on $\P^1_2$ and let
$[\lambda:\mu:\nu]$ be the fibre coordinate of
$\P_{\P^1_2}(\MO_{\P^1_2}(1)\oplus\MO_{\P^1_2}^{\oplus2})$.
Under the above trivialisations, the evaluation homomorphism
$\MO_{\P^1_2}^{\oplus2}\twoheadrightarrow\MO_{\P^1_2}(1)$
is given by $(a,b)\mapsto as+bt$, and hence
\[
\varphi_U
\colon 
U\times\P_{\P^1_2}(\MO_{\P^1_2}(1)\oplus\MO_{\P^1_2}^{\oplus2})  
\to U \times \P^3, \qquad 
([s:t],[\lambda:\mu:\nu])
\mapsto
[\lambda s:\lambda t:\mu:\nu].
\]
Moreover, 
$E_U =E\times_{\P^1_1}U$ is defined by $\lambda=0$, while
$F_U$ is defined by $x_0=x_1=0$.
From this local description, 
it is easy to see that the upper square is commutative and 
$\varphi$ is the blowup of $W$ along $F$ with $E=\Ex(\varphi)$.



Let us prove the \lq\lq moreover" part.
We have
\[
D_1\cap D_2
=
\P_{\P^1_1}
(
\cF/
(
\MO_{\P^1_1}(1)_1\oplus\MO_{\P^1_1}(1)_2
)
)
=
\P_{\P^1_1}
(
\MO_{\P^1_1}^{\oplus2}
)
=
F.
\]
As $D_i$ is the effective divisor corresponding to the element
$s_i\in H^0(W,\xi_W-\pi_W^*\MO_{\P^1_1}(1)_i)$
given by
$\pi_W^*\MO_{\P^1_1}(1)_i\to\pi_W^*\cF\twoheadrightarrow\xi_W$,
we get
$D_i\sim\xi_W-\pi_W^*\MO_{\P^1_1}(1)$.
This completes the proof of Step \ref{s1 3-2 exist}.
\end{proof}

Set
\[
A:=2\xi_W+\pi_W^*\MO_{\P^1_1}(1). 
\]
As $\xi_W$ is nef and $\pi_W$-ample, $A$ is ample. 
Since $W$ is a smooth projective toric variety, $|A|$ is very ample. 
Take a general member
$Y\in|A|$. 
By the Bertini theorem, 
all of $Y, S := Y \cap D_1$, and $C := Y \cap F$ are smooth projective  varieties. 
Therefore, 
\[
X:=\varphi^{-1}(Y)
\simeq
\Bl_C\,Y, 
\]
which is a smooth projective threefold. 
For each $i \in \{1, 3\}$, let $\pi_i : F = \P^1_1 \times \P^1_3 \to \P^1_i$ be the projection and 
we set $\MO_F(a, b) := \pi_1^*\MO_{\P^1_1}(a) \otimes \pi_3^*\MO_{\P^1_3}(b)$. 
It holds that $\varphi^*A
=
2\xi_V+\pi_V^*\MO_T(1,0)$.

\begin{step}\label{s2 3-2 exist}
The following hold: 
\begin{enumerate}
\item $Y \sim 2\xi_W + \pi_W^*\MO_{\P^1_1}(1)$. 
\item $X \sim 2\xi_V + \pi_V^*\MO_T(1, 0)$. 
\item $\pi_W^*\MO_{\P^1_1}(1)|_F \simeq\MO_F(1,0)$. 
\item $\xi_W|_F \simeq \MO_F(0, 1)$. 
\item $C \sim \MO_F(1, 2)$. 
\end{enumerate}
\end{step}

\begin{proof}[Proof of Step \ref{s2 3-2 exist}]
The assertion (1) follows from $Y \sim A = 2\xi_W+\pi_W^*\MO_{\P^1_1}(1)$. 
Then (2) holds by 
$X \sim \varphi^*Y \sim \varphi^*(2\xi_W+\pi_W^*\MO_{\P^1_1}(1)) 
\sim 2\xi_V +\pi^*_V\MO_T(1, 0)$. 
We have 
\[
\pi^*_W\MO_{\P^1_1}(1)|_F \simeq \pi_1^*\MO_{\P^1_1}(1) \simeq \MO_F(1, 0). 
\]
Thus (3) holds. 
As $\xi_F := \MO_F(0, 1)$ coincides with the tautological line bundle of 
the $\P^1$-budnle $F = \P_{\P^1_1}(\MO_{\P^1_1}^{\oplus 2}) \to \P^1_1$, we get 
$\xi_W|_F \simeq \xi_F = \MO_F(0, 1)$, and hence (4) holds. 
Finally, (5) follows from 
\[
C = Y \cap F \sim 
(2\xi_W + \pi_W^*\MO_{\P^1_1}(1))|_F \sim \MO_F(1, 2). 
\]
This completes the proof of Step \ref{s2 3-2 exist}. 
\end{proof}

\begin{step}\label{s3 3-2 exist}
$\xi_W|_Y$ is ample. 
\end{step}

\begin{proof}[Proof of Step \ref{s3 3-2 exist}]
Since $\cF
=
\MO_{\P^1_1}(1)^{\oplus2}\oplus\MO_{\P^1_1}^{\oplus2}$ is globally generated, 
$\xi_W=\MO_W(1)$ is globally generated. 
Suppose that there exists a curve $\Gamma$ on $Y$ such that $\xi_W \cdot \Gamma =0$. 
It suffices to derive a contradiction. 
As $\xi_W$ is $\pi_W$-ample, 
$\Gamma$ is not contained in any fibre of $\pi_W$, i.e., $\pi_W(\Gamma) = \P^1_1$. 
For the normalisation $\Gamma^N$ of $\Gamma$, 
we have the induced surjection 
\[
(\MO_{\P^1_1}(1)|_{\Gamma^N})^{\oplus 2}  \oplus 
(\MO_{\P^1_1}|_{\Gamma^N})^{\oplus2}
= \pi_W^*\cF|_{\Gamma^N} \twoheadrightarrow \xi_W|_{\Gamma^N}. 
\]
As $\deg (\xi_W|_{\Gamma^N})=0$ and 
$\MO_{\P^1_1}(1)|_{\Gamma^N}$ is ample, 
each ample direct summand $\MO_{\P^1_1}(1)|_{\Gamma^N}$ is mapped to zero on $\xi_W|_{\Gamma^N}$. 
Therefore, $\Gamma \subset D_1 \cap D_2 = F$. 
This, together with $\xi_W|_F \simeq \MO_F(0, 1)$, 
implies that $\Gamma \sim \MO_F(0, 1)$. 
On the other hand, we have 
\[
\Gamma \subset Y\cap F=C\in|\MO_F(1,2)|. 
\]
Hence we obtain  $\Gamma = C \sim \MO_F(1, 2)$, contradicting $\Gamma \sim \MO_F(0, 1)$. 
This completes the proof of Step \ref{s3 3-2 exist}. 
\end{proof}

\begin{step}\label{s4 3-2 exist}
$-K_X$ is ample. 
\end{step}

\begin{proof}[Proof of Step \ref{s4 3-2 exist}]
As $\pi_W : W =\P_{\P^1_1}(\cF) \to \P^1_1$ is a $\P^3$-bundle, 
$K_W
\sim
-4\xi_W + \pi_W^*(K_{\P^1_1} +\det (\cF)) \sim -4\xi_W$. 
Since $Y\sim 2\xi_W+\pi_W^*\MO_{\P^1_1}(1)$, 
the adjunction formula gives 
\[
-K_Y
\sim -(K_W+Y)|_Y 
\sim 4\xi_W|_Y -  (2\xi_W+\pi_W^*\MO_{\P^1_1}(1))|_Y 
= (2\xi_W-\pi_W^*\MO_{\P^1_1}(1))|_Y. 
\]
This, together with $S =D_1|_Y 
\sim
(
\xi_W-\pi_W^*\MO_{\P^1_1}(1))|_Y$, implies 
\[
-K_Y-S  \sim 
(2\xi_W-\pi_W^*\MO_{\P^1_1}(1))|_Y
-(
\xi_W-\pi_W^*\MO_{\P^1_1}(1))|_Y
=\xi_W|_Y. 
\]
Since $F=D_1\cap D_2$, we have
$C = F \cap Y = D_1 \cap D_2 \cap Y = D_2 \cap S = D_2|_S 
\sim
(
\xi_W-\pi_W^*\MO_{\P^1_1}(1)
)|_S$. 
Consequently, 
\[
-K_Y|_S-C \sim (2\xi_W-\pi_W^*\MO_{\P^1_1}(1))|_S - (
\xi_W-\pi_W^*\MO_{\P^1_1}(1))|_S
\sim 
\xi_W|_S. 
\]
As $\xi_W|_Y$ is ample, both $-K_Y-S$ and $-K_Y|_S -C$ are ample. 
Therefore, $X =\Bl_C\,Y$ is Fano by Proposition \ref{p prime div criterion}.
This completes the proof of Step \ref{s4 3-2 exist}. 
\end{proof}

\begin{step}\label{s5 3-2 exist}
$X$ is a Fano threefold of No.\ 3-2. 
\end{step}

\begin{proof}[Proof of Step \ref{s5 3-2 exist}]
Twisting by $\MO_T(-1,-1)$, 
we have an 
isomorphism of $T$-schemes
\[
V = \P_T\left(
\MO_T(1,1) \oplus \MO_T^{\oplus2}
\right)
\simeq
\P_T
\left(
\MO_T\oplus\MO_T(-1,-1)^{\oplus2}
\right)=:P.
\]
For the induced $\P^2$-bundle $\pi_P : P \to T$ and 
its tautological line bundle $\xi_P$, 
we have 
$
\xi_V\otimes\pi_V^*\MO_T(-1,-1)
\simeq
\xi_P$ via the above isomorphism. 
Hence $X$ is identified with a smooth member of
\[
\left|
2\xi_P+\pi_P^*\MO_T(3,2)
\right|.
\]
Then $(-K_X)^3 = 14$ by the same argument as in 
in \cite[Proposition 6.16]{FanoIII}. 
Hence $X$ is a Fano threefold of No.\ $3$-$2$ \cite[Subsection 7.3]{FanoIV}. 
This completes the proof of Step \ref{s5 3-2 exist}. 
\end{proof}
Step \ref{s5 3-2 exist} completes the proof of 
Proposition \ref{p 3-2 exist}.  
\end{proof}

\begin{prop}\label{p exist rho 3} 
Let x-yz be a number with $x=3$, i.e.,
\[
\text{x-yz}
\in
\{
\text{3-1, 3-2, ..., 3-30, 3-31}
\}.
\]
Then there exists a Fano threefold of No.\ x-yz.
\end{prop}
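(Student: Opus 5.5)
The proof will be bookkeeping plus a few new constructions, following the pattern of Proposition \ref{p exist rho 4}. First I would list the numbers 3-yz that are already settled. These are: 3-1 (Proposition \ref{p double cover exist}); 3-2 (Proposition \ref{p 3-2 exist}); 3-3 and 3-17 (Proposition \ref{p exist as CI}); 3-5 and 3-22 (Proposition \ref{p-existence-3-5-3-22}); 3-6 and 3-12 (Proposition \ref{p P^3 blowup}); 3-7 and 3-11 (Lemma \ref{l dP3 elliptic exist}); 3-9 (Proposition \ref{p existence 3-9}); 3-10, 3-15 and 3-20 (Proposition \ref{p-existence-blowups-of-Q}); 3-13 (Proposition \ref{p-existence-blowups-of-W}); 3-14, 3-16 and 3-23 (Proposition \ref{p-existence-blowups-of-V7}); and 3-25 to 3-31 (Proposition \ref{p exist as toric}). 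The remaining numbers are 3-4, 3-8, 3-18, 3-19, 3-21 and 3-24.

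For these remaining numbers I would first try the contraction shortcut. If the table in \cite[Subsection 7.4]{FanoIV} shows that some already constructed Fano threefold $X'$ with $\rho(X')=4$ is a blowup of a Fano threefold of No.\ 3-yz, then contracting the relevant $E_1$-ray on $X'$ produces the desired threefold. This uses Lemma \ref{l ray bpf} to get the contraction and the fact that the target is Fano, as recorded in the table. Any case not covered this way I would construct directly, using the descriptions in \cite[Subsection 7.3]{FanoIV} and the criteria of Section 4.
\begin{itemize}
\item 3-18: blow up $\P^3$ along a disjoint union of a line and a conic. This union is an intersection of quadrics, and $-K_{\P^3}-\MO(2)$ is ample, so Proposition \ref{p MM 2.12} with $r=2$ applies.
\item 3-19: blow up $Q$ at two points $P_1,P_2$ such that the line $\langle P_1,P_2\rangle$ is not contained in $Q$. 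Then $\{P_1,P_2\}=\langle P_1,P_2\rangle\cap Q$ is cut out by hyperplane sections, and $-K_Q-2H=H$ is ample, so Proposition \ref{p MM 2.12} with $r=3$ applies.
\item 3-24: blow up $W$ along a fibre of $\pi_1$. This fibre is an intersection of members of $|H_1|$, and $-K_W-H_1$ is ample.
\item 3-21: blow up $\P^1\times\P^2$ along a smooth rational curve of bidegree $(2,1)$. Take it inside $S=\P^1\times(\text{line})\simeq\P^1\times\P^1$ and apply Proposition \ref{p prime div criterion}, checking ampleness of $-K_Y-S$ and of $-K_Y|_S-C$ on $\P^1\times\P^1$, where both reduce to positivity of bidegrees.
\item 3-4: blow up a double cover of type 2-18 along a smooth fibre of the composite map to $\P^2$. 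I would again apply Proposition \ref{p prime div criterion}, taking $S$ to be the preimage of a suitable line.
\item 3-8: realise it as a general member of a linear system on $\mathbb{F}_1\times\P^2$, which is toric. Bertini and adjunction give smoothness and the Fano property.
\end{itemize}
In each case I would compute $(-K_X)^3$ and identify the number via \cite[Subsection 7.3]{FanoIV}.

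I expect the main obstacle to be 3-4 and 3-8. For 3-4, the blowup centre lies on a double cover, so ampleness must be checked on a surface $S$ that is itself a double cover, and in characteristic two this requires care. For 3-8, very ampleness and Bertini must genuinely apply to the chosen linear system. If the direct route fails for either, I would fall back on the contraction shortcut from an existing $\rho=4$ threefold, in the same way Proposition \ref{p exist rho 4} reduces to the entries marked \lq\lq none".
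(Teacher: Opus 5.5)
\textbf{Overall structure.} Your bookkeeping and your contraction shortcut match the paper. The paper also invokes Proposition~\ref{p exist rho 4}, so that only the entries marked ``none'' in the blowups column of \cite[Subsection 7.3]{FanoIV} remain. Of your six leftovers, 3-18, 3-19, 3-21 and 3-24 are covered by that shortcut. So only 3-4 and 3-8 need new constructions, and both are marked ``none''. The fallback in your last sentence is therefore unavailable for exactly the cases where you might need it.

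\textbf{3-18.} Your direct route here would fail. If $\Pi$ is the plane of the conic $C$, any quadric containing $C$ and the point $L\cap\Pi\notin C$ contains $\Pi$. Hence $L\amalg C$ is not an intersection of quadrics, and Proposition~\ref{p MM 2.12} does not apply; you must use the shortcut.

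\textbf{3-4.} Your route works, and it differs from the paper's. Take $Y$ of No.\ 2-18 with conic bundle $g\colon Y\to\P^2$, and set $H:=g^*\MO_{\P^2}(1)$, so that $-K_Y\sim 2H+H'$. Let $S:=g^{-1}(\ell)$ for a general line $\ell\ni g(C)$. Then $-K_Y-S\sim H+H'$ and $-K_Y|_S-C\sim (H+H')|_S$ are both ample. Moreover, $S$ is integral and smooth along $C$, because $g$ is smooth along the smooth fibre $C$. No characteristic-two care is needed. The paper instead applies Proposition~\ref{p MM 2.12} with $L=H$: $C$ is cut out by two members of $|H|$ and $-K_Y-H\sim H+H'$ is ample. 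This is shorter.

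\textbf{3-8: the genuine gap.} The system $|(\alpha\circ p_1)^*\MO_{\P^2}(1)\otimes p_2^*\MO_{\P^2}(2)|$ on $\mathbb{F}_1\times\P^2$ is base point free but not ample: it has degree $0$ on $e\times\{\mathrm{pt}\}$ for the $(-1)$-curve $e$. Consequently:
\begin{itemize}
\item toricity gives no very ampleness;
\item Bertini's smoothness theorem for merely base-point-free systems can fail in characteristic $p$;
\item $\rho(X)=3$ does not follow from a Grothendieck--Lefschetz argument as in Lemma~\ref{l CI exist}.
\end{itemize}

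\textbf{The missing step for 3-8.} You need to identify the general member concretely. The map $\alpha\times\mathrm{id}$ is the blowup of $\P^2\times\P^2$ along $\{p\}\times\P^2$. Since $(\alpha\times\mathrm{id})_*\MO=\MO$, a general member of your system is the pullback, hence the strict transform, of a general smooth $(1,2)$-divisor $Y$, which is of No.\ 2-24. This strict transform is $\Bl_C\,Y$, where $C=Y\cap(\{p\}\times\P^2)$ is a smooth fibre of the conic bundle $g\colon Y\to\P^2$.

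This is exactly the paper's construction. $C$ is an intersection of two members of $|H|$, and $-K_Y-H\sim H+H'$ is ample. So Proposition~\ref{p MM 2.12} shows $\Bl_C\,Y$ is Fano, and \cite[Subsection 7.3]{FanoIV} identifies it as No.\ 3-8.
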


\begin{proof}
As the case $\rho=4$ has been settled already 
(Proposition \ref{p exist rho 4}), it is enough to treat the 
following cases \cite[Subsection 7.3]{FanoIV}:
\[
\begin{gathered}
3\text{-}1,\quad
3\text{-}2,\quad
3\text{-}3,\quad
3\text{-}4,\quad
3\text{-}5,\quad
3\text{-}6,\quad
3\text{-}7,\quad
3\text{-}8,\quad
3\text{-}9,\quad
3\text{-}10,\\
3\text{-}11,\quad
3\text{-}12,\quad
3\text{-}13,\quad
3\text{-}14,\quad
3\text{-}15,\quad
3\text{-}16,\quad
3\text{-}20,\quad
3\text{-}22,\quad
3\text{-}23,\quad
3\text{-}29,
\end{gathered}
\]
namely, the entries marked with \lq\lq none" in the \lq\lq blowups" column of \cite[Subsection 7.3]{FanoIV}.
Except for $3\text{-}4$ and $3\text{-}8$, the assertion holds as follows: 
\begin{itemize}
\item 3-1: Proposition \ref{p double cover exist}. 
\item 3-2: Proposition \ref{p 3-2 exist}. 
\item 3-3: Proposition \ref{p exist as CI}.
\item 3-5, 3-22: Proposition \ref{p-existence-3-5-3-22}.
\item 3-6, 3-12: Proposition \ref{p P^3 blowup}.
\item 3-7, 3-11: Lemma \ref{l dP3 elliptic exist}.
\item 3-9: Proposition \ref{p existence 3-9}. 
\item 3-10, 3-15, 3-20: Proposition \ref{p-existence-blowups-of-Q}.
\item 3-13: Proposition \ref{p-existence-blowups-of-W}.
\item 3-14, 3-16, 3-23: Proposition \ref{p-existence-blowups-of-V7}.
\item 3-29: Proposition \ref{p exist as toric}.
\end{itemize}


In what follows, we treat the cases 3-4 and 3-8. 
Let $Y$ be a Fano threefold of No.\ 2-18 or 2-24, 
whose existence is guaranteed by Proposition \ref{p double cover exist} 
and 
Proposition \ref{p exist as CI}, respectively. 
Let $g\colon Y\to\P^2$ be the unique conic bundle of type $C_1$ 
\cite[Subsection 7.2]{FanoIV}. 
Set $H := g^*\MO_{\P^2}(1)$. 
Let $h : Y \to Z$  be the contraction of the extremal ray not corresponding to $g$. 
For the ample generator $H_Z$ of $\Pic Z(\simeq \Z)$ and 
$H' := h^*H_Z$, 
the following holds \cite[Proposition 5.9(3)]{FanoIII}:
\[
-K_Y\sim2H+H'.
\]
Take a smooth fibre $C$ of $g$. 
Since $C$ is the scheme-theoretic intersection of two members of $|H|$, the ideal sheaf $\cI_{C/Y}(H)$ is globally generated. Moreover, $-K_Y-H\sim H+H'$ is ample. 
Proposition \ref{p MM 2.12} shows that $\Bl_C\,Y$ is Fano. If $Y$ is of No.\ 2-18 (resp.\ 2-24), then $\Bl_C\,Y$ is of No.\ 3-4 (resp.\ 3-8) by \cite[Subsection 7.3]{FanoIV}.
\end{proof}

\subsection{$\rho=2$}
\begin{prop}\label{p exist rho 2} 
Let x-yz be a number with $x=2$, i.e.,
\[
\begin{gathered}
\text{x-yz}
\in
\{
\text{2-1, 2-2, ..., 
2-36}
\}.
\end{gathered}
\]
Then there exists a Fano threefold of No.\ x-yz.
\end{prop}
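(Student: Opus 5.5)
The plan is to argue exactly as in Propositions \ref{p exist rho 4} and \ref{p exist rho 3}. Since the case $\rho=3$ is settled (Proposition \ref{p exist rho 3}), every number 2-yz that appears in the ``blowups'' column of \cite[Subsection 7.2]{FanoIV} already exists: a Fano threefold of $\rho=3$ carries an extremal contraction to that number, and by \cite[Subsection 7.2]{FanoIV} the target is then a Fano threefold of the required number. It therefore suffices to treat the entries marked ``none'' in that column. I will check each such entry against the constructions already given and collect the references.

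I expect the following cases to be covered by earlier results.
\begin{itemize}
\item 2-1, 2-3, 2-5, 2-10, 2-14: Lemma \ref{l dP3 elliptic exist}. This is the blowup of a del Pezzo threefold $V_d$ along an elliptic curve cut out by two members of $|H|$. The existence of $V_d$ for each $d\le 5$ comes from the $\rho=1$ case (Proposition \ref{p exist rho 1}).
\item 2-2, 2-8, 2-18: Proposition \ref{p double cover exist}.
\item 2-4, 2-6, 2-7, 2-9, 2-12, 2-13, 2-17, 2-24, 2-25, 2-27, 2-29, 2-32, 2-33: Proposition \ref{p exist as CI}.
\item 2-11, 2-16, 2-19, 2-20, 2-22, 2-26: Proposition \ref{p-existence-blowups-of-Vd}.
\item 2-15, 2-28: Proposition \ref{p P^3 blowup}.
\item 2-21: Lemma \ref{l 2-21 exist}.
\item 2-23: Lemma \ref{l 2-23 exist}.
\item 2-33, 2-34, 2-35, 2-36: Proposition \ref{p exist as toric}.
\end{itemize}

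Any remaining ``none'' entries I would construct directly with Proposition \ref{p MM 2.12}. For 2-30, take $Y=\P^3$ and let $C$ be a smooth conic. It is the scheme-theoretic intersection of a plane and a quadric, so $\cI_{C/\P^3}(2)$ is globally generated. With $L=\MO_{\P^3}(2)$ and $r=2$, the divisor $-K_{\P^3}-L=\MO(2)$ is ample. For 2-31, take a line $C$ on the quadric $Q$. By $\la C\ra\cap Q=C$, the sheaf $\cI_{C/Q}(1)$ is globally generated, and $-K_Q-H=2H$ is ample. In both cases $\Bl_C Y$ is Fano by Proposition \ref{p MM 2.12}. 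It then has the right number by \cite[Subsection 7.2]{FanoIV}, which identifies the number from the blowup centre.

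The main obstacle I expect is bookkeeping rather than geometry. I must read off precisely which entries of \cite[Subsection 7.2]{FanoIV} are marked ``none''. For each of them I must make sure the construction cited above lands in the stated number, and not merely in some Fano threefold with $\rho=2$. Where that identification is not immediate, I will compute $(-K_X)^3$ together with the type of the other extremal ray and compare with the table, as was done for 4-5 and 3-2.
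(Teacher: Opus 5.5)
Your proposal is correct and follows the paper's proof. It reduces to the ``none'' entries of \cite[Subsection 7.2]{FanoIV} via Proposition \ref{p exist rho 3}, then cites the same existence results case by case. In fact, your list already covers all of 2-1, \ldots, 2-36, including your valid direct constructions of 2-30 and 2-31 via Proposition \ref{p MM 2.12}. So neither the reduction step nor the bookkeeping you anticipate is actually needed.
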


\begin{proof}
As the case $\rho=3$ has been settled already (Proposition \ref{p exist rho 3}), it is enough to treat the cases
\[
\begin{gathered}
2\text{-}1,\quad
2\text{-}2,\quad
2\text{-}3,\quad
2\text{-}4,\quad
2\text{-}5,\quad
2\text{-}6,\quad
2\text{-}7,\quad
2\text{-}8,\quad
2\text{-}9,\quad
2\text{-}10,\quad
2\text{-}11,\quad
2\text{-}12,\quad
2\text{-}13,\\
2\text{-}14,\quad
2\text{-}15,\quad
2\text{-}16,\quad
2\text{-}17,\quad
2\text{-}19,\quad
2\text{-}20,\quad
2\text{-}21,\quad
2\text{-}22,\quad
2\text{-}23,\quad
2\text{-}26,\quad
2\text{-}28,
\end{gathered}
\]
namely, the entries marked with \lq\lq none" in the \lq\lq blowups" column of \cite[Subsection 7.2]{FanoIV}. The assertion holds by the results listed below. 
\begin{itemize}
\item 2-1, 2-3, 2-5, 2-10, 2-14: Lemma \ref{l dP3 elliptic exist}.
\item 2-2, 2-8: Proposition \ref{p double cover exist}.
\item 2-4, 2-6, 2-7, 2-9, 2-12, 2-13, 2-17: Proposition \ref{p exist as CI}.
\item 2-11, 2-16, 2-19, 2-20, 2-22, 2-26: Proposition \ref{p-existence-blowups-of-Vd}.
\item 2-15, 2-28: Proposition \ref{p P^3 blowup}.
\item 2-21: Lemma \ref{l 2-21 exist}.
\item 2-23: Lemma \ref{l 2-23 exist}.
\end{itemize}
\end{proof}

\begin{thm}
Let x-yz be an arbitrary number listed in \cite[Section 7]{FanoIV}. 
Then there exists a Fano threefold of No.\ x-yz.
\end{thm}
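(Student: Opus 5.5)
The plan is a case split according to the Picard number $x=\rho(X)$; each case has already been handled by an earlier result. In \cite[Section 7]{FanoIV} the numbers are grouped by $\rho$, and $\rho$ ranges over $\{1,\dots,10\}$. So it suffices to check that every value of $x$ is covered.

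For $x=1$, we invoke Proposition \ref{p exist rho 1}. For $x=2,3,4$, we invoke Propositions \ref{p exist rho 2}, \ref{p exist rho 3} and \ref{p exist rho 4}. For $x\geq 5$, we invoke Proposition \ref{p exist rho geq 5}.

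The only point that needs care is the reduction used inside Propositions \ref{p exist rho 2}--\ref{p exist rho 4}. In each of them, only the numbers marked \lq\lq none" in the \lq\lq blowups" column of \cite[Section 7]{FanoIV} are treated directly. Every other number x-yz is realised in \cite[Section 7]{FanoIV} as a contraction target of some Fano threefold of Picard number $x+1$. I would make explicit that this is exactly how the classification in \cite{FanoIV} produces existence. A Fano threefold $X'$ of No.\ $(x+1)$-$y'z'$ has an extremal contraction of type $E_1$ onto a Fano threefold, and that target is of the required number x-yz.

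Hence the dependencies go downward in $\rho$. First we settle $\rho\geq 5$ (Proposition \ref{p exist rho geq 5}), then $\rho=4$, $\rho=3$ and $\rho=2$ in turn, and finally $\rho=1$, which is handled independently by Proposition \ref{p exist rho 1}. The main obstacle is therefore bookkeeping rather than new mathematics. One must confirm that the lists of \lq\lq none" entries quoted in each proposition really exhaust the numbers not obtained as blowdowns. One must also confirm that each blowdown relation cited from \cite{FanoIV} goes from an already-constructed threefold of higher Picard number.

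With these checks, the theorem follows by combining the five propositions.
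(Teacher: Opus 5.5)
Your proposal is correct and is essentially the paper's own proof: the theorem is obtained by splitting according to $\rho$ and citing Propositions \ref{p exist rho 1}, \ref{p exist rho 2}, \ref{p exist rho 3}, \ref{p exist rho 4} and \ref{p exist rho geq 5}. The downward reduction through the \lq\lq none\rq\rq{} entries of the \lq\lq blowups\rq\rq{} column that you describe is already carried out inside the propositions for $\rho=2,3,4$, so the theorem itself needs no further bookkeeping.
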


\begin{proof}
The assertion holds by the results listed below. 
\begin{itemize}
\item $\rho=1$: Proposition \ref{p exist rho 1}. 
\item $\rho=2$: Proposition \ref{p exist rho 2}. 
\item $\rho=3$: Proposition \ref{p exist rho 3}. 
\item $\rho=4$: Proposition \ref{p exist rho 4}. 
\item $\rho\geq 5$: Proposition \ref{p exist rho geq 5}. 
\end{itemize}
\end{proof}

\section{Irreducibility criteria and some isolated cases}

For a fixed number x-yz, we say that $\pi : \cX \to U$ is a {\em parameter space} of No.\ x-yz if  $\pi$ is a smooth projective morphism of quasi-projective varieties over $k$ (and hence irreducible) such that 
\begin{itemize}
\item the fibre of $\pi$ over every closed point is a Fano threefold, and 
\item every Fano threefold $X$ of No. x-yz is  isomorphic to  the fibre of $\pi$ over some closed point. 
\end{itemize}
The goal of the remaining part of this paper is to prove the existence of parameter spaces for all numbers. 
In this section, we collect some criteria and settle some isolated cases. 

\subsection{Complete intersections and double covers}

\begin{prop}\label{p irre CI}
Let 
\[
\text{x-yz}
\in
\{
\text{1-3, 1-4, 1-5, 1-6, 1-7, 1-8, 1-9, 1-10, 1-11, 1-13, 1-14,}
\]
\[
\text{2-24, 3-2, 3-3, 3-8, 3-17, 4-1}
\}.
\]
Then there exists a parameter space of No.\ x-yz. 
\end{prop}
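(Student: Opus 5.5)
Every Fano threefold of the listed numbers is a complete intersection of a fixed type: a smooth zero locus of a fixed section type in a fixed ambient variety $W$, usually homogeneous or a projective bundle. The plan is to prove that \emph{every} Fano threefold of the given number has such a description, and then take the open locus of smooth complete intersections as the parameter space. Concretely, for each number I would fix the ambient $W$ and a globally generated vector bundle $\cE$ (a direct sum of line bundles, or a homogeneous bundle for the Mukai models 1-5,\dots,1-10). The models are as follows:
\begin{itemize}
\item 1-3, 1-4: $\P^4$ or $\P^5$.
\item 1-11, 1-13, 1-14: weighted projective space or $\P^5$, or $\P^6$.
\item 1-5,\dots,1-10: the Grassmannians and homogeneous spaces of \cite[Example 5.2.2]{IP99} and \cite{IKTTg12}.
\item 2-24, 3-3, 3-17, 4-1: products of projective spaces.
\item 3-2: the $\P^2$-bundle $P$ over $\P^1\times\P^1$ of Proposition \ref{p 3-2 exist}.
\item 3-8: the appropriate divisor in $\P^2\times\F_1$.
\end{itemize}
Set $\cP := H^0(W,\cE)$ and let $U\subset \cP$ be the open subset of sections whose zero scheme is a smooth threefold. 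Let $\cX\subset W\times U$ be the universal zero locus. Then $\pi:\cX\to U$ is smooth and projective, $U$ is an irreducible quasi-projective variety, and it is nonempty by the existence results of Section 5--6. By adjunction every fibre has $-K$ equal to the restriction of a fixed ample class, so every fibre is Fano. Theorem \ref{t No inv} then shows that every closed fibre has the same number x-yz, since one fibre does.

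The substantive step is surjectivity: every Fano threefold $X$ of No.\ x-yz must embed in $W$ as the zero locus of a section of $\cE$. I would take these structure results directly from the classification \cite{FanoI}, \cite{FanoII}, \cite{FanoIII}, \cite{FanoIV}, which in positive characteristic provide exactly the needed descriptions. For $\rho=1$ and index $\ge 1$, anticanonically embedded $X$ is a hypersurface or complete intersection for genus $\le 5$ (1-3, 1-4 and the index-two cases 1-11, 1-13, 1-14 via $|H|$). For genus $6$--$10$ and $12$, Mukai's linear sections of homogeneous varieties, established in \cite{FanoIV}, \cite{IKTTg12}. For higher $\rho$, the contraction structure recorded in \cite[Section 7]{FanoIV} is used:
\begin{itemize}
\item 2-24: a divisor of bidegree $(1,2)$ on $\P^2\times\P^2$.
\item 3-3: a divisor of tridegree $(1,1,2)$ on $\P^1\times\P^1\times\P^2$.
\item 3-17: a divisor of tridegree $(1,1,1)$ on $\P^1\times\P^1\times\P^2$.
\item 4-1: a divisor of degree $(1,1,1,1)$ on $(\P^1)^4$.
\item 3-2: a member of $|2\xi_P+\pi_P^*\MO(3,2)|$ as in Step \ref{s5 3-2 exist}.
\end{itemize}
For each, I would pull back line bundles from the product of the targets of the extremal contractions. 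Their sum then defines a morphism to $W$ which is a closed embedding, and the degree computation $(-K_X)^3$ identifies the class. Corollary \ref{c relative semi-ample} and the Kodaira vanishing of Theorem \ref{t Fano3 ANV} give the base-point-freeness and the cohomology needed to see that the ideal is generated by the expected section.

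The main obstacle I expect is surjectivity for the Mukai cases 1-5 to 1-10 and for 3-2. In each of these the required embedding (via a rank-two or rank-five vector bundle, or the non-obvious $\P^2$-bundle) must be constructed intrinsically on an arbitrary $X$ rather than on the specific example used for existence. I would first cite the corresponding structure theorems from \cite{FanoIV} and \cite[Proposition 6.16]{FanoIII}. For 3-2, I would reconstruct $P$ from $X$ as follows: the conic bundle $X\to\P^1\times\P^1$ is embedded by $-K_X$ relatively, and $P$ is obtained as $\P(\pi_*\omega_X^{-1}\otimes\cdots)$. I would then check that the resulting rank-three bundle splits as $\MO\oplus\MO(-1,-1)^{\oplus2}$ using $H^1$-vanishing on $\P^1\times\P^1$. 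Finally, since all $X$ in a case are accounted for by the universal family over $U$, $\pi$ is a parameter space.
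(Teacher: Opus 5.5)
Your strategy is the paper's: a fixed ambient with fixed line bundles (a homogeneous bundle for 1-10), the open locus of smooth complete intersections, and surjectivity imported from structure theorems. However, the structure input you state for No.\ 1-5 is wrong, and with it your family is not a parameter space.

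\textbf{The gap: genus $6$.} Genus $6$ is not covered by ``linear sections of homogeneous varieties''. The ordinary Gushel--Mukai threefolds are $\Gr(2,5)\cap H_1\cap H_2\cap Q\subset\P^9$, which is a quadric section, not a linear one. In addition there are the special ones: double covers $f\colon X\to V_5$ branched along a smooth member of $|\MO_{V_5}(2)|$. These have $-K_X=f^*\MO_{V_5}(1)$, $(-K_X)^3=10$ and $\rho(X)=1$, so they are Fano threefolds of No.\ 1-5. But their Gushel map is $2{:}1$ onto $V_5$, and they do not arise as complete intersections in $\Gr(2,5)$ (Gushel's dichotomy). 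Hence any family over an open subset of $H^0(\Gr(2,5),\MO(1)^{\oplus 2}\oplus\MO(2))$ misses them.

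This is why the paper only requires $P$ to be a projective variety and cites \cite[Theorem 5.4]{KTLift1} for 1-5. Take $P$ to be the projective cone in $\P^{10}$ over $\Gr(2,5)\subset\P^9$, with $L_1=L_2=L_3=\MO_P(1)$ and $L_4=\MO_P(2)$.
\begin{itemize}
\item A codimension-three linear section missing the vertex yields the ordinary threefolds.
\item A codimension-three linear section through the vertex is the cone over $V_5$. A quadric not through the vertex then cuts out exactly the double covers of $V_5$.
\end{itemize}
Note also that the paper takes the Mukai-type descriptions for 1-6,\dots,1-10 from \cite{KTprime}, not from \cite{FanoIV} or \cite{IKTTg12}.

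\textbf{Two smaller points.} First, for 3-2 your claim that $-K$ is the restriction of a fixed ample class is false. You get $-K_X=(\xi_P+\pi_P^*\MO_T(1,2))|_X$, but $\xi_P+\pi_P^*\MO_T(1,2)$ is not ample on $P$. Under $P\simeq V$ it equals $\xi_V+\pi_V^*\MO_T(0,1)$, which restricts to $\MO(0,1,1)$ on $E=\P^1_1\times\P^1_2\times\P^1_3$ (notation of Proposition \ref{p 3-2 exist}). So $-K_X$ fails to be ample as soon as $X$ contains a curve $\P^1_1\times\{\mathrm{pt}\}$ of $E$, and adjunction alone does not show the fibres are Fano. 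You must shrink $U$ to the open locus where $-K$ is ample, using that ampleness is open in flat projective families. This locus is nonempty and still contains every threefold of No.\ 3-2; it is what the paper's ``open subset $U$'' provides.

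Second, your $\rho=1$ ambients are shifted by one: 1-3 and 1-4 live in $\P^5$ and $\P^6$, and 1-13 and 1-14 live in $\P^4$ and $\P^5$.
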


\begin{proof}
Except for 1-10, 
there exist a projective variety $P$ and 
Cartier divisors $L_1, ..., L_r$ on $P$ such that  
every Fano threefold $X$ of No.\ x-yz is isomorphic to 
a complete intersection $D_1 \cap \cdots \cap D_r$ 
for some members $D_1 \in |L_1|, ..., D_r \in |L_r|$; 
see \cite[Theorem 5.4]{KTLift1} (1-5), 
\cite{KTprime} (1-6, 1-7, 1-8, 1-9), and \cite[Section 7]{FanoIV}. 
In this case, there exists an open subset 
\[
U \subset \P(H^0(P, L_1)) \times \cdots \times \P(H^0(P, L_r))
\]
which is a parameter space of No. x-yz. 
Concerning 1-10, we can apply the same argument by using a suitable vector bundle instead of line bundles \cite{KTprime}. 
\end{proof}

\begin{prop}\label{p irre double cover}
Let 
$\text{x-yz} \in \{\text{1-1, 1-12, 2-2, 2-8, 2-18, 3-1}\}$. 
Then there exists a parameter space of No.\ x-yz. 
\end{prop}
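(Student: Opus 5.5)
We will parametrise all double covers of a fixed base $Y$ by the coefficient space of the defining equation $t^2+at+b=0$. By the double-cover descriptions in \cite[Section 7]{FanoIV}, for each of these numbers there is a fixed smooth projective variety $Y$ and a line bundle $\cL$ on $Y$ such that every Fano threefold of the given number is a double cover $X\to Y$ associated with $\cL$:
\begin{itemize}
\item 1-1, 2-2, 2-8, 2-18, 3-1: take $(Y,\cL)$ as in Proposition \ref{p double cover exist};
\item 1-12: take $Y=\P^3$ and $\cL=\MO_{\P^3}(2)$.
\end{itemize}
In 1-12, $X$ is a double cover of $\P^3$ branched in a quartic. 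One might instead realise 1-12 as a hypersurface of degree $4$ in the weighted projective space $\P(1,1,1,1,2)$. Either model fits the scheme below.

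The key point, valid in every characteristic including $p=2$, is that a finite flat degree-two morphism $f\colon X\to Y$ with $f_*\MO_X\simeq \MO_Y\oplus\cL^{-1}$ is given by a local equation $t^2+at+b=0$ with $(a,b)\in H^0(Y,\cL)\oplus H^0(Y,\cL^{\otimes 2})$. So the first step is to show that each such $X$ is of this form. Since $f$ is finite flat of degree two, $f_*\MO_X$ is locally free of rank two and contains $\MO_Y$. The quotient $f_*\MO_X/\MO_Y$ is then invertible, and we identify it with $\cL^{-1}$ using the descriptions in \cite[Section 7]{FanoIV}, for instance by comparing $\omega_X\simeq f^*(\omega_Y\otimes\cL)$. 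The extension $0\to\MO_Y\to f_*\MO_X\to\cL^{-1}\to0$ splits because $H^1(Y,\cL)=0$ for all the listed bases (Kodaira vanishing on these toric or Fano $Y$). Writing $t$ for a local generator of the $\cL^{-1}$ summand, $t^2$ is a section of $f_*\MO_X\otimes\cL^{\otimes 2}$, which yields $a$ and $b$.

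Next we build the family. Set $\cP:=H^0(Y,\cL)\oplus H^0(Y,\cL^{\otimes 2})$ and form the universal hypersurface
\[
\cX:=Z\bigl(z^2+\pi^*a\cdot z+\pi^*b\bigr)\subset \Tot(\cL)\times\cP .
\]
Its projection $\cX\to\cP$ is projective, since it is finite over $Y\times\cP$. Let $U\subset\cP$ be the locus over which the fibre is smooth. Smoothness of fibres is an open condition, so $U$ is open. It is non-empty by Proposition \ref{p smooth double cover exist}, hence irreducible, and $\cX_U\to U$ is smooth and projective.

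It remains to check that every fibre over $U$ is Fano, with anticanonical sheaf $f^*(-K_Y-\cL)$. This formula is again given by \cite[Proposition 0.2.20]{CDL25}, and $-K_Y-\cL$ is ample with $f$ finite. Combined with the first step, every Fano threefold of the given number appears as a fibre. Theorem \ref{t No inv} then shows that all fibres have the same number, so $\cX_U\to U$ is a parameter space.

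The main obstacle is the first step in characteristic two, including inseparable covers (those with $a=0$). We must verify that the descriptions in \cite{FanoIV} really produce a finite flat $f$ with the prescribed $\cL$, and the splitting argument must be carried out without using a trace decomposition.
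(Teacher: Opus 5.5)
Your proposal is correct and follows the paper's argument: fix $(Y,\cL)$, realise every Fano threefold of the given number as $\{z^2+az+b=0\}\subset\Tot(\cL)$, and take the open smooth locus in $H^0(Y,\cL)\oplus H^0(Y,\cL^{\otimes 2})$. The paper simply asserts that such $(a,b)$ exist, so your details are useful additions: the splitting via $H^1(Y,\cL)=0$, which already handles the inseparable characteristic-two case you list as the main obstacle; the Fano check via $\omega_X\simeq f^*(\omega_Y\otimes\cL)$; and connectedness of the smooth fibres, which you leave implicit but which follows from $H^0(Y,\cL^{-1})=0$.
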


\begin{proof}
In any case, 
there exists a smooth projective threefold $Y$ and a line bundle $\cL$ on $Y$ such that 
every Fano threefold $X$ of No.\ x-yz admits a finite double cover 
$f: X \to Y$ satisfying $\cL^{-1} \simeq f_*\MO_X/\MO_Y$. 
In this case, we can find $a \in H^0(Y, \cL)$ and $b \in H^0(Y, \cL^{\otimes 2})$ such that 
\[
X \simeq \{ z^2 + az + b=0\}. 
\]
Therefore, there exists an open subset 
$U \subset H^0(Y, \cL) \times H^0(Y, \cL^{\otimes 2})$ which is a parameter space of No. x-yz. 
\end{proof}

\subsection{Relative blowup criteria}

\begin{lem}\label{l relative blowup parameter space}
Fix a number $\text{x-yz}$. 
Let $U$ be a quasi-projective variety, 
let $\pi\colon
\cY
\to 
U$ be a smooth projective morphism, and let $\cC
\subset
\cY$ be a closed subscheme such that the induced morphism 
$\cC
\to 
U$ is smooth. 
Assume that, 
given a Fano threefold $X$ of No.\ x-yz, 
there exists a closed point $u \in U$ such that 
$\cC_u$ is of pure one-dimensional and 
$X \simeq \Bl_{\cC_u}\, \cY_u$. 
Then there exists a parameter space of No.\ $\text{x-yz}$.
\end{lem}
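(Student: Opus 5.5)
The plan is to take the relative blowup $\widetilde{\cX} := \Bl_{\cC}\,\cY \to U$ and then shrink $U$ to an irreducible open subset over which the fibres are Fano threefolds of No.\ x-yz.

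\textbf{Step 1: restrict to the locus where the fibre of $\cC$ is a curve.} Since $\cC \to U$ is smooth, its relative dimension is locally constant on $\cC$. Hence the set of points $u$ such that $\cC_u$ is non-empty of pure dimension one is a union of connected components of the image of $\cC$ together with the openness of smooth morphisms. I will replace $U$ by the open subset $U_1$ of points $u$ at which $\cC_u$ is pure one-dimensional, after removing the components of $\cC$ of the wrong relative dimension. Concretely, I will write $\cC = \cC^{(1)} \amalg \cC'$, where $\cC^{(1)}$ is the union of the components of relative dimension one. I then restrict to the complement of the closed image of $\cC'$ intersected with the open image of $\cC^{(1)}$. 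If this locus fails to be open or dense, I will instead take the union of the irreducible components of the relevant constructible locus and pass to a disjoint union of varieties. Irreducibility is then recovered in Step 3.

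\textbf{Step 2: the relative blowup is smooth and commutes with base change.} $\cC$ is smooth over $U$ and $\cY$ is smooth over $U$, so $\cC \subset \cY$ is a regular immersion of codimension two, and its conormal sheaf is locally free. The blowup $\widetilde{\cX} = \Proj \bigoplus_n \cI^n$ then satisfies $\cI^n/\cI^{n+1} \simeq \Sym^n(\cI/\cI^2)$, and these graded pieces are flat over $U$. It follows that the formation of the blowup commutes with arbitrary base change $u \to U$, so that $\widetilde{\cX}_u \simeq \Bl_{\cC_u}\,\cY_u$. Flatness of $\widetilde{\cX} \to U$ together with smoothness of every fibre gives smoothness of $\widetilde{\cX} \to U$. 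Projectivity holds because $\cY \to U$ is projective.

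\textbf{Step 3: openness of the Fano locus and irreducibility.} Ampleness of $-K$ is an open condition on the base: the relative canonical sheaf $\omega_{\widetilde{\cX}/U}$ is invertible, and the set of $u$ at which $\omega^{-1}_{\widetilde{\cX}/U}|_{\widetilde{\cX}_u}$ is ample is open. Let $U^\circ \subset U$ be this open subset. By hypothesis, every Fano threefold $X$ of No.\ x-yz appears as a closed fibre over $U^\circ$. Over each connected component of $U^\circ$, Theorem~\ref{t No inv} shows that the number of the fibre is constant. I will therefore discard the components whose fibres are not of No.\ x-yz. The remaining components $U^\circ_1, \dots, U^\circ_m$ each carry a smooth projective family, but there may be several of them, while the definition of a parameter space asks for a single variety.

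\textbf{Main obstacle.} Step 3 is where the real difficulty lies. Either $U$ is assumed irreducible, as I read ``variety'', in which case $U^\circ$ is a non-empty open subset of an irreducible space and is automatically irreducible; or one must check that ``parameter space'' tolerates this. Since the paper calls $U$ a quasi-projective variety, hence integral, $U^\circ$ is irreducible. Non-emptiness follows from the existence theorem of the previous section. Step 1 also simplifies under integrality: because $U$ is integral, the relative dimension of each component of $\cC$ is constant over its open image, so I will simply restrict to the open set where the fibre is pure one-dimensional and non-empty. By hypothesis, the fibre over some point has this property.
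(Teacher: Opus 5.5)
Your proposal is correct and is essentially the paper's proof: shrink $U$ so that every $\cC_u$ is a non-empty disjoint union of smooth curves, form $\Bl_{\cC}\,\cY$, and use that blowups along centres smooth over the base commute with arbitrary base change. Your flatness argument via $\cI^n/\cI^{n+1}\simeq\Sym^n(\cI/\cI^2)$ is exactly what the cited Stacks Project tag provides; then pass to the open Fano locus, which is irreducible because $U$ is integral. The hedging in Step 1 and the discarding of components in Step 3 are unnecessary: the images of the pieces of $\cC$ of the wrong relative dimension are open and closed in the connected $U$, hence empty once one such $u$ exists. Also, a parameter space is not required to have all fibres of No.\ x-yz; that follows afterwards from Theorem~\ref{t No inv}.
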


\begin{proof}
After shrinking $U$, 
we may assume that 
$\cY_u$ is a smooth projective threefold and 
$\cC_u$ is a non-empty disjoint union of smooth curves for every closed point $u \in U$. 
Set $\cX
:=
\Bl_{\cC}\,\cY$. 
Since blowups of smooth schemes along smooth centres commute with 
(not necessarily flat) base changes \cite[Tag 0H1G]{SP}, we have
$\cX_u
\simeq
\Bl_{\cC_u}\cY_u$ for every closed point
$u\in U$. 
Moreover, $\cX\to U$ is a smooth projective morphism.
Then the assertion holds by replacing $U$ by a suitable open subset.
\qedhere 




\end{proof}

\begin{prop}\label{p relative CI centre parameter space}
Fix a number $\text{x-yz}$. 
Let $U$ be a quasi-projective variety and let
$\pi\colon
\cY
\longrightarrow
U$ be a smooth projective morphism such that 
$\cY_u$ is a Fano threefold for every closed point $u \in U$. 
Let $\cL_1$ and $\cL_2$ be $\pi$-nef line bundles on $\cY$. 
Set
\[
V
:=
\P_U(\cE_1^\vee)
\times_U \P_U(\cE_2^\vee), 
\]
where $\cE_i
:=
\pi_*\cL_i$ for every $i$, which is a vector bundle. 
For each $i \in \{1, 2\}$, let $\cD_i
\subset
\cY\times_U V$ be the universal divisor associated with $\cL_i$ 
and set
$\cC
:=
\cD_1\cap \cD_2
\subset
\cY\times_U V$. 
Assume that, given a Fano threefold $X$ of No.\ x-yz, 
there exists a closed point $v \in V$ such that 
$X \simeq \Bl_{\cC_v}(\cY_v)$ and 
$\cC_v$ is a non-empty disjoint union of smooth curves. 
Then there exists a parameter space of No.\ $\text{x-yz}$.
\end{prop}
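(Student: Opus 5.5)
The plan is to reduce Proposition \ref{p relative CI centre parameter space} to Lemma \ref{l relative blowup parameter space}. First we check that $\cE_i=\pi_*\cL_i$ is locally free and commutes with base change, so that $V$ is a well-defined smooth quasi-projective variety over $k$. By Corollary \ref{c relative semi-ample}, each $\cL_i$ is $\pi$-semi-ample. Since every closed fibre $\cY_u$ is a Fano threefold, Theorem \ref{t Fano3 ANV} gives $H^{>0}(\cY_u,\cL_{i,u})=0$; the case $\cL_{i,u}\simeq\MO$ is covered by $H^{>0}(\MO_{\cY_u})=0$, and otherwise one writes $\cL_{i,u}=K_{\cY_u}+(\cL_{i,u}-K_{\cY_u})$ with the second term ample and applies Kodaira vanishing in its Serre-dual form. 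Cohomology and base change then shows that $\cE_i$ is a vector bundle whose formation commutes with base change. Hence $V\to U$ is a product of projective bundles, so $V$ is smooth over $U$ and is a quasi-projective variety. We may replace $U$ by its smooth locus if necessary, since only the closed points realising Fano threefolds of No.\ x-yz matter; but $U$ is irreducible and we only need some open subset, so this is harmless.

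Next we set $\cY_V:=\cY\times_U V\to V$, which is smooth and projective. On it lie the universal divisors $\cD_i$, the zero loci of the tautological sections of $\cL_i\boxtimes\MO_{\P(\cE_i^\vee)}(1)$, and their intersection $\cC=\cD_1\cap\cD_2$. To apply Lemma \ref{l relative blowup parameter space} we need $\cC\to V$ to be smooth, at least after shrinking $V$. The closed points $v$ used in the hypothesis have $\cC_v$ a smooth curve, so at these points the fibre is smooth of pure dimension one. The task is therefore to show that $\cC\to V$ is flat near such $v$. Since $\cC_v$ has codimension two in $\cY_v$, the two local equations cutting $\cC$ restrict to a regular sequence on the fibre. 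By the local flatness criterion (a fibrewise regular sequence on a flat family defines a flat quotient), $\cC$ is flat over $V$ at the points of $\cC_v$, and flat with smooth fibre implies smooth. Both flatness and smoothness of the fibre are open conditions, and $\cC\to V$ is proper, so the locus
\[
V^\circ:=\{v\in V \mid \cC\to V \text{ is smooth of relative dimension one along } \cC_v \}
\]
is open. It is nonempty as soon as one Fano threefold of No.\ x-yz exists, which was established in the previous section.

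Finally we apply Lemma \ref{l relative blowup parameter space} to $\cY_{V^\circ}\to V^\circ$ with centre $\cC_{V^\circ}$. By hypothesis, every Fano threefold of No.\ x-yz is isomorphic to $\Bl_{\cC_v}\cY_v$ for some closed $v$ with $\cC_v$ a nonempty disjoint union of smooth curves, and any such $v$ lies in $V^\circ$. The lemma then yields the parameter space, and $V^\circ$ is irreducible because $V$ is.

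The main obstacle is the first step: the base-change property of $\pi_*\cL_i$, and the requirement that $U$, and hence $V$, be a variety. If $U$ is not reduced at some point this has to be handled, but $U$ is assumed to be a variety, so this is fine. The openness and flatness argument in the second step is routine.
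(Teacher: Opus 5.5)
Your proposal is correct and follows the paper's proof. Both restrict $V$ to the open locus over which $\cC\to V$ is smooth and then apply Lemma \ref{l relative blowup parameter space}; your local-flatness argument additionally supplies the check, left implicit in the paper, that every relevant closed point $v$ lies in that locus.

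The one blemish is the aside about replacing $U$ by its smooth locus. It is unnecessary, since the lemma needs no smoothness of the base, and it would in fact be illegitimate, because shrinking $U$ can discard points that realise threefolds of No.\ x-yz.
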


\begin{proof}
Note that $V$ is a quasi-projective variety. 
Then the assertion follows from
Lemma \ref{l relative blowup parameter space} applied to
$\cY \times_U V
\longrightarrow
V$ 
and $\cC
\subset
\cY \times_U V$ after applying the base change $(-) \times_V V'$ 
for the largest open subset $V' \subset V$ such that $\cC \times_V V' \to V'$ is smooth. 
\end{proof}

\begin{prop}\label{p moving CI containing moving centre}
Fix a number $\text{x-yz}$. 
Let $U$ be a quasi-projective variety and let
$\rho\colon
\cP
\longrightarrow
U$ be a smooth projective morphism. 
Let
$\cC
\subset
\cP$ be a closed subscheme such that
$\cC
\longrightarrow
U$
is smooth. 
Let $\cL_1,\ldots,\cL_r$ be line bundles on $\cP$. 
Assume that, for every $i\in\{1,\ldots,r\}$, the function
\[
U
\longrightarrow
\Z,
\qquad
u
\longmapsto
h^0\left(
\cP_u, 
\cI_{\cC_u/\cP_u}
\otimes
\cL_i|_{\cP_u}
\right)
\]
is constant. 
Set $\cE_i
:=
\rho_*
\left(
\cI_{\cC/\cP}
\otimes
\cL_i
\right)$, which is a vector bundle on $U$. 
Set 
\begin{itemize}
\item $V
:=
\P_U(\cE_1^\vee)
\times_U
\cdots
\times_U
\P_U(\cE_r^\vee)$, 
\item $\cP_V
:=
\cP\times_U V$, and 
$\cC_V
:=
\cC\times_U V
\subset
\cP_V$. 
\end{itemize}
For every $i\in\{1,\ldots,r\}$, let
$\cD_i
\subset
\cP_V$ be the universal divisor containing $\cC_V$, and set
\[
\cY
:=
\cD_1\cap\cdots\cap\cD_r
\subset
\cP_V. 
\]
Assume that, given a Fano threefold $X$ of No.\ x-yz, 
there exists a closed point $v \in V$ 
such that 
$\cY_v$ is a smooth projective threefold, 
$X \simeq \Bl_{(\cC_V)_v}(\cY_v)$, and 
$(\cC_V)_v$ is a non-empty disjoint union of smooth curves. 
Then there exists a parameter space of No.\ $\text{x-yz}$.
\end{prop}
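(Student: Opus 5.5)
Since $\rho\colon \cP\to U$ is flat and projective, and the function $u\mapsto h^0(\cP_u,\cI_{\cC_u/\cP_u}\otimes\cL_i|_{\cP_u})$ is constant, Grauert's theorem shows that $\cE_i$ is locally free and commutes with base change. Flatness of $\cI_{\cC/\cP}$ over $U$ follows from flatness of $\cC\to U$. The plan is to reduce to Lemma \ref{l relative blowup parameter space}, applied to a suitable open subset of $V$.

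First I set up the universal objects. On $\P_U(\cE_i^\vee)$ there is the tautological inclusion $\MO(-1)\to \cE_i$. Pulling back to $\cP_V$ and composing with the base-change map $\cE_i\otimes\MO_{\cP_V}\to \cI_{\cC_V/\cP_V}\otimes\cL_i$ gives a section of $\cI_{\cC_V/\cP_V}\otimes\cL_i\otimes\MO(1)$. Its zero scheme is $\cD_i$, and it contains $\cC_V$ because the section lies in the ideal. Over each closed point $v$, the fibre $(\cD_i)_v$ is the member of $|\cL_i|$ (containing $(\cC_V)_v$) that corresponds to $v$. It is a genuine divisor, since the section is nonzero on the fibre. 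Hence $\cY_v=(\cD_1)_v\cap\cdots\cap(\cD_r)_v$, and formation of $\cY$ commutes with base change to points, because $\cY$ is cut out by sections.

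Next I choose the open locus. Let $V^\circ\subset V$ be the set of $v$ such that $\cY_v$ is smooth of pure dimension $3$ and $\cY_v$ is a local complete intersection of codimension $r$ there. This is open: by upper semicontinuity of fibre dimension, the set of points where $\cY\to V$ has relative dimension $3$ is open. On that set $\cY$ is cut out by $r$ equations in the smooth $V$-scheme $\cP_V$ of relative dimension $3+r$. So it is a relative complete intersection and therefore flat over $V$ there, by the fibrewise criterion of flatness. The smooth locus of a flat finite-presentation morphism is open, and since $\cY\to V$ is proper, the image of the non-smooth locus is closed. Removing it gives an open $V^\circ$ over which $\cY\to V^\circ$ is smooth and projective. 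By hypothesis $V^\circ$ contains every point $v$ that realises a Fano threefold of No.\ x-yz, so it is nonempty. It is an open subscheme of the quasi-projective variety $V$ (a product of projective bundles over the variety $U$), so it is itself a quasi-projective variety. The subscheme $\cC_V\times_V V^\circ\subset\cY|_{V^\circ}$ is smooth over $V^\circ$, being a base change of the smooth morphism $\cC\to U$.

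Finally I apply Lemma \ref{l relative blowup parameter space} to $\cY|_{V^\circ}\to V^\circ$ with centre $\cC_V|_{V^\circ}$. The hypothesis of that lemma is exactly our assumption: every $X$ of No.\ x-yz is $\Bl_{(\cC_V)_v}\cY_v$ for some $v$, and that $v$ lies in $V^\circ$. The lemma then produces the required parameter space. The main obstacle is the openness and flatness of the smooth-threefold locus of $\cY\to V$, which is why I pass through the relative complete intersection structure. The remaining steps are routine.
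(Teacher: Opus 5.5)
Your proposal is correct and follows the same route as the paper. Grauert's theorem (constant $h^0$ over the reduced base $U$) makes $\cE_i$ a vector bundle compatible with base change, and the statement is then deduced from Lemma \ref{l relative blowup parameter space}. The paper leaves implicit the step that $\cY\to V$ is smooth over an open subset of $V$ containing every relevant point $v$; your relative complete intersection and fibrewise flatness argument supplies exactly this. Note that this step uses that the fibres of $\rho$ have dimension $3+r$: the proposition does not state this, but it holds in every application ($\P^4$ with $r=1$, $\P^5$ with $r=2$, $\Gr(2,5)$ with $r=3$).
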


\begin{proof}
Note that $V$ is a quasi-projective variety. 
As $u \mapsto h^0\left(
\cP_u,
\cI_{\cC_u/\cP_u}
\otimes
\cL_i|_{\cP_u}
\right)$ is constant, 
$\cE_i
:=
\rho_*
\left(
\cI_{\cC/\cP}
\otimes
\cL_i
\right)$ is a vector bundle such that 
the induced linear map 
\[
\cE_i\otimes k(u)
\longrightarrow
H^0\left(
\cP_u,
\cI_{\cC_u/\cP_u}
\otimes
\cL_i|_{\cP_u}
\right)
\]
is an isomorphism for every closed point $u\in U$. 
Then  the assertion follows from
Lemma \ref{l relative blowup parameter space}.
\end{proof}




\subsection{1-2 and 2-6}

\begin{prop}\label{p parameter 2-6}
There exists a parameter space of No.\ 2-6. 
\end{prop}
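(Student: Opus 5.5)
The plan is to imitate the Mori--Mukai degeneration trick and put both descriptions of No.\ 2-6 into one irreducible linear family. By \cite[Section 7]{FanoIV}, a Fano threefold of No.\ 2-6 is one of the following:
(a) a smooth divisor of bidegree $(2,2)$ on $\P^2 \times \P^2$;
(b) a double cover $f\colon X \to W$ with $\cL = \MO_{\P^2\times\P^2}(1,1)|_W$ and $\cL^{-1}\simeq f_*\MO_X/\MO_W$.
The aim is to realise both kinds as complete intersections in one fixed ambient variety, with coefficients varying in a vector space.

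Set $T := \P^2 \times \P^2$, $\cM := \MO_T(1,1)$ and $P := \P_T(\MO_T \oplus \cM)$, with homogeneous fibre coordinates $[s:z]$. Here $s$ is a section of the tautological bundle $\xi$, and $z$ is a section of $\xi \otimes \cM^{-1}$. Consider the parameter vector space
\[
\cP := k \times H^0(T,\cM) \times H^0(T,\cM) \times H^0(T,\cM^{\otimes 2}) \ni (\lambda, g, a, b),
\]
and define
\[
X_{\lambda,g,a,b} := \{\lambda z = g s\} \cap \{z^2 + a z s + b s^2 = 0\} \subset P .
\]
This is a family of complete intersections of two universal divisors in $P \times \cP$ over the affine space $\cP$. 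I would take $U \subset \cP$ to be the open subset over which the family is smooth of relative dimension three with $-K$ relatively ample, and $\cX \to U$ the restricted universal family. Openness of these conditions for flat projective families gives a smooth projective morphism whose closed fibres are Fano threefolds. Since $U$ is a nonempty open subset of an affine space, it is an irreducible quasi-projective variety.

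It remains to see that every $X$ of No.\ 2-6 occurs as a fibre.
\begin{itemize}
\item When $\lambda \neq 0$, the first equation gives $z = gs/\lambda$. The section $s=0$ would then force $z=0$, so $X_{\lambda,g,a,b}$ maps isomorphically onto the divisor $\{g^2 + \lambda a g + \lambda^2 b = 0\} \subset T$. Taking $\lambda = 1$ and $g = a = 0$ realises an arbitrary $(2,2)$-divisor as $\{b=0\}$, which covers case (a).
\item When $\lambda = 0$, the first equation becomes $gs=0$. On the locus $s \neq 0$ this cuts out $W := \{g=0\}$, and $X_{0,g,a,b}$ is the double cover $z^2 + a z + b = 0$ of $W$. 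By the argument of Proposition \ref{p irre double cover}, every $X$ of type (b) has this form for some $a \in H^0(W,\cL)$ and $b \in H^0(W,\cL^{\otimes 2})$. We then need these to lift to sections on $T$. This follows from surjectivity of $H^0(T,\MO_T(m,m)) \to H^0(W,\MO_W(m,m))$, since $H^1(T,\MO_T(m-1,m-1))=0$.
\end{itemize}

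The main obstacle is the degenerate case $\lambda = 0$. There one must check that $X_{0,g,a,b}$ coincides scheme-theoretically with the double cover. The danger is the locus $\{s = 0\}$: there $z \neq 0$, and the quadric equation reduces to $z^2 = 0$, which has no solutions. So no extra component appears, and the intersection is really the double cover of $W$.

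I also need that the relevant fibres are smooth, so that they lie in $U$; this holds simply because they are isomorphic to the given smooth Fano threefolds. In characteristic two the linear term $a z s$ is essential: it is exactly what allows the inseparable-free double covers of Proposition \ref{p smooth double cover exist} to appear. The same equation handles all characteristics uniformly. Finally, Theorem \ref{t No inv} guarantees that every closed fibre over the connected base $U$ is of No.\ 2-6.
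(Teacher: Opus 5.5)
Your proposal is correct and is essentially the paper's proof. The paper uses the same family $\{z^2+az+b=rz+c=0\}$ inside $\Tot_P(H)$, with $P=\P^2\times\P^2$ and $H=\MO(1,1)$; this is exactly your chart $\{s\neq 0\}$, which, as you observe, contains the whole quadric. It obtains the $(2,2)$-divisors at $r=1$ and the double covers of $W=\{c=0\}$ at $r=0$, using the same lifting of sections from $W$ to $P$.

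Two small fixes are needed.
\begin{enumerate}
\item For $\lambda z=gs$ and $z^2+azs+bs^2$ to be homogeneous, $z$ must be a section of $\xi\otimes\pi^*\cM$, not $\xi\otimes\cM^{-1}$. Equivalently, take $\P_T(\MO_T\oplus\cM^{-1})$ in Grothendieck's convention, so that $\{s\neq 0\}=\Tot_T(\cM)$.
\item A fibre lies in your $U$ not merely because it is smooth, since smoothness of the morphism also needs flatness there. Being smooth, the fibre has dimension three, so it is a codimension-two complete intersection in the smooth fivefold, and hence the family is flat along it. This is the paper's remark that a point lies in the flat locus if and only if $\dim X_u=3$.
\end{enumerate}
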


\begin{proof}
Set $P:=\P^2 \times \P^2$, $H:=\MO_{\P^2 \times \P^2}(1, 1)$, and 
\[
V^{\all}
:=
H^0(P,H)
\oplus
H^0\left(
P,H^{\otimes2}
\right)
\oplus
H^0(P,H)
\oplus
\mathbb A^1, 
\]
which is an affine space. 
Let
\[
\rho \colon
\operatorname{Tot}_P(H)
:=
\operatorname{Spec}_P 
\left(
\bigoplus_{d \geq 0} H^{-d}
\right)
\longrightarrow
P
\]
be the total space of $H$ and let
$z
\in
H^0\left(
\operatorname{Tot}_P(H),
\rho^*H
\right)$ 
be the tautological section. For
a closed point $(a,b,c,r)\in V^{\all}$, let $X_{a,b,c,r}
\subset
\operatorname{Tot}_P(H)$ be the closed subscheme defined by
\[
X_{a,b,c,r} := \{ z^2+az+b=rz+c=0\} \subset \Tot_P(H). 
\]
Then there exists the closed subscheme $\cX^{\all} \subset \operatorname{Tot}_P(H) \times V^{\all}$ such that 
the fibre of the induced composition 
\[
\pi^{\all}: \cX^{\all} \hookrightarrow \operatorname{Tot}_P(H) \times V^{\all} \xrightarrow{\pr_2} V^{\all}
\]
over a closed point $(a,b,c,r)\in V^{\all}$ is equal to $X_{a, b, c, r}$. 
For the largest open subset $V$ 
over which $\pi^{\all}$ is flat, 
we have the induced flat projective morphism 
\[
\pi : \cX \to V, 
\]
where $\cX := \cX^{\all} \times_{V^{\all}} V$. 
Since $\dim \Tot_P(H) = \dim P +1=5$, 
a closed point $(a, b, c, r) \in V^{\all}$ is contained in $V$ if and only if $\dim X_{a, b, c, r}=3$. 
By \cite[Subsection 7.2]{FanoIV}, it suffices to show (i) and (ii) below. 
\begin{enumerate}
\item[(i)] 
If $X$ is a smooth divisor on $\P^2 \times \P^2$ of bidegree $(2, 2)$, 
then $X \simeq X_{a, b, c, r}$ for some $(a, b, c, r) \in V^{\all}$. 
\item[(ii)] 
If $f: X \to W$ is a split double cover with $f_*\MO_X/\MO_W \simeq H^{-1}|_W$, 
then  $X \simeq X_{a, b, c, r}$ for some $(a, b, c, r) \in V^{\all}$. 
\end{enumerate}

Let us show (i). 
In this case, we have $X = V(b) \in |H^{\otimes 2}|$ for some $b \in H^0\left(
P,H^{\otimes2}
\right)$. 
Then 
\[
X_{0, b, 0, 1} = (\{ z^2 + b= z=0 \} \subset \Tot_P(H)) \xrightarrow{\simeq} (\{ b=0\} \subset P)  = V(b) =X,
\]
where the isomorphism $\xrightarrow{\simeq}$ is induced by the projection $\Tot_P(H) \to P$. 
Thus (i) holds.

Let us show (ii).
We have
$W=V(c)\in|H|$ for some $c\in H^0(P,H)$. 
Fix a split double cover
$f\colon
X
\to 
W$. 
Then $X$ is defined by
\[
z^2+sz+t=0
\]
for some $s\in H^0\left(
W,H|_W
\right)$ and $t\in H^0\left(
W,H^{\otimes2}|_W
\right)$. 
By $H^1(P, H^m)=0$ and the exact sequence $0 \to H^{\otimes m} \to H^{\otimes (m+1)} \to H^{\otimes (m+1)}|_W \to 0$, 
there exist
$a\in H^0(P,H)$ and $b\in H^0\left(
P,H^{\otimes2}
\right)$ 
such that
$a|_W=s$ and $b|_W=t$. 
Then
\[
X_{a,b,c,0}
=
\left(
\left\{
z^2+az+b=c=0
\right\}
\subset
\Tot_P(H)
\right)
\xrightarrow{\simeq}
X,
\]
where the isomorphism is induced by the identification
$\rho^{-1}(W)
\simeq
\Tot_W(H|_W)$. Thus (ii) holds.
\end{proof}

\begin{prop}\label{p parameter 1-2}
There exists a parameter space of No.\ 1-2. 
\end{prop}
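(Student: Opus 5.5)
Fano threefolds of No.\ 1-2 are those with $\rho(X)=1$, $r_X=1$ and $(-K_X)^3=4$. By \cite[Section 7]{FanoIV}, each such $X$ is either a smooth quartic hypersurface in $\P^4$, or a double cover of a smooth quadric $Q\subset\P^4$ (the hyperelliptic case, $f_*\MO_X/\MO_Q\simeq\MO_Q(-2)$). The plan is to imitate Proposition \ref{p parameter 2-6} exactly, replacing $P=\P^2\times\P^2$ by $P:=\P^4$. We set $H:=\MO_{\P^4}(2)$, which plays the role of $\MO(1,1)$ in 2-6, and we put
\[
V^{\all}:=H^0(P,H)\oplus H^0(P,H^{\otimes 2})\oplus H^0(P,H)\oplus\mathbb A^1 .
\]
Inside $\Tot_P(H)$, with tautological section $z$, we consider
\[
X_{a,b,c,r}:=\{z^2+az+b=rz+c=0\}.
\]
We form the universal family $\cX^{\all}\to V^{\all}$, restrict to the open subset $V$ over which it is flat (equivalently, $\dim X_{a,b,c,r}=3$), and then pass to the open subset over which the fibres are smooth. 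This gives a smooth projective morphism over an irreducible quasi-projective base, namely an open subset of an affine space. Its closed fibres are Fano by Theorem \ref{t No inv}, provided that one fibre of No.\ 1-2 occurs; this is automatic once the two realisability statements below are proved.

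It then suffices to show two things. (i) A smooth quartic $X=V(b)$ with $b\in H^0(\P^4,\MO(4))=H^0(P,H^{\otimes2})$ is isomorphic to $X_{0,b,0,1}$: setting $z=0$ and projecting gives the isomorphism onto $V(b)\subset P$. (ii) A double cover $f:X\to Q$ with $Q=V(c)$, $c\in H^0(P,H)$, is given locally by $z^2+sz+t=0$ with $s\in H^0(Q,\MO_Q(2))$ and $t\in H^0(Q,\MO_Q(4))$, and this holds also in characteristic $2$, since every degree two finite flat cover with $f_*\MO_X/\MO_Q\simeq\MO_Q(-2)$ has this form. We lift $s,t$ to $a\in H^0(\P^4,\MO(2))$ and $b\in H^0(\P^4,\MO(4))$ using $H^1(\P^4,\MO(m))=0$ and the sequence $0\to\MO(m-2)\to\MO(m)\to\MO_Q(m)\to0$. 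Then $X\simeq X_{a,b,c,0}$ via $\rho^{-1}(Q)\simeq\Tot_Q(H|_Q)$.

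The main obstacle is pinning down the precise structure of the hyperelliptic case in \cite{FanoIV}, in particular that the branch data is given by $\MO_Q(2)$, so that $\cL=H|_Q$. This needs to be checked in characteristic $2$, where the cover need not be described by a branch divisor. I would rely on the description $f_*\MO_X/\MO_Q\simeq\MO_Q(-2)$, which follows from $K_X\sim f^*(K_Q+\cL)$ and $-K_X=f^*\MO_Q(1)$, so that $\cL=\MO_Q(2)$. Once this is in hand, the rest is formal, exactly as in Proposition \ref{p parameter 2-6}.
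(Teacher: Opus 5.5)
Your proposal is the paper's argument: the paper proves this in one line by rerunning the proof of Proposition \ref{p parameter 2-6} with $P=\P^4$ and $H=\MO_{\P^4}(2)$. Your checks of (i) and (ii) are what that rerun amounts to, namely the lifting via $H^1(\P^4,\MO(m))=0$ and the global form $z^2+sz+t$, which is available because $H^1(Q,\MO_Q(2))=0$ splits $f_*\MO_X$.

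The one slip is that Theorem \ref{t No inv} assumes every fibre is already Fano, so it cannot be what makes the closed fibres Fano. Instead, shrink further to the open locus where $\omega^{-1}$ of the family is relatively ample, which is nonempty by (i). Alternatively, observe directly that a smooth fibre is a smooth quartic if $r\neq 0$. If $r=0$, it is a double cover $f$ of the quadric $V(c)$, which is necessarily smooth by the Jacobian criterion, with $-K=f^*\MO_{V(c)}(1)$.
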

\begin{proof}
The same argument as in Proposition \ref{p parameter 2-6} works by setting 
$P := \P^4$ and $H := \MO_{\P^4}(2)$. 
\end{proof}

\subsection{Easy cases and $\rho=1$}

\begin{prop}\label{p unique easy}
Let 
\[
\text{x-yz}
\in
\{
\text{1-15, 1-16, 1-17, 2-27, 2-30, 2-31, 2-32, 2-33, 2-34, 2-35, 2-36,}
\]
\[
\text{3-16, 3-22, 3-23, 3-24, 3-25, 3-26, 3-27, 3-28, 3-29, 3-30, 3-31,}
\]
\[
\text{4-6, 4-7, 4-8, 4-9, 4-10, 4-11, 4-12, 5-2, 5-3, 6-1}
\}.
\]
Then there exists a unique Fano threefold $X_{x{\text -}yz}$ of No.\ x-yz up to isomorphisms. 
In particular, there exists a parameter space of No.\ x-yz. 
\end{prop}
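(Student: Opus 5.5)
The "in particular" part is immediate once uniqueness is known. If $X_{x\text{-}yz}$ is the unique Fano threefold of No.\ x-yz up to isomorphism, then the constant family $\pi : X_{x\text{-}yz} \to \Spec k$ is a parameter space. Existence of $X_{x\text{-}yz}$ was already established in the previous section. So the real content is uniqueness, which I would prove case by case using the explicit descriptions in \cite[Section 7]{FanoIV}. The strategy is always the same: realise $X$ as either a rigid variety or a blowup of a unique variety $Y$ along a centre, and show the centre is unique up to $\Aut(Y)$.

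I would sort the list into three groups.

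\emph{Group 1: direct products and toric or homogeneous varieties determined by the data.} This covers 1-17 ($\P^3$), 1-16 ($Q$), 1-15 ($V_5$, unique by \cite{FanoIV}/\cite{KTprime}), 2-32 ($W$, \cite[Lemma 5.16]{FanoIII}), 2-34, 3-27, 5-3, 6-1, and the toric cases 2-33, 2-35, 2-36, 3-25, 3-26, 3-28, 3-29, 3-30, 3-31, 4-9--4-12, 5-2. For these, \cite[Section 7]{FanoIV} gives an explicit description. Examples are $\P^1\times S_d$, the blowup of $\P^3$ along a line, and $\P(\MO\oplus\MO(2))$ over $\P^2$. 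In each, the centres are linear objects (points, lines, disjoint lines, or fibres) on which the relevant automorphism group acts transitively. For 6-1 I use that $S_7$, the blowup of $\P^2$ at two points, is unique. For 2-34 and 3-27 there is nothing to prove.

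\emph{Group 2: blowups along conics, twisted cubics, and similar curves.} This covers 2-27 (twisted cubic in $\P^3$), 2-30 (conic in $\P^3$), 2-31 (line on $Q$), 3-16, 3-22, 3-23, 3-24, 4-6, 4-7, and 4-8. In each case I would use the blowup description from \cite[Subsections 7.2--7.4]{FanoIV} and the corresponding transitivity statements:
\begin{itemize}
\item $\PGL_4$ acts transitively on twisted cubics and on smooth conics in $\P^3$.
\item $\Aut(Q)$ acts transitively on lines of $Q$.
\item $\Aut(W)$ acts transitively on fibres of $W \to \P^2$ (2-32 to 3-24), and on pairs of disjoint fibres of the two projections (4-7).
\item For 3-22, conics in a fibre $\{t\}\times\P^2$ of $\P^1\times\P^2$ are all equivalent.
\item For 3-16 and 3-23, I reduce to a fixed point $P\in\P^3$ and a cubic, respectively a conic, through $P$. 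Transitivity on pairs (point, twisted cubic through it) and (point, conic through it) follows from $\Aut(\P^1)$ acting transitively on points of the curve.
\item For 4-6 and 4-8, the tridegree $(1,1,d)$ curves on $(\P^1)^3$ with $d\le 1$ form a single orbit.
\end{itemize}

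\emph{Group 3: the two-step constructions.} This covers 3-24 and 4-6, which I handle by composing the uniqueness of the intermediate variety with transitivity on the next centre.

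I expect the main obstacle to be making sure that each blowup description in \cite{FanoIV} is truly intrinsic. That is, every $X$ of the given number must arise from the stated $Y$ with a centre of the stated type, and not merely some $X$ of that number. For the cases with $\rho\ge 3$ it must also be ensured that the configuration (for instance, the relative position of a line and a point) is forced. I would settle this by invoking the extremal-contraction data in \cite[Section 7]{FanoIV}, together with Theorem \ref{t No inv}, to pin down $Y$ and the type $E_1$ centre. Then a short orbit computation under $\Aut(Y)$ completes each case.
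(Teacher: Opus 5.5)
Your approach is essentially the paper's. The paper also reads off the description of each number from \cite[Section 7]{FanoIV}. It then proves uniqueness by showing that the automorphism group of a unique ambient variety acts transitively on the blowup centres. Examples are pairs $(C,P)$ with $P$ a point of a twisted cubic $C$ for 3-16, pairs of distinct points of $L$ via the stabiliser of two disjoint lines for 5-2, the diagonal of $C\times C\times C$ for 4-6, and pairs $(P,L)$ of a $1$- and a $2$-dimensional subspace of $k^3$ with $P\cap L=0$ for 4-7. For $V_5$, $W$ and lines on $Q$ it cites the literature, as you do.

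A few points need correcting.

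\begin{itemize}
\item No.\ 6-1 is $\P^1\times S_5$, where $S_5$ is the blowup of $\P^2$ at four points in general position. $\P^1\times S_7$ is No.\ 4-10, which you already cover. The fix is immediate: $\PGL_3$ acts transitively on ordered four-tuples of points with no three collinear.
\item The uniqueness of $V_5$ in positive characteristic is a nontrivial recent result, which the paper takes from \cite[Theorem 1.3]{IKTTV5}. Note that \cite{KTprime} concerns prime Fano threefolds, whereas $V_5$ has index two.
\item In characteristic two, your claim that $\Aut(Q)$ acts transitively on lines of $Q$ needs an argument valid for the defective quadratic forms that occur there, since the bilinear form of a quadric in $\P^4$ has a nontrivial radical. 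The paper cites \cite[Theorem B.3]{Lan83} for this case.
\item Theorem \ref{t No inv} plays no role. The blowup descriptions in \cite[Section 7]{FanoIV} already apply to every Fano threefold of the given number.
\end{itemize}
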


\begin{proof}
We treat the following five cases separately: 
\begin{enumerate}
\item $\text{1-16, 1-17, 2-34, 2-35, 2-36, 3-27, 3-28, 3-31, 4-10, 5-3, 6-1}$. 
\item
$\text{2-27, 2-30, 2-33, 3-16, 3-22, 3-23, 3-25, 3-26, 3-29, 3-30, 
4-9, 4-11, 4-12, 5-2}$. 
\item 1-15,  2-31, 2-32.  
\item 4-6, 4-8. 
\item 3-24, 4-7. 
\end{enumerate}
The case (1) is obvious by \cite[Section 7]{FanoIV}. 

As for (2), the uniqueness holds by using suitable  projective
equivalence of the corresponding blowup centres. 
Here we only treat 3-16 and 5-2 as the other cases are simpler. 
The uniqueness for No.\ 3-16 follows from the fact that 
$\Aut(\P^3)$ acts on the set 
\[
\{ (C, P) \,|\, P \in C, C \text{ is a smooth cubic rational curve on }\P^3\}
\]
transitively. 
The uniqueness of No.\ 5-2 follows from the fact that $\Aut(\P^3)$ acts transitively on ordered pairs of disjoint lines $(L,L')$, and the stabiliser of such a pair induces the full automorphism group of $L (\simeq \P^1)$. Hence it acts transitively on pairs of distinct points on $L$, and therefore on pairs of distinct fibres of the exceptional divisor lying over $L$.

Concerning (3), the uniqueness holds as follows: 
\begin{itemize}
\item 1-15: \cite[Theorem 1.3]{IKTTV5}.
\item 2-31:  \cite[Theorem B.3]{Lan83}.
\item 2-32: \cite[Lemma 5.16(1)]{FanoIII}.
\end{itemize}

Let us treat the case (4). 
For a Fano threefold $X$ of No.\ 4-6, 
we have $X \simeq \Bl_C(\P^1_1 \times \P^1_2 \times \P^1_3)$ for some smooth curve $C$ of tridegree $(1, 1, 1)$. 
Note that each projection $\pi_i : C \to \P^1_i$ is an isomorphism. 
Then the image of the composite closed immersion 
\[
C \hookrightarrow \P^1_1 \times \P^1_2 \times \P^1_3 
\xrightarrow{\simeq, \pi_1^{-1} \times \pi_2^{-1} \times \pi_3^{-1}} C \times C \times C 
\]
is the diagonal $\{(t, t, t) \in C \times C \times C \,|\, t \in C\}$. 
Therefore, $X \simeq  \Bl_C(\P^1_1 \times \P^1_2 \times \P^1_3) \simeq \Bl_{\Delta}( C \times C \times C)$. 
This completes the proof for the case of No.\ 4-6. 
A similar argument is applicable for the case of No.\ 4-8.

Let us show the uniqueness for (5). 
In what follows, we treat only No.\ 4-7, as the other case is easier. 
We use the following incidence description:  
\[
W
=
\left\{
(p,\ell)
\in
\P^2\times(\P^2)^\vee
\ \middle|\
p\in\ell
\right\}. 
\]
Let $\pi_1\colon W \to \P^2$ 
and $\pi_2\colon W\to (\P^2)^\vee$ 
be the induced contractions. 
By the description in
\cite[Subsection 7.4]{FanoIV}, every Fano threefold of No.\ 4-7 is
isomorphic to
\[
\Bl_{C_1\amalg C_2}W,
\]
where $C_1=\pi_1^{-1}(p)$ and $C_2=\pi_2^{-1}([\ell])$ 
for a point $p\in\P^2$ and a line $\ell\subset\P^2$ satisfying
$p\notin\ell$. 


Let $V$ be a three-dimensional vector space. 
Then a point $p$ (resp.\ a line $\ell$) on $\P(V) (\simeq \P^2)$ corresponds to a 
one-dimensional (resp.\ two-dimensional) vector subspace 
$P$ (resp.\ $L$) of $V$. 
Under this correspondence, 
$p \not\in \ell$ if and only if $P \cap L = \{0\}$. 
Therefore,  it is enough to show that $\GL(V)$ acts transitively on the following set: 
\[
\Sigma := 
\{ (P, L) \,|\, 
P\text{ and } L\text{ are vector subspaces of }V, 
\dim P = 1, \dim L =2, P \cap L = 0\}.  
\]
Pick two elements $(P, L), (P', L') \in \Sigma$. 
Then the required transitivity can be checked by fixing linear bases of $P, L, P', L'$. 
\qedhere


\end{proof}

\begin{thm}\label{t parameter rho 1}
Let x-yz be a number with $x = 1$, i.e., 
\[
\text{x-yz} \in 
\{\text{1-1, 1-2, ..., 1-17}\}. 
\]
Then there exists a parameter space of No.\ x-yz.
\end{thm}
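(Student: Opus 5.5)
We will go through the seventeen numbers 1-1, \dots, 1-17 and check that each is covered by one of the criteria already established in this section. First, 1-1 and 1-12 are covered by Proposition \ref{p irre double cover}. Next, 1-3, 1-4, 1-5, 1-6, 1-7, 1-8, 1-9, 1-10, 1-11, 1-13, 1-14 are covered by Proposition \ref{p irre CI}. The number 1-2 is Proposition \ref{p parameter 1-2}. Finally, 1-15, 1-16, 1-17 are covered by Proposition \ref{p unique easy}; there the parameter space is a single point $U=\Spec k$ with $\cX = X_{x\text{-}yz}$.

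This exhausts the list, since $\{1,\ldots,17\}$ splits as $\{1,12\}\cup\{2\}\cup\{3,\ldots,11,13,14\}\cup\{15,16,17\}$. So the proof is a bookkeeping argument, and I would write it as an itemised list of references, mirroring the proof of Proposition \ref{p exist rho 1}.

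The only points requiring care lie in the cited propositions, not in this theorem. For 1-12 (the double cover of $V_1$-type, i.e.\ the hyperelliptic genus 2 index one case) one needs that every such $X$ is a double cover of a fixed $Y$. I take this to be $Y=\P^3$ branched in the sextic-type datum, or a fixed quadric cone model, with $\cL$ fixed. This description must hold in characteristic two as well, via the $z^2+az+b$ form, as recorded in \cite[Section 7]{FanoIV}. For 1-10 one needs the vector-bundle (Mukai) description from \cite{KTprime}. I regard these as already settled by the cited propositions.

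Thus I expect no real obstacle: the main risk is only a missed number in the partition above, which I would double-check against \cite[Subsection 7.1]{FanoIV}.
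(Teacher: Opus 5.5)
Your proof is the same bookkeeping as the paper's and is correct: 1-1 and 1-12 via Proposition~\ref{p irre double cover}, 1-2 via Proposition~\ref{p parameter 1-2}, the eleven complete-intersection cases (including 1-10) via Proposition~\ref{p irre CI}, and 1-15, 1-16, 1-17 via Proposition~\ref{p unique easy}. One harmless correction to your side remark: No.\ 1-12 is $V_2$, the double cover of $\P^3$ with $\cL=\MO_{\P^3}(2)$ (quartic branch datum), whereas the genus-two index-one sextic double cover you describe is No.\ 1-1; since you only cite Proposition~\ref{p irre double cover}, this does not affect the argument.
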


\begin{proof}
The assertion holds as follows: 
\begin{itemize}
\item 1-3, 1-4, 1-5, 1-6, 1-7, 1-8, 1-9, 1-10, 1-11, 1-13, 1-14: 
Proposition \ref{p irre CI}. 
\item 1-1, 1-12: Proposition \ref{p irre double cover}. 
\item 1-15, 1-16, 1-17: Proposition \ref{p unique easy}. 
\item 1-2: Proposition \ref{p parameter 1-2}.
\end{itemize}
\end{proof}

\subsection{$\rho\geq 5$}

\begin{prop}\label{p irre 5-1}
Let $\text{x-yz}
\in
\{
\text{3-18, 4-4, 5-1}\}$. 
Then there exists a parameter space  of No.\ x-yz. 
\end{prop}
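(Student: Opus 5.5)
The plan is to realise each of 3-18, 4-4 and 5-1 as a (possibly iterated) blowup of a fixed Fano threefold along centres that move in an irreducible family, and then apply Lemma \ref{l relative blowup parameter space}, possibly twice. I take as given the descriptions in \cite[Section 7]{FanoIV}:
\begin{itemize}
\item 3-18 is $\Bl_{L\amalg B}\P^3$, where $L$ is a line and $B$ is a conic disjoint from $L$;
\item 4-4 is the blowup of a Fano threefold $Y$ of No.\ 3-19, where $Y=\Bl_{\{p,q\}}Q$ for non-collinear points $p,q$, along the proper transform of a smooth conic through $p$ and $q$;
\item 5-1 is the blowup of a Fano threefold $Y=\Bl_B Q$ of No.\ 2-29, with $B=Q\cap\Pi$ a conic, along three fibres of $\Ex(Y\to Q)\to B$ over three distinct points of $B$.
\end{itemize}
If some of these descriptions should instead be read in the equivalent form via a two-step blowup, the same strategy applies.

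\emph{3-18.} Let $U$ be the open subset of $\Gr(2,4)\times \mathcal{H}$ parametrising disjoint pairs $(L,B)$. Here $\mathcal{H}\to(\P^3)^\vee$ is the projective bundle of conics in planes, which is a smooth irreducible variety. The universal centre $\cC\subset \P^3\times U$ is a disjoint union of a universal line and a universal conic. After shrinking to the smooth-conic locus, $\cC\to U$ is smooth. Every $X$ of No.\ 3-18 is a fibre, so Lemma \ref{l relative blowup parameter space} applies directly.

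\emph{5-1.} Let $\mathcal{B}\to U_0$ be the universal smooth conic, where $U_0\subset \Gr(3,5)$ is the open set of planes $\Pi$ with $\Pi\cap Q$ smooth. Set $\cY:=\Bl_{\mathcal{B}}(Q\times U_0)$, which is smooth over $U_0$ by \cite[Tag 0H1G]{SP}. Let $U$ be the open subset of the triple fibre product $\mathcal{B}\times_{U_0}\mathcal{B}\times_{U_0}\mathcal{B}$ consisting of distinct triples; it is irreducible because $\mathcal{B}\to U_0$ is a $\P^1$-bundle. Pull $\cY$ back to $U$. The centre is the preimage in $\Ex$ of the three tautological sections, which is a disjoint union of three $\P^1$-bundles over $U$ and hence smooth over $U$. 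The lemma then gives the parameter space.

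\emph{4-4.} I would proceed in two steps.
\begin{enumerate}
\item Over the open set $U_1\subset Q\times Q$ of non-collinear pairs, form $\cY_1:=\Bl_{\Delta_p\amalg\Delta_q}(Q\times U_1)$. This is smooth over $U_1$, and its fibres are the threefolds of No.\ 3-19.
\item Let $U\to U_1$ be the space of smooth conics on $Q$ through $p$ and $q$. It is an open subset of a projective bundle: the planes containing the line $\overline{pq}$ form a $\P^2$, and that line is not contained in $Q$. Hence $U$ is irreducible. The strict transform $\cC$ of the universal conic in $\cY_1\times_{U_1}U$ is flat over $U$, being an open piece of a universal family, and has smooth fibres. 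Therefore it is smooth over $U$, and Lemma \ref{l relative blowup parameter space} applies once more.
\end{enumerate}

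The main obstacle is verifying that the strict-transform centre family (in the 4-4 case) and the exceptional-fibre family (in the 5-1 case) are genuinely smooth closed subschemes over the base. The issue is flatness: the proper transform must commute with base change. I would handle this fibrewise. For 4-4, on the fibre over a point of $U$, the proper transform of a smooth curve through blown-up points is its isomorphic lift. Since the universal object is an honest closed subscheme with constant Hilbert polynomial over a reduced base, it is flat, and smoothness of the fibres then gives smoothness of the family. The 5-1 exceptional-fibre family is handled by its own argument, as noted above.
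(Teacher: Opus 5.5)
Your proposal is correct and follows the paper's approach. The paper writes out only 5-1, and it does so exactly as you do: the universal smooth conic over an open subset of $\Gr(3,5)$, distinct triples of points on it, the blowup along the conic followed by the three exceptional fibres, and then Lemma \ref{l relative blowup parameter space}. It dismisses 3-18 and 4-4 as similar, and your constructions supply those cases explicitly.

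In the 4-4 step, the cleanest justification is that the strict transform of the universal conic $\mathcal{B}$ is the blowup of $\mathcal{B}$ along the relative Cartier divisor cut out by the two point-sections. It is therefore isomorphic to $\mathcal{B}$ and smooth over $U$, and its fibres are the fibrewise strict transforms. This avoids the somewhat circular Hilbert-polynomial argument in your last paragraph.
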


\begin{proof}
In what follows, we only treat the case when x-yz is 5-1, as the other cases are similar. 
Fix a smooth quadric threefold $Q \in \P^4$. 
Since smooth conics on $Q$ are obtained as the scheme-theoretic intersection $Q \cap \P^2$ for a plane $\P^2 \subset \P^4$, 
we can find a non-empty open subset $U \subset \Gr(3, 5)$ and a closed subscheme $\cC \subset Q \times U$ such that 
every smooth conic $C \subset Q$ is obtained as a fibre over some closed point of 
$\cC \hookrightarrow Q \times U \to U.$

Set 
\[
V := (\cC \times_U \cC  \times_U \cC)  \setminus \bigcup_{i \neq j} \Delta_{ij}, 
\]
which parametrises  $(C; P_1, P_2, P_3)$, 
where $C$ is a smooth conic and 
$(P_1, P_2, P_3)$ is a triple consisting of mutually distinct points. 
Hence we have the universal smooth conic $\cC_V := \cC \times_U V \subset Q \times V$ and 
the universal points $\cP_1, \cP_2, \cP_3 \subset \cC_V$. 
As $\cC \times_U \cC  \times_U \cC$  is irreducible, so is $V$. 
Let $\rho: \cY \to Q \times V$ be the blowup along $\cC \times_U V$. 
Then the assertion holds by Lemma \ref{l relative blowup parameter space} 
applied to $\cC := \cE_1 \amalg \cE_2 \amalg \cE_3$, 
where $\cE_i := \rho^{-1}(\cP_i)$. 
\end{proof}

\begin{thm}\label{t parameter rho 5}
Let x-yz be a number with $x \geq  5$. 
Then there exists a parameter space of No.\ x-yz.
\end{thm}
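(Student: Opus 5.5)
The numbers with $x\geq 5$ are 5-1, 5-2, 5-3, 6-1, 7-1, 8-1, 9-1 and 10-1, and I will treat them case by case using results already established.

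\emph{The first eight.} 5-1 is settled by Proposition \ref{p irre 5-1}. For 5-2, 5-3 and 6-1, Proposition \ref{p unique easy} shows that the Fano threefold is unique up to isomorphism. Hence the constant family over $\Spec k$ is a parameter space; it is nonempty by Proposition \ref{p exist rho geq 5}.

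\emph{The cases 7-1, 8-1, 9-1, 10-1.} By \cite[Subsection 7.6]{FanoIV}, every Fano threefold of No.\ x-yz with $x\geq 6$ is isomorphic to $\P^1\times S$, where $S$ is a smooth del Pezzo surface of degree $11-x$. So it suffices to build a smooth projective family over an irreducible quasi-projective variety whose closed fibres are exactly all such $S$, and then take its product with $\P^1$.

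For $d=11-x\in\{1,2,3,4\}$, every del Pezzo surface of degree $d$ is $\Bl_{P_1,\dots,P_{9-d}}\P^2$ for points $P_1,\dots,P_{9-d}$ in general position. I will take $V\subset(\P^2)^{9-d}$ to be the open subset of ordered tuples in general position: no two coincide, no three are collinear, no six lie on a conic, and for $9-d=8$ no eight lie on a cubic singular at one of them. This $V$ is open in an irreducible variety, hence irreducible. Over $V$ I construct the universal iterated blowup $\cS\to V$ by successively blowing up $\P^2\times V$ along the (strict transforms of the) sections $\cP_i$. Each centre is smooth over the base and the points are disjoint, so the blowup commutes with base change by \cite[Tag 0H1G]{SP}, exactly as in Lemma \ref{l relative blowup parameter space}. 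Thus $\cS\to V$ is smooth projective and its closed fibres are precisely the del Pezzo surfaces of degree $d$. Finally $\cX:=\P^1\times\cS\to V$ is smooth projective, its fibres are Fano threefolds of No.\ x-1, and every such threefold occurs as a fibre.

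\emph{Main obstacle.} The one point needing care is that ``general position'' is an open condition on $(\P^2)^{9-d}$ and that it characterises exactly the tuples giving del Pezzo surfaces. This is classical and characteristic-free. Alternatively, I can simply choose an open $V$ over which the fibres have ample anticanonical divisor: ampleness is open in smooth projective families, so $V$ is open, and it contains every del Pezzo surface of degree $d$ because each arises from some tuple of points. With that, the statement follows.
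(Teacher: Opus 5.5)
Your proposal is correct and follows the paper's proof: No.\ 5-1 via Proposition \ref{p irre 5-1}, No.\ 5-2 and 5-3 via Proposition \ref{p unique easy}, and $x \geq 6$ via the description $X \simeq \P^1 \times S$ from \cite[Subsection 7.6]{FanoIV} together with a parameter space for smooth del Pezzo surfaces. The only differences are cosmetic. You place 6-1 under the uniqueness result, which Proposition \ref{p unique easy} does cover. You also construct the del Pezzo family explicitly as the universal blowup of $\P^2$ at ordered points in general position, whereas the paper simply cites ``the corresponding result for smooth del Pezzo surfaces''.
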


\begin{proof}
If $x \geq 6$, then the assertion follows from 
\cite[Subsection 7.6]{FanoIV} and the corresponding result for smooth del Pezzo surfaces. 
When $x=5$, the assertion holds by 
Proposition \ref{p irre 5-1} (No.\ 5-1) and 
Proposition \ref{p unique easy} (No.\ 5-2, No.\ 5-3). 
\end{proof}

\section{Irreducibility for $\rho=2$}

\subsection{2-9, 2-12, 2-13, 2-17}

\begin{dfn}\label{d Ugdn}
    Take non-negative integers $g, d, n$. 
    Let $U_{g, d, n} \subset \Hilb^{dt+1-g}(\P^n)$ 
be the open subset that parametrises all the smooth curves $C \subset \P^n$ such that $\deg C = d$, $g(C)=g$, and the restriction homomorphism
\[
H^0\left(
\P^n,
\MO_{\P^n}(1)
\right)
\longrightarrow
H^0\left(
C,
\MO_{\P^n}(1)|_C
\right)
\]
is an isomorphism. 
\end{dfn}

\begin{lem}\label{l Mg H^1=0-case}
Take non-negative integers $g, d, n$ satisfying $g+n =d$. 
Then $U_{g, d, n}$ is either empty or irreducible (cf.\ Definition \ref{d Ugdn}). 
\end{lem}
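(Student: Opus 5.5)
We will show that $U_{g,d,n}$ is the image of an irreducible variety built from the moduli of pairs $(C, L)$. The starting observation is as follows. If $[C] \in U_{g,d,n}$ and $L := \MO_{\P^n}(1)|_C$, then $h^0(C, L) = n+1$ and $\deg L = d$. Riemann--Roch gives $\chi(C, L) = d + 1 - g = n+1$, so $H^1(C, L) = 0$. Conversely, suppose $C$ is a smooth projective curve of genus $g$ with a line bundle $L$ of degree $d$ satisfying $h^1(L) = 0$, $h^0(L) = n+1$, and such that $L$ is very ample. Then any choice of basis of $H^0(C, L)$ embeds $C$ into $\P^n$ as a point of $U_{g,d,n}$. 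Hence $U_{g,d,n}$ is the set of embeddings by complete linear systems of such pairs $(C, L)$, and the relevant conditions ($H^1=0$, very ampleness) are open.

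The plan is to construct an irreducible parameter space dominating $U_{g,d,n}$.

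First, we take an irreducible family of all smooth genus $g$ curves over $k$, for instance an open subset $H_g$ of a Hilbert scheme of tri-canonically embedded curves when $g \ge 2$. This is irreducible in every characteristic by Deligne--Mumford. The cases $g = 0, 1$ are handled directly: for $g=0$ we use $\P^1$ itself, and for $g = 1$ we use the family of plane cubics in Weierstrass form. Over $H_g$ there is the universal curve $\cC \to H_g$.

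Next, after an étale or finite base change that gives a section, we form the relative Picard scheme $\Pic^d_{\cC/H_g}$. It is smooth over $H_g$ with irreducible fibres (torsors under the Jacobian), so it is irreducible. Let $P \subset \Pic^d$ be the open subset where $h^1(L) = 0$ and $L$ is very ample. Over $P$, cohomology and base change makes the pushforward of a Poincaré bundle a vector bundle of rank $n+1$. Its frame bundle $F \to P$ is then irreducible, being open in a vector bundle over $P$, and the universal embedding defines a morphism $F \to \Hilb(\P^n)$. By the observation in the first paragraph, every point of $U_{g,d,n}$ lies in the image of $F$. The image of $F$ also lies in $U_{g,d,n}$: since $h^0(L) = n+1$, the restriction map on sections is an isomorphism, and the image curve is smooth of degree $d$ and genus $g$. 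So $U_{g,d,n}$ is the image of an irreducible scheme and is therefore irreducible or empty.

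The main obstacle is the existence of the universal line bundle. A Poincaré bundle exists only locally (étale locally, or after adding a section), so the vector bundle and frame bundle should be built over an étale cover. Each component of that cover must still map onto $U_{g,d,n}$, or else the irreducibility must be argued via openness. A cleaner alternative is to avoid Poincaré bundles altogether. One shows instead that $U_{g,d,n}$ is smooth, because the obstruction space $H^1(C, N_{C/\P^n})$ vanishes: this follows from the Euler sequence and $H^1(L) = 0$. Then the natural map $U_{g,d,n} \to \Pic^d_{\cC/H_g}$, taken modulo $\PGL_{n+1}$, has fibres that are $\PGL_{n+1}$-orbits, which are irreducible. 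Both routes require a correct relative setup, and that is where the most care is needed.
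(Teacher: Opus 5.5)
Your argument is essentially the paper's: parametrise pairs $(C,L)$ with $H^1(C,L)=0$ and $L$ very ample, take the frame-bundle torsor over this locus (the paper uses the $\PGL_{n+1}$-torsor $\Isom(\P(\cE^\vee),\P^n)$), and note that the induced map onto $U_{g,d,n}$ is surjective because $g+n=d$ forces $H^1(C,\MO_{\P^n}(1)|_C)=0$. The paper avoids the Poincar\'e-bundle issue you flag by working over $\cM_g$ with the unrigidified Picard stack $\mathcal{P}ic^d_g$, which carries a universal line bundle by construction and is irreducible because each $\mathcal{P}ic^d_C\to\Pic^d_C$ is a $\bG_m$-gerbe; in your scheme-theoretic set-up you can close it by base changing along the universal curve $\cC\to H_g$ itself, which is irreducible and surjective and acquires the diagonal as a section, so you need neither component bookkeeping nor your deformation-theoretic alternative.
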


\begin{proof}
Let $\cM_g$ be the moduli stack of smooth curves of genus $g$, 
which is known to be a smooth irreducible Artin stack. 
and let $\cC_g \to \cM_g$ be the universal curve. 
Let
\[
\mathcal{P}ic_g^d
\longrightarrow
\mathcal M_g
\]
be the universal (unrigidified) Picard stack, 
where 
$\mathcal{P}ic_g^d$ parametrises pairs $(C,L)$ consisting of 
a smooth curve $C$ of genus $g$ and a line bundle $L$ on $C$ of degree $d$. 
Since the relative Picard stack is an Artin stack smooth over $\cM_g$
\cite[Tags 0D04 and 0DPJ]{SP}, 
$\mathcal{P}ic_g^d$ is a smooth Artin stack. 
For a geometric point $[C] \to \cM_g$ represented by a smooth projective curve $C$ of genus $C$, 
the base change $\cP ic_{C}^d := \mathcal{P}ic_g^d \times_{\cM_g} [C]$ 
is the Picard stack parametrising the line bundles on $C$ of degree $d$. 
We have the induced morphism $\pi : \cP ic_{C}^d \to \Pic_C^d$ to the Picard variety $\Pic_C^d$, which is a smooth projective variety. 
Since $\pi$ is a $\bG_m$-gerbe \cite[Tag 0DME]{SP}, 
$\cP ic_{C}^d$ is irreducible \cite[Tag 06R9]{SP}. 
Therefore, $\mathcal{P}ic_g^d$ is a smooth irreducible Artin stack. 
The universal curve 
$\cC_g \times_{\cM_g} \mathcal{P}ic_g^d$ 
over $\mathcal{P}ic_g^d$ has the universal line bundle of degree $d$.





Let
\[
\mathcal P_{g,d}
\subset
\mathcal{P}ic_g^d
\]
be the open Artin substack parametrising the pairs $(C,L)$ such that 
\begin{enumerate}
\item[($\star$)] $H^1(C, L)=0$ and $|L|$ is very ample. 
\end{enumerate}
We have the universal curve $q\colon
\mathcal C_{g,d}
\longrightarrow
\mathcal P_{g,d}$ for $\cC_{g, d} := \cC_g \times_{\cM_g} \cP_{g, d}$ 
and the universal line bundle $\mathcal L$ on $\cC_{g, d}$ of degree $d$. 
Note that $q :\mathcal C_{g,d}
\to 
\mathcal P_{g,d}$ is representable, smooth, and proper of
relative dimension one, because so is $\cC_g \to \cM_g$. 
For every geometric point $(C,L)$ of $\mathcal P_{g,d}$, we have 
$H^1(C, L)=0$ by ($\star$), and hence 
the Riemann--Roch theorem implies 
\[
h^0(C,L)
= \chi(C, L) = d-g+1 =n+1. 
\]
Then it follows from cohomology and base change that $\mathcal E
:=
q_*\mathcal L$ is a vector bundle of rank $n+1$ on $\mathcal P_{g,d}$. 

Define the projective frame bundle by
\[
\mathcal F_{g,d,n}
:=
\operatorname{Isom}_{\mathcal P_{g,d}}
\left(
\P(\mathcal E^\vee),
\P^n_{\mathcal P_{g,d}}
\right) \to \cP_{g, d}.
\]
Over a geometric point $(C,L) \to \cP_{g, d}$ of 
$\cP_{g, d}$, its geometric fibre 
$\mathcal F_{g,d,n} \times_{\cP_{g, d}} (C, L)$ 
parametrises isomorphisms
\[
\P\left(
H^0(C,L)^\vee
\right)
\xrightarrow{\simeq}
\P^n
\]
between projective spaces. 
The morphism $\mathcal F_{g,d,n}
\to 
\mathcal P_{g,d}$ 
is a representable $\operatorname{PGL}_{n+1}$-torsor. 
In particular, $\cF_{g, d, n}$ is an Artin stack \cite[Tag 05UM]{SP}. 
By ($\star$), the complete linear system $|L|$ induces a closed immersion
$\iota_L : C
\hookrightarrow
\P\left(
H^0(C,L)^\vee
\right)$, 
and a projective frame induces a projective isomorphism
$\theta: \P\left(
H^0(C,L)^\vee
\right)
\xrightarrow{\simeq}
\P^n.$
Thus we obtain the composite  closed immersion
\[
\theta \circ \iota_L:C 
\overset{\iota_L}{\hookrightarrow} \P\left(
H^0(C,L)^\vee
\right)
\xrightarrow{\theta, \simeq}
\P^n.  
\]
Therefore, we get a morphism of Artin stacks
\[
\epsilon : \mathcal F_{g,d,n}
\to 
U_{g,d,n}, \qquad (C, L, \theta) \mapsto 
[\theta\circ\iota_L(C) \subset \P^n]. 
\]

Let us show that $\epsilon$ is surjective. 
Take a geometric point $[C\subset\P^n]
\in
U_{g,d,n}$.  
For $L :=
\MO_{\P^n}(1)|_C$, 
$|L|$ is very ample and we have $h^0(C, L) = n+1$ by Definition \ref{d Ugdn}. 
As  $g+n=d$,   the Riemann--Roch theorem implies $h^1(C,L)=0$. 
Therefore, $[C \subset \P^n]$ is contained in the image of $\epsilon$. 
Therefore, $\epsilon$ is surjective.

We have 
$\epsilon\colon
\mathcal F_{g,d,n}
\twoheadrightarrow 
U_{g,d,n}$ and 
\[
\cF_{g, d, n} \xrightarrow{\PGL_{n+1}\text{-torsor}} \cP_{g, d} 
\overset{{\rm open}}{\subset} \mathcal{P}ic_g^d.
\]
Assume that $U_{g, d, n} \neq \emptyset$. 
Then $\cF_{g, d, n} \neq \emptyset$. 
Hence $\cP_{g, d} \neq \emptyset$. 
As $\mathcal{P}ic_g^d$ is irreducible, so is $\cP_{g, d}$. 
Since 
$\cF_{g, d, n} \to \cP_{g, d}$ is a $\PGL_{n+1}$-torsor, 
$\cF_{g, d, n}$ is irreducible. 
Therefore, its image $U_{g, d, n}$ is irreducible, as required. 
\end{proof}

\begin{lem}\label{l Mg 573}
$U_{5,7,3}$ is either empty or irreducible  (cf.\ Definition \ref{d Ugdn}). 
\end{lem}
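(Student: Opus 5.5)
The plan is to run the argument of Lemma \ref{l Mg H^1=0-case} again, replacing the Picard stack by the universal curve. The reason is that $g+n=8\neq 7=d$, so the embedding line bundle is no longer non-special; in this case it is forced to be of the form $\omega_C(-p)$.

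First I would identify the embedding line bundle. Take a geometric point $[C\subset\P^3]\in U_{5,7,3}$ and set $L:=\MO_{\P^3}(1)|_C$. Then $\deg L=7$ and $h^0(C,L)=4$ by Definition \ref{d Ugdn}. Riemann--Roch gives $h^0(L)-h^1(L)=7-5+1=3$, so $h^1(L)=h^0(\omega_C\otimes L^{-1})=1$. Since $\deg(\omega_C\otimes L^{-1})=8-7=1$, this line bundle has a nonzero section and hence equals $\MO_C(p)$ for a unique point $p\in C$. Therefore $L\simeq\omega_C(-p)$. Conversely, for every pair $(C,p)$ with $g(C)=5$ we have $h^0(C,\omega_C(-p))=g-1=4$, because a genus-$5$ curve has no base points of $|\omega_C|$.

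Second, I would build the parameter stack. Let $\cC_5\to\cM_5$ be the universal curve, viewed as the stack of pairs $(C,p)$. It is irreducible because $\cM_5$ is irreducible and $\cC_5\to\cM_5$ is smooth and proper with irreducible fibres. On $\cC_5\times_{\cM_5}\cC_5\to\cC_5$, with the first factor as base, consider the line bundle
\[
\cL:=\omega_{\cC_5/\cM_5}(-\Delta)
\]
pulled back appropriately, where $\Delta$ denotes the diagonal. Its fibre over $(C,p)$ is $\omega_C(-p)$. Let $\cP\subset\cC_5$ be the open substack where $\omega_C(-p)$ is very ample. Since $h^0$ is constant equal to $4$, cohomology and base change shows that $\cE:=q_*\cL|_{\cP}$ is a vector bundle of rank $4$. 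Then I would form the $\PGL_4$-torsor $\cF:=\Isom_{\cP}(\P(\cE^\vee),\P^3_{\cP})$ and the morphism $\epsilon\colon\cF\to U_{5,7,3}$, $(C,p,\theta)\mapsto[\theta\circ\iota_{\omega_C(-p)}(C)]$, exactly as in Lemma \ref{l Mg H^1=0-case}. The image lies in $U_{5,7,3}$ because $H^0(\P^3,\MO(1))\to H^0(C,L)$ is injective for a nondegenerate curve and both spaces have dimension $4$.

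Third, surjectivity of $\epsilon$ follows from the first step: every $[C\subset\P^3]$ arises from $(C,p,\theta)$, with $p$ determined by $\omega_C\otimes L^{-1}\simeq\MO_C(p)$. Hence, if $U_{5,7,3}\neq\emptyset$, then $\cP\neq\emptyset$ and $\cP$ is irreducible as an open substack of the irreducible $\cC_5$. It follows that $\cF$ is irreducible and so is its image $U_{5,7,3}$.

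The main obstacle I expect is the second step. One has to set up the relative line bundle $\omega(-\Delta)$ correctly over the universal curve and check the base-change identification $\cE\otimes k(x)\simeq H^0(C,\omega_C(-p))$. This includes verifying that the pullback of $\cC_5\to\cM_5$ along itself is again representable, smooth and proper, so that the torsor construction of Lemma \ref{l Mg H^1=0-case} applies verbatim.
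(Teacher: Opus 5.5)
Your proposal is correct and is essentially the paper's proof. The universal curve $\cC_5\to\cM_5$ together with the diagonal is just $\cM_{5,1}$ with its universal curve and universal section $\sigma$; the paper likewise restricts to the open locus where $|K_C-p|$ is very ample, pushes forward $\omega_q(-\sigma)$, forms the $\PGL_4$-torsor of projective frames, and proves surjectivity of $\epsilon$ by the same Riemann--Roch argument giving $K_C-L\sim p$. Your extra check that the image curve is nondegenerate, so that $\epsilon$ really lands in $U_{5,7,3}$, is a point the paper leaves implicit.
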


\begin{proof}
Let $\cM_{5,1}$ be the moduli stack of pointed smooth curves $(C,p)$ of
genus $5$. Recall that $\cM_{5,1}$ is a smooth irreducible Artin stack. 
We have the universal curve $q\colon
\cC_{5,1}
\longrightarrow
\cM_{5,1}$ and the universal section $\sigma\colon
\cM_{5,1}
\longrightarrow
\cC_{5,1}$. 
By abuse of notation, the image of $\sigma$ is also denoted by $\sigma$.

Let
\[
\cP_{5,1}
\subset
\cM_{5,1}
\]
be the open Artin substack parametrising pointed smooth curves $(C,p)$
such that $|K_C-p|$ is very ample. 
We denote the restrictions of $q$ and $\sigma$ to $\cP_{5,1}$ by the
same symbols. Set 
\[
\cL
:=
\omega_q(-\sigma).
\]
For every geometric point $(C,p)$ of $\cP_{5,1}$, we have
$h^0\left(
C,
\MO_C(p)
\right)
=
1$ and the Riemann--Roch theorem implies 
\[
h^0\left(
C,
K_C-p
\right) -1 =\chi(C, K_C-p) 
=\chi(C, \MO_C) + 
\deg(K_C-p) =
(1-5) + (8-1)
=
3.
\]
It follows from cohomology and base change that
$\cE
:=
q_*\cL$ is a vector bundle of rank $4$ on $\cP_{5,1}$.

Define the projective frame bundle by
\[
\cF_{5,7,3}
:=
\operatorname{Isom}_{\cP_{5,1}}
\left(
\P(\cE^\vee),
\P^3_{\cP_{5,1}}
\right)
\longrightarrow
\cP_{5,1}.
\]
Over a geometric point $(C,p) \to \cP_{5,1}$ of $\cP_{5,1}$, its
geometric fibre $\cF_{5,7,3}
\times_{\cP_{5,1}}
(C,p)$ 
parametrises isomorphisms
$\P\left(
H^0\left(
C,
K_C-p
\right)^\vee
\right)
\xrightarrow{\simeq}
\P^3$ between projective spaces. The morphism
$\cF_{5,7,3}
\to 
\cP_{5,1}$ 
is a representable $\PGL_4$-torsor.
In particular, $\cF_{5,7,3}$ is an Artin stack. 

The complete linear system $|K_C-p|$ induces a closed immersion
\[
\iota_{C,p}\colon
C
\hookrightarrow
\P\left(
H^0\left(
C,
K_C-p
\right)^\vee
\right),
\]
and a projective frame induces an isomorphism
$\theta\colon
\P\left(
H^0\left(
C,
K_C-p
\right)^\vee
\right)
\xrightarrow{\simeq}
\P^3$ between projective spaces. Thus we obtain the composite closed
immersion
\[
\theta\circ\iota_{C,p}\colon
C
\hookrightarrow
\P\left(
H^0\left(
C,
K_C-p
\right)^\vee
\right)
\xrightarrow{\simeq}
\P^3.
\]
Therefore, we get a morphism of Artin stacks
\[
\epsilon\colon
\cF_{5,7,3}
\longrightarrow
U_{5,7,3},
\qquad
(C,p,\theta)
\longmapsto
\left[
\theta\circ\iota_{C,p}(C)
\subset
\P^3
\right].
\]

Let us show that $\epsilon$ is surjective. Take a geometric point
$[C\subset\P^3]
\in
U_{5,7,3}$. 
Set $L
:=
\MO_{\P^3}(1)|_C$.  
By assumption, $h^0(C,L)=4$. 
Since $\deg L=7$ and $g(C)=5$, the Riemann--Roch theorem gives
$h^1(C,L)
=
1$, which implies 
$h^0(C,K_C-L)
=
1$ by Serre duality. 
Since $\deg(K_C-L)
=
2g(C)-2-\deg L
=
1$, there exists a unique closed point $p\in C$ satisfying 
$K_C-L
\sim p$. 
Hence $|K_C-p| =|L|$ is very ample. 
Then $(C,p)$ is a geometric point of $\cP_{5,1}$, and hence 
$[C\subset\P^3]$ is contained in the image of $\epsilon$. Therefore, $\epsilon$ is surjective.


We have 
$\epsilon\colon
\cF_{5,7,3}
\twoheadrightarrow 
U_{5,7,3}$ and 
\[
\cF_{5,7,3}
\xrightarrow{\PGL_4\text{-torsor}}
\cP_{5,1}
\overset{{\rm open}}{\subset}
\cM_{5,1}.
\]
Assume that $U_{5,7,3}\neq\emptyset$. Then
$\cF_{5,7,3}\neq\emptyset$. 
Hence $\cP_{5,1}\neq\emptyset$. As $\cM_{5,1}$ is irreducible, so is
$\cP_{5,1}$. Since
$\cF_{5,7,3}
\longrightarrow
\cP_{5,1}$ 
is a $\PGL_4$-torsor, $\cF_{5,7,3}$ is irreducible. Therefore, its
image $U_{5,7,3}$ is irreducible, as required.
\end{proof}

\begin{prop}\label{p parameter spaces rho 2 Mg}
Let
$\text{x-yz}
\in
\{
\text{2-9, 2-12, 2-13, 2-17}
\}.$ 
Then there exists a parameter space of No.\ $\text{x-yz}$.
\end{prop}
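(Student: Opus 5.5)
We need the descriptions of these four families from \cite[Subsection 7.2]{FanoIV}, in the Mori--Mukai form. No.\ 2-9 is the blowup of $\P^3$ along a curve of degree $7$ and genus $5$ which is an intersection of cubics. No.\ 2-12 is the blowup of $\P^3$ along a curve of degree $6$ and genus $3$ which is an intersection of cubics. No.\ 2-13 is the blowup of $Q$ along a curve of degree $6$ and genus $2$. No.\ 2-17 is the blowup of $Q$ along an elliptic curve of degree $5$. In each case the centre $C$ is a smooth curve.

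The plan is to realise each centre as a point of a space $U_{g,d,n}$ from Definition \ref{d Ugdn}, to know that space is irreducible, and then to apply Lemma \ref{l relative blowup parameter space}.

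For 2-12 we have $(g,d,n)=(3,6,3)$, so $g+n=d$. For 2-9 we have $(g,d,n)=(5,7,3)$. In both cases the first step is to show that $C\subset\P^3$ lies in $U_{g,d,3}$, i.e.\ that $H^0(\P^3,\MO(1))\to H^0(C,\MO(1)|_C)$ is an isomorphism.

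\begin{itemize}
\item Injectivity holds because $C$ is not contained in a plane. A plane curve of degree $6$ or $7$ has the wrong genus.
\item Surjectivity means $h^0(C,\MO(1)|_C)=4$.
\begin{itemize}
\item For $(3,6)$: the bundle $\MO(1)|_C$ is non-special of degree $6>2g-2=4$, so $h^0=6-3+1=4$.
\item For $(5,7)$: we have $h^0=7-5+1+h^1=3+h^0(K_C-H)$, and $K_C-H$ has degree $1$. So we must show $K_C-H$ is effective, i.e.\ $h^0(H)=4$ rather than $3$. Since $C\subset\P^3$ is nondegenerate, $h^0(H)\ge 4$ by restriction. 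If $h^0(H)\ge5$, then $H$ would be special of degree $7$ with $h^0\geq 5$, contradicting Clifford's bound $h^0\le 7/2+1$. Hence $h^0=4$.
\end{itemize}
\end{itemize}

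Lemma \ref{l Mg H^1=0-case} and Lemma \ref{l Mg 573} then give irreducibility of $U_{3,6,3}$ and $U_{5,7,3}$, both nonempty by the existence results above. Over $U_{g,d,3}$ we have the universal curve $\cC\subset\P^3\times U_{g,d,3}$, smooth over the base, and Lemma \ref{l relative blowup parameter space} with $\cY=\P^3\times U_{g,d,3}$ produces a parameter space. The intersection-of-cubics condition plays no role: we only need that every Fano of the given number arises as such a blowup, and Lemma \ref{l relative blowup parameter space} shrinks to an open set anyway.

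For 2-13 and 2-17 the curves lie on the fixed quadric $Q$. For the elliptic quintic, $\MO(1)|_C$ is non-special of degree $5$, so $h^0=5$. For the genus $2$ sextic, $h^0=6-2+1=5$. Moreover $C$ spans $\P^4$: a curve of degree $5$ or $6$ in a $\P^3$ would lie on the quadric surface $Q\cap\P^3$, and the arithmetic genus of a $(a,b)$ curve with $a+b=5$ or $6$ never matches, or such a curve cannot lie in $Q$. So $C\in U_{1,5,4}$, resp.\ $U_{2,6,4}$, with $g+n=d$ in both cases, and these are irreducible by Lemma \ref{l Mg H^1=0-case}.

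The remaining issue is that $C$ must lie on $Q$. We let $Q$ vary with $C$, using Proposition \ref{p moving CI containing moving centre} with $\cP=\P^4\times U_{g,d,4}$, $r=1$ and $\cL_1=\MO(2)$. This needs $h^0(\cI_C(2))$ to be constant, and we get it from $h^0(\MO_{\P^4}(2))=15$ together with projective normality in degree $2$:
\begin{itemize}
\item for $(1,5)$, $h^0(\MO_C(2))=10$, so $h^0(\cI_C(2))=5$;
\item for $(2,6)$, $h^0(\MO_C(2))=11$, so $h^0(\cI_C(2))=4$.
\end{itemize}
Projective normality, i.e.\ surjectivity of $H^0(\MO_{\P^4}(2))\to H^0(\MO_C(2))$, holds by Castelnuovo (Green) because the line bundle has degree $\ge 2g+1$. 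If that is not available in characteristic $p$, we shrink $U$ to the open locus where $h^0(\cI_C(2))$ is minimal. This is harmless provided every relevant $C$ lies in that locus, and checking this is the main obstacle. After that, the universal quadric containing $C$ gives $\cY$. Every Fano of No.\ 2-13 or 2-17 is $\Bl_C Q$ for some such pair, and the locus where $\cY_v$ is smooth is open, so Proposition \ref{p moving CI containing moving centre} concludes.
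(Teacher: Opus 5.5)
Your proposal follows the paper's proof essentially step by step. You identify each centre with a point of $U_{g,d,n}$ and invoke the irreducibility lemmas. For 2-9 and 2-12 you apply Lemma \ref{l relative blowup parameter space} to $\P^3\times U_{g,d,3}$. For 2-13 and 2-17 you use projective normality to make $h^0(\cI_{C/\P^4}(2))=15-(2d-g+1)$ constant, and then apply Proposition \ref{p moving CI containing moving centre} with $r=1$ and $\cL_1=\MO_{\P^4}(2)$. Your Clifford argument for $h^0(\MO_{\P^3}(1)|_C)\le 4$ when $(g,d)=(5,7)$ is a valid substitute for the paper's remark that $K_C-L$ has degree $1$ on a curve of positive genus.

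One step is incomplete: the nondegeneracy of $C\subset\P^4$ for 2-13 and 2-17. A hyperplane section $S=Q\cap\P^3$ of the smooth quadric threefold may be a quadric cone, but your bidegree count only treats $S\simeq\P^1\times\P^1$. The clause ``or such a curve cannot lie in $Q$'' does not cover the cone.

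The paper handles the cone on its minimal resolution $S'\simeq\P_{\P^1}(\MO_{\P^1}\oplus\MO_{\P^1}(2))$. Since $C$ is smooth, its proper transform $C'\simeq C$ meets the $(-2)$-curve with intersection number $0$ or $1$. Adjunction on $S'$ then shows that a smooth curve on the cone of degree $2a$ (resp.\ $2a+1$) has genus $(a-1)^2$ (resp.\ $a^2-a$). This gives genus $4$ in degree $6$ and genus $2$ in degree $5$, which rules out $(g,d)=(2,6)$ and $(1,5)$.

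Finally, your fallback for projective normality is not needed. Normal generation of line bundles of degree at least $2g+1$ \cite{Mum70} holds in every characteristic, and it is exactly what the paper uses. So the ``main obstacle'' you anticipate does not arise.
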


\begin{proof}
By the blowup descriptions in \cite[Subsection 7.2]{FanoIV}, 
every Fano threefold of No.\ x-yz is  isomorphic to $\Bl_C Y$, 
where the pair $(Y, C)$ is given as follows. 
\begin{table}[htbp]
\centering
\[
\begin{array}{c|c|c|c}
\text{No.}
&
g(C)
&
\deg C
&
Y
\\
\hline
\text{2-9}
&
5
&
7
&
\P^3
\\
\text{2-12}
&
3
&
6
&
\P^3
\\
\text{2-13}
&
2
&
6
&
Q
\\
\text{2-17}
&
1
&
5
&
Q
\end{array}
\]
\caption{Blowup descriptions for No.\ 2-9, 2-12, 2-13, and 2-17.}
\label{table rho 2 Mg}
\end{table}
Here $Q$ denotes a smooth quadric hypersurface.

For
$(g,d,n)
\in
\{
(5,7,3),
(3,6,3),
(2,6,4),
(1,5,4)
\}$, recall that 
\[
U_{g,d,n}
\subset
\Hilb^{dt+1-g}(\P^n)
\]
is the open subset introduced in Definition \ref{d Ugdn}. 
By Lemma \ref{l for 8.4} below, 
every smooth curve $C$ appearing in Table \ref{table rho 2 Mg}
defines a closed point of the corresponding $U_{g,d,n}$. 
Note that $U_{g,d,n}$ is either empty or irreducible 
(Lemma \ref{l Mg H^1=0-case}, Lemma \ref{l Mg 573}). 
Let $\cC_{g,d,n}
\subset
\P^n\times U_{g,d,n}$ be the universal curve. 
Concerning  the cases No.\ $2$-$9$ and No.\ $2$-$12$, 
the assertion holds by applying Lemma \ref{l relative blowup parameter space} to $\cY := \P^3 \times U_{g, d, 3}$ and $\cC_{g,d,3}
\subset \P^3\times U_{g,d,3} = \cY$.



In what follows, we treat the cases No.\ $2$-$13$ and No.\ $2$-$17$. 
Let $\cC_{g,d,4}
\subset
\P^4\times U_{g,d,4}$ be the universal curve. Set
\[
\rho\colon
\cP
:=
\P^4\times U_{g,d,4}
\longrightarrow
U_{g,d,4}
\]
to be the second projection and set
$\cL
:=
\pr_1^*\MO_{\P^4}(2)$. 

Fix a closed point $u\in U_{g,d,4}$ and set
\[
C
:=
\left(
\cC_{g,d,4}
\right)_u
\qquad\text{and}\qquad
L
:=
\MO_{\P^4}(1)|_C.
\]
Since
$d
\geq
2g+1$, the embedding $C \subset \P^4$ is projectively normal 
\cite[Corollary in page 55]{Mum70}. 
In particular, $\Sym^2 H^0(C, L) \to H^0(C, L^{\otimes 2})$ is surjective. By Definition \ref{d Ugdn}, 
we have the induced isomorphism: 
$\rho_1: H^0\left(
\P^4,
\MO_{\P^4}(1)
\right)
\xrightarrow{\simeq}
H^0(C,L)$. 
Then the restriction map $\rho_2 : H^0\left(
\P^4,
\MO_{\P^4}(2)
\right) \to  H^0(C,L^{\otimes 2})$ is surjective by the following commutative diagram: 
\[
\begin{tikzcd}
\Sym^2 H^0\left(
\P^4,
\MO_{\P^4}(1)
\right) \arrow[r, "\Sym^2 \rho_1", "\simeq"'] \arrow[d] & \Sym^2 H^0(C,L) \arrow[d, twoheadrightarrow] \\
H^0\left(
\P^4,
\MO_{\P^4}(2)
\right) \arrow[r, "\rho_2"] &  H^0(C,L^{\otimes 2}). 
\end{tikzcd}
\]
As $\deg L^{\otimes2}
=
2d
>
2g-2$, the Riemann--Roch theorem gives
\[
h^0\left(
C,
L^{\otimes2}
\right)
=\chi \left(
C,
L^{\otimes2}
\right)
= \deg  L^{\otimes2} + 1-g = 2d-g+1.
\]
Then it holds that
\[
h^0\left(
\P^4,
\cI_{C/\P^4}(2)
\right)
=
h^0\left(
\P^4,
\MO_{\P^4}(2)
\right) - h^0\left(
C,
L^{\otimes2}
\right)
= 15 -(2d-g+1). 
\]
Thus the function 
\[
U_{g,d,4}
\to \Z,\qquad u
\mapsto 
h^0\left(
\P^4,
\cI_{\left(\cC_{g,d,4}\right)_u/\P^4}(2)
\right)
\]
is constant. 
Then the assertion holds by Proposition
\ref{p moving CI containing moving centre}  applied with
$r=1$ and $\cL_1=\cL$. 
\qedhere

\end{proof}

\begin{lem}\label{l for 8.4}
Let $C \subset \P^n$ be a smooth curve listed as in 
Table \ref{table rho 2 Mg}, where $n:=3$ for 2-9 and 2-12 (resp.\ $n:=4$ for 2-13 and 2-17). 
Then the restriction map 
\[
\rho: H^0(\P^n, \MO_{\P^n}(1)) \to H^0(C, \MO_{\P^n}(1)|_C)
\]
is an 
isomorphism. 
\end{lem}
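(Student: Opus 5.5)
We need to show that $\rho\colon H^0(\P^n,\MO_{\P^n}(1))\to H^0(C,\MO_{\P^n}(1)|_C)$ is injective and surjective. The plan is to compute $h^0(C,L)$ for $L:=\MO_{\P^n}(1)|_C$ in each case, check injectivity separately, and conclude by comparing dimensions.

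\emph{Injectivity.} This amounts to $C$ not lying in a hyperplane. For 2-13 and 2-17, the curve $C$ lies on $Q\subset\P^4$. In the blowup descriptions of \cite[Subsection 7.2]{FanoIV}, $C$ is either non-degenerate by definition or a complete intersection on $Q$ that is clearly non-degenerate. If instead $C\subset H\cap Q$, then $C$ would lie on a quadric surface in $\P^3$. Its bidegree $(a,b)$ with $a+b=d$ and $(a-1)(b-1)=g$ is impossible for $(g,d)=(2,6)$ and $(1,5)$: for $d=6$ the candidates give $g\in\{4,3,0\}$, and for $d=5$ they give $g\in\{2,0\}$. For 2-9 and 2-12 the ambient space is $\P^3$. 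If $C$ lay in a plane, we would have $g=(d-1)(d-2)/2$, which equals $15$ and $10$ respectively, not $5$ or $3$. So $\rho$ is injective in all four cases.

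\emph{Surjectivity via $h^0(C,L)=n+1$.}
\begin{itemize}
\item For 2-13 and 2-17, $\deg L=d\geq 2g+1>2g-2$, so $h^1(C,L)=0$ and Riemann--Roch gives $h^0=d+1-g=5=n+1$.
\item For 2-12, $\deg L=6>2g-2=4$, so $h^0=6-3+1=4$.
\item For 2-9 we have $d=7<2g-2=8$, and $h^0(C,L)=4$ holds if and only if $h^1(C,L)=h^0(K_C-L)=1$. Since $\deg(K_C-L)=1$, this says that $K_C-L$ is effective, equivalently $h^0(L)\geq 4$ rather than $3$.
\end{itemize}

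The main obstacle is 2-9, where we must exclude $h^0(L)=3$. Injectivity already gives $h^0(L)\geq 4$, because the four coordinate sections restrict to linearly independent sections; that is exactly the non-degeneracy proved above. Then $h^1(C,L)=h^0-\chi=h^0-3\geq 1$. On the other hand $h^1=h^0(K_C-L)\leq 1$, since $K_C-L$ has degree $1$ and $C$ is not rational. Hence $h^0(L)=4$.

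In general, injectivity forces $h^0(L)\geq n+1$. In the nonspecial cases we have equality directly, and in case 2-9 the degree-one bound above gives equality. So $\rho$ is an injective linear map between spaces of equal dimension $n+1$, hence an isomorphism.
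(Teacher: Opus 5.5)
Your route is the paper's: non-degeneracy via genus--degree obstructions, then Riemann--Roch in the non-special cases, and for 2-9 the bound $h^0(K_C-L)\leq 1$ for a degree-one divisor on a non-rational curve. The surjectivity half and the injectivity for $n=3$ are fine.

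\textbf{The gap is in the injectivity for 2-13 and 2-17.} If $C\subset H$ for a hyperplane $H\subset\P^4$, then $S:=H\cap Q$ is a prime divisor of degree two in $H\simeq\P^3$, but it need not be smooth. When $H$ is tangent to $Q$, $S$ is a quadric cone. On the cone there is no bidegree, since its class group is generated by a ruling line, so the relations $a+b=d$ and $(a-1)(b-1)=g$ are unavailable. Your argument therefore does not cover this case, which the paper treats separately.

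The case is easy to close. Let $\mu\colon S'\simeq\mathbb{F}_2\to S$ be the minimal resolution, with $(-2)$-curve $E$ and fibre $F$. Write the proper transform of $C$ as $C'\sim\alpha E+\beta F$.
\begin{itemize}
\item The degree is $\deg C=(E+2F)\cdot C'=\beta$.
\item Adjunction with $K_{S'}\sim-2E-4F$ gives $g(C)=(\alpha-1)(\beta-\alpha-1)$.
\item Smoothness of $C$ at the vertex makes $C'\to C$ an isomorphism, which forces $C'\cdot E=\beta-2\alpha\in\{0,1\}$.
\end{itemize}
Hence a smooth curve on the cone has $(\deg C,g(C))=(2\alpha,(\alpha-1)^2)$ or $(2\alpha+1,\alpha(\alpha-1))$. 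This gives $g=4$ in degree $6$ and $g=2$ in degree $5$, which excludes $(6,2)$ and $(5,1)$.

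Separately, drop the sentence that $C$ is \emph{non-degenerate by definition or a complete intersection on $Q$}. The centres of 2-13 and 2-17 are not complete intersections on $Q$: degree $5$ is odd, and a degree-$6$ complete intersection on $Q$ would be $Q\cap H'\cap T$ with $H'$ a hyperplane and $T$ a cubic, which is degenerate. So that remark carries no content. What is needed is the genus--degree argument on both types of quadric surface.
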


\begin{proof}Set $L:=\MO_{\P^n}(1)|_C$ and let x-yz be the number. 
We now finish the proof by assuming $(\star)$ below. 
\begin{enumerate}
\item[($\star$)] $\rho$ is injective. 
\end{enumerate}
It is enough to show that $h^0(C, L)=n+1$. 
If x-yz is not 2-9, then we have $H^1(C, L) =0$ by Serre duality. 
In this case, the Riemann-Roch theorem implies 
\[
h^0(C, L) = \chi(C, L) = \deg C +1-g(C) = n+1.
\]
as required.

Assume that x-yz is 2-9. 
By ($\star$), we have $h^0(C, L) \geq h^0(\P^3, \MO_{\P^3}(1)) =4$. 
It suffices to prove $h^0(C, L) \leq 4$. 
The Riemann-Roch theorem implies 
\[
h^0(C, L) -h^1(C, L) = \chi(C, L) = 1-g(C) +\deg C =3. 
\]
Hence it is enough to show $(h^1(C, L)=) h^0(C, K_C-L) \leq 1$, 
which follows from 
$g(C) >0$ and $\deg (K_C-L) = 2g(C) -2 - \deg C = 10-2 -7 =1$. 
This completes the proof under assuming $(\star)$.

\medskip

It is enough to prove $(\star)$, i.e., $\rho$ is injective. 
Otherwise, we have $C \subset \P^{n-1}$ for some hyperplane $\P^{n-1}$ on $\P^n$. 
If $n=3$ (i.e., x-yz is 2-9 or 2-12), 
then 
$(g(C), \deg(C)) \in \{(5,7), (3, 6)\}$ contradicts the genus formula $2g(C)- 2 = \deg(C) (\deg (C)-3)$. 
Assume that $n=4$ (i.e., x-yz is 2-13 or 2-17). 
Then $C \subset Q \cap \P^3 =:S$. 
Here  $S$ is a prime divisor on $Q$, and hence a (possibly singular) quadric surface.

Assume that $S$ is smooth, i.e., $S\simeq \P^1\times\P^1$. 
Write $C\sim aF_1+bF_2$, where $F_1$ and $F_2$ are the fibres of the two projections. 
We have $C \sim \MO_{\P^1 \times \P^1}(a, b)$ for some $a \geq 0$ and $b \geq 0$. 
It is easy to derive a contradiction by using  $\deg C = a+b$ and 
\[
2g(C) -2 = (K_S + C) \cdot C = -2 \deg C + C^2. 
\]

In what follows, we assume that $S$ is a singular quadric surface. 
Let $\mu : S' \to S$ be the minimal resolution. 
It is well known that $S' \simeq \P_{\P^1}(\MO_{\P^1} \oplus \MO_{\P^1}(2))$ and 
$E := \Ex(\mu)$ is a smooth rational curve with $E^2 = -2$. 
For the proper transform $C'$ of $C$ on $S'$, 
$\mu|_{C'} : C' \xrightarrow{\simeq} C$. 
Moreover, either $C' \cap E = 0$ or 
$C' \cdot E=1$ (indeed, if $C' \cdot E \geq 2$, then 
the scheme-theoretic intersection $C' \cap E$ is of length $\geq 2$ and mapped to the closed point $\mu(E)$, contradicting  
$\mu|_{C'} : C' \xrightarrow{\simeq} C$). 
Hence $C'^2 = (\mu^*C -aE)^2 = C^2 + a^2 E^2 = C^2 - b$ 
for  $(a, b) \in \{ (0, 0), (1/2, 1/2)\}$. 
As $K_{S'} \sim \mu^*K_S$,  the adjunction formula implies 
\[
2g(C) -2  = 2g(C') -2 = (K_{S'} + C') \cdot C' = 
K_{S'} \cdot C' + C'^2  
\]
\[
= K_S \cdot C + C^2 -b = -2 \deg C + \frac{1}{2} (\deg C)^2 -b.  
\]
However, this is impossible by 
$(g(C), \deg C) \in \{(2, 6), (1, 5)\}$ and $ b\in \{0, 1/2\}$. 
\end{proof}

\subsection{Proof of irreducibility ($\rho=2$)}

\begin{prop}\label{p parameter spaces rho 2 CI centres}
Let
\[
\text{x-yz}
\in
\{
\text{2-1, 2-3, 2-4, 2-5, 2-7, 2-10, 2-14, 2-15, 2-23, 2-25, 2-28, 2-29}\}.
\]
Then there exists a parameter space of No.\ $\text{x-yz}$.
\end{prop}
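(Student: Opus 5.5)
The plan is to apply Proposition \ref{p relative CI centre parameter space} uniformly. By the blowup descriptions in \cite[Subsection 7.2]{FanoIV}, every Fano threefold of each listed number is $\Bl_C Y$, where $Y$ is a Fano threefold with $\rho(Y)=1$ and $C$ is a complete intersection of two members of $|\cL_1|$ and $|\cL_2|$ for suitable nef line bundles on $Y$. The list of pairs I would use is the following.
\begin{itemize}
\item 2-1, 2-3, 2-5, 2-10, 2-14: $Y=V_d$ for $d=1,2,3,4,5$, and $C$ is the intersection of two members of $|H|$ with $-K_Y\sim 2H$.
\item 2-4, 2-15, 2-25, 2-28: $Y=\P^3$, and $C$ is the intersection of members of degrees $(3,3)$, $(2,3)$, $(2,2)$ and $(1,3)$ respectively.
\item 2-7, 2-23, 2-29: $Y=Q$, and $C$ is the intersection of members of $|\MO_Q(2)|$ and $|\MO_Q(2)|$, of $|\MO_Q(1)|$ and $|\MO_Q(2)|$, and of $|\MO_Q(1)|$ and $|\MO_Q(1)|$ respectively.
\end{itemize}

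First I construct a base family $\pi:\cY\to U$ over a quasi-projective variety $U$ with a relatively ample $\cH$. For $\P^3$ and $Q$ this is easy. For $Q$, either take $U$ to be a point, since all smooth quadric threefolds are isomorphic, or take the open subset of $\P(H^0(\P^4,\MO(2)))$ parametrising smooth quadrics. For $V_d$, I use the parameter spaces of No.\ 1-12, 1-13, 1-14 and 1-15 from Proposition \ref{p irre CI}, Proposition \ref{p irre double cover} and Proposition \ref{p unique easy}.

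The parameter space for $V_1$ (No.\ 1-11) needs separate care. I would take the explicit family of degree $6$ hypersurfaces in $\P(1,1,1,2,3)$; Proposition \ref{p irre CI} already covers 1-11 as a complete-intersection-type description. From that description I would extract a family $\cY\to U$ together with a relative line bundle $\cH$ whose restriction to each fibre is $H$. If needed, I would pass to a generically finite cover using Proposition \ref{p Pic lift alteration} so that $\cH$ exists globally on $\cY$.

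Next I set $\cL_i:=\cH^{\otimes a_i}$. These are $\pi$-nef, and $\pi_*\cL_i$ is locally free by the Kodaira-type vanishing of Theorem \ref{t Fano3 ANV} together with cohomology and base change. This gives the incidence variety $V=\P_U(\cE_1^\vee)\times_U\P_U(\cE_2^\vee)$ of Proposition \ref{p relative CI centre parameter space}.

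The key input is then to check the hypothesis of that proposition: every $X$ of the given number is $\Bl_{\cC_v}\cY_v$ for some closed point $v\in V$ at which $\cC_v$ is a smooth curve. This holds because, by the descriptions in \cite{FanoIV}, the centre $C$ is the scheme-theoretic intersection $D_1\cap D_2$ with $D_i\in|a_iH|$. The irreducibility of $V$ follows from the irreducibility of $U$, since $V$ is a product of projective bundles over $U$.

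The main obstacle I expect is exactly this identification: that in positive characteristic every centre appearing in \cite[Subsection 7.2]{FanoIV} is genuinely a complete intersection of the stated bidegrees. This is clear for the degree and genus pairs that force it, for example a curve of degree $9$ and genus $10$ in $\P^3$ lying on a cubic. For the $\P^3$ cases I would verify it with $h^0(\cI_C(a_1))>0$ from Riemann--Roch, choose $D_1$ of least degree containing $C$, and then show that $C$ is a Cartier divisor in $|a_2H|_{D_1}|$ on the surface $D_1$ by a degree computation. For $V_1$, the existence of the global $\cH$ over the base is the secondary technical issue.
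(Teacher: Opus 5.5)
Your proposal is correct and follows essentially the paper's proof. You apply Proposition \ref{p relative CI centre parameter space} with $\cL_1,\cL_2$ suitable powers of a relatively ample generator $\cH$ on a parameter space of the $\rho=1$ base $Y$ (Theorem \ref{t parameter rho 1}), after passing to a cover via Proposition \ref{p Pic lift alteration} so that $\cH$ exists globally; the paper writes this out only for No.\ 2-1 and, like you, takes the complete-intersection descriptions of the centres directly from \cite[Subsection 7.2]{FanoIV}, so the extra verification you sketch for the $\P^3$ cases is not needed (and a degree computation alone would not fix the class of $C$ on $D_1$ in any case).
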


\begin{proof}
By the blowup descriptions in \cite[Subsection 7.2]{FanoIV}, 
the assertion follows from  Proposition
\ref{p relative CI centre parameter space}. 
In what follows, we only treat the case of No.\ $2$-$1$ to illustrate how to apply Proposition
\ref{p relative CI centre parameter space}.

Let $X$ be a Fano threefold of
No.\ $2$-$1$. 
By \cite[Subsection 7.2]{FanoIV}, there exist a del Pezzo
threefold $Y_{\text{1-11}}$ of No. 1-11 and 
members $D_1, D_2 \in |H|$ such that 
$C :=D_1 \cap D_2$ is a smooth curve and 
$X \simeq \Bl_C\,Y_{\text{1-11}}$.


By Theorem \ref{t parameter rho 1}, there exists a parameter space 
$\alpha : \cY_{\text{1-11}} \to U_{\text{1-11}}$ of No.\ 1-11. 
We may assume that  
$\Pic \cY_{\text{1-11}} \to \Pic Y_{\text{1-11}}$ is surjective for the fibre of every closed point (Proposition \ref{p Pic lift alteration}). 
Then we can find an $\alpha$-ample Cartier divisor $\cH$ 
on $\cY_{\text{1-11}}$ 
such that $-K_{(\cY_{\text{1-11}})_u} \sim 2\cH|_{(\cY_{\text{1-11}})_u}$ for every closed point $u \in U_{\text{1-11}}$. 
Then the assertion holds by applying Proposition
\ref{p relative CI centre parameter space} 
for $\cL_1 := \cH$ and $\cL_2 := \cH$. 
\qedhere





\end{proof}

\begin{prop}\label{p parameter spaces rho 2 moving ambient}
Let
$\text{x-yz}
\in
\{
\text{2-11, 2-16, 2-19, 2-20, 2-21, 2-22, 2-26}
\}$. 
Then there exists a parameter space of No.\ $\text{x-yz}$.
\end{prop}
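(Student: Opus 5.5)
The idea is to reuse the existence constructions of Proposition \ref{p-existence-blowups-of-Vd} and Lemma \ref{l 2-21 exist}, but now with both the ambient threefold and the auxiliary surface varying in families. The families are then fed into Lemma \ref{l relative blowup parameter space}. By \cite[Subsection 7.2]{FanoIV}, each Fano threefold of these numbers is $\Bl_C Y$ with $Y$ and $C$ as follows.

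\begin{itemize}
\item 2-11, 2-19, 2-26: $Y = V_3, V_4, V_5$ and $C$ a line, i.e.\ a curve with $H\cdot C = 1$.
\item 2-16, 2-22: $Y = V_4, V_5$ and $C$ a conic.
\item 2-20: $Y = V_5$ and $C$ a twisted cubic.
\item 2-21: $Y = Q$ and $C$ a rational normal quartic.
\end{itemize}

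The plan is to realise all such pairs $(Y,C)$ as fibres of one smooth family $(\cY, \cC) \to U$ over an irreducible base.

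\textbf{Step 1 (the ambient family).} Take a parameter space $\alpha\colon \cY_0 \to U_0$ of No.\ 1-11+$d$ (i.e.\ of $V_d$), or of $Q$, from Theorem \ref{t parameter rho 1}. After the base change of Proposition \ref{p Pic lift alteration}, there is an $\alpha$-ample $\cH$ with $-K \sim 2\cH$ (resp.\ $3\cH$) fibrewise, and $\alpha_*\cH$ is locally free by Kodaira vanishing (Theorem \ref{t Fano3 ANV}). This gives a relative embedding $\cY_0 \hookrightarrow \P_{U_0}(\alpha_*\cH^\vee)$. For $Q$ this step is trivial, since $Q$ is unique.

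\textbf{Step 2 (the curve family).} Here I would not parametrise $C$ directly, because irreducibility of the Hilbert scheme of such curves on $V_d$ is not available from earlier results. Instead I would parametrise it through the rational curve itself. Let $\cM$ be the open subscheme of the relative $\Hom$-scheme $\Hom_{U_0}(\P^1, \cY_0)$ consisting of closed embeddings of $H$-degree $e\in\{1,2,3\}$, or of degree $4$ into $Q$. Such embeddings are exactly the maps given by a base-point-free $(n+1)$-tuple of degree-$e$ forms on $\P^1$ with image in the fibre. The universal image $\cC \subset \cY_0\times_{U_0}\cM$ is smooth over $\cM$ wherever it is a smooth curve, and it is a single irreducible curve on each fibre. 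Lemma \ref{l relative blowup parameter space} applied to $\cY_0\times_{U_0}\cM \to \cM$ and $\cC$ then yields the parameter space, provided the relevant open part of $\cM$ is irreducible.

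\textbf{Step 3 (irreducibility, the main obstacle).} I would prove irreducibility of the space of embedded lines, conics, twisted cubics and quartics by a dimension count plus connectedness, using the surface constructions from the existence part. Every such $C$ lies on a smooth hyperplane section $S\in|H|$, which is a del Pezzo surface. This holds for general $S$ through $C$ by Bertini for the globally generated $\cI_C(1)$, as in Proposition \ref{p prime div criterion}. So consider the incidence variety $\{(u,S,C)\}$ fibred over the irreducible family of pairs $(u,S)$, namely an open subset of $\P_{U_0}(\alpha_*\cH^\vee)$. Over a fixed smooth del Pezzo surface $S$, the curves of the given type form:
\begin{itemize}
\item finitely many $(-1)$-curves, for lines;
\item a finite union of pencils $|F|$, for conics;
\item a finite union of systems $\tau^*|\MO(1)|$, for cubics;
\item open subsets of finitely many complete linear systems of rational curves, for 2-21.
\end{itemize}
The components over $S$ are permuted transitively by monodromy of the universal family of surfaces $S$, because the Weyl group acts transitively on $(-1)$-classes, conic classes, and so on. In positive characteristic this monodromy statement is the delicate point.

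To handle that point, I would pass to the finite étale cover of the space of $(u,S)$ that marks the class in question. I expect to show it is connected by degenerating along the family to a specific surface, or by comparing with characteristic zero through the liftability used in Theorem \ref{t Fano3 bpf thm}. Failing this, I would replace the argument by a direct construction. For 2-21, fix $C$ as the rational normal quartic, which is unique up to $\PGL_5$; the quadrics through $C$ form a linear system whose smooth members give an irreducible open set. This is exactly the argument of Proposition \ref{p moving CI containing moving centre}, with $\cP = \P^4$, $r=1$, $\cL_1=\MO(2)$, and constant $h^0(\cI_C(2))$. For 2-20 one can argue similarly with the projective uniqueness of twisted cubics in a $\P^3$-section, and in the remaining cases with the transitivity of $\Aut$ on lines and conics in $V_5$.
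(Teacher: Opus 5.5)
There is a genuine gap, and it is the one you flag yourself in Step 3. You fibre the incidence $\{(Y,C)\colon C\subset Y\}$ over the family of ambient threefolds $Y$. The irreducibility of your base then becomes the irreducibility of the relative space of lines, conics and twisted cubics on the family of $V_d$'s. You reduce this to transitivity of the geometric monodromy on $(-1)$-classes, conic pencils, etc.\ of the universal smooth hyperplane section, but you never prove that in characteristic $p$. ``Degenerating to a specific surface'' and ``comparing with characteristic zero via liftability'' are intentions, not arguments: liftability of each individual Fano threefold says nothing about monodromy of a family over $k$.

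The fallbacks do not cover the remaining cases:
\begin{itemize}
\item 2-21 is fine.
\item For 2-11, 2-16 and 2-19 (curves on $V_3$ and $V_4$) the proposal contains no argument at all.
\item For 2-22 and 2-26 you would need $\Aut(V_5)$ and the Hilbert schemes of lines and conics on $V_5$ in characteristic $p$, none of which is established here. Moreover, even in characteristic zero $\PGL_2$ does not act transitively on the $\P^2$ of lines of $V_5$.
\item The sentence for 2-20 is not an argument.
\end{itemize}
The claim that every $C$ lies on a smooth $S\in|H|$ also needs its own proof: global generation of $\cI_{C/Y}(H)$ plus a Bertini count along $C$. Proposition \ref{p prime div criterion} does not supply this. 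A minor slip: $V_d$ is No.\ 1-$(10+d)$, not 1-$(11+d)$.

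The missing idea is to fibre the incidence the other way round, which your 2-21 fallback already does in a special case. Each of $V_3,V_4,V_5,Q$ is a smooth complete intersection in a fixed homogeneous $P$: a cubic in $\P^4$, two quadrics in $\P^5$, three hyperplane sections of $\Gr(2,5)\subset\P^9$, or a quadric in $\P^4$. The curve $C$ is then a line, conic, twisted cubic or rational normal quartic \emph{in $P$}. The argument runs as follows.
\begin{itemize}
\item Parametrise $C$ inside $P$ over an irreducible base $U$. Use Grassmannians of lines or planes for lines and conics in $\P^4,\P^5$. For curves in $\Gr(2,5)$, use an open subset of $\operatorname{Mor}_e(\P^1,\Gr(2,5))$, which is irreducible by Str{\o}mme.
\item Check that $h^0(P,\cI_C\otimes\cL_i)$ is constant on $U$. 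For twisted cubics and $\MO_{\Gr(2,5)}(1)$ it is $10-4=6$, since $C$ spans a $\P^3$ and so $H^0(\Gr(2,5),\MO(1))\to H^0(C,\MO_{\P^1}(3))$ is surjective. It is $35-4$ for lines and cubics in $\P^4$, and $21-5$ for conics and quadrics in $\P^5$.
\item The complete intersections $Y\supset C$ are then parametrised by a product of projective bundles over $U$, so irreducibility is automatic.
\end{itemize}
Proposition \ref{p moving CI containing moving centre} then gives the parameter space for all seven numbers uniformly. No monodromy and no knowledge of Fano schemes of curves on $V_d$ is needed. This is exactly the paper's proof.
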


\begin{proof}
By the blowup descriptions in \cite[Subsection 7.2]{FanoIV}, 
every Fano
threefold of No.\ x-yz 
is isomorphic to the
blowup of a smooth complete-intersection $Y$ in $P$ along a smooth
curve $C$, where $(P, Y, C)$ is described as follows:
\[
\begin{array}{c|c|c|c}
\text{No.}
&
P
&
C\subset P
&
Y\subset P
\\
\hline
\text{2-11}
&
\P^4
&
\text{a line}
&
Y\in|\MO_{\P^4}(3)|
\\
\text{2-16}
&
\P^5
&
\text{a conic}
&
Y=D_1\cap D_2,\quad D_i\in|\MO_{\P^5}(2)|
\\
\text{2-19}
&
\P^5
&
\text{a line}
&
Y=D_1\cap D_2,\quad D_i\in|\MO_{\P^5}(2)|
\\
\text{2-20}
&
\Gr(2,5)
&
\text{a twisted cubic}
&
Y=D_1\cap D_2\cap D_3,\quad
D_i\in|\MO_{\Gr(2,5)}(1)|
\\
\text{2-21}
&
\P^4
&
\text{a rational normal quartic}
&
Y\in|\MO_{\P^4}(2)|
\\
\text{2-22}
&
\Gr(2,5)
&
\text{a conic}
&
Y=D_1\cap D_2\cap D_3,\quad
D_i\in|\MO_{\Gr(2,5)}(1)|
\\
\text{2-26}
&
\Gr(2,5)
&
\text{a line}
&
Y=D_1\cap D_2\cap D_3,\quad
D_i\in|\MO_{\Gr(2,5)}(1)|.
\end{array}
\]
In any case, we can apply Proposition \ref{p moving CI containing moving centre}. 

In what follows, we provide the details only for the case of No.\ $2$-$20$, as the other cases are easier. 
Take the Pl\"{u}cker embedding $\Gr(2, 5) \subset \P^9$. 
Let $U \subset \operatorname{Mor}_3\left(\P^1,\Gr(2,5)\right)$ be the open subset that parametrises all the closed immersions $\P^1 \hookrightarrow \Gr(2,5)$ whose images are twisted cubic curves. 
Then $U$ is irreducible by \cite[Theorem 2.1]{Str87}.
Let $\cC
\subset
\Gr(2,5)\times U$ be the universal curve. Let
\[
\rho\colon
\cP :=
\Gr(2,5)\times U
\longrightarrow
U
\]
be the second projection. For every $i\in\{1,2,3\}$, set
$\cL_i
:=
\pr_1^*\MO_{\Gr(2,5)}(1)$.

Fix a closed point $u\in U$ and set $C:=\cC_u$. 
In order to apply Proposition \ref{p moving CI containing moving centre}, 
it suffices to show that 
$h^0\left(
\Gr(2,5),
\cI_{C/\Gr(2,5)}
\otimes
\MO_{\Gr(2,5)}(1)
\right)$ does not depend on $u$. 
In what follows, we prove $h^0\left(
\Gr(2,5),
\cI_{C/\Gr(2,5)}
\otimes
\MO_{\Gr(2,5)}(1)
\right)=
6$. 
Note that $C$ is a smooth cubic rational curve on $\P^3 := \la C \ra$, 
where $\la C \ra$ denotes the smallest linear subvariety on $\P^9$ containing $C$. 
We have $\MO_{\Gr(2,5)}(1)|_C
\simeq
\MO_{\P^1}(3)$ and the restriction map 
\[
\alpha : H^0\left(
\Gr(2,5),
\MO_{\Gr(2,5)}(1)
\right)
\longrightarrow
H^0\left(
C,
\MO_{\Gr(2,5)}(1)|_C
\right)
\]
is surjective by the following commutative diagram: 
\[ 
\begin{tikzcd}
& H^0\left( \Gr(2,5), \MO_{\Gr(2,5)}(1) \right) \arrow[dr, "\alpha"] & \\ 
H^0\left( \P^9, \MO_{\P^9}(1) \right) \arrow[ur,"\simeq"] \arrow[dr,two heads] && H^0\left( C, \MO_{\Gr(2,5)}(1)|_C \right) \\ 
& H^0\left( \P^3, \MO_{\P^3}(1) \right). \arrow[ur,"\simeq"] & \end{tikzcd} 
\]
Then we get the required equality 
$h^0\left(
\Gr(2,5),
\cI_{C/\Gr(2,5)}
\otimes
\MO_{\Gr(2,5)}(1)
\right)
=
6$ by 
\begin{itemize}
\item $h^0\left(
C,
\MO_{\Gr(2,5)}(1)|_C
\right)
= h^0(\P^1, \MO_{\P^1}(3)) = 4$, 
\item 
$h^0\left(
\Gr(2,5),
\MO_{\Gr(2,5)}(1)
\right)
=
10$, and 
\item the exact sequence 
\[
0 \to H^0\left(
\Gr(2,5),
\cI_{C/\Gr(2,5)}
\otimes
\MO_{\Gr(2,5)}(1)
\right) \to 
H^0\left(
\Gr(2,5),
\MO_{\Gr(2,5)}(1)
\right)
\]
\[\xrightarrow{\alpha} H^00\left(
C,
\MO_{\Gr(2,5)}(1)|_C
\right) \to 0. 
\]
\end{itemize}




\end{proof}

\begin{thm}\label{t parameter rho 2}
Let x-yz be a number with $x = 2$, i.e., 
\[
\text{x-yz} \in 
\{\text{2-1, 2-2, ..., 2-36}\}. 
\]
Then there exists a parameter space of No.\ x-yz.
\end{thm}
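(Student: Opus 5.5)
The plan is a case-by-case bookkeeping over the 36 numbers 2-1, \ldots, 2-36, sending each to one of the parameter-space results already proved. The assignment I will use is the following.
\begin{itemize}
\item Complete intersections and double covers: 2-24 by Proposition \ref{p irre CI}; 2-2, 2-8, 2-18 by Proposition \ref{p irre double cover}; 2-6 by Proposition \ref{p parameter 2-6}.
\item Curves in $\P^3$ or $Q$ moving in $U_{g,d,n}$: 2-9, 2-12, 2-13, 2-17 by Proposition \ref{p parameter spaces rho 2 Mg}.
\item Blowups along complete-intersection centres: 2-1, 2-3, 2-4, 2-5, 2-7, 2-10, 2-14, 2-15, 2-23, 2-25, 2-28, 2-29 by Proposition \ref{p parameter spaces rho 2 CI centres}.
\item Moving ambient complete intersections containing a moving centre: 2-11, 2-16, 2-19, 2-20, 2-21, 2-22, 2-26 by Proposition \ref{p parameter spaces rho 2 moving ambient}.
\item Unique up to isomorphism: 2-27, 2-30, 2-31, 2-32, 2-33, 2-34, 2-35, 2-36 by Proposition \ref{p unique easy}.
\end{itemize}

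Collecting these lists, every number is covered except 2-24 (already handled above) and 2-18 (also handled). Checking the union explicitly: 1--8 covered (2-1, 2-3, 2-4, 2-5, 2-7 CI centres; 2-2, 2-8 double cover; 2-6), 9--17 covered, 18 double cover, 19--22 moving ambient, 23 CI centre, 24 CI, 25 CI centre, 26 moving ambient, 27 unique, 28--29 CI centre, 30--36 unique. So the theorem follows by listing these cases, exactly as in Theorem \ref{t parameter rho 1}.

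The only step that is not pure bookkeeping is checking that each cited description in \cite[Subsection 7.2]{FanoIV} actually matches the hypotheses of the proposition it is sent to. For 2-24 this means verifying that it is listed as a divisor of bidegree $(1,2)$ on $\P^2\times\P^2$, and for 2-18 as a double cover of $\P^2\times\P^1$ branched in a $(2,2)$ divisor. If some number turns out to be missing from the lists, I would first try Proposition \ref{p relative CI centre parameter space} on top of a parameter space for its rank-one base, using Theorem \ref{t parameter rho 1} together with Proposition \ref{p Pic lift alteration} to lift line bundles, as in the proof of Proposition \ref{p parameter spaces rho 2 CI centres}.
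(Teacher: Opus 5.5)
Your proposal is correct and matches the paper's proof exactly: the paper proves this theorem by the same case-by-case assignment, sending 2-2, 2-8, 2-18 to Proposition \ref{p irre double cover}, 2-24 to Proposition \ref{p irre CI}, 2-6 to Proposition \ref{p parameter 2-6}, and the remaining numbers to Propositions \ref{p unique easy}, \ref{p parameter spaces rho 2 Mg}, \ref{p parameter spaces rho 2 CI centres}, and \ref{p parameter spaces rho 2 moving ambient} with the same lists you give. Your explicit union check confirms all 36 numbers are covered, so the fallback you describe is never needed.
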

\begin{proof}
The assertion holds as follows: 
\begin{itemize}
\item 2-2, 2-8, 2-18:
Proposition \ref{p irre double cover}. 
\item 2-24:
Proposition \ref{p irre CI}. 
\item 2-6: Proposition \ref{p parameter 2-6}. 
\item 2-27, 2-30, 2-31, 2-32, 2-33, 2-34, 2-35, 2-36:
Proposition \ref{p unique easy}. 
\item 2-9, 2-12, 2-13, 2-17:
Proposition \ref{p parameter spaces rho 2 Mg}. 
\item 2-1, 2-3, 2-4, 2-5, 2-7, 2-10, 2-14, 2-15, 2-23, 2-25, 2-28, 2-29:
Proposition \ref{p parameter spaces rho 2 CI centres}. 
\item 2-11, 2-16, 2-19, 2-20, 2-21, 2-22, 2-26:
Proposition \ref{p parameter spaces rho 2 moving ambient}. 
\end{itemize}
\end{proof}

\section{Irreducibility for $\rho=3$ and $\rho=4$}

\subsection{Blowup along a curve inside a fixed surface}

\begin{lem}\label{l fixed lin sys}
Fix a number x-yz. 
Let $Y$ be a smooth projective threefold. 
Let $S$ be a normal prime divisor on $Y$ and let $D$ be a Cartier divisor on $S$. 
Assume that every Fano threefold of No.\ x-yz is isomorphic to $\Bl_C Y$ for some smooth member $C \in |D|$. 
Then there exists a parameter space of No.\ x-yz. 
\end{lem}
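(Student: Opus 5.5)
The plan is to parametrise the smooth members of $|D|$ by an open subset of the projective space $\P(H^0(S,\MO_S(D)))$ and then invoke Lemma \ref{l relative blowup parameter space}. Set $P:=\P(H^0(S,\MO_S(D))^\vee)$, which is a projective space and in particular an irreducible quasi-projective variety. (If $H^0(S,\MO_S(D))=0$, then no Fano threefold of No.\ x-yz exists and there is nothing to prove.) On $S\times P$ there is the universal divisor $\cC^{\all}\subset S\times P$. Its fibre over a closed point $[s]\in P$ is the effective Cartier divisor $V(s)\in|D|$ on $S$.

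This universal divisor is cut out by the tautological section of $\pr_1^*\MO_S(D)\otimes\pr_2^*\MO_P(1)$. That section does not vanish identically on any fibre, and $S$ is integral. Hence $\cC^{\all}$ is an effective relative Cartier divisor, so it is flat over $P$. Composing with the closed immersion $S\hookrightarrow Y$, we regard $\cC^{\all}$ as a closed subscheme of $\cY^{\all}:=Y\times P$, and $\cY^{\all}\to P$ is smooth and projective.

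Next, let $U\subset P$ be the locus of $[s]$ such that $V(s)$ is smooth of pure dimension one. Because $\cC^{\all}\to P$ is flat and proper, the smooth locus of $\cC^{\all}\to P$ is open in $\cC^{\all}$. Its complement has closed image in $P$, so $U$ is open. We then set $\cY:=Y\times U$ and $\cC:=\cC^{\all}\times_P U$. By construction $\cC\to U$ is smooth, and every fibre $\cC_u$ is a smooth curve of pure dimension one, viewed inside $Y$.

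By hypothesis, every Fano threefold $X$ of No.\ x-yz is isomorphic to $\Bl_C Y$ for some smooth member $C\in|D|$. Such a $C$ equals $\cC_u$ for the corresponding closed point $u\in U$. In particular $U\neq\emptyset$, and $U$, being a nonempty open subset of a projective space, is an irreducible quasi-projective variety. Lemma \ref{l relative blowup parameter space} therefore applies to $\cY\to U$ and $\cC\subset\cY$, and it produces a parameter space of No.\ x-yz.

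The only point needing care is checking that $\cC^{\all}$, the family of members of the linear system $|D|$ on $S$, is a well-defined closed subscheme of $Y\times P$ that is flat over $P$. I expect this to be the main (mild) obstacle. I would handle it by writing the tautological section locally on $S$ as $\sum_i s_i t_i$, with $(s_i)$ a basis of $H^0(S,\MO_S(D))$ and $(t_i)$ the corresponding linear coordinates on $P$. Since $S$ is integral, this section is a nonzerodivisor on each fibre $S\times\{[s]\}$. The local criterion of flatness then gives flatness of $\cC^{\all}$ over $P$. Normality of $S$ is not really needed here beyond integrality.
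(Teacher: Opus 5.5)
Your proposal is correct and takes the same route as the paper, whose proof is exactly the one-line instruction to apply Lemma \ref{l relative blowup parameter space} to a suitable open subset of the projective space of members of $|D|$. You also supply the details the paper leaves implicit: the universal divisor is flat, which gives openness of the smooth locus $U$. Your remark that only integrality of $S$ is used is accurate.
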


\begin{proof}
The assertion holds by applying Lemma \ref{l relative blowup parameter space} to a suitable open subset $U$ of $\P(H^0(S, D))$. 
\end{proof}

\begin{prop}\label{p rho=3 fixed lin sys}
Let 
$\text{x-yz}
\in
\{
\text{3-5, 3-13, 3-21}
\}.$ 
Then there exists a parameter space of No.\ x-yz. 
\end{prop}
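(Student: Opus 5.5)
The plan is to reduce each of the three numbers to Lemma \ref{l fixed lin sys}. For each case we exhibit a fixed smooth projective threefold $Y$, a normal prime divisor $S\subset Y$ and a Cartier divisor $D$ on $S$ such that every Fano threefold of the given number is isomorphic to $\Bl_C Y$ for some smooth $C\in|D|$. The input is the blowup description in \cite[Subsection 7.3]{FanoIV}. The work is to show that the varying data of that description can be moved, by automorphisms of $Y$, onto a single fixed surface $S$, with $C$ then lying in a fixed linear system on $S$.

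For 3-5, every such $X$ is $\Bl_C(\P^1\times\P^2)$, where $C$ is a smooth curve of bidegree $(5,2)$ whose projection $C\to\P^2$ is an isomorphism onto a conic $B$. Since $\PGL_3$ acts transitively on smooth conics, we may fix $B$ and put $S:=\P^1\times B\simeq\P^1\times\P^1$, so that $C\subset S$. Computing degrees as in Proposition \ref{p-existence-3-5-3-22} gives $C\in|A_1+5A_2|$. We check that the projection $C\to B$ is an isomorphism, which forces the $A_1$-coefficient to be $1$, and that the $\P^1$-degree $5$ forces the $A_2$-coefficient to be $5$. Hence $D:=A_1+5A_2$.

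For 3-13, every such $X$ is $\Bl_C W$ for a curve $C$ of bidegree $(2,2)$ with the additional properties recorded in \cite{FanoIV}; in particular $C$ lies in a smooth member $S\in|H_1+H_2|$ (a sextic del Pezzo surface), and we identify its class there. The main obstacle is showing that all such pairs $(S,C)$ are equivalent under $\Aut W$. My first attempt would use the incidence description $W=\{(p,\ell)\mid p\in\ell\}$ from the proof of Proposition \ref{p unique easy}. In that description a smooth member of $|H_1+H_2|$ is given by a non-degenerate correlation, so $\PGL_3$ acts transitively on the smooth members and we may fix $S$. The class of $C$ in $\Pic S$ is then pinned down by the intersection numbers $C\cdot H_i=2$ together with the condition distinguishing 3-13 from the other numbers, namely that the conic $C_{\P^2}$ passes through exactly two of the three points, as in Proposition \ref{p-existence-blowups-of-W}. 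Where several classes are possible, the stabiliser of $S$, including the monodromy permuting the $(-1)$-curves, should identify them, giving $D\sim 2M-E_1-E_2$ up to automorphism.

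For 3-21, $X=\Bl_C(\P^1\times\P^2)$ with $C$ of bidegree $(2,1)$. The image of $C$ in $\P^2$ is a line, so $C\subset S:=\P^1\times\ell\simeq\P^1\times\P^1$, and we may fix $\ell$ using $\PGL_3$. The curve $C$ is then a smooth member of the fixed class $|\MO_S(1,2)|$, with coefficients determined by the bidegree in the same way as for 3-5. In all three cases Lemma \ref{l fixed lin sys} then applies, and 3-13 is the only step where the transitivity argument is genuinely nontrivial.
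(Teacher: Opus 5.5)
Your treatment of 3-5 and 3-21 is correct and is the same as the paper's. You fix the conic (resp.\ line) $B\subset\P^2$ by $\PGL_3$, put $S=\P^1\times B\simeq\P^1\times\P^1$, and read off the class of $C$ from its two degrees.

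\textbf{The gap is in 3-13, at the claim that $\PGL_3$ acts transitively on the smooth members of $|H_1+H_2|$.} This claim is false. In the incidence model $W=\{(p,\ell)\mid p\in\ell\}\subset\P(V)\times\P(V^\vee)$ one has $H^0(W,H_1+H_2)\simeq\operatorname{End}(V)/k\cdot\mathrm{id}$. Automorphisms of $W$ act on this space by conjugation, not by congruence of bilinear forms. If $\phi$ has three distinct eigenvalues, then $S_\phi=\{\ell(\phi(p))=0\}$ is the blowup of $\P(V)$ at the three eigenlines. The orbit of $S_\phi$ remembers the eigenvalue triple up to permutation and $\lambda\mapsto a\lambda+b$, which leaves one continuous modulus.

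A dimension count confirms this. We have $\dim|H_1+H_2|=7$ and $\dim\Aut W=8$, and the stabiliser of $[\phi]$ contains the two-dimensional torus centralising $\phi$. Hence every orbit of smooth members has dimension at most $6$. So there is no single $S$ to which Lemma \ref{l fixed lin sys} can be applied.

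The later steps are also unsupported. The condition ``the conic passes through exactly two of the three points'' comes from the existence construction in Proposition \ref{p-existence-blowups-of-W}, not from the classification in \cite{FanoIV}. So you have not shown that an arbitrary 3-13 centre lies on a smooth member of $|H_1+H_2|$ at all. Moreover, for a fixed general $S$, the stabiliser (torus plus the duality involution) fixes each of the three classes $2M-E_i-E_j$. Monodromy only becomes available if $S$ is allowed to vary, and that is incompatible with Lemma \ref{l fixed lin sys}.

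\textbf{The missing idea is to use a different surface.} By the description in \cite[Subsection 7.3]{FanoIV}, $B:=\pi_1(C)$ is a smooth conic and $C\to B$ is an isomorphism. Hence
\[
C\subset S:=\pi_1^{-1}(B)=\P(T_{\P^2}|_B)\simeq\P^1\times\P^1 .
\]
$\PGL_3$ acts transitively on smooth conics, and this action lifts to $\Aut W$ via $(x,y)\mapsto((A^t)^{-1}x,Ay)$ on $W=\{x^ty=0\}$. So $B$, and hence $S$, can be fixed. Let $f$ be the fibre class of $S\to B$. Then $H_1|_S\sim 2f$ and $H_2|_S\sim\MO_S(1,1)$. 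Since $C$ is a section of $S\to B$ with $H_2\cdot C=2$, you get $C\in|\MO_{\P^1\times\P^1}(1,1)|$. This is a fixed linear system on a fixed smooth surface, so Lemma \ref{l fixed lin sys} applies. This is exactly the paper's argument.
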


\begin{proof}
Fix a contraction $\pi : W \to \P^2$. 
By the blowup descriptions in
\cite[Subsection 7.3]{FanoIV}, 
every Fano threefold $X$ of No.\ x-yz is isomorphic to $\Bl_C\,Y$ 
for some smooth member $C \in |D|$, 
where 
the triple $(Y, S, |D|)$  is given in the following table. 
\[
\begin{array}{c|c|c|c}
\text{No.}
&
\text{ambient }Y
&
\text{surface }S
&
|D|
\\
\hline
\text{3-5}
&
\text{2-34}
=
\P^2\times\P^1
&
B\times\P^1
\simeq
\P^1\times\P^1,
B\subset\P^2
\text{ a smooth conic}
&
\left|
\MO_{\P^1\times\P^1}(1,5)
\right|
\\
\text{3-13}
&
\text{2-32}
=
W
&
\pi^{-1}(B)
\simeq
\P^1\times\P^1, 
B\subset\P^2
\text{ a smooth conic}
&
\left|
\MO_{\P^1\times\P^1}(1,1)
\right|
\\
\text{3-21}
&
\text{2-34}
=
\P^2\times\P^1
&
B\times\P^1
\simeq
\P^1\times\P^1,
B\subset\P^2
\text{ a line}
&
\left|
\MO_{\P^1\times\P^1}(1,2)
\right|
\end{array}
\]
In order to apply Lemma \ref{l fixed lin sys}, 
it suffices to show that $S$ can be fixed. 
To this end, it is enough to check that $B$ can be fixed. 
This is clear for No.\ 3-5 and No.\ 3-21. 
Concerning No.\ $3$-$13$, 
this follows from the fact that 
$W
=
\left\{
x^ty=0
\right\}
\subset
\P^2_x\times\P^2_y$ is stable under 
the automorphism
\[
(x,y)
\longmapsto
\left(
(A^t)^{-1}x,
Ay
\right)
\]
for every $A
\in
\GL_3$. 
\qedhere

\end{proof}

\begin{prop}\label{p rho=4 tridegree}
Let
$\text{x-yz}
\in
\{
\text{4-3, 4-6, 4-8, 4-13}
\}.$ 
Then there exists a parameter space of No.\ x-yz.
\end{prop}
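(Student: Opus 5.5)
The plan is to reduce to Lemma \ref{l fixed lin sys}, following the construction in Proposition \ref{p existence 4-13}. Set $Y := \P^1_1 \times \P^1_2 \times \P^1_3$ and $S := \Delta \times \P^1_3 \simeq \P^1 \times \P^1$, where $\Delta \subset \P^1_1 \times \P^1_2$ is the diagonal. Put $M := H_1|_S = H_2|_S$ and $N := H_3|_S$. By the blowup descriptions in \cite[Subsection 7.4]{FanoIV}, a Fano threefold of No.\ 4-3, 4-6, 4-8, or 4-13 is $\Bl_C Y$ for a smooth curve $C$ of tridegree $(1,1,d)$ with $d \in \{3, 1, 0, 2\}$ (the assignment to be read off from \cite[Subsection 7.4]{FanoIV}). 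It therefore suffices to show that, after an automorphism of $Y$, every such $C$ is a smooth member of the fixed linear system $|dM+N|$ on the fixed surface $S$. Lemma \ref{l fixed lin sys} then applies with $D := dM+N$; note that $S$ is smooth, hence normal.

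\textbf{Step 1.} Since $C \cdot H_1 = C \cdot H_2 = 1$, each projection $p_i \colon C \to \P^1_i$ for $i \in \{1,2\}$ is birational onto $\P^1_i$. As $C$ is smooth, this forces $C \simeq \P^1$ with $p_1$ and $p_2$ isomorphisms. The image $B'$ of $C$ in $\P^1_1 \times \P^1_2$ is then the graph of the isomorphism $\phi := p_2 \circ p_1^{-1}$, a curve of bidegree $(1,1)$ (cf.\ Remark \ref{r curve bidegree}).

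\textbf{Step 2.} Apply the automorphism $(\phi^{-1}) \times \mathrm{id} \times \mathrm{id}$, or more precisely act on the second factor by $\phi^{-1}$. This moves $B'$ to $\Delta$, so $C \subset \Delta \times \P^1_3 = S$. Since automorphisms of the factors preserve tridegrees, the curve $C \subset S$ has class $aM + bN$ with $C \cdot H_1 = b = 1$ and $C \cdot H_3 = a = d$. Hence $C \in |dM+N|$. Smoothness of $C$ is given, and $X \simeq \Bl_C Y$ is preserved by the automorphism.

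\textbf{Main obstacle.} The delicate point is bookkeeping rather than geometry. One must confirm from \cite[Subsection 7.4]{FanoIV} that each of the four numbers really is described as a blowup of $\P^1 \times \P^1 \times \P^1$ along a curve of tridegree exactly $(1,1,d)$, up to permuting factors, which is harmless since permutations are automorphisms. One must also check that the ordering of the factors can always be arranged so that the two degree-one coordinates are the first two; this is immediate by permuting. If some entry, for instance 4-13, is instead described with degrees such as $(1,1,3)$ in a different order, the same permutation handles it. Once this is settled, Lemma \ref{l fixed lin sys} gives the parameter space, taking $U$ to be the open subset of $\P(H^0(S, dM+N))$ parametrising smooth members.
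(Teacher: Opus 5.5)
Your proposal is correct and follows the paper's proof: move $C$ into $S=\Delta\times\P^1_3$ by an automorphism of $Y$, identify its class on $S$ as $dM+N$, and apply Lemma \ref{l fixed lin sys}. Your Steps 1--2 simply spell out the ``suitable automorphism'' that the paper leaves implicit. One small slip: the paper's assignment is $d=2$ for 4-3 and $d=3$ for 4-13, whereas you swapped these two. This is harmless, since the argument is uniform in $d\in\{0,1,2,3\}$.
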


\begin{proof}
Define $d$ by 
\[
d:=0\quad(\text{if $x$-$yz$ is 4-8}), \qquad
d:=1\quad(\text{if $x$-$yz$ is 4-6}),
\]
\[
d:=2\quad(\text{if $x$-$yz$ is 4-3}), \qquad
d:=3\quad(\text{if $x$-$yz$ is 4-13}).
\]
Set
$Y
:=
\P^1_1
\times
\P^1_2
\times
\P^1_3$ and $
H_i
:=
\pr_i^*\MO_{\P^1_i}(1)$ for the $i$-th projection 
$\pr_i\colon
Y
\longrightarrow
\P^1_i$. 
For the diagonal $\Delta
\subset
\P^1_1\times\P^1_2$, set 
\[
Y \supset S
:=
\Delta\times\P^1_3 \simeq \P^1 \times \P^1_3,
\]
$M
:=
H_1|_S
=
H_2|_S$, and $N:=
H_3|_S.$

Let $X$ be a Fano threefold of No.\ x-yz. By the blowup descriptions in
\cite[Subsection 7.4]{FanoIV}, there exists a smooth curve 
$C
\subset
Y$ of tridegree
$(1,1,d)$ 
such that $X
\simeq
\Bl_C Y$. 
After applying a suitable automorphism on $\P^1 \times \P^1 \times \P^1$, 
we may assume that $C \subset S$. 
As $C$ is of tridegree $(1, 1, d)$, we obtain $C \in |dM+N|$. 
Then the assertion holds by applying Lemma \ref{l fixed lin sys} 
to $D := dM+N$. 
\qedhere


%
\end{proof}

\subsection{Blowup along a disjoint union of curves}

\begin{lem}\label{l disjoint centre}
Let x-yz, x'-y'z', $x_1$-$y_1z_1$, and $x_2$-$y_2z_2$ be numbers. 
Assume that 
the following hold: 
\begin{enumerate}
\item 
$Z$ is a Fano threefold of No.\ x'-y'z', 
every Fano threefold of No.\ x'-y'z' is isomorphic to $Z$, and 
the automorphism scheme $\Aut(Z)$ is integral. 
\item 
For each $i \in \{1, 2\}$, 
there exists a parameter space of No.\ $x_i$-$y_iz_i$. 
\item 
For each $i \in \{1, 2\}$ and every Fano threefold $Y_i$ of No.\ $x_i$-$y_iz_i$, 
there exists  a unique extremal ray $F$ such that 
$Z_i \simeq Z$ for the contraction $Y_i \to Z_i$ of $F$. 
\item 
Every Fano threefold $X$ of No.\ x-yz is isomorphic to $Y_1 \times_Z Y_2$ for some contractions $Y_1 \to Z$ and $Y_2 \to Z$. 
\end{enumerate}
Then there exists a parameter space of No.\ x-yz. 
\end{lem}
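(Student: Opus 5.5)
We build a universal family of fibre products over a parameter space of pairs of contractions to the fixed threefold $Z$, twisted by $\Aut(Z)$ so that every relative position of $Y_1\to Z$ and $Y_2\to Z$ occurs.

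\textbf{Step 1: the families $\cY_i\to U_i$ and the contractions.} By (2), take parameter spaces $\pi_i\colon \cY_i\to U_i$ of No.\ $x_i$-$y_iz_i$. After replacing $U_i$ by a generically finite cover as in Proposition \ref{p Pic lift alteration} (and an open subset, keeping irreducibility and surjectivity on closed points), we may assume the relative cone $\NE(\cY_i/U_i)$ is identified with $\NE$ of every fibre. Let $F_i$ be the extremal ray in (3). By rigidity (Proposition \ref{p DVR NE Nef}, Proposition \ref{p ext type family}) it corresponds to a single relative extremal ray. We choose a line bundle $\cM_i$ supporting it. By Corollary \ref{c relative semi-ample}, $\cM_i$ is $\pi_i$-semi-ample, giving a contraction $\cY_i\to\cZ_i$ over $U_i$. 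By Proposition \ref{p cont bc}(1), this contraction restricts to the contraction of $F$ on each fibre. Hence $\cZ_i\to U_i$ is a smooth projective family whose closed fibres are all isomorphic to $Z$, by (1) and (3).

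\textbf{Step 2: trivialising $\cZ_i$.} We pass to the frame scheme $P_i:=\Isom_{U_i}(Z\times U_i,\cZ_i)$. This is an $\Aut(Z)$-torsor over $U_i$; it is representable since $Z$ is projective, and surjective since all fibres are isomorphic to $Z$. Because $\Aut(Z)$ is integral by (1), $P_i$ is irreducible. Over $P_i$ we obtain a family $\cY_i\times_{U_i}P_i\to Z\times P_i$ of contractions onto the fixed $Z$. This is the step I expect to be the main obstacle. One must show $P_i\to U_i$ is an fppf torsor, which needs the Isom scheme to be flat and the fibres to be isomorphic fppf-locally. One must also ensure $P_i$ can be replaced by a quasi-projective variety: either pass to a quasi-projective dense open stratum, or use that $\Aut(Z)$ is affine since $-K_Z$ is ample. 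For irreducibility of $P_i$, I would argue that an $\Aut(Z)$-torsor over an irreducible base with integral structure group is irreducible, using openness of the torsor map.

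\textbf{Step 3: the fibre product.} Set $V:=P_1\times P_2$, which is irreducible because $k$ is algebraically closed. Define $\cX:=(\cY_1\times_{U_1}P_1)\times_{Z\times V}(\cY_2\times_{U_2}P_2)$, with the evident pullbacks to $Z\times V$. Here, by (4), any $X$ of No.\ x-yz is $Y_1\times_Z Y_2$ for some contractions, and each such contraction arises from a point of $P_i$ for a suitable identification with $Z$. Hence every such $X$ appears as a closed fibre of $\cX\to V$. Take the largest open $V'\subset V$ over which $\cX\to V$ is smooth with Fano fibres; smoothness and ampleness of $-K$ are open conditions (Corollary \ref{c relative nef}). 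This $V'$ is nonempty because it contains the points giving Fano threefolds of No.\ x-yz. Then $\cX\times_V V'\to V'$ is a parameter space, and Theorem \ref{t No inv} guarantees that all its fibres are of No.\ x-yz.
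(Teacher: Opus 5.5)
You follow the paper's architecture: Picard-surjective families $\cY_i\to U_i$, the relative contraction $\cY_i\to\cZ_i$, the Isom scheme over $U_i$, the product of the two Isom schemes, the fibre product over $Z\times V$, and a final open subset. But there is a genuine gap exactly at the step you flagged. You assert that $P_i=\Isom_{U_i}(Z\times U_i,\cZ_i)\to U_i$ is an $\Aut(Z)$-torsor, and then deduce irreducibility of $P_i$ from openness of the torsor map, without proving flatness or local triviality. What your hypotheses give directly is weaker: $P_i$ is a pseudo-torsor (the map $P_i\times\Aut(Z)\to P_i\times_{U_i}P_i$ is an isomorphism), and its structure map is surjective with all closed fibres isomorphic to $\Aut(Z)$. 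That is not enough. A pseudo-torsor need not be flat or irreducible: for the trivial group, the surjective monomorphism $(\mathbb A^1\setminus\{0\})\sqcup\Spec k\to\mathbb A^1$ is one. Likewise, a family over a possibly singular base whose closed fibres are all isomorphic is not locally trivial for formal reasons. Without openness of $P_i\to U_i$, irreducibility of $P_i$, and hence of $V=P_1\times P_2$, is unproven, and irreducibility is the whole content of the lemma.

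The missing ingredient is an algebraic Fischer--Grauert theorem. The paper applies \cite[Theorem 1.3]{Poc25} to the fibrewise trivial family $\cZ_i\to U_i$. This gives an \'etale surjective $U_i'\to U_i$ with $\cZ_i\times_{U_i}U_i'\simeq Z\times U_i'$, hence $P_i\times_{U_i}U_i'\simeq\Aut(Z)\times_k U_i'$. So $P_i\to U_i$ is flat with integral geometric fibres, and $P_i$ is irreducible. The paper also first reduces to ${\rm trans.deg}(k/\F_p)=\infty$.

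If you want to avoid that theorem, you can argue by hand for the three targets $Z\in\{\P^3,Q,\P^2\times\P^1\}$ where the lemma is actually used. There $H^1(Z,T_Z)=0$, and obstructions to deforming an isomorphism $Z\to(\cZ_i)_u$ over $U_i$ lie in $H^1(Z,T_Z)$. So the Isom scheme is smooth over $U_i$, hence open and surjective with integral fibres. However, the lemma as stated carries no rigidity hypothesis, so you would have to add one.

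The rest of your argument is fine: quasi-projectivity of the Isom scheme via relative ampleness of $\omega^{-1}$, consistency of the chosen ray across fibres (which also needs Theorem \ref{t No inv}), and the choice of the open subset of $V$.
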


\begin{proof}
By standard argument, we may assume that ${\rm trans.deg}(k/\F_p)=\infty$. 
For each $i\in\{1,2\}$, take a parameter space
$\alpha_i\colon
\cY_i
\longrightarrow
U_i$ of No.\ $x_i$-$y_iz_i$. 
We may assume that the restriction group homomorphism
$\Pic(\cY_i)
\to \Pic(Y_i)$ is surjective for every geometric fibre $Y_i$ of $\alpha_i$ (Proposition \ref{p Pic lift alteration}).
By (1) and (3), there exists a contraction 
\[
\pi_i\colon
\cY_i
\longrightarrow
\cZ_i
\]
over $U_i$ such that the geometric fibre
$(\cZ_i)_u$ over every geometric point $u\in U_i$
is isomorphic to $Z$ (Proposition \ref{p ext type family}, Theorem \ref{t No inv}).

Since $Z$ is a smooth Fano variety, 
$\Aut(Z)$ is an affine group scheme of finite type over $k$, which is integral by (1). 
Let 
\[
q_i: V_i
:=
\operatorname{Isom}_{U_i}
\left(
\cZ_i,
Z\times U_i
\right) \to U_i  
\]
be the open subscheme of the Hom scheme $\Hom_{U_i}(\cZ_i, Z \times U_i)$ parametrising $U_i$-isomorphisms. 
As $\omega_{\cZ_i/U_i}^{-1}$ and $\omega_{Z \times U_i/U_i}^{-1}$ are ample over $U_i$, 
$\operatorname{Isom}_{U_i}
\left(
\cZ_i,
Z\times U_i
\right)$ is quasi-projective over $U_i$, and hence quasi-projective over $k$.

Let us show that $V_i$ is irreducible. 
Since $U_i$ is irreducible, it is enough to show that 
$q_i$ is flat and every geometric fibre is irreducible. 
By \cite[Theorem 1.3]{Poc25}, the fibrewise trivial family $\cZ_i \to U_i$ becomes trivial after a suitable \'etale base change. 
Hence there exists an \'etale surjective morphism $U'_i \to U_i$ such that 
\[
V_i \times_{U_i} U'_i \simeq \Isom_{U_i}
\left(
\cZ_i,
Z\times U_i
\right) \times_{U_i} U'_i 
=  \Isom_{U'_i}(\cZ_i \times_{U_i} U'_i,
Z\times U'_i) 
\]
\[
\simeq 
\Isom_{U'_i}( Z \times U'_i,
Z\times U'_i) \simeq \Aut(Z) \times_k U'_i
\]
and $q_i \times_{U_i} U'_i:\Aut(Z) \times_k U'_i \to U'_i$ is the second projection. 
Therefore, $q_i$ is flat and every geometric fibre of $q_i$ is integral by (1). 
Therefore, $V_i$ is irreducible. 

Over $V_i =\Isom_{U_i}
\left(
\cZ_i,
Z\times U_i
\right)$, there exists
the universal isomorphism
$\theta_i : \cZ_i\times_{U_i}V_i
\xrightarrow{\simeq}
Z\times V_i$. 
Thus, after replacing $\cY_i$ by its pullback to $V_i$, we obtain a
morphism
\[
\beta_i\colon
\cY_i\times_{U_i}V_i
\to  
\cZ_i\times_{U_i}V_i \xrightarrow{\simeq, \theta_i}
Z\times V_i 
\]
whose restriction to every geometric fibre is the contraction as 
in {\rm (3)}.

Set $V
:=
V_1\times V_2.$ Then $V$ is irreducible. 
Taking the base change $(-) \times_{V_i} V$,  we
obtain the induced morphism 
$\beta_{i,V}\colon
\cY_{i,V}
\longrightarrow
Z\times V$ for each $i\in\{1,2\}$. 
Set 
\[
\cX
:=
\cY_{1,V}
\times_{Z\times V}
\cY_{2,V}.
\]
Then the assertion holds by taking a suitable open subset $V^{\circ} \subset V$. 
\qedhere
\end{proof}

\begin{rem}\label{r Aut integral}
We shall apply the above lemma for the case when 
$Z \in \{ \P^3, \P^2 \times \P^1, Q\}$. 
In this case, $\Aut(Z)$ is irreducible and reduced as follows: 
\begin{itemize}
\item $\P^3, \P^2 \times \P^1$: 
\cite[Theorem 1 and Theorem 2(ii)]{LL22}. 
\item $Q$: \cite[Th\'eor\`eme 1, Corollaire in page 182]{Dem77}. 
\end{itemize}
\end{rem}

\begin{prop}\label{p rho=3 disjoint centre}
Let
\[
\text{x-yz}
\in
\{
\text{3-6, 3-10, 3-12, 3-14, 3-15, 3-19, 3-20, 4-5}
\}.
\]
Then there exists a parameter space of No.\ x-yz.
\end{prop}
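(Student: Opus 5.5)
The plan is to apply Lemma \ref{l disjoint centre} uniformly, with $Z \in \{\P^3, Q, \P^2\times\P^1\}$; hypothesis (1) is then supplied by Remark \ref{r Aut integral} together with the uniqueness of $Z$ (Proposition \ref{p unique easy} for $\P^2 \times \P^1$, and the obvious uniqueness of $\P^3$ and $Q$). In each case $X \simeq \Bl_{C_1 \amalg C_2} Z$ for disjoint smooth curves $C_1, C_2$. Setting $Y_i := \Bl_{C_i} Z$, we have $X \simeq Y_1 \times_Z Y_2$, since blowups along disjoint centres are computed locally. This gives hypothesis (4).

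Using \cite[Subsections 7.2--7.4]{FanoIV}, I expect the following assignment (to be checked against the tables).
\begin{itemize}
\item 3-6: $Z=\P^3$, with $Y_1$ of No.\ 2-33 (a line) and $Y_2$ of No.\ 2-25 (an elliptic quartic).
\item 3-12: $Z=\P^3$, with 2-33 and 2-27 (a twisted cubic).
\item 3-14: $Z=\P^3$, with a point blowup (2-35) and 2-28 (a plane cubic).
\item 3-10: $Z=Q$, with two copies of 2-29 (conics).
\item 3-15: $Z=Q$, with 2-31 (a line) and 2-29.
\item 3-20: $Z=Q$, with two copies of 2-31.
\item 3-19: $Z=Q$, with two point blowups (2-30).
\item 4-5: $Z=\P^2\times\P^1$, with two curves of the appropriate bidegrees, e.g.\ 3-17 and 3-24, or the relevant numbers from the table.
\end{itemize}
Some of these involve point centres, such as 3-14 and 3-19. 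This is harmless, because Lemma \ref{l disjoint centre} only uses contractions $Y_i \to Z$ and never that the centres are curves.

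Hypothesis (2), the existence of parameter spaces for each $Y_i$, comes from Theorem \ref{t parameter rho 2}. When $Y_i$ has $\rho=3$ (as possibly for 4-5), it comes instead from Propositions \ref{p irre CI}, \ref{p unique easy} and \ref{p rho=3 fixed lin sys}. The only remaining input is hypothesis (3): every $Y_i$ of the given number must have a \emph{unique} extremal ray whose contraction target is isomorphic to $Z$. For $\rho(Y_i)=2$ there are only two rays. I would read off the types of both contractions from the table in \cite[Subsection 7.2]{FanoIV} and check that the other contraction is not onto $Z$. Examples: for 2-33 the other is a $\P^2$-bundle over $\P^1$; for 2-25 the other target is again $\P^3$, so some care is needed there.

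That last point is the main obstacle. For 2-25 and 2-29 (and 2-31 inside $Q$) both rays can lead to isomorphic targets; for instance, 2-25 is symmetric, and both contractions go to $\P^3$. When this happens, uniqueness in (3) fails as literally stated, and I would weaken it. Hypothesis (3) is only used to produce the relative contraction $\cY_i \to \cZ_i$ with $\cZ_i$ fibrewise $\simeq Z$. If two such rays exist, I would choose one of them globally on the parameter space: after the base change from Proposition \ref{p Pic lift alteration}, $\Pic$ is constant, so the rays are identified across fibres by Proposition \ref{p DVR NE Nef}, and we pick one ray. Every $X$ still arises, since any contraction to $Z$ can be used; by symmetry this ambiguity only permutes the choices.

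The case 4-5 needs the most bookkeeping. There I would verify from \cite[Subsection 7.4]{FanoIV} the exact pair of centres and confirm that the $\rho=3$ intermediate blowups have a distinguished ray whose composite contraction goes to $\P^2 \times \P^1$. Otherwise I would treat 4-5 directly with Lemma \ref{l disjoint centre}, using $Y_i$ of $\rho=2$ over $Z=\P^2\times\P^1$ (e.g.\ blowups along a fibre-type curve and a curve of bidegree $(2,1)$). Once the hypotheses are checked, Lemma \ref{l disjoint centre} immediately yields the parameter spaces.
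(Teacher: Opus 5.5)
Your strategy is the paper's: you apply Lemma \ref{l disjoint centre} with $Z\in\{\P^3,Q,\P^2\times\P^1\}$, and for the seven $\rho=3$ cases your pairs agree with the paper's table. The one step that needs real checking is hypothesis (3), and that is where the proposal goes wrong.

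Your ``main obstacle'' rests on a misidentification.
\begin{itemize}
\item For No.\ 2-25, that is $\Bl_C\P^3$ with $C=Q_1\cap Q_2$ an elliptic quartic, the second extremal contraction is the quadric-surface fibration onto $\P^1$ given by the pencil $|2H-E|$. It is not a contraction onto $\P^3$; the case symmetric over $\P^3$ is No.\ 2-12.
\item For No.\ 2-29, the second contraction is the quadric fibration over $\P^1$ given by $|H-E|$.
\item For No.\ 2-31, the second contraction is the $\P^1$-bundle over $\P^2$ induced by projection from the line.
\end{itemize}
Hence for every intermediate $Y_i$ the blowup contraction is the unique extremal contraction whose target is isomorphic to $Z$. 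So hypothesis (3) holds as stated, and the paper simply reads this off from \cite[Subsections 7.2--7.4]{FanoIV}.

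Your proposed weakening is therefore unnecessary, and as written it is not an argument. A global choice of ray does give a relative contraction $\cY_i\to\cZ_i$. But ``every $X$ still arises by symmetry'' would require the family to realise every pair consisting of $Y_i$ together with a prescribed contraction to $Z$. Nothing in your construction ensures this.

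For No.\ 4-5, the suggestion ``3-17 and 3-24'' is wrong. The fallback with ``$Y_i$ of $\rho=2$ over $Z=\P^2\times\P^1$'' is impossible, since any blowup of $\P^2\times\P^1$ has $\rho=3$. The correct pair is:
\begin{itemize}
\item No.\ 3-28, the blowup along a fibre $\{\mathrm{pt}\}\times\P^1$ (bidegree $(0,1)$);
\item No.\ 3-21, the blowup along a curve of bidegree $(1,2)$.
\end{itemize}
Each contracts to $\P^2\times\P^1$ by a single $E_1$-ray, and this is the only contraction with that target:
\begin{itemize}
\item No.\ 3-28 is $\P^1\times\F_1$, and its other two contraction targets are surfaces.
\item For No.\ 3-21, let $\ell\subset\P^2$ be the line that is the image of the centre. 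The other two rays contract the proper transform of $\ell\times\P^1$ onto curves $B$ with $-K_{Y'}\cdot B=0$ on the target $Y'$, so those targets are not Fano.
\end{itemize}
Their parameter spaces come from Propositions \ref{p rho=3 fixed lin sys} and \ref{p unique easy}, which you already cite. With these two corrections, your argument coincides with the paper's proof.
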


\begin{proof}
The blowup descriptions and the intermediate Fano threefolds are given
as follows:
\[
\begin{array}{c|c|c|c}
\text{No.}
&
Z
&
(x_1\text{-}y_1z_1,\ x_2\text{-}y_2z_2)
&
\text{blowup centres on }Z
\\
\hline
\text{3-6}
&
\P^3
&
(\text{2-25},\text{2-33})
&
\text{an elliptic quartic and a line}
\\
\text{3-10}
&
Q
&
(\text{2-29},\text{2-29})
&
\text{two conics}
\\
\text{3-12}
&
\P^3
&
(\text{2-27},\text{2-33})
&
\text{a rational cubic and a line}
\\
\text{3-14}
&
\P^3
&
(\text{2-28},\text{2-35})
&
\text{a plane cubic and a point}
\\
\text{3-15}
&
Q
&
(\text{2-29},\text{2-31})
&
\text{a conic and a line}
\\
\text{3-19}
&
Q
&
(\text{2-30},\text{2-30})
&
\text{two points}
\\
\text{3-20}
&
Q
&
(\text{2-31},\text{2-31})
&
\text{two lines}
\\
\text{4-5}
&
\P^2\times\P^1
&
(\text{3-28},\text{3-21})
&
\text{curves of bidegree }(0,1)\text{ and }(1,2).
\end{array}
\]
More precisely, by the blowup descriptions in
\cite[Subsections 7.2, 7.3, and 7.4]{FanoIV}, every Fano threefold $X$ of No.\ x-yz 
is isomorphic to
\[
Y_1
\times_Z
Y_2,
\]
where $Y_i$ is a Fano threefold of No.\
$x_i$-$y_iz_i$ appearing in the corresponding row of the table, and $Y_i \to 
Z$ is the blowup along the corresponding centre. 

By Theorem \ref{t parameter rho 2} (2-$y_iz_i$), 
Proposition
\ref{p rho=3 fixed lin sys} (3-21), and Proposition \ref{p unique easy} (3-28), there
exists a parameter space of No.\ $x_i$-$y_iz_i$ for every number
appearing in the third column. Moreover, the automorphism scheme of each
possible threefold
\[
Z
\in
\{
\P^3,
Q,
\P^2\times\P^1
\}
\]
is integral by Remark \ref{r Aut integral}.

It follows from the lists of extremal rays in
\cite[Subsections 7.2, 7.3, and 7.4]{FanoIV} that, for every intermediate
Fano threefold $Y_i$ in the table, there exists a unique extremal ray 
whose contraction has target isomorphic to $Z$. Indeed, the relevant
contractions are the unique contractions
\[
\begin{array}{c|c}
\text{No.}
&
\text{target}
\\
\hline
\text{2-25, 2-27, 2-28, 2-33, 2-35}
&
\P^3
\\
\text{2-29, 2-30, 2-31}
&
Q
\\
\text{3-21, 3-28}
&
\P^2\times\P^1.
\end{array}
\]
Therefore, all the assumptions of Lemma
\ref{l disjoint centre} are satisfied for every row of the first table.
The assertion follows.
\end{proof}

\subsection{Proof of irreducibility ($\rho=3, 4$)}


\begin{prop}\label{p rho=3 fibre blowup}
There exists a parameter space of No.\ 3-4.
\end{prop}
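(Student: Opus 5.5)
Every Fano threefold of No.\ 3-4 is $\Bl_C Y$, where $Y$ is a Fano threefold of No.\ 2-18 and $C$ is a smooth fibre of the conic bundle $g\colon Y\to\P^2$ of type $C_1$ (see \cite[Subsection 7.3]{FanoIV}; this matches the existence proof in Proposition \ref{p exist rho 3}). The plan is to construct the parameter space as a family over the parameter space of 2-18, enlarged by the base $\P^2$ of the conic bundle, and then to apply Lemma \ref{l relative blowup parameter space}.

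\emph{Step 1: a relative conic bundle.} By Proposition \ref{p irre double cover}, there is a parameter space $\alpha\colon\cY\to U$ of No.\ 2-18, realised as the family of double covers $\{z^2+az+b=0\}$ of $\P^2\times\P^1$ with $(a,b)$ ranging over an open subset of $H^0(\cL)\oplus H^0(\cL^{\otimes 2})$ for $\cL=\MO(1,1)$. In this description the conic bundle $g$ is simply the composite $\cY_u\to\P^2\times\P^1\to\P^2$. We therefore get a global morphism $G\colon\cY\to\P^2\times U$ over $U$ with no further work. If one prefers an intrinsic route, Proposition \ref{p Pic lift alteration} lets us lift the nef divisor $g^*\MO_{\P^2}(1)$ to $\cY$, and Corollary \ref{c relative semi-ample} together with Proposition \ref{p cont bc} then produces the relative contraction. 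The explicit description is cleaner, so I will use it.

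\emph{Step 2: the centre.} Set $V:=\P^2\times U$, and base change to obtain $\cY_V:=\cY\times_U V\to V$. Let $\cC\subset\cY_V$ be the preimage of the diagonal section, i.e.\ the fibre product of $G$ with the graph $V\to\P^2\times V$, so that $\cC_{(t,u)}=g_u^{-1}(t)$. Now pass to the largest open subset $V^\circ\subset V$ over which $\cC\to V$ is smooth. Its fibres are then smooth conics, that is, smooth fibres of $g_u$. The set $V^\circ$ is open because $\cC\to V$ is flat: it is a family of conics cut out by a relative Cartier divisor in a $\P^2$-bundle, or one can use fibre dimension and Cohen--Macaulayness. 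It is also irreducible, since it is open in the irreducible space $\P^2\times U$, and it is nonempty because smooth fibres exist.

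\emph{Step 3: conclusion.} Given any $X$ of No.\ 3-4, write $X\simeq\Bl_C Y$ with $C=g^{-1}(t)$ smooth. Choose $u$ with $Y\simeq\cY_u$; the isomorphism may change the conic bundle structure only by an automorphism of $\P^2$, because $g$ is the unique $C_1$ contraction. Hence $(t',u)\in V^\circ$ for a suitable $t'$, and Lemma \ref{l relative blowup parameter space} gives the parameter space. The main obstacle is ensuring that every 3-4 arises from a \emph{smooth} fibre of the conic bundle over $\P^2$ (rather than, say, a smooth fibre of another structure), and that the identification of $g$ with the projection is canonical. Both facts should follow from the uniqueness of the $C_1$ ray in \cite[Subsection 7.2]{FanoIV} and the blowup description in \cite[Subsection 7.3]{FanoIV}.
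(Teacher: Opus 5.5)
Your proposal is correct and follows essentially the same route as the paper's proof. You base change the double-cover parameter space $\alpha\colon\cY\to U$ of No.\ 2-18 along $\P^2\times U\to U$, take the preimage of the diagonal of $\P^2\times\P^2$ as the moving centre, and conclude with Lemma \ref{l relative blowup parameter space}. Your explicit shrinking to the open locus $V^\circ$ where $\cC\to V$ is smooth spells out a hypothesis of that lemma which the paper leaves implicit; that locus is open because $\cC\to V$ is proper, and flatness is what lets smoothness be checked fibrewise.
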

\begin{proof}
We use the parameter space of No.\ $2$-$18$ constructed in  the proof of Proposition
\ref{p irre double cover}. 
More precisely, there exist an irreducible
quasi-projective variety $U$, 
a smooth projective morphism $\alpha\colon
\cY
\longrightarrow
U$ and a finite surjective morphism 
$f\colon
\cY
\to 
\P^2_1\times\P^1\times U$ 
over $U$ of degree $2$ 
such that every Fano threefold of No.\ $2$-$18$ is isomorphic
to a geometric fibre of $\alpha$.

For $V := \P^2_2 \times U \xrightarrow{\pr_2} U$, 
consider the following diagram in which every square is cartesian: 
\[
\begin{tikzcd}
\cY
\arrow[d, "f"']
\arrow[dd, bend right=100, "\alpha"']
&
\cY_V := \cY \times_U V 
\arrow[l]
\arrow[d, "g"]
\arrow[dd, bend left=80, "\beta"]
\\
\P^2_1 \times \P^1 \times U
\arrow[d]
&
\P^2_1 \times \P^2_2 \times \P^1 \times U
\arrow[l]
\arrow[d]
\\
U
&
V=\P^2_2 \times U
\arrow[l, "\pr_2"']
\end{tikzcd}
\]
For the diagonal $\Delta_{\P^2} \subset \P^2_1 \times \P^2_2$, 
let $\cC \subset \cY \times_U V  = \cY_V$ be its inverse image. 
For every closed point $v = (p, u) \in \P^2_2 \times U$, 
we have 
\[
\cC_v =\cC \cap \beta^{-1}(v) 
= g^{-1}(\Delta_{\P^2} \times \P^1 \times U) \cap g^{-1}(\P^2_1 \times \{p\} \times \P^1 \times \{ u\}) = 
g^{-1}( \{ (p, p)\} \times \P^1 \times \{u\}). 
\]
Therefore, 
every Fano
threefold of No.\ $3$-$4$ is isomorphic to
$\Bl_C Y$ for some closed point $v=(p, u) \in \P^2_2 \times U$ satisfying $Y = (\cY_V)_v$ and $\cC_v =C$  \cite[Subsection 7.3]{FanoIV}. 
Then there exists a parameter space of No.\ 3-4 by 
Lemma \ref{l relative blowup parameter space}. 
\qedhere



%
\end{proof}

\begin{prop}\label{p rho=3 CI centre}
Let
$\text{x-yz}
\in
\{
\text{3-7, 3-9, 3-11, 4-2}
\}.$ 
Then there exists a parameter space of No.\ x-yz.
\end{prop}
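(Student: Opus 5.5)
The plan is to realise each of the four numbers as a blowup along a complete intersection curve in a moving family of ambient threefolds, and then invoke Proposition \ref{p relative CI centre parameter space} or Proposition \ref{p moving CI containing moving centre}, following the pattern of the proof of Proposition \ref{p parameter spaces rho 2 CI centres}. We read off the blowup descriptions from \cite[Subsections 7.3 and 7.4]{FanoIV} as follows.

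\begin{itemize}
\item 3-7: blowup of $W$ along an elliptic curve $C = S\cap S'$ with $S,S'\in|H|$, $-K_W\sim 2H$.
\item 3-11: blowup of $V_7$ along $C = S\cap S'$ with $S, S' \in |-\frac{1}{2}K_{V_7}|$.
\item 3-9: blowup of $Y=\P_{\P^2}(\MO\oplus\MO(2))$ along a curve $C\in|2\xi|_S|$ on the negative section $S$, as in Proposition \ref{p existence 3-9}.
\item 4-2: the analogous blowup of $Y=\P_{\P^1\times\P^1}(\MO\oplus\MO(1,1))$, as in Proposition \ref{p exist rho 4}.
\end{itemize}

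For 3-7 and 3-11, the ambient $W$ (resp.\ $V_7$) is unique up to isomorphism, so we take $U$ to be a point and $\cY := W$ (resp.\ $V_7$). We then apply Proposition \ref{p relative CI centre parameter space} with $\cL_1=\cL_2=H$, which is nef (indeed ample). The required hypothesis is exactly that every Fano threefold of these numbers arises as $\Bl_{D_1\cap D_2}Y$ for some $D_1,D_2\in|H|$. This is the content of the blowup description, which we recheck from \cite{FanoIV}. For 3-11 we must be careful that the centre is the complete intersection of two members of $|-\frac12 K_{V_7}|$, rather than some other elliptic quartic-type curve. If the description in \cite{FanoIV} only gives ``an elliptic curve of a given degree'', we supply a short argument: the curve has $H$-degree $H^3$, so by adjunction and Riemann--Roch on a member $S\supset C$ it lies in $|H|_S|$. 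This in turn uses $h^1(\MO_Y)=0$ to lift sections.

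For 3-9 and 4-2, the section $S\simeq T$ is canonically determined. For 4-2, $Y$ is unique, being toric of No.\ 3-31. For 3-9, $Y$ is likewise a fixed toric variety. Moreover $S$ is the unique member of $|\xi - \pi^*L|$, so it is fixed by the negativity of its normal bundle. Hence every such $X$ is $\Bl_C Y$ with $C$ a smooth member of the fixed linear system $|2L|$ on the fixed normal prime divisor $S$, and Lemma \ref{l fixed lin sys} applies directly.

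The main obstacle is justifying that every Fano threefold of No.\ 3-9 and 4-2 has its centre contained in that specific section $S$ and lying in $|2L|$, rather than being an arbitrary curve of the same invariants. I would first try to extract this from \cite[Proposition 4.44]{FanoIV} and \cite[Proposition 5.29]{FanoIV}, which characterise these numbers. Failing that, I would argue numerically: the intersection numbers $C\cdot S<0$ forced by $(-K_X)^3$ and the degree of $C$ against $-K_Y$ imply $C\subset S$, and since $\Pic S\simeq\Pic T$, the class of $C$ is then determined.
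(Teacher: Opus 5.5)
Your treatment of 3-7 and 3-11 is the paper's argument: Proposition~\ref{p relative CI centre parameter space} over a point with $\cL_1=\cL_2=H$, $-K_Y\sim 2H$.

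\textbf{The gap is in 3-9 and 4-2: you put the centre on the wrong section.} Write $\pi\colon Y=\P_P(\MO_P\oplus\cL)\to P$ and $\xi=\MO_Y(1)$. Let $S_0\in|\xi-\pi^*\cL|$ be the negative section, so $\MO_Y(S_0)|_{S_0}\simeq\cL^{-1}$.

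The section used in Proposition~\ref{p existence 3-9} corresponds to the quotient $\MO_P\oplus\cL\to\cL$. Hence $S\sim\xi$ and $\xi|_S\simeq\cL$ (Corollary~\ref{c prime div criterion to P1-bdl}). So it is a positive section disjoint from $S_0$, not $S_0$ itself.

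Your description is also internally inconsistent. Since $\xi|_{S_0}\simeq\MO_{S_0}$, the system $|2\xi|_{S_0}|$ is not $|2\cL|$.

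Moreover, the centre can never lie on $S_0$. For 3-9 one has $-K_Y|_{S_0}\simeq\MO_{\P^2}(1)$. Blowing up a quartic $C\subset S_0$ would then give $-K_X|_{\widetilde{S_0}}\simeq\MO_{\P^2}(-3)$, so $X$ would not be Fano.

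This has two consequences:
\begin{enumerate}
\item Your fallback argument ``$C\cdot S<0$ forces $C\subset S$'' is aimed at a false statement; in fact $C\cdot S_0=0$.
\item The rigidity you invoke (uniqueness of $S$ from the negativity of its normal bundle) does not apply to the relevant sections. The sections disjoint from $S_0$ are the members of $|\xi|$ given by $(c,s)\in H^0(P,\MO_P)\oplus H^0(P,\cL)=H^0(Y,\xi)$ with $c\neq0$. They form an affine space of dimension $h^0(P,\cL)>0$, so Lemma~\ref{l fixed lin sys} cannot be applied to a fixed surface as you state it.
\end{enumerate}

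\textbf{How the paper handles it.} It uses \cite[Proposition 4.44(3) and Proposition 5.29(3)]{FanoIV}, which supply exactly your ``main obstacle'': the centre is $C=T\cap D_2'$, with $D_2'\in|\pi^*\cL^{\otimes 2}|$ and $T$ a $\pi$-section satisfying $T\cap S_0=\emptyset$. Writing $T\sim S_0+\pi^*\cM$ and restricting to $S_0$ gives $\MO\simeq\cL^{-1}\otimes\cM$, so $T\in|S_0+\pi^*\cL|=|\xi|$. Thus $C$ is a complete intersection of members of the two nef systems $|S_0+\pi^*\cL|$ and $|\pi^*\cL^{\otimes 2}|$. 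Proposition~\ref{p relative CI centre parameter space} over a point then finishes, exactly as for 3-7 and 3-11.

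\textbf{How to repair your route instead.} You must add a transitivity step. For $s\in H^0(P,\cL)$, the bundle automorphisms $(a,b)\mapsto(a,b+sa)$ of $\MO_P\oplus\cL$ act on $Y$ over $P$ and act transitively on the sections disjoint from $S_0$. So $T$ can be moved to a fixed positive section $T_0$. This carries $C=T\cap\pi^{-1}(B)$ to $T_0\cap\pi^{-1}(B)$, a member of the fixed system $|\pi^*\cL^{\otimes 2}|_{T_0}|$. Only after this step does Lemma~\ref{l fixed lin sys} apply.
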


\begin{proof}
By Proposition \ref{p relative CI centre parameter space}, 
it is enough to prove that 
every Fano threefold $X$ of 
No. x-yz 
$\in
\{
\text{3-7, 3-9, 3-11, 4-2}\}$ is isomorphic to
$\Bl_C\,Y$, 
where $Y, D_1, D_2$ are  as in the following table and 
$C$ is a smooth curve satisfying $C =D'_1 \cap D'_2$ for some 
$D'_1 \in |D_1|$ and $D'_2 \in |D_2|$. 
\[
\begin{array}{c|c|c|c}
\text{No.}
&
Y
&
D_1
&
D_2
\\
\hline
\text{3-7}
&
W
&
-\frac{1}{2}K_W
&
-\frac{1}{2}K_W
\\
\text{3-11}
&
V_7
&
-\frac{1}{2}K_{V_7}
&
-\frac{1}{2}K_{V_7}
\\
\text{3-9}
&
\P_{\P^2}
\left(
\MO_{\P^2}
\oplus
\MO_{\P^2}(2)
\right)
&
S_0+\pi^*\MO_{\P^2}(2)
&
\pi^*\MO_{\P^2}(4)
\\
\text{4-2}
&
\P_{\P^1\times\P^1}
\left(
\MO_{\P^1\times\P^1}
\oplus
\MO_{\P^1\times\P^1}(1,1)
\right)
&
S_0+\pi^*\MO_{\P^1\times\P^1}(1,1)
&
\pi^*\MO_{\P^1\times\P^1}(2,2).
\end{array}
\]
The cases 3-7 and 3-11 are settled by the blowup descriptions in
\cite[Subsection 7.3]{FanoIV}.

In what follows, we treat the  cases $3$-$9$ and $4$-$2$ simultaneously. 
For
\[
(P,\cL)
:=
\begin{cases}
\left(
\P^2,
\MO_{\P^2}(2)
\right)
&
\text{for No.\ }3\text{-}9,
\\
\left(
\P^1\times\P^1,
\MO_{\P^1\times\P^1}(1,1)
\right)
&
\text{for No.\ }4\text{-}2, 
\end{cases}
\]
we have the induced $\P^1$-bundle structure 
$\pi : Y = 
\P_P
\left(
\MO_P
\oplus
\cL
\right) \to P$. 
Let $S_0\subset Y$ be the section satisfying $\MO_Y(S_0)|_{S_0}
\simeq
\cL^{-1}$. 
By 
\cite[Proposition 4.44(3) and Proposition 5.29(3)]{FanoIV}, 
the blowup centre $C$ is a complete intersection 
of a member $D'_2 \in |D_2|$ and a $\pi$-section $T$ 
satisfying $T \cap S_0 =\emptyset$. 
For such a $\pi$-section $T$, it is enough to prove $D_1 \sim T$. 
Since both $T$ and $S_0$ are sections of $\pi$, we
have 
$T\sim
S_0+\pi^*\cM$ for some $\cM \in\Pic(P)$. 
This, together with $T \cap S_0 = \emptyset$ and 
 $\MO_Y(S_0)|_{S_0}
\simeq
\cL^{-1}$, implies $\cM \simeq \cL$. 
In particular, $T \sim S_0+\pi^*\cL =D_1$, as required. 
\qedhere



%
\end{proof}

\begin{thm}\label{t parameter rho 3}
Let x-yz be a number with $x = 3$, i.e., 
\[
\text{x-yz}
\in
\{\text{3-1, 3-2, ..., 3-31}\}.
\]
Then there exists a parameter space of No.\ x-yz.
\end{thm}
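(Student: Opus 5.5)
The plan is to assemble the proof case by case, exactly as in Theorem \ref{t parameter rho 2}, by listing for each number 3-1, \ldots, 3-31 an earlier result producing a parameter space. Most of the list is already covered:
\begin{itemize}
\item 3-1 is handled by Proposition \ref{p irre double cover}.
\item 3-2, 3-3, 3-8 and 3-17 are handled by Proposition \ref{p irre CI}.
\item 3-16 and 3-22 through 3-31 are covered by Proposition \ref{p unique easy}.
\item 3-5, 3-13 and 3-21 are covered by Proposition \ref{p rho=3 fixed lin sys}.
\item 3-6, 3-10, 3-12, 3-14, 3-15, 3-19 and 3-20 are covered by Proposition \ref{p rho=3 disjoint centre}.
\item 3-4 is covered by Proposition \ref{p rho=3 fibre blowup}.
\item 3-7, 3-9 and 3-11 are covered by Proposition \ref{p rho=3 CI centre}.
\item 3-18 is covered by Proposition \ref{p irre 5-1}.
\end{itemize}

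What remains uncovered in the text above is 3-1 through 3-31 minus these, namely 3-2 (already in Proposition \ref{p irre CI}) and, I believe, nothing else. I will nevertheless double-check against the table in \cite[Subsection 7.3]{FanoIV} that every number appears exactly once.

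If a stray case such as 3-2 needs separate treatment (its description is a divisor in the $\P^2$-bundle $\P_T(\MO_T\oplus\MO_T(-1,-1)^{\oplus 2})$ in $|2\xi_P+\pi_P^*\MO_T(3,2)|$), I will argue that every Fano threefold of No.\ 3-2 admits this description by \cite[Subsection 7.3]{FanoIV}. Taking an open subset of $\P(H^0(P,2\xi_P+\pi_P^*\MO_T(3,2)))$ then gives a parameter space, as in the complete-intersection argument of Proposition \ref{p irre CI}.

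The only real obstacle is bookkeeping: checking that the descriptions cited for the remaining numbers (for instance 3-18 as a blowup of $\P^3$ along a disjoint line and conic, treated "similarly" in Proposition \ref{p irre 5-1}) genuinely cover all threefolds of the given number. If 3-18 needs detail, I will mimic the 5-1 argument: parametrise pairs consisting of a line and a disjoint conic in $\P^3$ by an open subset of a product of Grassmannian-type Hilbert schemes, which is irreducible. I will then apply Lemma \ref{l relative blowup parameter space} to the universal disjoint union.
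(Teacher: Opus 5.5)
Your proposal is correct and is essentially the paper's own proof. The paper also proves the theorem by citing, for each of the 31 numbers, the same proposition you assign it to, and your list accounts for all 31 numbers. The contingency paragraphs for 3-2 and 3-18, and the slightly muddled sentence calling 3-2 ``uncovered'', are unnecessary, since both cases are already handled by the propositions you cite.
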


\begin{proof}
The assertion holds as follows: 
\begin{itemize}
\item 3-1:
Proposition \ref{p irre double cover}. 
\item 3-2, 3-3, 3-8, 3-17:
Proposition \ref{p irre CI}. 
\item 3-4:
Proposition \ref{p rho=3 fibre blowup}. 
\item 3-5, 3-13, 3-21:
Proposition \ref{p rho=3 fixed lin sys}. 
\item 3-6, 3-10, 3-12, 3-14, 3-15, 3-19, 3-20:
Proposition \ref{p rho=3 disjoint centre}. 
\item 3-7, 3-9, 3-11:
Proposition \ref{p rho=3 CI centre}. 
\item 3-18:
Proposition \ref{p irre 5-1}. 
\item 3-16, 3-22, 3-23, 3-24, 3-25, 3-26, 3-27, 3-28, 3-29, 3-30, 3-31:
Proposition \ref{p unique easy}. 
\end{itemize}
\end{proof}

\begin{thm}\label{t parameter rho 4}
Let x-yz be a number with $x = 4$, i.e., 
\[
\text{x-yz}
\in
\{\text{4-1, 4-2, ..., 4-13}\}.
\]
Then there exists a parameter space of No.\ x-yz.
\end{thm}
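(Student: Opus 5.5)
The plan is to mirror the proofs of Theorem \ref{t parameter rho 2} and Theorem \ref{t parameter rho 3}: go through the thirteen numbers 4-1, ..., 4-13 and, for each, cite a proposition already established that produces a parameter space. No new geometry should be needed. The only work is to check that every number with $x=4$ appears in the hypotheses of some earlier result.

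Explicitly, the assignment I would use is as follows. For 4-1, Proposition \ref{p irre CI}, since a divisor of tridegree $(1,1,1,1)$ on $(\P^1)^4$ is a complete intersection in a fixed ambient space. For 4-2, Proposition \ref{p rho=3 CI centre}: the blowup centre is a complete intersection of a $\pi$-section $T\sim S_0+\pi^*\MO(1,1)$ and a member of $|\pi^*\MO(2,2)|$. For 4-3, 4-6, 4-8 and 4-13, Proposition \ref{p rho=4 tridegree}, since the curve of tridegree $(1,1,d)$ can be moved into the fixed surface $\Delta\times\P^1_3$. For 4-4, Proposition \ref{p irre 5-1}, which treats 3-18, 4-4 and 5-1 by the same universal-conic-and-points construction. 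For 4-5, Proposition \ref{p rho=3 disjoint centre}, via Lemma \ref{l disjoint centre} with $Z=\P^2\times\P^1$ and the pair (3-28, 3-21). Finally, 4-6, 4-7, 4-8, 4-9, 4-10, 4-11 and 4-12 are each unique up to isomorphism by Proposition \ref{p unique easy}, so they have parameter spaces trivially. The overlap at 4-6 and 4-8 between this last case and the tridegree case is harmless.

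The main thing to double-check is coverage: that every one of the thirteen numbers is listed somewhere above. Tallying, 4-1 through 4-13 are all accounted for. For 4-4 and 4-5 I should confirm that the earlier propositions genuinely include these numbers in their hypotheses rather than only mentioning them in passing. Proposition \ref{p irre 5-1} does list 4-4, with the remark that the other cases are similar. Proposition \ref{p rho=3 disjoint centre} lists 4-5, and its input for 3-21 comes from Proposition \ref{p rho=3 fixed lin sys}. With those checks done, the theorem follows by collecting these citations in an itemised list, as in the $\rho=3$ case.
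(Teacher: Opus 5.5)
Your proposal is correct and matches the paper's proof: the same case-by-case citation of Proposition \ref{p irre CI} (4-1), Proposition \ref{p rho=3 CI centre} (4-2), Proposition \ref{p rho=4 tridegree} (4-3, 4-13), Proposition \ref{p irre 5-1} (4-4), Proposition \ref{p rho=3 disjoint centre} (4-5), and Proposition \ref{p unique easy} (4-6 through 4-12). The only difference is cosmetic: the paper cites 4-6 and 4-8 only under Proposition \ref{p unique easy}, while you also cover them by Proposition \ref{p rho=4 tridegree}, which does no harm.
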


\begin{proof}
The assertion holds as follows: 
\begin{itemize}
\item 4-1:
Proposition \ref{p irre CI}. 
\item 
4-2: Proposition \ref{p rho=3 CI centre}. 
\item 4-3, 4-13:
Proposition \ref{p rho=4 tridegree}. 
\item 4-4:
Proposition \ref{p irre 5-1}. 
\item 4-5:
Proposition \ref{p rho=3 disjoint centre}.
\item 4-6, 4-7, 4-8, 4-9, 4-10, 4-11, 4-12:
Proposition \ref{p unique easy}. 
\end{itemize}



\end{proof}

\begin{thm}\label{t parameter final}
Let x-yz be an arbitrary number. 
Then there exists a parameter space of No.\ x-yz.
\end{thm}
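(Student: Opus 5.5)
The plan is to sort by Picard rank $x = \rho(X)$ and cite the results already established for each rank. By \cite[Section 7]{FanoIV}, every number x-yz satisfies $1 \leq x \leq 10$. The cases are covered as follows:
\begin{itemize}
\item $x=1$: Theorem \ref{t parameter rho 1}.
\item $x=2$: Theorem \ref{t parameter rho 2}.
\item $x=3$: Theorem \ref{t parameter rho 3}.
\item $x=4$: Theorem \ref{t parameter rho 4}.
\item $x \geq 5$: Theorem \ref{t parameter rho 5}.
\end{itemize}
In each case the cited result provides a parameter space $\pi \colon \cX \to U$ of No.\ x-yz. Here $U$ is a quasi-projective variety, in particular integral; $\pi$ is smooth and projective; every closed fibre is a Fano threefold; and every Fano threefold of No.\ x-yz occurs as a closed fibre.

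The step that needs care is bookkeeping: every number in the list of \cite[Section 7]{FanoIV} must actually appear in one of the rank-by-rank theorems. I would check this against the itemised lists in the proofs of those theorems: 1-1 through 1-17, 2-1 through 2-36, 3-1 through 3-31, 4-1 through 4-13, and 5-1 through 10-1. For example, 3-19 is covered by Proposition \ref{p rho=3 disjoint centre}, which is cited under the range 3-6, \ldots, 3-20 in Theorem \ref{t parameter rho 3}, and 3-18 by Proposition \ref{p irre 5-1}. Confirming that nothing is omitted is the only real obstacle, and it is a finite verification.

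To get the form of Theorem \ref{intro main}, which requires every closed fibre to be of No.\ x-yz and not merely Fano, I would combine the parameter space with Theorem \ref{t No inv}. Since $U$ is connected and $\pi$ is smooth and projective with Fano geometric fibres, all closed fibres have the same number. That number is x-yz because at least one Fano threefold of No.\ x-yz exists (Section 6) and occurs as a fibre. Finally, since these families are indexed by the 105 numbers and Fano threefolds with different numbers are distinguished by Theorem \ref{t No inv}, there are exactly 105 deformation families.
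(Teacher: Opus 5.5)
Your proposal is correct and is essentially the paper's proof: the paper also splits by $\rho$ and cites Theorems \ref{t parameter rho 1}, \ref{t parameter rho 2}, \ref{t parameter rho 3}, \ref{t parameter rho 4}, and \ref{t parameter rho 5}, and your check that the itemised lists cover every number is accurate. Your extra step of invoking Theorem \ref{t No inv} to show that every closed fibre is of No.\ x-yz matches the remark the paper makes right after its definition of a parameter space.
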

\begin{proof}
The assertion holds as follows: 
\begin{itemize}
\item $\rho=1$: Theorem \ref{t parameter rho 1}.
\item $\rho=2$: Theorem \ref{t parameter rho 2}.
\item $\rho=3$: Theorem \ref{t parameter rho 3}.
\item $\rho=4$: Theorem \ref{t parameter rho 4}.
\item $\rho\geq5$: Theorem \ref{t parameter rho 5}.
\end{itemize}    
\end{proof}










\bibliographystyle{skalpha}
\bibliography{reference.bib}

@article{Amb23,
  author  = {Ambrosi, Emiliano},
  title   = {Specialization of {N}{\'e}ron--{S}everi groups in positive characteristic},
  journal = {Annales Scientifiques de l'\'{E}cole Normale Sup\'{e}rieure},
  volume  = {56},
  number  = {3},
  pages   = {665--711},
  year    = {2023},
  doi     = {10.24033/asens.2542},
  eprint  = {1810.06481},
  archivePrefix = {arXiv}
}

@article {BFT22,
    AUTHOR = {De Biase, Lorenzo and Fatighenti, Enrico and Tanturri, Fabio},
     TITLE = {Fano 3-folds from homogeneous vector bundles over
              {G}rassmannians},
   JOURNAL = {Rev. Mat. Complut.},
  FJOURNAL = {Revista Matem\'atica Complutense},
    VOLUME = {35},
      YEAR = {2022},
    NUMBER = {3},
     PAGES = {649--710},
      ISSN = {1139-1138,1988-2807},
   MRCLASS = {14J45 (14E30 14J30 14M15)},
  MRNUMBER = {4482268},
MRREVIEWER = {Guolei\ Zhong},
       DOI = {10.1007/s13163-021-00401-2},
       URL = {https://doi-org.kyoto-u.idm.oclc.org/10.1007/s13163-021-00401-2},
}

@article {CT20,
    AUTHOR = {Cascini, Paolo and Tanaka, Hiromu},
     TITLE = {Relative semi-ampleness in positive characteristic},
   JOURNAL = {Proc. Lond. Math. Soc. (3)},
  FJOURNAL = {Proceedings of the London Mathematical Society. Third Series},
    VOLUME = {121},
      YEAR = {2020},
    NUMBER = {3},
     PAGES = {617--655},
      ISSN = {0024-6115,1460-244X},
   MRCLASS = {14C20 (14G17)},
  MRNUMBER = {4100119},
MRREVIEWER = {Andreas\ H\"{o}ring},
       DOI = {10.1112/plms.12323},
       URL = {https://doi.org/10.1112/plms.12323},
}

@book{CDL25,
  author    = {Cossec, Fran{\c{c}}ois and Dolgachev, Igor and Liedtke, Christian},
  title     = {Enriques Surfaces I},
  edition   = {2},
  publisher = {Springer Singapore},
  year      = {2025},
  pages     = {xxi+681},
  isbn      = {978-981-96-1214-7},
  doi       = {10.1007/978-981-96-1214-7}
}

@article{Dem77,
  author  = {Demazure, Michel},
  title   = {Automorphismes et d{\'e}formations des vari{\'e}t{\'e}s de {B}orel},
  journal = {Inventiones Mathematicae},
  volume  = {39},
  number  = {2},
  pages   = {179--186},
  year    = {1977},
  doi     = {10.1007/BF01390108}
}

@book{EH24,
  author    = {Eisenbud, David and Harris, Joe},
  title     = {The Practice of Algebraic Curves: A Second Course in Algebraic Geometry},
  series    = {Graduate Studies in Mathematics},
  volume    = {250},
  publisher = {American Mathematical Society},
  address   = {Providence, RI},
  year      = {2024},
  isbn      = {978-1-4704-7637-3},
}

@article {Fuj83semipositive,
    AUTHOR = {Fujita, Takao},
     TITLE = {Semipositive line bundles},
   JOURNAL = {J. Fac. Sci. Univ. Tokyo Sect. IA Math.},
  FJOURNAL = {Journal of the Faculty of Science. University of Tokyo.
              Section IA. Mathematics},
    VOLUME = {30},
      YEAR = {1983},
    NUMBER = {2},
     PAGES = {353--378},
      ISSN = {0040-8980},
   MRCLASS = {32L20 (14F05)},
  MRNUMBER = {722501},
MRREVIEWER = {J.\ Kajiwara},
}

@book {Har77,
    AUTHOR = {Hartshorne, Robin},
     TITLE = {Algebraic geometry},
    SERIES = {Graduate Texts in Mathematics, No. 52},
 PUBLISHER = {Springer-Verlag, New York-Heidelberg},
      YEAR = {1977},
     PAGES = {xvi+496},
      ISBN = {0-387-90244-9},
   MRCLASS = {14-01},
  MRNUMBER = {0463157},
MRREVIEWER = {Robert Speiser},
}

@article {Isk77,
    AUTHOR = {Iskovskih, V. A.},
     TITLE = {Fano threefolds. {I}},
   JOURNAL = {Izv. Akad. Nauk SSSR Ser. Mat.},
  FJOURNAL = {Izvestiya Akademii Nauk SSSR. Seriya Matematicheskaya},
    VOLUME = {41},
      YEAR = {1977},
    NUMBER = {3},
     PAGES = {516--562, 717},
      ISSN = {0373-2436},
   MRCLASS = {14J10 (14M20 14N05)},
  MRNUMBER = {463151},
MRREVIEWER = {Miles Reid},
}

@article {Isk78,
    AUTHOR = {Iskovskih, V. A.},
     TITLE = {Fano threefolds. {II}},
   JOURNAL = {Izv. Akad. Nauk SSSR Ser. Mat.},
  FJOURNAL = {Izvestiya Akademii Nauk SSSR. Seriya Matematicheskaya},
    VOLUME = {42},
      YEAR = {1978},
    NUMBER = {3},
     PAGES = {506--549},
      ISSN = {0373-2436},
   MRCLASS = {14J10 (14J30 14M20 14N05)},
  MRNUMBER = {503430},
MRREVIEWER = {Miles Reid},
}

@incollection {IP99,
    AUTHOR = {Iskovskikh, V. A. and Prokhorov, Yu. G.},
     TITLE = {Fano varieties},
 BOOKTITLE = {Algebraic geometry, {V}},
    SERIES = {Encyclopaedia Math. Sci.},
    VOLUME = {47},
     PAGES = {1--247},
 PUBLISHER = {Springer, Berlin},
      YEAR = {1999},
   MRCLASS = {14J45 (14E07 14F22 14K30)},
  MRNUMBER = {1668579},
MRREVIEWER = {Takao Fujita},
}

@misc{IKTTV5,
  author        = {Ito, Tetsushi and Kanemitsu, Akihiro and
                   Takamatsu, Teppei and Tanaka, Yuuji},
  title         = {Quintic del Pezzo threefolds in positive and mixed characteristic},
  year          = {2025},
  eprint        = {2506.15086},
  archivePrefix = {arXiv},
  primaryClass  = {math.AG},
  doi           = {10.48550/arXiv.2506.15086}
}

@misc{IKTTg12,
  author        = {Tetsushi Ito and Akihiro Kanemitsu and Teppei Takamatsu and Yuuji Tanaka},
  title         = {Fano threefolds of genus 12 with large automorphism group in positive and mixed characteristic},
  year          = {2026},
  eprint        = {2601.10106},
  archivePrefix = {arXiv},
  primaryClass  = {math.AG}
}

@article{dJ96,
  author  = {de Jong, A. J.},
  title   = {Smoothness, semi-stability and alterations},
  journal = {Publications Math{\'e}matiques de l'IH{\'E}S},
  volume  = {83},
  year    = {1996},
  pages   = {51--93},
  doi     = {10.1007/BF02698644}
}

@article{dJ97,
  author  = {de Jong, A. Johan},
  title   = {Families of curves and alterations},
  journal = {Ann. Inst. Fourier (Grenoble)},
  volume  = {47},
  number  = {2},
  year    = {1997},
  pages   = {599--621},
  doi     = {10.5802/aif.1575},
}

@article{KTprime,
  author =        {Kanemitsu, Akihiro and Tanaka, Hiromu},
  journal =       {in preparation},
  title =         {Prime Fano threefolds in positive characteristic},
  year =          {2026},
}

@article{KT25,
  author  = {Kawakami, Tatsuro and Tanaka, Hiromu},
  title   = {Weak quasi-$F$-splitting and del Pezzo varieties},
  journal = {J. Lond. Math. Soc. (2)},
  volume  = {111},
  number  = {2},
  year    = {2025},
  pages   = {Paper No. e70098},
  doi     = {10.1112/jlms.70098},
}

@article{KTLift1,
  author =        {Kawakami, Tatsuro and Tanaka, Hiromu},
  journal =       {preprint available at arXiv:2503.10236v1},
  title =         {Liftability and vanishing theorems for {F}ano threefolds in positive characteristic {I}},
  year =          {2025},
}

@article{KTLift2,
  author =        {Kawakami, Tatsuro and Tanaka, Hiromu},
  journal =       {preprint available at arXiv:2404.04764v2},
  title =         {Liftability and vanishing theorems for {F}ano threefolds in positive characteristic {II}},
  year =          {2025},
}

@article{Kaw85,
  author  = {Kawamata, Yujiro},
  title   = {Pluricanonical systems on minimal algebraic varieties},
  journal = {Invent. Math.},
  volume  = {79},
  number  = {3},
  year    = {1985},
  pages   = {567--588},
  doi     = {10.1007/BF01388524},
}

@article {Kee99,
    AUTHOR = {Keel, Se\'{a}n},
     TITLE = {Basepoint freeness for nef and big line bundles in positive
              characteristic},
   JOURNAL = {Ann. of Math. (2)},
  FJOURNAL = {Annals of Mathematics. Second Series},
    VOLUME = {149},
      YEAR = {1999},
    NUMBER = {1},
     PAGES = {253--286},
      ISSN = {0003-486X},
   MRCLASS = {14C20 (14E30)},
  MRNUMBER = {1680559},
MRREVIEWER = {Yuri G. Prokhorov},
       DOI = {10.2307/121025},
       URL = {https://doi-org.utokyo.idm.oclc.org/10.2307/121025},
}

@article {Kol91,
    AUTHOR = {Koll\'{a}r, J\'{a}nos},
     TITLE = {Extremal rays on smooth threefolds},
   JOURNAL = {Ann. Sci. \'{E}cole Norm. Sup. (4)},
  FJOURNAL = {Annales Scientifiques de l'\'{E}cole Normale Sup\'{e}rieure. Quatri\`eme
              S\'{e}rie},
    VOLUME = {24},
      YEAR = {1991},
    NUMBER = {3},
     PAGES = {339--361},
      ISSN = {0012-9593},
   MRCLASS = {14J30 (14E30 14J10)},
  MRNUMBER = {1100994},
MRREVIEWER = {Eckart Viehweg},
       URL = {http://www.numdam.org/item?id=ASENS_1991_4_24_3_339_0},
}

@book {KM98,
    AUTHOR = {Koll\'{a}r, J\'{a}nos and Mori, Shigefumi},
     TITLE = {Birational geometry of algebraic varieties},
    SERIES = {Cambridge Tracts in Mathematics},
    VOLUME = {134},
      NOTE = {With the collaboration of C. H. Clemens and A. Corti,
              Translated from the 1998 Japanese original},
 PUBLISHER = {Cambridge University Press, Cambridge},
      YEAR = {1998},
     PAGES = {viii+254},
      ISBN = {0-521-63277-3},
   MRCLASS = {14E30},
  MRNUMBER = {1658959},
MRREVIEWER = {Mark Gross},
       DOI = {10.1017/CBO9780511662560},
       URL = {https://doi-org.utokyo.idm.oclc.org/10.1017/CBO9780511662560},
}

@book{Lan83,
  author    = {Lander, Eric S.},
  title     = {Symmetric Designs: An Algebraic Approach},
  publisher = {Cambridge University Press},
  year      = {1983}
}

@article{LL22,
  author  = {Liendo, Alvaro and Lucchini Arteche, Giancarlo},
  title   = {Automorphisms of products of toric varieties},
  journal = {Mathematical Research Letters},
  volume  = {29},
  number  = {2},
  pages   = {529--540},
  year    = {2022},
  doi     = {10.4310/MRL.2022.v29.n2.a9}
}

@article {MM81,
    AUTHOR = {Mori, Shigefumi and Mukai, Shigeru},
     TITLE = {Classification of {F}ano {$3$}-folds with {$B_{2}\geq 2$}},
   JOURNAL = {Manuscripta Math.},
  FJOURNAL = {Manuscripta Mathematica},
    VOLUME = {36},
      YEAR = {1981/82},
    NUMBER = {2},
     PAGES = {147--162},
      ISSN = {0025-2611},
   MRCLASS = {14J30 (14J10)},
  MRNUMBER = {641971},
MRREVIEWER = {Mary Schaps},
       DOI = {10.1007/BF01170131},
       URL = {https://doi.org/10.1007/BF01170131},
}

@incollection {MM83,
    AUTHOR = {Mori, Shigefumi and Mukai, Shigeru},
     TITLE = {On {F}ano {$3$}-folds with {$B_{2}\geq 2$}},
 BOOKTITLE = {Algebraic varieties and analytic varieties ({T}okyo, 1981)},
    SERIES = {Adv. Stud. Pure Math.},
    VOLUME = {1},
     PAGES = {101--129},
 PUBLISHER = {North-Holland, Amsterdam},
      YEAR = {1983},
   MRCLASS = {14J30},
  MRNUMBER = {715648},
MRREVIEWER = {I. Dolgachev},
       DOI = {10.2969/aspm/00110101},
       URL = {https://doi.org/10.2969/aspm/00110101},
}

@incollection{MM85,
  author    = {Mori, Shigefumi and Mukai, Shigeru},
  title     = {Classification of {Fano} 3-folds with {$B_2 \geq 2$}, {I}},
  booktitle = {Algebraic and Topological Theories: To the Memory of Dr. Takehiko Miyata},
  editor    = {Nagata, Masayoshi},
  publisher = {Kinokuniya},
  address   = {Tokyo},
  year      = {1985},
  pages     = {496--545}
}

@article {MM03,
    AUTHOR = {Mori, Shigefumi and Mukai, Shigeru},
     TITLE = {Erratum: ``{C}lassification of {F}ano 3-folds with {$B_2\geq
              2$}'' [{M}anuscripta {M}ath. {\bf 36} (1981/82), no. 2,
              147--162; {MR}0641971 (83f:14032)]},
   JOURNAL = {Manuscripta Math.},
  FJOURNAL = {Manuscripta Mathematica},
    VOLUME = {110},
      YEAR = {2003},
    NUMBER = {3},
     PAGES = {407},
      ISSN = {0025-2611},
   MRCLASS = {14J45 (14E30 14J30)},
  MRNUMBER = {1969009},
       DOI = {10.1007/s00229-002-0336-2},
       URL = {https://doi.org/10.1007/s00229-002-0336-2},
}

@incollection{Mum70,
  author    = {Mumford, David},
  title     = {Varieties Defined by Quadratic Equations},
  booktitle = {Questions on Algebraic Varieties},
  series    = {C.I.M.E. Summer Schools},
  pages     = {29--100},
  publisher = {Edizioni Cremonese},
  address   = {Rome},
  year      = {1970},
}

@article{Poc25,
  author  = {Poczobut, Pawe{\l}},
  title   = {An algebraic variant of the {F}ischer--{G}rauert theorem},
  journal = {Mathematische Zeitschrift},
  volume  = {310},
  number  = {2},
  pages   = {33},
  year    = {2025},
  doi     = {10.1007/s00209-025-03728-4}
}

@article {Sho79a,
    AUTHOR = {\v{S}okurov, V. V.},
     TITLE = {The existence of a line on {F}ano varieties},
   JOURNAL = {Izv. Akad. Nauk SSSR Ser. Mat.},
  FJOURNAL = {Izvestiya Akademii Nauk SSSR. Seriya Matematicheskaya},
    VOLUME = {43},
      YEAR = {1979},
    NUMBER = {4},
     PAGES = {922--964, 968},
      ISSN = {0373-2436},
   MRCLASS = {14J30 (14M20)},
  MRNUMBER = {548510},
MRREVIEWER = {Miles Reid},
}

@article {Sho79b,
    AUTHOR = {\v{S}okurov, V. V.},
     TITLE = {Smoothness of a general anticanonical divisor on a {F}ano
              variety},
   JOURNAL = {Izv. Akad. Nauk SSSR Ser. Mat.},
  FJOURNAL = {Izvestiya Akademii Nauk SSSR. Seriya Matematicheskaya},
    VOLUME = {43},
      YEAR = {1979},
    NUMBER = {2},
     PAGES = {430--441},
      ISSN = {0373-2436},
   MRCLASS = {14J30},
  MRNUMBER = {534602},
MRREVIEWER = {Werner Kleinert},
}

@misc{SP,
  author =        {{The} {Stacks Project Authors}},
  howpublished =  {\url{http://stacks.math.columbia.edu}},
  title =         {\itshape {S}tacks {P}roject},
}

@incollection{Str87,
  author    = {Str{\o}mme, Stein Arild},
  title     = {On parametrized rational curves in Grassmann varieties},
  booktitle = {Space Curves},
  series    = {Lecture Notes in Mathematics},
  volume    = {1266},
  pages     = {251--272},
  publisher = {Springer},
  address   = {Berlin},
  year      = {1987},
  doi       = {10.1007/BFb0078187}
}

@article{FanoI,
  author =        {Tanaka, Hiromu},
  journal =       {arXiv:2308.08121},
  title =         {Fano threefolds in positive characteristic {I}},
  year =          {2023},
}

@article{FanoII,
  author =        {Tanaka, Hiromu},
  journal =       {arXiv:2308.08122},
  title =         {Fano threefolds in positive characteristic {II}},
  year =          {2023},
}

@article{FanoIII,
  author =        {Asai, Masaya and Tanaka, Hiromu},
  journal =       {arXiv:2308.08124},
  title =         {Fano threefolds in positive characteristic {III}},
  year =          {2023},
}

@article{FanoIV,
  author =        {Tanaka, Hiromu},
  journal =       {arXiv:2308.08127},
  title =         {Fano threefolds in positive characteristic {IV}},
  year =          {2023},
}

@article{WW82, 
author = {Watanabe, Keiichi and Watanabe, Masayuki}, 
title = {The Classification of {Fano} $3$-Folds with Torus Embeddings}, 
journal = {Tokyo Journal of Mathematics}, 
volume = {5}, number = {1}, pages = {37--48}, year = {1982}, doi = {10.3836/tjm/1270215033} }

@article{Wis09,
  author  = {Wi{\'s}niewski, Jaros{\l}aw A.},
  title   = {Rigidity of the Mori cone for Fano manifolds},
  journal = {Bull. London Math. Soc.},
  volume  = {41},
  year    = {2009},
  number  = {5},
  pages   = {779--781},
  doi     = {10.1112/blms/bdp025},
}

\end{document}